\documentclass[final,reqno,dvipsnames]{elsarticle}

\usepackage[colorlinks=true,breaklinks=true,linkcolor=lightblue,citecolor=lightblue]{hyperref}

\usepackage{amssymb}
\usepackage{amsmath}
\usepackage{amsfonts}
\usepackage{amsthm}

\usepackage{bbm}

\usepackage{latexsym}

\usepackage{subfigure}
\usepackage{stmaryrd}

\usepackage{algorithmic}

\usepackage[utf8]{inputenc}
\usepackage{booktabs}

\usepackage{mathrsfs}
\usepackage{rotating} 
\usepackage{graphicx,float,epsfig,color,fancyhdr}
\usepackage{amsopn}
\usepackage{multirow}

\usepackage{soul}

\usepackage{tikz}
\usetikzlibrary{patterns,decorations.pathmorphing,calc,arrows.meta,positioning}

\def\E{{K}}

\def\vb{{\bf v}}

\renewcommand{\O}{\Omega}

\newtheorem{lemma}{Lemma}[section]
\newtheorem{remark}{Remark}[section] 
\newtheorem{proposition}{Proposition}[section]
\newtheorem{assumption}{Assumption}
\newtheorem{theorem}{Theorem}[section]

\newtheorem{problem}{Problem}

\def\bu{\mathbf{u}}

\newcommand\bn{\mathbf{n}}
\newcommand\bt{\boldsymbol{t}}

\def\CTh{\mathfrak{T}_h}
\def\HdoK{{H^{2}(\E)}}

\def\EE{\mathcal{E}}
\def\EEh{\EE_h}

\def\HdoK{{H^{2}(\E)}}

\def\inte{{\rm int}}
\def\bdry{{\rm bdry}}
\def\EE{\mathcal{E}}
\def\EEh{\EE_h}
\def\EEi{\EE_h^{\inte}}
\def\EEbdry{\EE_h^{\bdry}}
\def\VV{\mathfrak{V}}
\def\VVi{\VV_h^{\inte}}
\def\VVbdry{\VV_h^{\bdry}}
\def\HdoK{{H^{2}(\E)}}

\def\CV{\mathcal{V}}
\def\VtK{\widetilde{\CV}_{h}(\E)}
\newcommand\Vh{{\mathcal V}_h}

\def\vb{{\bf v}}

\def\CD{\mathcal{D}}

\def\CS{\mathcal{S}}

\def\CT{\mathcal{T}}

\def\E{\mathrm{K}}

\def\half{\mbox{$\frac12$}}

\def\N{{\mathbb{N}}}

\def\O{\Omega}

\def\Vh{V_h}

\def\nc{{\rm NC}}
\def\HncT{H^{2,\nc}(\CTh)}
\newcommand{\vertiii}[1]{{\left\vert\kern-0.25ex\left\vert\kern-0.25ex\left\vert #1 
    \right\vert\kern-0.25ex\right\vert\kern-0.25ex\right\vert}}

\renewcommand\sp{\mathop{\mathrm{sp}}\nolimits}

\newcommand*{\jump}[1]{\lbrack\hspace{-1.5pt}\lbrack #1\rbrack\hspace{-1.5pt}\rbrack}

\newcommand{\DOFS}[1]{\mbox{$\mathbf{#1}$}}
\newcommand{\Vt}{\widetilde{V}}

\allowdisplaybreaks
\journal{}
\date{\today}

\begin{document}

\begin{frontmatter}

  \title{Conforming and non-conforming virtual element methods for
    the biharmonic Steklov eigenvalue problem with minimum regularity}
  
  \author[1]{Dibyendu Adak}
  \ead{dibyendu.jumath@gmail.com}
  
  \author[2,3,4]{Daniele Boffi}
  \ead{daniele.boffi@kaust.edu.sa}
  
  \author[3]{Francesca Gardini}
  \ead{francesca.gardini@unipv.it}
  
  \author[4]{Gianmarco Manzini}
  \ead{gm.manzini@gmail.com}
  
  \author[5]{Jesus Vellojin}
  \ead{jesus.vellojinm@usm.cl}

  \address[1]{Department of Mathematics, Indian Institute of Technology-Kharagpur, WB, India.}
  
  \address[5]{Departamento de Ciencias, Universidad T\'ecnica Federico Santa Mar\'ia, Valpara\'iso, Chile}

  \address[2]{King Abdullah University of Science and Technology (KAUST),
    Computer, Electrical and Mathematical Science and Engineering
    Division, Thuwal, Saudi Arabia}

  \address[3]{Universit\`a di Pavia, Dipartimento di Matematica
    ``F. Casorati'', Pavia, Italy}

  \address[4]{IMATI, Consiglio Nazionale delle Ricerche, via Ferrata
    1, 27100 Pavia, Italy.}  
  
  \begin{abstract}
    In this work, we analyze the conforming and $C^0$-non-conforming
    Virtual Element Method for a fourth-order Steklov eigenvalue
    problem on a generally shaped, possibly nonconvex, polygonal
    domain. By employing an {\it enriching } operator, we derive the
    convergence analysis using the discrete $H^2$ seminorm, and the
    $H^1$ and $L^2$ norms.
    We use the Babu\v{s}ka--Osborn spectral theory \cite{BO} to prove
    that the numerical scheme approximates the spectrum without
    introducing any spurious eigenvalue.
    Moreover, we derive the optimal order of convergence for
    eigenfunctions and double order for eigenvalues.
    We assess the performance of the method on several numerical tests
    using different families of polygonal meshes.
  \end{abstract}

  \begin{keyword} 
    virtual element method     \sep
    Steklov eigenvalue problem \sep
    a priori error estimates   \sep
    polygonal meshes           \sep
    biharmonic equation 
    \MSC 65N12\sep 65N25 \sep 65N30 \sep 70J30  \sep76M10 \sep 76M25  
  \end{keyword}

\date{\today}
\end{frontmatter}


\setcounter{equation}{0}

\section{Introduction}
 
Elliptic problems featuring eigenvalues in their boundary conditions
are typically called Steklov problems.  There are various instances of
such problems, including the self-adjoint Steklov eigenvalue problem
modeling vibration in incompressible fluid-structure interactions
\cite{bermudez2000finite}, the sloshing problem
\cite{bermudez2003finite}, and the longitudinal vibration of an
elastic bar or the vibration of a thin membrane stretched over a
bounded region \cite{BO}. In \cite{kuttler1972remarks}, the author
proves a bound on the first eigenvalue on a square domain and that the
associated eigenfunction does not change sign. In
\cite{bucur2009first,ferrero2005fourth}, the authors study the
spectrum of a fourth order Steklov eigenvalue problem on a bounded
domain in $\mathbb{R}^n$ and give the explicit form of the spectrum
in the case where the domain is a ball. Reference~\cite{wang2009sharp}
provides bounds on the first non-zero Steklov eigenvalues when
$\Omega$ is isometric to an $n$-dimensional Euclidean ball.


\subsection{Physical Motivation}

The model problem, which will be formally introduced by the Steklov's eigenvalue problem of 
equations~\eqref{eq:model1:A}--\eqref{eq:model1:C} in the next section, admits a natural
mechanical interpretation in the framework of thin plate vibrations
with constrained boundary kinematics.
Consider a thin elastic plate whose edges are supported by
frictionless roller guides that enforce the constraint
$\partial_{\bn} u = 0$ on $\partial\Omega$, meaning that the plate
boundary must remain horizontal (zero slope in the normal direction)
while being free to displace vertically.
Additionally, the plate edges are connected to a distributed elastic
foundation (such as linear springs or elastomeric bearing pads) that
exerts a restoring force proportional to the vertical deflection.
The equilibrium between the internal shear force in the plate and the
elastic reaction from the boundary supports is expressed by the
condition $\partial_{\bn}(\Delta u) = -\lambda u$ on $\partial\Omega$,
where $\lambda$ is the Steklov's eigenvalue.
Figure~\ref{fig:physical:model} illustrates this mechanical
configuration.

This type of boundary configuration arises in several engineering
applications.
Expansion joints in bridge decks commonly employ roller or sliding
bearings combined with elastomeric pads to accommodate thermal and
seismic deformations while maintaining structural
continuity~\cite{chen2014bridge}; the rollers constrain rotation at
the joint while the elastomeric elements provide restoring stiffness.
Seismic base isolation systems for buildings and precision equipment
similarly combine sliding or rolling supports with elastic centering
devices~\cite{naeim1999design}.
In the domain of microelectromechanical systems (MEMS), guided mirror
platforms with electrostatic or magnetic restoring forces exhibit
analogous boundary behavior, where tilt is mechanically constrained
while vertical displacement is elastically
resisted~\cite{senturia2001microsystem}.
Additionally, vibration isolation tables used in optical laboratories
and semiconductor manufacturing facilities employ air bearings or
roller guides together with pneumatic springs, maintaining platform
levelness while providing controlled isolation from ground vibrations.
The eigenvalue $\lambda$ 
characterizes the
natural vibration frequencies of the plate under these guided-elastic
boundary conditions, making this model relevant for the dynamic
analysis of such systems.
The extension of this model to non-convex domains is particularly
relevant for plates with cutouts (e.g., access panels, ventilation
openings) or L-shaped configurations commonly encountered in
structural floor plates and bridge deck segments.


\begin{figure}[t!]
  \begin{center}
       \begin{tikzpicture}[xscale=1.4,
    spring/.style={decorate,decoration={zigzag,amplitude=2.5pt,segment length=4pt,pre length=2pt,post length=2pt}},
    ground/.style={pattern=north east lines,pattern color=gray},
    roller/.style={circle,draw,fill=white,minimum size=4pt,inner sep=0pt},
    thick plate/.style={fill=blue!20,draw=blue!60!black,line width=0.8pt},
    force arrow/.style={-{Stealth[length=2.5mm]},red!70!black,line width=0.8pt},
    dim arrow/.style={{Stealth[length=2mm]}-{Stealth[length=2mm]},black,line width=0.5pt}
  ]

  \begin{scope}[local bounding box=side view]
    
    \fill[ground] (-0.4,-1.8) rectangle (0,-0.2);
    \draw[line width=1pt] (0,-1.8) -- (0,-0.2);
    
    \fill[ground] (8,-1.8) rectangle (8.4,-0.2);
    \draw[line width=1pt] (8,-1.8) -- (8,-0.2);
    
    \draw[line width=0.6pt,fill=gray!10] (-0.05,-0.3) rectangle (0.25,0.5);
    \draw[densely dashed,gray] (0.1,-0.25) -- (0.1,0.45);
    
    \draw[line width=0.6pt,fill=gray!10] (7.75,-0.3) rectangle (8.05,0.5);
    \draw[densely dashed,gray] (7.9,-0.25) -- (7.9,0.45);
    
    \node[roller] (r1l) at (0.1,0.35) {};
    \node[roller] (r2l) at (0.1,0.1) {};
    \node[roller] (r3l) at (0.1,-0.15) {};
    
    \node[roller] (r1r) at (7.9,0.35) {};
    \node[roller] (r2r) at (7.9,0.1) {};
    \node[roller] (r3r) at (7.9,-0.15) {};
    
    \draw[thick plate] 
      (0.2,0.08) 
      .. controls (2,0.35) and (3,0.5) .. (4,0.45)
      .. controls (5,0.4) and (6,0.3) .. (7.8,0.08)
      -- (7.8,-0.02)
      .. controls (6,0.2) and (5,0.3) .. (4,0.35)
      .. controls (3,0.4) and (2,0.25) .. (0.2,-0.02)
      -- cycle;
    
    \node[above,blue!60!black] at (4,0.55) {\small Elastic plate};
    
    \draw[spring,line width=0.7pt] (0.1,-0.4) -- (0.1,-1.1);
    \draw[line width=0.6pt] (-0.1,-1.1) -- (0.3,-1.1);
    \fill[ground] (-0.15,-1.15) rectangle (0.35,-1.35);
    
    \draw[spring,line width=0.7pt] (7.9,-0.4) -- (7.9,-1.1);
    \draw[line width=0.6pt] (7.7,-1.1) -- (8.1,-1.1);
    \fill[ground] (7.65,-1.15) rectangle (8.15,-1.35);
    
    \node[left,font=\footnotesize] at (-0.1,-1) {$k$};
    \node[right,font=\footnotesize] at (8.1,-1) {$k$};
    
    \draw[force arrow] (0.6,0.03) -- (0.6,0.6);
    \node[right,font=\footnotesize,red!70!black] at (0.65,0.35) {$u$};
    
    \draw[line width=0.5pt,densely dashed,green!50!black] (-0.3,0.03) -- (1.2,0.03);
    \node[below,font=\footnotesize,green!50!black] at (0.75,-0.50) {$\partial_n u = 0$};
    
    \draw[line width=0.5pt,densely dashed,green!50!black] (6.8,0.03) -- (8.3,0.03);
    \node[below,font=\footnotesize,green!50!black] at (7.25,-0.50) {$\partial_n u = 0$};
    
    \draw[force arrow] (0.1,-0.25) -- (0.1,-0.55);

    \node[left,font=\footnotesize,red!70!black,align=right] at (-0.25,-0.45) 
    {$\partial_n(\Delta u)$\\$= -\lambda u$};
    
    \node at (4,-0.4) {$\Omega$};
    
    \draw[-{Stealth},line width=0.5pt] (-0.8,-1.6) -- (-0.8,-1.0);
    \draw[-{Stealth},line width=0.5pt] (-0.8,-1.6) -- (-0.2,-1.6);
    \node[below,font=\tiny] at (-0.2,-1.6) {$x$};
    \node[left,font=\tiny] at (-0.8,-1.0) {$u$};
    
  \end{scope}
  
  \node[above,font=\small\bfseries] at (4,1.3) {Side View: Plate with Roller Guides and Elastic Supports};

  \begin{scope}[shift={(0,-4.5)},local bounding box=top view]
    
    \draw[thick plate] (0.2,0) rectangle (7.8,2.5);
    
    \node at (4,1.25) {$\Omega$};
    
    \node[below,font=\footnotesize] at (4,-0.15) {$\partial\Omega$};
    
    \foreach \x in {0.6,1.3,2.0,2.7,3.4,4.1,4.8,5.5,6.2,6.9,7.5} {
      \draw[fill=white,line width=0.4pt] (\x,0) circle (1.5pt);
      \draw[fill=white,line width=0.4pt] (\x,2.5) circle (1.5pt);
    }
    \foreach \y in {0.4,0.85,1.25,1.65,2.1} {
      \draw[fill=white,line width=0.4pt] (0.2,\y) circle (1.5pt);
      \draw[fill=white,line width=0.4pt] (7.8,\y) circle (1.5pt);
    }
    
    \foreach \x in {0.95,1.65,2.35,3.05,3.75,4.45,5.15,5.85,6.55,7.25} {
      \draw[spring,line width=0.4pt] (\x,0) -- (\x,-0.4);
      \draw[spring,line width=0.4pt] (\x,2.5) -- (\x,2.9);
    }
    \foreach \y in {0.2,0.65,1.05,1.45,1.85,2.3} {
      \draw[spring,line width=0.4pt] (0.2,\y) -- (-0.2,\y);
      \draw[spring,line width=0.4pt] (7.8,\y) -- (8.2,\y);
    }
    
    \draw[force arrow] (5.5,-0.05) -- (5.5,-0.7);
    \node[below,font=\footnotesize] at (5.5,-0.7) {$\mathbf{n}$};
    
  \end{scope}
  
  \node[above,font=\small\bfseries] at (4,-2.7) {Top View: Boundary Supports Distribution};

\end{tikzpicture}
  \end{center}

  \caption{Mechanical interpretation of the model
    problem~\eqref{eq:model1:A}--\eqref{eq:model1:C}.
    \textit{Top:} Side view of a thin elastic plate supported by
    roller guides (constraining the boundary slope to zero,
    $\partial_{\bn} u = 0$) and connected to elastic springs that
    provide a restoring force proportional to the vertical
    deflection.
    The balance between the internal shear force and the spring
    reaction yields the boundary condition
    $\partial_{\bn}(\Delta u) = -\lambda u$.
    \textit{Bottom:} Top view showing the distribution of roller
    supports and elastic springs along the entire boundary
    $\partial\Omega$.}
  \label{fig:physical:model}
\end{figure}


\subsection{Background}
Several numerical techniques apply to the approximation of the Steklov
eigenvalue problem. Among them, we emphasize the finite element
approximation for the fourth-order Steklov eigenvalue problem
\cite{bi2011conforming} and its virtual element approximation
\cite{monzon2019virtual}; the a priori, and a posteriori error
analysis for the non-self-adjoint Steklov eigenvalue problem
\cite{wang2022priori}; the non-conforming Crouzeix Raviart type
approximation~\cite{yang2020non}. Two grid discretization and
multigrid correction schemes are discussed in~\cite{zhang2020multigrid}.

The Virtual Element Method (VEM) extends the finite element method to
polygonal meshes inheriting the Galerkin orthogonality property.
Also, the VEM is connected to the mimetic finite difference
method~\cite{BasicVEM2013}.
In the VEM, we do not know the basis functions explicitly.
Instead, we use their projection onto a suitable polynomial space in
the discrete bilinear form of the variational formulation.
Extending the discrete scheme from $2D$ to $3D$ is more manageable
than other existing techniques, such as in the non-conforming finite
element methods~\cite{MR3460621}.

Conforming and non-conforming VEM have extensively been studied for
fourth-order problems such as plate bending problems
\cite{MR3002804,adak2022convergence,zhao2018morley,MR3529253,antonietti2018fully},
Cahn--Hilliard equation \cite{dedner2021higher,antonietti2016c},
vibration and buckling eigenvalue problems
\cite{adak2023c0,MR3875292,mora2020virtual}, Stommel--Munk models
\cite{adak2023morley}, Navier--Stokes, and Oseen equations in stream
function formulation \cite{adak2021virtual,mora2022virtual},
Singularly perturbed equation \cite{zhang2020nonconforming},
Boussinesq equation \cite{da2023fully}. In particular, non-conforming
VEM was developed for elliptic problems in \cite{de2016nonconforming}
and later studied for different model problems
\cite{cangiani2016nonconforming,liu2019nonconforming,zhang2021divergence}.

In Ref.~\cite{antonietti2018fully}, the fully
non-conforming VEM was developed  to approximate plate problems. Morley-type
non-conforming VEM was developed in \cite{zhao2018morley}, and
$C^0$-non-conforming VEM in \cite{zhao2016nonconforming}. Furthermore,
the works in Reference~\cite{zhao2019divergence,MR4019985}, studied the Stokes
complex relationship between the Morley and divergence-free
Crouzeix-Raviart space.

\subsection{Novelty}

Our study introduces a novel fourth-order Steklov eigenvalue problem
given by a biharmonic Steklov operator with boundary conditions
analogous to those considered in \cite{adak2023conforming} and aims to
devise $H^2$-conforming and $C^0$-non-conforming VEMs for its
approximation.
For a long time, the finite element method (FEM) for plate bending
problems has been an important area of research because of the
difficulty in constructing $H^2$-conforming elements. We refer the
readers to \cite{MR3002804,chinosi2016} and the references therein.
The primary motivation for using the $C^0$ non-conforming scheme is
that this approach relaxes the global continuity conditions of the
approximating functional space. Further, the boundary conditions
imposed to define local virtual element spaces significantly influence
the global regularity of the discrete functions. Unlike the
$H^2$-conforming case, we have considered different boundary
conditions to impose the $C^0$ continuity of the discrete basis
functions. Moreover, we are interested in developing conforming and
non-conforming spaces and carrying out a unified analysis, which
requires weaker regularity of the functions. Additionally, we have
highlighted the difficulty of extending our work to the fully
non-conforming space \cite{zhao2018morley},
in Conclusion. The proposed analysis can be extended
to any order of accuracy, cf. Remark~\ref{General_order}, where the
analytical solution has higher regularity. To the best of our
knowledge, the formulation is new, and the convergence analysis covers
both the conforming and non-conforming approximations with minimum
regularity of the exact solution.

The model problem deals with boundary integration on the right-hand
side. Consequently, we must have a well-defined solution operator in
$L^2(\partial\Omega)$ for both conforming and non-conforming analysis.
We emphasize that such a definition is not straightforward for the
non-conforming VEM since its space has less regularity. The fully
non-conforming virtual element space developed in \cite{zhao2018morley}
can not be used as approximation space since the virtual element
functions has only a global $L^2(\Omega)$ regularity. The loading term
involves boundary integration for which we need at least $H^1(\Omega)$
regularity. This issue could be avoided by properly choosing discrete
space such as $C^0$-non-conforming virtual element space.
Additionally, using the spectral theory for compact operators, we
prove optimal convergence order for the approximation of eigenvalues
and eigenfunctions on very general types of domains, such as
non-convex domains.
However, in the non-conforming VEM, extending the analysis to
non-convex domains is not immediate as in the conforming settings
because the discrete space is not a subspace of $H^2(\Omega)$.
The non-conforming nature of the method is reflected by an additional
error term due to the {\it variational crime}. By introducing an
enriching operator that maps non-conforming space to conforming space,
we derive the convergence analysis with optimal order in different
norms such as discrete $H^2$ seminorm and $H^1$, $L^2$ norms. We can
also derive the errors in different norms in the conforming
method.

\subsection{Outline}

The article is organized as follows.
In Section~\ref{SEC:STAT}, we present the fourth-order Steklov
eigenvalue problem and the associated variational formulations. We
define the source problems and solution operators relevant to the weak
formulations. Additionally, we study the regularity of the solution
related to the weak formulation.
In Section~\ref{Discrete:sp}, we derive the discrete formulation of
the problem and we introduce the lowest order $H^2$-conforming and
$C^0$ non-conforming virtual element spaces, the \emph{enriching} and
the discrete solution operators.
Then, in Section~\ref{Analysis:Source}, we carry out the convergence
analysis for the source problems with conforming method in different
norms such as $H^2$, $H^1$, and $L^2$ norms, and we extend the study
to the non-conforming method through the enriching technique.
In Section~\ref{SPC:CONVERGENCE}, we carry out the convergence analysis
for the eigenvalue problem.
In Section~\ref{Numer:Exp}, we investigate the behavior of the
proposed schemes through a set of numerical experiments that confirm
the theoretical expectations.
In Section~\ref{conclusion}, we offer our final remarks and discuss
possible future developments of our work.

\setcounter{equation}{0}
\section{The model eigenvalue problem}
\label{SEC:STAT}
Let $\O\subset\mathbb{R}^2$ be an open, bounded, polygonal domain with
boundary $\partial\Omega$.
We are interested in the following problem: find $\lambda\in\mathbb{R}$ such that there exists $u\neq 0$ such that the
following system holds
\begin{align}
  \Delta^2 u                &= 0 \phantom{-\lambda u}\quad\text{in~}\Omega,        \label{eq:model1:A}\\
  \bn\cdot\nabla{u}         &= 0 \phantom{-\lambda u}\quad\text{on~}\partial\Omega,\label{eq:model1:B}\\
  \bn\cdot\nabla(\Delta u)  &= -\lambda u\phantom{0} \quad\text{on~}\partial\Omega, \label{eq:model1:C}
\end{align}
where $\bn$ is the outward unit vector orthogonal
to $\partial\Omega$.

\subsection{Preliminaries and weak formulation of the problem }
\subsection{Notation}
Throughout this paper, we follow the convention of Sobolev spaces of
Ref.~\cite{adams2003sobolev}.
Accordingly, we denote the space of square-integrable functions
defined on any open, bounded, connected domain $\CD \subset
\mathbb{R}^2$ with boundary $\partial\CD$ by $L^2(\CD)$, and the
Hilbert space of functions in $L^2(\CD)$ with all partial derivatives
up to a positive integer $m$ also in $L^2(\CD)$ by $H^m(\CD)$.
We endow $H^m(\CD)$ with a norm and a seminorm that we denote as $ \|
\cdot \|_{m,\CD}$ and $\vert \,\cdot\,\vert_{m,\CD}$, respectively.
We denote the space of polynomials of degree up to a given integer
$l\geq0$ and defined on $\CD$ by $\mathbb{P}_{l}(\CD)$, and, for
$l=-1$, we conventionally assume that
$\mathbb{P}_{-1}(\CD)=\big\{0\big\}$.
We denote the unit vector that is orthogonal to $\partial\CD$ and
pointing out of $\CD$ by $\mathbf{n}_{\CD}=(n_1,n_2)^T$, and the unit
vector that is tangent to $\partial\CD$ by
$\mathbf{t}_{\CD}=(t_1,t_2)^T$.
The mutual orientation of vectors $\mathbf{n}_{\CD}$ and
$\mathbf{t}_{\CD}$ is such that $t_1=-n_2$ and $t_2=n_1$ when we
evaluate these vectors on the same edge of $\partial\CD$.
To avoid ambiguity, we express $\partial_{\bn}\phi=\bn\cdot\nabla\phi$
and $\partial_{\bt}\phi=\bt\cdot\nabla\phi$ to denote the normal and
tangential derivatives along an edge with unit normal and tangential
vectors $\bn$ and $\bt$, respectively.
The \emph{Hessian matrix of $\phi$} is given by
$D^2 \phi:=(\partial_{ij}\phi)_{1\leq i,j \leq2}$; the
\emph{gradient of $u$} is defined as the vector $\nabla u=(\partial_j
u)_{j,=1,2}$.
We denote the inner product of any pair of tensors
$\boldsymbol{\tau}=(\tau_{ij})_{i,j=1,2}$ and
$\boldsymbol{\sigma}=(\sigma_{ij})_{i,j=1,2}$ by
$\boldsymbol{\tau}:\boldsymbol{\sigma}=\sum_{i,j=1}^{2}\tau_{ij}\sigma_{ij}$.
Finally, we bring forth that the letter $C$, possibly with a subindex,
a superindex, or a modifier on top such as ``$\widetilde{C}$'' or
``$\widehat{C}$'', or ``$C_{\Omega}$'' denotes a positive constant
whose value can be different at any instance and that is independent
of $h$ but may depend on the other parameters of the problem and the
discretization that we will introduce in the next sections.
We also use the abbreviation "Ref." to indicate the word "Reference".

\subsection{The continuous spectral variational formulation} 

We present the variational formulation for
\eqref{eq:model1:A}-\eqref{eq:model1:C}.
We multiply \eqref{eq:model1:A} by suitable test functions, we
integrate by parts and use the boundary conditions \eqref{eq:model1:B}
and \eqref{eq:model1:C}.
The variational formulation reads:
\begin{problem}
  \label{pr:eigen1}
  Find $(u, \lambda) \in V \times \mathbb{R}$ with $u\neq 0$, such that
  \begin{align*}
    a(u,v)=\lambda b(u,v) \quad \forall v \in V,
  \end{align*}
  where
  \begin{align*}
    V = \Big\{
    v\in H^2(\O)~\textrm{such~that}~
    \partial_{\bn}v=0~\text{on}~\partial\Omega
    \Big\},
  \end{align*}
  and the bilinear forms
  $a:V\times V\rightarrow \mathbb{R}$ and $b:V\times
  V\rightarrow\mathbb{R}$ are defined by
  \begin{align*}
    a(u,v) := \int_{\Omega} D^2 u : D^2 v
    \quad\textrm{and}\quad
    b(u,v):= \int_{\partial\Omega} u v
  \end{align*}
  for all $u$ and $v\in V$.
\end{problem}

We reformulate Problem~\ref{pr:eigen1} as follows by introducing the coercive bilinear 
form $\widehat{a}(\cdot,\cdot)$.
\begin{problem}
  \label{pr:eigen1:mod}
  Find $(u, \lambda) \in V \times \mathbb{R}$ with $u\neq 0$, such that
  \begin{align*}
    \widehat{a}(u,v)=(\lambda+1) b(u,v) \quad \forall v \in V,
  \end{align*}
  where the bilinear form $\widehat{a}:V\times
  V\rightarrow \mathbb{R}$ reads
  \begin{align*}
    \widehat{a}(u,v):=\int_{\Omega} D^2 u : D^2 v + \int_{\partial \Omega} uv
  \end{align*}
  for all $u$ and $v\in V$.
\end{problem}
We observe that $\widehat{a}(\cdot,\cdot)$ and $b(\cdot,\cdot)$ are continuous
bilinear forms.
The bilinear form $\widehat{a}(\cdot,\cdot)$ is coercive in $V$, i.e.,
\begin{equation}
    \widehat{a}(u,u)\geq C \| u \|_{2,\Omega}^2, \quad \forall u \in V.
    \label{eq:coercivity}
\end{equation}
The proof of this property can be found in the final appendix.
Then, we consider the following source problem associated with Problem 2. 
\begin{problem}\label{weak:source}
  Given $f \in H^1(\Omega)$, find $\widetilde{u}\in V\subset
  H^2(\Omega)$ such that
  \begin{equation}
    \widehat{a}(\widetilde{u},v) =  b(f,v) \quad \forall v \in V.
  \end{equation}
\end{problem}
We note that $f|_{\partial \Omega} \in H^{1/2}(\partial \Omega)$ since
$f\in H^1(\Omega)$; the bilinear form $\widehat{a}(\cdot,\cdot)$ is
bounded and coercive.
An application of Lax-Milgram lemma implies the well-posedness of
Problem~\ref{weak:source}.
Therefore, we can introduce the \emph{solution operator} 
  $T:H^1(\O)\rightarrow H^1(\O)$ such
that $Tf=\widetilde{u}$, where $\widetilde{u}$ is the unique solution
to Problem~\ref{weak:source}.
We also observe that due to the symmetry of $\widehat{a}(\cdot,\cdot)$
and $b(\cdot,\cdot)$, the operator $T$ is self-adjoint, so that for any
$f,\,v\in V$ we state that
\begin{align*}
  \widehat{a}(T f, v)=b(f, v)=b(v, f)=\widehat{a}(Tv,f)=\widehat{a}(f,T v).
\end{align*}
Since $Tf=\widetilde{u}\in H^2(\O)\hookrightarrow H^1(\O)$, 
we deduce that $T$ is compact.
\noindent
Note that $(\lambda,\widetilde{u})\in\mathbb{R}\times V$,
$\widetilde{u}\neq 0$, is an eigenpair of Problem~\ref{pr:eigen1:mod}
if and only if $T \widetilde{u} = \mu \widetilde{u}$ with
$\mu:=\frac{1}{\lambda+1}\neq0$.
The spectral characterization of $T$ is as follows.
\begin{theorem}
  The spectrum of $T$, denoted by $\sp(T)$, decomposes as
  $\sp(T)=\{0\}\cup\{\mu_n\,:\, n\in\mathbb{N}\}$, where
  $\{\mu_n\}_{n\in\mathbb{N}}$ is a sequence of strictly positive
  eigenvalues with finite multiplicity tending to zero.
\end{theorem}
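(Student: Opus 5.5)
The plan is to apply the Riesz--Schauder theory for compact self-adjoint operators. Compactness of $T$ is already available: $T$ maps $H^1(\O)$ boundedly into $V\subset H^2(\O)$, and Rellich's theorem makes $H^2(\O)\hookrightarrow H^1(\O)$ compact. Self-adjointness is not with respect to the $H^1$ inner product, so I would first fix a Hilbert setting in which $T$ is genuinely symmetric. The cleanest choice is to regard $T$ as acting on $V$ equipped with the inner product $\widehat{a}(\cdot,\cdot)$. By the coercivity estimate \eqref{eq:coercivity} and the continuity of $\widehat{a}$, this inner product gives a norm equivalent to $\|\cdot\|_{2,\O}$. Since $T$ maps $H^1(\O)$ into $V$, its spectrum on $H^1(\O)$ and on $V$ agree away from zero, and $0$ belongs to both. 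Indeed, any eigenfunction for a nonzero eigenvalue automatically lies in $V$, and on $H^1$ the resolvent equation $(T-z)u=g$ with $z\neq0$ reduces to one in $V$ plus the explicit term $-g/z$.

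On $(V,\widehat{a})$ the relation $\widehat{a}(Tf,v)=b(f,v)=\widehat{a}(f,Tv)$ shows that $T$ is self-adjoint, and it remains compact there by the same embedding argument, using the trace $H^1\to L^2(\partial\O)$. The spectral theorem for compact self-adjoint operators then says the nonzero spectrum consists of real eigenvalues of finite multiplicity, accumulating only at $0$.

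Positivity is the next step. Taking $v=Tf$ gives $\widehat{a}(Tf,f)=b(f,f)=\|f\|^2_{0,\partial\O}\ge0$. Hence if $Tf=\mu f$ with $\mu\neq0$, $f\neq0$, then $\mu\,\widehat{a}(f,f)=\|f\|_{0,\partial\O}^2$. Moreover $f|_{\partial\O}\neq0$, since otherwise $Tf=0$ would give $f=0$. It follows that $\mu>0$.

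It remains to show that $0\in\sp(T)$ and that there are infinitely many $\mu_n$.
\begin{itemize}
\item For $0$: the kernel of $T$ is $\{f\in V: f|_{\partial\O}=0\}$, which contains $C_c^\infty(\O)$. So $0$ is an eigenvalue of infinite multiplicity, and in any case $0$ lies in the spectrum of a compact operator in infinite dimensions.
\item For infinitely many $\mu_n$, which I expect to be the only mildly delicate point: $\ker T^\perp$ (in $\widehat{a}$) is the closure of the range of $T$. It is infinite-dimensional because the traces of $V$ on $\partial\O$ form an infinite-dimensional subspace of $L^2(\partial\O)$; for instance, smooth functions with vanishing normal derivative near $\partial\O$ and arbitrary smooth tangential profiles along each edge, with care at corners. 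Then $T$ restricted to $\ker T^\perp$ is injective and compact on an infinite-dimensional space, so it has countably infinitely many nonzero eigenvalues, and these tend to zero.
\end{itemize}
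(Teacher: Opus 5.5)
Your proposal is correct and follows the same route the paper takes. The paper gives no separate proof; it relies on the compactness of $T$, the symmetry $\widehat{a}(Tf,v)=b(f,v)=\widehat{a}(f,Tv)$, and the subsequent remark that $0$ is an eigenvalue whose eigenspace is the kernel of $b$, while positive definiteness of $\widehat{a}$ gives strictly positive eigenvalues. Your version supplies the details it leaves out: working on $(V,\widehat{a})$, transferring the nonzero spectrum back to $H^1(\Omega)$, and showing that $(\ker T)^{\perp}$ is infinite-dimensional.

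One small slip: $\widehat{a}(Tf,f)=\|f\|_{0,\partial\Omega}^2$ comes from taking $v=f$, not $v=Tf$, in $\widehat{a}(Tf,v)=b(f,v)$. This choice is legitimate because an eigenfunction with $\mu\neq0$ lies in $V$.
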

\begin{remark}
    We observe that $0$ is an eigenvalue of the operator $T$ with corresponding eigenspace equal to the kernel of the bilinear form $b$, which contains nonzero functions having vanishing trace on the boundary of $\Omega$. Moreover we emphasize that, since the bilinear form $\widehat{a}(\cdot,\cdot)$ is symmetric and positive definite, Problem~\ref{pr:eigen1:mod} admits a divergent sequence of strictly positive eigenvalues.  
\end{remark}
%

%
It is known that, if $f$ belongs to $H^1(\Omega)$, then the solution $Tf$ to the source Problem \ref{weak:source} belongs to 
$H^{2+s}(\Omega)$ for a certain $s\in (1/2,1]$. Moreover, the 
following stability estimate holds
  \begin{equation*}
      \| Tf\|_{2+s,\Omega} \leq C \| f\|_{1,\Omega}.
  \end{equation*}
  The regularity index $s$ of the solution $u=Tf$ of the source Problem \ref{weak:source} depends on the maximum re-entrant corner of the 
  computational domain $\Omega$.

\begin{remark}
  In Ref.~\cite{monzon2019virtual}, the fourth-order Steklov
  eigenvalue problem is approximated by employing $H^2$-conforming
  virtual element method.
  In that case, the weak formulation contains a boundary integration of the normal derivative of functions $u$ and $v$ as in
  $\int_{\partial\Omega}\displaystyle\partial_{\bn}u\,\partial_{\bn}v$.
  The boundary integral is thus well-defined, since the trace of $H^2$-conforming virtual element functions belongs (at least) to $H^{3/2}(\partial\Omega)$.
  In our analysis, beside a conforming virtual element approximation, we also consider a $C^0$ non-conforming space of functions with only $H^1$ global regularity, which is not enough to deal with the boundary integral presented in the formulation studied in~\cite{monzon2019virtual}. On the other hand, the boundary integral in our formulation 
  ($\int_{\partial\Omega}\displaystyle u\,v$)  
   is well-defined for functions in the $C^0$ non-conforming space.
\end{remark}

\section{Virtual Element Discretizations}
\label{Discrete:sp}
This section reviews the construction of the $H^2$-conforming and
$C^0$ non-conforming virtual element methods on general polygonal
meshes for the numerical approximation of Problem~\ref{pr:eigen1:mod}.
Conforming and non-conforming VEMs for biharmonic equations were first
developed in
Refs.~\cite{adak2023conforming,adak2022convergence,MR3002804}, and
\cite{zhao2018morley,zhao2016nonconforming}, respectively.
Herein, we consider the $H^2$-conforming and $C^0$ non-conforming
virtual element formulation presented in
References~\cite{zhao2016nonconforming,MR3002804}.

\medskip
\noindent
The virtual element formulation of Problem~\ref{pr:eigen1:mod} reads 
\begin{problem}\label{Disc_VEP_BEP}
  Find $(\lambda_h,u_h^{\dag})\in \mathbb{R}\times \Vh^{\dag}$, with
  $u_h^{\dag} \neq 0$ such that
  \begin{align*}
    \widehat{a}_h(u_h^{\dag},v_h)=(\lambda_h+1)  b(u_h^{\dag},v_h)\qquad \forall v_h\in \Vh^{\dag},
  \end{align*}
  we use the superscript $\dag\in\{c,nc\}$ to refer to the conforming
  and non-conforming discretizations.
\end{problem}
In Problem~\ref{Disc_VEP_BEP}, $(\lambda_h,u_h^{\dag})$ is the
discrete approximation of $(\lambda,u)$, and
$\widehat{a}_h(\cdot,\cdot)$ is the virtual element approximation of
$\widehat{a}(\cdot,\cdot)$.
In the rest of this section, we formulate the local, global,
conforming and non-conforming virtual element spaces and discuss the
construction of the polynomial projection operators that we use to
define the discrete bilinear form $\widehat{a}_h(\cdot,\cdot)$.
In the forthcoming sections, we will use the ``dagger'' notation for
generic definitions, properties, of theoretical results that are valid
simultaneously for the conforming and the non-conforming case.

\subsection{Mesh notation and regularity }
We denote a generic, star-shaped polygon, which can be nonconvex, by
$\E$ and its elemental diameter and boundary by $h_\E$ and
$\partial\E$.
We denote the length of a generic edge $e$ by $h_e$.
Let $\{\CTh\}_{h>0}$ be a sequence of decompositions of $\O$ composed
by non-overlapping elements $\E$.
Each decomposition $\CTh$, also called \emph{a mesh}, is labeled by
the mesh size parameter $h:=\max_{\E\in\CTh}h_\E$.  We denote the set
of mesh edges in $\CTh$ by $\EE_h:= \EEi \cup \EEbdry$, where $\EEi$
and $\EEbdry$ are the interior and boundary edge sets. Analogously, we
denote the set of mesh vertices by $\VV_h:=\VVi\cup\VVbdry$, where
$\VVi$ and $\VVbdry$ are the interior and boundary vertex
sets. Besides, we use the symbols $\bn_e$ and $\bt_{e}$ to denote the
unit normal and tangential vectors to edge $e\in\EE_h$.
Moreover, we define the space of discontinuous, piecewise polynomials
of degree $\ell$ on $\CTh$ by:
\begin{align*}
  \mathbb{P}_{\ell}(\CTh):= \big\{q\in L^2(\O): q|_{\E}\in\mathbb{P}_{\ell}(\E)\quad\forall\E\in\CTh\big\}.
\end{align*}
For all $m \in \N \cup \{0\}$, we consider the $L^2(\E)$-projection
operator $\Pi_{\E}^{m}$ onto the polynomial space
$\mathbb{P}_{m}(\E)$.
This operator is such that for each $\phi\in L^2(\E)$,
$\Pi_{\E}^{m}\phi$ is the unique polynomial of degree $m$ on $\E$
satisfying
\begin{equation}
  \left(\phi-  \Pi_{\E}^{m} \phi,q \right)_{0,\E}=0 
  \qquad\forall q\in\mathbb{P}_{m}(\E).
\end{equation}

\noindent
Next, for any integer number $t>0$, we introduce the broken Sobolev
space
\begin{align*}
  H^{t}(\CTh):= \big\{ \phi \in L^2(\O): \phi|_{\E} \in H^{t}(\E)  \quad \forall \E \in \CTh\big\},
\end{align*}
equipped with the broken seminorm and broken norm
\begin{equation}
  \label{seminorm}
  |\phi|_{t,h}:=\Bigg(  \: \sum_{\E\in\CTh}|\phi|_{t,\E}^{2} \Bigg)^{1/2};\qquad\parallel \phi\parallel_{t,h}:=\Bigg(  \: \sum_{\E\in\CTh} \parallel \phi \parallel_{t,\E}^{2} \Bigg)^{1/2}.
\end{equation}
According to~\cite{huang2021medius}, we define the jump of a function
$\phi \in H^2(\CTh)$ by $\jump{\phi}:=\phi^{+}-\phi^{-}$ on each
internal edge $e\in\EEi$, where $\phi^{\pm}$ is the trace
$\phi|_{\E^{\pm}}$, $e\subseteq\partial\E^{+}\cap\partial \E^{-}$, and
$\jump{\phi}:=\phi|_e$ on each boundary edge $e \in \EEbdry$.
Using this definition, we introduce the subspace of $H^2(\CTh)$ with weaker continuity:
\begin{multline*}
  \HncT 
  :=\bigg\{\phi_h\in H^2(\CTh)\cap H^1(\O) \,:\phi_h\text{ continuous at internal vertices, }\:\\
  \partial_{\bn}\phi_h(\vb_i)=0
  \quad\forall\vb_i\in\VVbdry,\quad\int_e\jump{\partial_{\bn_e}\phi_h}=0\quad\forall e\in\EEh\bigg\}.
\end{multline*}
For the theoretical analysis, we suppose that $\CTh$ satisfies the
following assumptions:
\begin{assumption}[Mesh Regularity]
  \label{Mesh:regularity}
  There exists a positive real number $\rho$ independent of $h$ such
  that for every $K\in\CTh$, it holds that
  \begin{itemize}
  \item[$\mathbf{(A1)}$]\textbf{star-shapedness}: $K$ is star-shaped
    with respect to an internal ball with a radius bigger than $\rho
    h_K$;
    
    \medskip
  \item[$\mathbf{(A2)}$]\textbf{uniform scaling}: the edge length
    $h_e$ for all $e\in\EE_h$ is bounded from below by $\rho h_K$,
    i.e., $h_e\geq\rho h_K$.
  \end{itemize}
\end{assumption}

\subsection{Local and global $H^2$-conforming virtual element spaces}
\label{subsec:conforming-VEM}
On each element $K \in \CTh$, we define the finite-dimensional space
\begin{multline*}
    \Vt_h^c(K) := \Big\{
    w_h\in H^2(K)\,:
    \Delta^2 w_h\in\mathbb{P}_2(K),\,
    w_h|_{\partial K}\in C^0(\partial K),\,
    w_h|_e\in\mathbb{P}_3(e)\,\,\forall e\in\partial K,\\[0.25em]
    \nabla w_h|_{\partial K}\in [C^0(\partial K)]^2, 
    \partial_{\bn_{e}} w_h|_e\in\mathbb{P}_1(e)\,\,\forall e\in\partial K 
    \Big\}.
\end{multline*}
The local space $\Vt_h^c(K)$ is associated with the
following set of linear functionals. For all $w_h \in
\Vt_h^c(K)$, we consider
\begin{itemize}
\item \DOFS{(H_1^c)}: pointwise evaluation $w_h(\vb_i)$ at the $N^K$
  vertices $\vb_i$ of the polygonal cell $\E$;
\item \DOFS{(H_2^c)}: pointwise evaluation
  $h_{\vb_i}\partial_xw_h(\vb_i)$, and $h_{\vb_i}\partial_yw_h(\vb_i)$ at
  the $N^K$ vertices $\vb_i$ of the polygonal cell $\E$,
\end{itemize}
where $h_{\vb_i}$ is a characteristic length associated with the
vertex $\vb_i$, for example, the maximum diameter of the elements
sharing such vertex.
The modified (or ``enhanced'') definition of the local space requires
the biharmonic projection operator
$\Pi^{\Delta,c}_K:\Vt_h^c(K) \rightarrow \mathbb{P}_2(K)$,
which, for all $w_h\in\Vt_h^c(K)$, is given by:
\begin{equation*}
  \begin{array}{rl}
    \displaystyle
    \int_K D^2(w_h-\Pi^{\Delta,c}_K w_h): D^2 q=0 & \forall q\in\mathbb{P}_2(K), \\[1.em]
    \displaystyle
    \int_{\partial K} (w_h-\Pi^{\Delta,c}_K w_h)q=0 & \forall q \in \mathbb{P}_1(\partial K).
  \end{array}
\end{equation*}
The operator $\Pi^{\Delta,c}_K$ is computable from the set of values
provided by the functionals \DOFS{(H_1^c)}-\DOFS{(H_2^c)}.
Using the biharmonic projection operator, we define the local virtual
elemental space
\begin{equation}
  V_h^c(K):= \bigg\{ w_h\in\Vt_h(K):\int_K(w_h-\Pi^{\Delta,c}_K w_h)q=0 \quad \forall q\in\mathbb{P}_2(K) \bigg\}.
\end{equation}
Since the space $ V_h^c(K)$ is a subspace of $\Vt_h(K)$, the
projection operator $\Pi^{\Delta,c}_K$ is well-defined and computable
using the values of the linear functional \DOFS{(H_1^c)} and
\DOFS{(H_2^c)}.
Moreover, we can choose the values of such linear operators as the
degrees of freedom that uniquely characterize the functions of
$V_h^c(K)$; see \cite[Lemma~2.3]{antonietti2016c} for further details.

Then, we introduce the global virtual element space by gluing the
local spaces $V_h^c(K)$ together in a conforming way:
\begin{equation}
  V_h^c:= \Big\{ w_h \in V : w_h|_{K} \in V_h^c(K)\,\,\forall K\in\CTh \Big\}.
  \label{Glb_sp}
\end{equation}
Such a definition incorporates the condition that the normal
derivative of the (globally defined) virtual element functions $w_h$
vanishes on the domain boundary, i.e., $\nabla w_h\cdot\mathbf{n}=0$
on $\partial\Omega$.
We obtain the degrees of freedom of the global virtual element space
$V_h^c$ by collecting all the local degrees of freedom.
\subsection{Local and global $C^0$ non-conforming virtual spaces}
Here, we describe the formulation of non-conforming virtual element
space for the model problem's discretization.
First, we define the local auxiliary space for each polygonal element
$K\in\CTh$ as
\begin{align*}
\Vt_h^{nc}(K)
:= \Big\{\phi_h\in \HdoK : \Delta^2\phi_h\in\mathbb{P}_{2}(\E),  \: \:\phi_h|_e\in\mathbb{P}_{2}(e),\: \: \Delta \phi_h|_e\in\mathbb{P}_{0}(e)\,\,\forall e\subseteq\partial\E\Big\},
\end{align*}
where $\Delta^2$ and $\Delta$ are the \emph{biharmonic} and the
\emph{Laplace} differential operators.
Then, we introduce three sets of bounded, linear functionals acting on
the virtual element functions $\phi_h\in\Vt_h^{nc}(K)$:
\begin{itemize}
\item $(\DOFS{F_v})_{v\in \VV_h^K}$: the pointwise evaluation
  $\phi_h(\vb_i)$ for every vertex $\vb_i$ of the polygon $\E$;
\item $(\DOFS{F_e^1})_{e \in \EE_h^K}$: the moments
  \begin{align*}
    \frac{1}{h_e}\int_e \phi_h  
    \qquad\forall\,\text{edge}\,\,e\in\EE_h^K;
  \end{align*}
\item $(\DOFS{F_e^2})_{e \in \EE_h^K}$: the moments 
  $$
  \quad  \int_e \partial_{\bn_{e}}\phi_h  
  \qquad \forall \, \text{edge} \, \, e \in \EE_h^K,
  $$	
\end{itemize}
where $\EE_h^{\E}$ is the set of edges forming the elemental boundary $\partial\E$. 
For every polygonal element $\E$, we define the elliptic projection
operator $\Pi^{\Delta,nc}_K:\Vt_h^{nc}(K)\longrightarrow
\mathbb{P}_2(\E)\subseteq\Vt_h^{nc}(K)$ as the solution to
the local variational problem:
\begin{equation*}
  \begin{array}{rl}
    \displaystyle
    \int_K D^2(w_h-\Pi^{\Delta,nc}_K w_h): D^2 q=0 & \forall q\in\mathbb{P}_2(K), \\[1.em]
    \displaystyle
    \int_{\partial K} (w_h-\Pi^{\Delta,nc}_K w_h)q=0 & \forall q \in \mathbb{P}_1(\partial K).
  \end{array}
\end{equation*}
The projection $\Pi^{\Delta,nc}_K \phi_h$ is computable from
the values of the functionals $(\DOFS{F_v})_{v\in \VV_h^K}$,
$(\DOFS{F_e^1})_{e \in \EE_h^K}$, and $(\DOFS{F_e^2})_{e \in\EE_h^K}$,
of $\phi_h \in \VtK$, cf.~\cite{MR3529253} for the proof.
\begin{lemma}\label{lemma-ProyDelta}
  The projection operator $\Pi^{\Delta,nc}_K :\Vt_h^{nc}(K)
  \longrightarrow \mathbb{P}_{2}(\E)$ is computable on the virtual
  element space $\Vt_h^{nc}(K)$, using only the information provided
  by the linear functionals $(\DOFS{F_v})_{v\in\VV_h^K}$,
  $(\DOFS{F_e^1})_{e\in\EE_h^K}$, and $(\DOFS{F_e^2})_{e\in\EE_h^K}$.
\end{lemma}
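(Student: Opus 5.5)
The plan is to show that both sets of conditions defining $\Pi^{\Delta,nc}_K w_h$ can be written using only the values of $(\DOFS{F_v})$, $(\DOFS{F_e^1})$, $(\DOFS{F_e^2})$. The second condition, $\int_{\partial K}(w_h-\Pi^{\Delta,nc}_K w_h)q=0$ for all $q\in\mathbb{P}_1(\partial K)$, is the easy one. On each edge $e$ the trace $w_h|_e$ lies in $\mathbb{P}_2(e)$, so it is uniquely determined by its two endpoint values $(\DOFS{F_v})$ and its mean $(\DOFS{F_e^1})$. Hence $w_h|_{\partial K}$ is known explicitly, and $\int_{\partial K} w_h q$ is computable for any polynomial $q$.

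For the first condition, we integrate by parts twice on $K$, for $q\in\mathbb{P}_2(K)$:
\begin{equation*}
\int_K D^2 w_h : D^2 q
= \int_K w_h\,\Delta^2 q
- \sum_{e\subseteq\partial K}\int_e w_h\,\partial_{\bn_e}(\Delta q)
+ \sum_{e\subseteq\partial K}\int_e \Big(\partial_{\bn_e}\partial_{\bn_e} q\,\partial_{\bn_e} w_h + \partial_{\bn_e}\partial_{\bt_e} q\,\partial_{\bt_e} w_h\Big).
\end{equation*}
It is simpler, though, to stop after one integration by parts and use that $D^2q$ is constant. This gives
\begin{equation*}
\int_K D^2 w_h : D^2 q=\sum_{e\subseteq\partial K}\int_e (D^2 q\,\bn_e)\cdot\nabla w_h .
\end{equation*}
We then split $\nabla w_h=\partial_{\bn_e}w_h\,\bn_e+\partial_{\bt_e}w_h\,\bt_e$ on each edge.

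The tangential term $\int_e (\bt_e\cdot D^2q\,\bn_e)\,\partial_{\bt_e} w_h$ has a constant weight, so it equals that constant times the difference of the vertex values of $w_h$ on $e$; this is computable from $(\DOFS{F_v})$. The normal term $\int_e (\bn_e\cdot D^2q\,\bn_e)\,\partial_{\bn_e}w_h$ also has a constant weight, because $q$ has degree two. So it equals that constant times $\int_e\partial_{\bn_e}w_h$, which is exactly $(\DOFS{F_e^2})$. Therefore the right-hand side of the first condition is computable. The left-hand side involves $\Pi^{\Delta,nc}_K w_h\in\mathbb{P}_2(K)$ and is an explicit polynomial computation. (The identity $\int_e(\bn_e\cdot D^2q\,\bn_e)\partial_{\bn_e}q'=\ldots$ is not needed, since the constant weight already reduces the term to the moment.)

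It remains to check that these conditions define $\Pi^{\Delta,nc}_K w_h$ uniquely, so that we are computing a well-defined object. The first condition fixes $\Pi^{\Delta,nc}_K w_h$ up to the kernel of $D^2$ on $\mathbb{P}_2(K)$, namely $\mathbb{P}_1(K)$. The second condition fixes that $\mathbb{P}_1$ component, since $p\mapsto\int_{\partial K}p\,q$ for $q\in\mathbb{P}_1$ is a positive definite form on $\mathbb{P}_1(K)$ (a linear function vanishing in $L^2(\partial K)$ is zero). The resulting square linear system has only computable data, so $\Pi^{\Delta,nc}_K w_h$ is obtained from the degrees of freedom alone.

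The only delicate point is the justification of the integration by parts for $w_h\in H^2(K)$ on a polygon with corners. This is standard for $H^2$ functions on Lipschitz polygons, with edge traces understood in $L^2(e)$.
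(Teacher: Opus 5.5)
Your proof is correct and is essentially the standard integration-by-parts argument that the paper does not reproduce but defers to \cite{MR3529253}: since $D^2q$ is constant for $q\in\mathbb{P}_2(K)$, the stiffness term reduces to edge terms whose tangential parts are vertex differences (regrouped by vertex, these are the corner terms $\jump{\partial_{\bn\bt}q}(z)\,w_h(z)$ of the general-order formula) and whose normal parts are the moments $\DOFS{F_e^2}$, while the boundary condition uses the quadratic edge traces fixed by $\DOFS{F_v}$ and $\DOFS{F_e^1}$. Your unique-solvability check of the resulting square system, reading $\mathbb{P}_1(\partial K)$ as traces of $\mathbb{P}_1(K)$ as the paper evidently intends, is also correct and makes explicit a point the paper leaves implicit.
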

By employing the projection operator $\Pi^{\Delta,nc}_K$, we introduce
the enhanced, non-conforming virtual element space on each $\E\in\CTh$:
\begin{align}
  V_h^{nc}(K) := \bigg\{ v_h \in \Vt_h^{nc}(K) : 
  (v_h- \Pi^{\Delta,nc}_K v_h,q_2)_{0,\E} =0\quad\forall q_2\in \mathbb{P}_{2}(\E) \bigg\}.
  \label{local:space:nc}
\end{align}
We summarize some crucial properties that follow from the definition
of space $ V_h^{nc}(K)$.
\begin{itemize}
\item The sets of linear operators $(\DOFS{F_v})_{v\in\VV_h^K}$,
  $(\DOFS{F_e^1})_{e\in\EE_h^K}$ and $(\DOFS{F_e^2})_{e\in\EE_h^K}$
  constitutes a set of DoFs for $V_h^{nc}(K)$;
\item the operator $\Pi^{\Delta,nc}_K :V_h^{nc}(K)\longrightarrow
  \mathbb{P}_{2}(\E)$ is computable using the values provided by
  $(\DOFS{F_v})_{v\in \VV_h^K}$, $(\DOFS{F_e^1})_{e \in \EE_h^K}$ and
  $(\DOFS{F_e^2})_{e \in \EE_h^K}$;
\item $\mathbb{P}_2(\E) \subseteq V_h^{nc}(K)$;
\item For every $v_h \in V_h^{nc}(K)$, the $L^2$-orthogonal projector $\Pi^{0}_K$ onto
  $\mathbb{P}_2(\E)$ coincides with the projector
  $\Pi^{\Delta,nc}_K$, namely,
  \[
    \Pi^{0}_K v_h = \Pi^{\Delta,nc}_K v_h.
  \]
  This follows directly from the definition of the enhanced local space
  $V_h^{nc}(K)$ in \eqref{local:space:nc}.
\end{itemize}
Building on top of the local space definition, we introduce the global
non-conforming virtual space as
\begin{equation}
  \label{global:space:nc}
  V_h^{nc} = \Big\{
  v_h\in\HncT:v_h|_K\in V_h^{nc}(K)\quad\forall K\in\CTh
  \Big\}.
\end{equation}
Since $V_h^{nc}\not\subset V$, in the convergence analysis we will have to estimate also the 
\emph{non-consistency error}, as usual when dealing with non-conforming approximation.
We consider the broken semi-norm $|\cdot|_{2,h}$ defined in
\eqref{seminorm} for the convergence analysis.
%
\subsection{Construction of the discrete forms}
We discretize the forms and the load term presented in
Problem~\ref{pr:eigen1:mod} by using the projection operators defined
in the previous section.
Hereafter, we use the symbol $\dag$ to denote both labels "c" (for "conforming") and "nc" (for "non-conforming", i.e., $\dag\in\{c,nc\}$.
We outline that we can decompose the continuous form $a(\cdot,\cdot)$
as the sum of the elemental bilinear forms
$a^{\E}(\cdot,\cdot):H^2(K)\times H^2(K)\to\mathbb{R}$ such that 
\begin{align*}
  a(u,v)=\sum_{\E\in\CTh}a^\E(u,v) \; \;\forall u,v\in H^2(\Omega).
\end{align*}
We consider the following approximation of the bilinear form
$a^{\E}(\cdot,\cdot)$
\begin{equation}
  a_h^{\E}(\varphi_h,\phi_h)
  :=a^{\E}\big(\Pi^{\Delta,\dag}_K\varphi_h,\Pi^{\Delta,\dag}_K\phi_h\big)
  +\CS^{\E}\big((I-\Pi^{\Delta,\dag}_K)\varphi_h,(I-\Pi^{\Delta,\dag}_K)\phi_h\big),
  \label{eqn:bil:a:def}
\end{equation}
where $\Pi^{\Delta,\dag}_K$ is the projection operator associated with
the biharmonic bilinear form, i.e., $a^K(\cdot,\cdot)$.
The second term of the right-hand side of~\eqref{eqn:bil:a:def}, i.e.,
$\CS(\cdot,\cdot)$, can be any positive symmetric bilinear form that
satisfies
\begin{equation}
  C_{\ast} a^K(\varphi_h,\varphi_h) \leq \CS^K(\varphi_h,\varphi_h) \leq C^{\ast} a^K(\varphi_h,\varphi_h) \qquad \forall \varphi_h \in V_h^{\dag}(K), \text{with} \; \Pi^{\Delta,\dag}_K \varphi_h=0,
  \label{stab:cnc}
\end{equation}
where $C_{\ast}$, and $C^{\ast}$ are positive constants independent of
$h$ and $K$.
In our numerical experiments, we will consider the ``dofi-dofi"
formula,
\begin{equation*}
  \CS^K(\varphi_h,\nu_h):=h_k^{-2} \sum_{i,j=1}^{N^{\text{dof}}_K} \text{dof}_i(\varphi_h)\;\text{dof}_j(\nu_h),
\end{equation*}
for all $\varphi_h,\nu_h \in V_h^{\dag}(K)$, where
$N^{\text{dof}}_K$ is the total number of degrees of freedom of
$V_h^{\dag}(K)$ and $\text{dof}_i(\varphi_h)$ and
$\text{dof}_i(\nu_h)$ denote the $i$-th degrees of freedom of
$\varphi_h$ and $\nu_h$.
Additionally, we prove the polynomial consistency and stability
properties of the discrete bilinear from $a_h(\varphi_h,\phi_h)$ in
the following lemma.
\begin{lemma}
  \label{stability:bilinear}
  For each $K \in \CTh$, the bilinear form $a_h^K(\cdot,\cdot)$
  satisfies the following polynomial consistency property
  \begin{equation*}
    a_h^K(\varphi_h,p)=a^K(\varphi_h,p) \quad \forall \varphi_h \in V_h^{\dag}(K), \; p \in \mathbb{P}_2(K).
  \end{equation*}
  Moreover, $a_h^K(\cdot,\cdot)$ satisfies the stability properties,
  i.e., there exist two positive constants $C_{\alpha}$, and
  $C^{\alpha}$ such that
  \begin{equation*}
    C_{\alpha} a^K(\varphi_h,\varphi_h) \leq a^K_h(\varphi_h,\varphi_h) \leq C^{\alpha} a^K(\varphi_h,\varphi_h) \quad \forall \varphi_h \in V_h^{\dag}(K).
  \end{equation*}
\end{lemma}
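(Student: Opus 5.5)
The plan is the standard VEM argument, built on the $a^K$-orthogonality of the projector $\Pi^{\Delta,\dag}_K$ and on the stabilization bounds \eqref{stab:cnc}. The one property of $\Pi^{\Delta,\dag}_K$ we need is that it reproduces quadratics: $\Pi^{\Delta,\dag}_K p = p$ for every $p\in\mathbb{P}_2(K)$. This is immediate because $p$ itself satisfies both defining conditions of the projector, and the projector is unique. We also need the orthogonality identity
\begin{equation*}
a^K\big((I-\Pi^{\Delta,\dag}_K)\varphi_h, q\big)=0 \qquad \forall q\in\mathbb{P}_2(K),
\end{equation*}
which is the first defining equation of $\Pi^{\Delta,\dag}_K$.

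\emph{Consistency.} Take $p\in\mathbb{P}_2(K)$. Then $(I-\Pi^{\Delta,\dag}_K)p=0$, so the stabilization term in \eqref{eqn:bil:a:def} vanishes. This leaves $a_h^K(\varphi_h,p)=a^K(\Pi^{\Delta,\dag}_K\varphi_h,p)$. By the orthogonality identity with $q=p$, this equals $a^K(\varphi_h,p)$.

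\emph{Stability.} Write $\varphi_h=\Pi\varphi_h+(I-\Pi)\varphi_h$, with $\Pi=\Pi^{\Delta,\dag}_K$. The function $\psi:=(I-\Pi)\varphi_h$ lies in $V_h^\dag(K)$, because $\mathbb{P}_2(K)\subseteq V_h^\dag(K)$. It also satisfies $\Pi\psi=0$, since $\Pi$ is idempotent. So \eqref{stab:cnc} applies to $\psi$. By orthogonality we have the Pythagorean identity
\begin{equation*}
a^K(\varphi_h,\varphi_h)=a^K(\Pi\varphi_h,\Pi\varphi_h)+a^K(\psi,\psi).
\end{equation*}
Hence
\begin{equation*}
a_h^K(\varphi_h,\varphi_h)=a^K(\Pi\varphi_h,\Pi\varphi_h)+\CS^K(\psi,\psi)
\end{equation*}
lies between $\min(1,C_\ast)\,a^K(\varphi_h,\varphi_h)$ and $\max(1,C^\ast)\,a^K(\varphi_h,\varphi_h)$. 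This gives the result with $C_\alpha=\min(1,C_\ast)$ and $C^\alpha=\max(1,C^\ast)$.

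\emph{Main obstacle.} The only subtle point is that $a^K$ is merely a seminorm on $V_h^\dag(K)$, with kernel $\mathbb{P}_1(K)$. We must check that the splitting above is legitimate and that \eqref{stab:cnc} is genuinely applied to elements with $\Pi\psi=0$, rather than to $\psi$ modulo linears. Idempotence of $\Pi$ settles this: applying $\Pi$ to $\psi=\varphi_h-\Pi\varphi_h$ gives $\Pi\varphi_h-\Pi\varphi_h=0$ exactly, so the boundary-moment condition is also exactly zero and \eqref{stab:cnc} holds as stated. The argument is identical for $\dag=c$ and $\dag=nc$.
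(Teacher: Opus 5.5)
Your argument is correct. It is the standard VEM proof: reproduction of $\mathbb{P}_2(K)$ and $a^K$-orthogonality of $\Pi^{\Delta,\dag}_K$ give consistency, and the splitting $a^K(\varphi_h,\varphi_h)=a^K(\Pi^{\Delta,\dag}_K\varphi_h,\Pi^{\Delta,\dag}_K\varphi_h)+a^K(\psi,\psi)$, combined with \eqref{stab:cnc} applied to $\psi=(I-\Pi^{\Delta,\dag}_K)\varphi_h$, gives stability with $C_\alpha=\min(1,C_\ast)$ and $C^\alpha=\max(1,C^\ast)$. The paper states this lemma without a written proof and implicitly relies on exactly this argument, so your write-up supplies the intended reasoning.
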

By considering the local contribution, we define the global discrete
form as
\begin{equation*}
  a_h(\varphi_h,\phi_h):=\sum_{\E \in \Omega_h} a_h^{\E}(\varphi_h,\phi_h), \qquad	 \forall \varphi_h, \phi_h \in V_h^{\dag}(K).
\end{equation*}
Finally, we note that we can discretize the bilinear form on the
right-hand side as follows
\begin{equation*}
  b(u_h,v_h):=\sum_{e \subset \partial \Omega } \int_e u_h v_h,
\end{equation*}
hence without employing the projection operators since the edge trace
of a virtual element function is a polynomial.


\medskip

\begin{lemma}[Consistency error estimate]
  \label{lemma:consistency:error}
  Let Assumption~\ref{Mesh:regularity} hold.
  Then, for any $w_h,\,v_h \in V_h^{\dag}$ with piecewise polynomial
  approximations $w_{\pi},\,v_{\pi} \in \mathbb{P}_2(\CTh)$, the
  following estimate holds:
  \begin{equation}
    \label{eq:consistency:error}
    \left|
      \sum_{K \in \CTh}
      \Big(
      a_h^K(w_h - w_{\pi},\, v_h - v_{\pi})
      - a^K(w_h - w_{\pi},\, v_h - v_{\pi})
      \Big)
    \right|
    \leq C
    \left(
      \sum_{K \in \CTh} |w_h - w_{\pi}|_{2,K}^2
    \right)^{1/2}
    \left(
      \sum_{K \in \CTh} |v_h - v_{\pi}|_{2,K}^2
    \right)^{1/2},
  \end{equation}
  where $C$ is a positive constant depending only on the stability
  constants $C_{\alpha}$ and $C^{\alpha}$ of
  Lemma~\ref{stability:bilinear}.
\end{lemma}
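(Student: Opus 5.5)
The estimate is local, so I will prove it element by element and then sum. On each $K\in\CTh$ I set $\phi:=w_h-w_\pi$ and $\psi:=v_h-v_\pi$. Since $\mathbb{P}_2(K)\subseteq V_h^{\dag}(K)$ (for the conforming space this holds because quadratics satisfy all the defining constraints and are fixed by $\Pi^{\Delta,c}_K$; for the non-conforming space it is listed among the properties above), both $\phi|_K$ and $\psi|_K$ belong to $V_h^{\dag}(K)$. Lemma~\ref{stability:bilinear} therefore applies to them.

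The first step is a Cauchy--Schwarz bound for $a_h^K$. The form $a_h^K$ is symmetric and, by the lower bound in Lemma~\ref{stability:bilinear}, positive semidefinite on $V_h^{\dag}(K)$. Hence
\begin{equation*}
|a_h^K(\phi,\psi)|\le a_h^K(\phi,\phi)^{1/2}a_h^K(\psi,\psi)^{1/2}\le C^{\alpha}\,a^K(\phi,\phi)^{1/2}a^K(\psi,\psi)^{1/2}=C^{\alpha}|\phi|_{2,K}|\psi|_{2,K},
\end{equation*}
where the last equality uses $a^K(\phi,\phi)=\int_K D^2\phi:D^2\phi=|\phi|_{2,K}^2$. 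The exact form satisfies $|a^K(\phi,\psi)|\le|\phi|_{2,K}|\psi|_{2,K}$ directly by Cauchy--Schwarz in $L^2(K)$. By the triangle inequality, the local difference is therefore bounded by $(1+C^{\alpha})|\phi|_{2,K}|\psi|_{2,K}$.

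The second step is to sum over $K$. I bound the absolute value of the sum by the sum of the absolute values, and then apply the discrete Cauchy--Schwarz inequality in $\ell^2$ over the elements. This gives \eqref{eq:consistency:error} with $C=1+C^{\alpha}$, which depends only on the stability constants.

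The step needing the most care is verifying that $a_h^K$ is applied to functions in $V_h^{\dag}(K)$, so that the upper stability bound is legitimate. This is exactly where the inclusion $\mathbb{P}_2(K)\subseteq V_h^{\dag}(K)$ is used. An alternative route would decompose $\phi$ through $\Pi^{\Delta,\dag}_K$ and use \eqref{stab:cnc} on $(I-\Pi^{\Delta,\dag}_K)\phi$ together with the $H^2$-seminorm stability of the projection, but the direct argument above avoids that.
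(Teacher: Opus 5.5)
Your proof is correct and follows the same route as the paper. You bound $|a_h^K(\phi,\psi)|$ by $C^{\alpha}|\phi|_{2,K}|\psi|_{2,K}$ via the upper stability bound and Cauchy--Schwarz, bound $|a^K(\phi,\psi)|$ directly, get the constant $1+C^{\alpha}$ from the triangle inequality, and finish with the discrete Cauchy--Schwarz inequality over elements; beyond the paper, you also justify that $a_h^K$ is symmetric positive semidefinite and that $\mathbb{P}_2(K)\subseteq V_h^{\dag}(K)$, so $(w_h-w_{\pi})|_K$ and $(v_h-v_{\pi})|_K$ are valid arguments for Lemma~\ref{stability:bilinear}.
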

\begin{proof}
  By the upper stability bound of $a_h^K(\cdot,\cdot)$
  (Lemma~\ref{stability:bilinear}) and the continuity of
  $a^K(\cdot,\cdot)$, together with the Cauchy--Schwarz inequality,
  we have, for each $K \in \CTh$,
  \begin{align*}
    \left| a_h^K(w_h - w_{\pi},\, v_h - v_{\pi}) \right|
    &\leq C^{\alpha}\, |w_h - w_{\pi}|_{2,K}\, |v_h - v_{\pi}|_{2,K},
    \\[0.5em]
    \left| a^K(w_h - w_{\pi},\, v_h - v_{\pi}) \right|
    &\leq |w_h - w_{\pi}|_{2,K}\, |v_h - v_{\pi}|_{2,K}.
  \end{align*}
  By the triangle inequality,
  \begin{align*}
    \left|
      a_h^K(w_h - w_{\pi},\, v_h - v_{\pi})
      - a^K(w_h - w_{\pi},\, v_h - v_{\pi})
    \right|
    &\leq (C^{\alpha} + 1)\,
      |w_h - w_{\pi}|_{2,K}\, |v_h - v_{\pi}|_{2,K}.
  \end{align*}
  Summing over all elements $K \in \CTh$ and applying the
  Cauchy--Schwarz inequality for sums
  yields~\eqref{eq:consistency:error}.
\end{proof}


\subsection{The discrete source problem}
Consider the discrete bilinear form
$\widehat{a}_h(\widetilde{u}_h,v_h)$ given by
\begin{equation*}
\widehat{a}_h(\widetilde{u}_h,v_h)=a_h(\widetilde{u}_h,v_h)+b(\widetilde{u}_h,v_h).
\end{equation*}
Applying the stability property of $a_h(\cdot,\cdot)$ from
Lemma~\ref{stability:bilinear} and the coercivity of $a(\cdot,\cdot)$,
we readily obtain the coercivity of
$\widehat{a}_h(\cdot,\cdot)$ on $V_h^c\times V_h^c$ with respect to the $\parallel \cdot \parallel_{2,h}$
norm:
\begin{equation*}
  \widehat{a}_h(\widetilde{u}_h,\widetilde{u}_h) =a_h(\widetilde{u}_h,\widetilde{u}_h)+b(\widetilde{u}_h,\widetilde{u}_h)
  \geq C |\widetilde{u}_h|_{2,h}^2+\int_{\partial \Omega} |\widetilde{u}_h|^2 \geq C \parallel \widetilde{u}_h\parallel_{2,h}^2 \quad\forall u_h\in V_h^{c}.
\end{equation*}
Further, we define for $v_h \in V_h^{nc}$
\begin{equation*}
|||v_h|||_{2,h}=| v_h|_{2,h}+\int_{\partial \Omega} |v_h|^2.
\end{equation*}
 Following Lemma~5.1 in \cite{zhao2018morley}, we can prove that $\|v_h\|_{0,h}+ |v_h|_{1,h}\leq C  ||v_h||_{2,h}$, where $C$ is a positive constant. Hence $|||\cdot|||_{2,h}$ is a norm on $V_h^{nc}$ and we can deduce the discrete coercivity as follows
\begin{equation*}
    \widehat{a}_h(\widetilde{u}_h,\widetilde{u}_h) \geq C  |||\widetilde{u}_h|||_{2,h} 
    \quad\forall u_h\in V_h^{nc}.
\end{equation*}
%
%
%
Then, we consider the discretization of Problem~\ref{weak:source}:
\begin{problem}
  \label{source:discrete}
  Given $f \in H^1(\Omega)$, Find $ \widetilde{u}_h^{\dag} \in  V_h^{\dag}$ such that
  \begin{equation}
    \label{discrete:source:issue}
    \widehat{a}_h(\widetilde{u}_h^{\dag},v_h)= b(f, v_h) \quad \forall v_h \in V_h^{\dag}.
  \end{equation}
\end{problem}
By employing the coercivity of $\widehat{a}_h(\cdot,\cdot)$, and the
Lax-Milgram theorem, we observe that the problem above is
well-posed.
In fact, given $f\in H^1(\Omega)$, we can prove the existence and
uniqueness of the approximate solution $\widetilde{u}_h^{\dag} \in
V_h^{\dag}$ to Problem~\ref{source:discrete}, which also satisfies the
stability inequality
\begin{align*}
  |\widetilde{u}_h^{\dag}|_{2,h} \leq C \| f \|_{H^1( \Omega)} \qquad \forall f\in H^1(\Omega),
\end{align*}
where the positive constant $C$ is independent of $h$.
Consequently, we can introduce the discrete solution operator as follows
\begin{align*}
  T_h: H^1(\O) &\to V^{\dag}_h \subset H^1(\O)\\[0.5em]
  f &\rightarrow T_h f:= \widetilde{u}_h^{\dag},
\end{align*}
where $\widetilde{u}_h^{\dag}$ is the unique solution of
problem~\eqref{discrete:source:issue}.
%
It is easy to check that
$(\lambda_h^{\dag}, u_h^{\dag}) \in \mathbb{R} \times V_h^{\dag} $ is
an eigenpair of Problem~\ref{Disc_VEP_BEP}
if and only if $(\mu_h^{\dag}, u_h^{\dag}) \in \mathbb{R} \times
V_h^{\dag}$ with $\mu_h^{\dag}:= \frac{1}{\lambda_h^{\dag}+1}$ is an
eigenpair of $T_h$. Moreover, from the definition of the discrete
bilinear forms $\widehat{a}_h(\cdot,\cdot)$, and $b(\cdot,\cdot)$, we
can prove that $T_h$ is self-adjoint. We conclude the section with the
spectral characterization of the operator $T_h$.
%
\begin{theorem}\label{teo:spectral_characterization}
  Let $m_h$ be the dimension of the discrete space $Z_h:=\{ u_h \in
  V_h^{\dag}: b(u_h,v_h)=0 \quad \forall v_h \in V_h^{\dag} \}$. Then
  the following result holds
  \begin{enumerate}[i)]
      \item The spectrum of $T_h$ consists of $N^\text{dof}-m_h$ real and strictly positive eigenvalues repeated according to their multiplicity. 
      \item The spectrum of $T_h$ contains the $0$ eigenvalue with multiplicity $m_h$.
  \end{enumerate}
\end{theorem}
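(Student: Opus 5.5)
The plan is to turn the spectral problem for $T_h$ into a generalized matrix eigenvalue problem on the finite-dimensional space $V_h^{\dag}$, with $N^{\text{dof}}:=\dim V_h^{\dag}$. Consider the restriction of $T_h$ to $V_h^{\dag}$. Any eigenfunction of $T_h$ with nonzero eigenvalue lies in the range of $T_h$, which is contained in $V_h^{\dag}$. So the nonzero spectrum is determined by this restriction.

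Fix a basis $\{\varphi_i\}$ of $V_h^{\dag}$ and define the matrices $A_{ij}=\widehat{a}_h(\varphi_j,\varphi_i)$ and $B_{ij}=b(\varphi_j,\varphi_i)$.
\begin{itemize}
\item $A$ is symmetric and positive definite, by the discrete coercivity of $\widehat{a}_h(\cdot,\cdot)$ established above (with respect to $\|\cdot\|_{2,h}$ in the conforming case and $|||\cdot|||_{2,h}$ in the non-conforming case).
\item $B$ is symmetric and positive semidefinite, since $b(v,v)=\|v\|_{0,\partial\Omega}^2\ge 0$.
\end{itemize}
For $f\in V_h^{\dag}$ with coefficient vector $\mathbf f$, the definition of $T_h$ gives that the coefficient vector of $T_hf$ is $A^{-1}B\mathbf f$. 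Therefore $T_h u_h=\mu u_h$ becomes $B\mathbf u=\mu A\mathbf u$.

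Next, symmetrize with the Cholesky factor $A=LL^T$. The problem then reduces to the symmetric positive semidefinite matrix $L^{-1}BL^{-T}$, which is similar to $A^{-1}B$. Hence $A^{-1}B$ is diagonalizable, and its eigenvalues are real and nonnegative, repeated according to multiplicity, $N^{\text{dof}}$ in total.

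The zero eigenspace of $A^{-1}B$ is $\ker B$. The key check is that $\ker B$ corresponds exactly to $Z_h$:
\begin{itemize}
\item $B\mathbf u=0$ means $b(u_h,v_h)=0$ for every basis function $v_h$, that is, $u_h\in Z_h$.
\item Conversely, $u_h\in Z_h$ gives $b(u_h,\varphi_i)=0$ for all $i$, so $B\mathbf u=0$.
\end{itemize}
Thus the algebraic multiplicity of $0$ equals its geometric multiplicity, by diagonalizability, and this multiplicity is $\dim\ker B=m_h$. This proves ii).

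The remaining $N^{\text{dof}}-m_h$ eigenvalues are nonzero and nonnegative, hence strictly positive. For positivity one can also argue directly: if $B\mathbf u=\mu A\mathbf u$ with $\mathbf u\notin\ker B$, then
\[
\mu=\frac{b(u_h,u_h)}{\widehat{a}_h(u_h,u_h)}>0 .
\]
This proves i), using the self-adjointness of $T_h$ noted before the statement.

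The main subtlety is that $T_h$ is defined on $H^1(\Omega)$ rather than only on $V_h^{\dag}$. On the complement of $V_h^{\dag}$, $T_h$ contributes only to the zero spectrum, so I would interpret the statement as a result about $T_h|_{V_h^{\dag}}$, as is standard. Within that setting, the one point needing care is that zero has no Jordan blocks, and the symmetric reduction above settles it.
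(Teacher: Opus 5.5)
Your argument is correct and is essentially the one the paper leaves implicit. The paper states this theorem without proof; its only supporting remarks are the self-adjointness of $T_h$ and the observation in Section~\ref{Numer:Exp} that $\mathbf{A}$ is symmetric positive definite and $\mathbf{B}$ is symmetric positive semidefinite, which is exactly your reduction to $B\mathbf{u}=\mu A\mathbf{u}$ followed by Cholesky symmetrization and the identification $\ker B\cong Z_h$. Your caveat about the domain of $T_h$ is also justified: on all of $H^1(\Omega)$ the kernel of $T_h$ contains every function with vanishing trace on $\partial\Omega$ and is therefore infinite-dimensional, so ii) holds only for $T_h|_{V_h^{\dag}}$.
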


\section{Convergence analysis of the source problem}
\label{Analysis:Source}
In this section, we introduce the theoretical framework that will be
used to carry out the convergence analysis. We begin by defining the
enriching operator, denoted as $E_h$, which maps elements from the
$C^0$ non-conforming virtual element space to the corresponding
$H^2$-conforming virtual element space. This concept is central to our study as we aim to develop a convergence analysis of the source problem  \eqref{discrete:source:issue} with a minimal regularity solution, as in the case of a non-convex domain. A concise overview of the enriching operator and its approximation properties
will be provided herein.


\subsection{Enriching Operator}
\label{enriching:operator}
In this section, we recall the definition of the \emph{enriching operator}
$E_h:V_h^{nc}\rightarrow V_h^{c}$ and its main property,
cf. References~\cite{huang2021medius} (VEM) and
\cite{brenner2013morley} (FEM).
We denote the \emph{patch of the vertex $\chi$}, i.e., the union of
all the elements of $\CTh$ sharing $\chi$ by $\mathcal{W}(\chi)$, and
the total number of elements of $\mathcal{W}(\chi)$ by 
$\mathcal{Z}(\chi)$.
We note that $E_h(\phi_h)$ belongs to the conforming space $V_h^c$ for
every $\phi_h\in V_h^{nc}$.
In view of the definitions of
$\DOFS{H_1^c}$ and $\DOFS{H_2^c}$, cf.
Subsection~\ref{subsec:conforming-VEM}, we uniquely characterize $E_h(\phi_h)$ by specifying:
\begin{itemize}
\item $\DOFS{H_1^c}(E_h(\phi_h))$: the function pointwise value
  $\frac{1}{\mathcal{Z}(\chi)} \sum_{\widehat{K} \in
  \mathcal{W}(\chi)} \Pi^0 \phi_h|_{\widehat{K}}(\chi)$;
\item $\DOFS{H_2^c}(E_h(\phi_h))$: the gradient pointwise values
  $\frac{1}{\mathcal{Z}(\chi)} \sum_{\widehat{K} \in
  \mathcal{W}(\chi)} h_{\chi} \nabla(\Pi^0
  \phi_h|_{\widehat{K}})(\chi)$.
\end{itemize}
In the definitions above, $h_{\chi}$ is a characteristic length
associated with the vertex $\chi$ and $\Pi^0$ is the piecewise 
$L^2$-orthogonal projector discussed in
Subsection~\ref{subsec:conforming-VEM}.
Then, we consider an arbitrary numbering of these degrees af freedom,
which we refer to as $\mbox{\textbf{dof}}_{i}(E_h(\phi_h))$ for
$i=1,2,\ldots,N_C^{\text{dof}}$, $N_C^{\text{dof}}$ being the
dimension of $V_h^c$.
Finally, writing $E_h(\phi_h)(x)$ in terms of the canonical basis functions $\psi_i$
corresponding to $\mbox{\textbf{dof}}_{i}(E_h(\phi_h))$ yields:
\begin{align*}
  E_h(\phi_h)(x)
  := \sum_{i=1}^{N_C^{\text{dof}}} \mbox{\textbf{dof}}_{i}\big(E_h(\phi_h)\big)\psi_i(x)\quad\forall\phi_h\in V_h^{nc}.
\end{align*}
We recall the approximation properties of $E_h$ in the following lemma (see,~\cite[Lemma 4.9-4.10]{adak2023morley}). 
\begin{lemma}
  \label{Eh:ADEh:ADI}~
  
  \begin{itemize}
  \item for all $ v_h \in \Vh^{nc}$, there exists $C>0$ independent of
    $h$, such that
    \begin{equation*}
      \| v_h -E_h v_h \|_{0,\O}+h| v_h -E_h v_h |_{1,\Omega}+ h^2|E_h v_h |_{2,\O} \leq Ch^2| v_h |_{2,h};
    \end{equation*}	
  \item let $w\in H^{2+s}(\Omega)$, with $s\in [0,1]$. Then, for all
    $v_h\in \Vh^{nc}$ we have
    \begin{equation*}
      a(w,v_h-E_hv_h)\leq Ch^s||w||_{2+s,\Omega}|v_h|_{2,h};
    \end{equation*}
  \end{itemize}
\end{lemma}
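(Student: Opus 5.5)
We prove the two bullets of Lemma~\ref{Eh:ADEh:ADI} separately, working element by element on a vertex-patch basis. The guiding fact is that $v_h - E_h v_h$ on an element $K$ is controlled by the degrees of freedom \DOFS{(H_1^c)}--\DOFS{(H_2^c)} of $E_h v_h - \Pi^0 v_h$. Throughout, $\Pi^0 v_h|_K=\Pi^{\Delta,nc}_K v_h$.

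\textbf{First bullet.} We split
\[
v_h-E_hv_h=(v_h-\Pi^0v_h)+(\Pi^0v_h-E_hv_h).
\]
The first term is local. Standard polynomial approximation on star-shaped elements (Assumption~\ref{Mesh:regularity}), combined with the $H^2$-stability of $\Pi^{\Delta,nc}_K$, gives
\[
\|v_h-\Pi^0v_h\|_{0,K}+h_K|v_h-\Pi^0v_h|_{1,K}\le Ch_K^2|v_h|_{2,K}.
\]

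For the second term, we use a norm equivalence on $V_h^c(K)$, obtained from stability of the dofi--dofi form and a scaling argument:
\[
\|w\|_{0,K}^2\simeq h_K^2\sum_i \mathrm{dof}_i(w)^2 .
\]
We apply it to $w=E_hv_h-\Pi^0v_h|_K$, noting that $\Pi^0v_h|_K\in\mathbb{P}_2(K)\subset V_h^c(K)$ locally. At each vertex $\chi$ of $K$, the degrees of freedom of $w$ are averages of differences $(\Pi^0v_h|_{\widehat K}-\Pi^0v_h|_{K})(\chi)$ and $h_\chi\nabla(\cdots)(\chi)$ over $\widehat K\in\mathcal W(\chi)$.

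Each such difference is bounded by a chain of neighbouring elements sharing an edge $e$. Across $e$, the jumps of $\Pi^0v_h$ are controlled as follows.
\begin{itemize}
\item We add and subtract $v_h$, which is continuous, has $\int_e\jump{\partial_{\bn_e}v_h}=0$, and is continuous at vertices.
\item We use inverse and trace inequalities on polynomials.
\item Since $v_h|_e\in\mathbb P_2(e)$ is single-valued on interior edges, the jumps of $\Pi^0 v_h$ reduce to traces of $v_h-\Pi^0v_h$.
\end{itemize}
The tangential derivative jump is then handled by vertex continuity, and the normal-derivative jump by its vanishing mean. This yields
\[
|\mathrm{dof}_i(w)|\le C h_K|v_h|_{2,\mathcal W(\chi)}.
\]
On boundary vertices the constraint $\partial_{\bn}v_h(\vb_i)=0$ ensures compatibility with $V\supset V_h^c$. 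Summing over the finitely overlapping patches gives the $L^2$ bound; the $H^1$ and $H^2$ bounds follow by inverse estimates on $V_h^c(K)$. Finally,
\[
|E_hv_h|_{2,\Omega}\le |E_hv_h-\Pi^0v_h|_{2,h}+|\Pi^0v_h|_{2,h}\le C|v_h|_{2,h}.
\]
We expect the \textbf{main obstacle} to be this jump/dof estimate: rigorously controlling the gradient degrees of freedom through only the weak continuity encoded in $\HncT$.

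\textbf{Second bullet.} We elementwise integrate by parts with the Hessian. Let $\chi=v_h-E_hv_h\in H^1(\Omega)\cap H^2(\CTh)$. For $s=0$ the claim is just Cauchy--Schwarz together with the first bullet. For $s>0$ we insert the local $\mathbb P_2$ projection $w_\pi$ of $w$:
\[
a(w,\chi)=\sum_K\int_K D^2(w-w_\pi):D^2\chi+\sum_K\int_K D^2w_\pi:D^2\chi .
\]
The first sum is bounded by $Ch^s\|w\|_{2+s}|v_h|_{2,h}$, using polynomial approximation in fractional Sobolev spaces and the first bullet.

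The second sum, by integration by parts, becomes edge terms
\[
\sum_e\int_e \partial_{\bn\bn}w_\pi\,\jump{\partial_{\bn}\chi}+\partial_{\bn\bt}w_\pi\,\jump{\partial_{\bt}\chi}.
\]
These are handled as follows.
\begin{itemize}
\item The tangential jumps of $\chi$ vanish since $\chi\in H^1$.
\item $\jump{\partial_{\bn}E_hv_h}=0$, and $\int_e\jump{\partial_\bn v_h}=0$, so the remaining normal jumps have zero mean on each edge.
\item We therefore subtract edge means of $\partial_{\bn\bn}w$, and use trace inequalities for $D^2w\in H^s$ with $s>1/2$ together with the first bullet.
\end{itemize}
This gives the $h^s$ factor. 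The minimal regularity $s>1/2$ ensures these traces are defined; otherwise one argues via a density/interpolation between $s=0$ and $s=1$.
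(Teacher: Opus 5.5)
The paper gives no argument of its own for this lemma; it only cites \cite[Lemmas~4.9--4.10]{adak2023morley}. Your first bullet is the standard patch and degree-of-freedom argument behind such results and is sound in outline, apart from the boundary issue noted below. The genuine gap is in the second bullet.

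\textbf{Interior edges.} Since $w_\pi$ is only piecewise quadratic,
\[
\sum_K\int_K D^2w_\pi:D^2\chi=\sum_K\int_{\partial K}(D^2w_\pi|_K\bn_K)\cdot\nabla\chi|_K .
\]
On an interior edge $e=\partial K^+\cap\partial K^-$ the contribution is
\[
\int_e \tfrac12\bigl(D^2w_\pi^{+}+D^2w_\pi^{-}\bigr)\bn_e\cdot\jump{\nabla\chi}\;+\;\int_e \jump{D^2w_\pi}\,\bn_e\cdot\tfrac12\bigl(\nabla\chi^{+}+\nabla\chi^{-}\bigr).
\]
The first integral is what you wrote, and it vanishes identically: a constant vector is integrated against $\jump{\nabla\chi}$, which has zero mean on $e$. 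The second integral, which you dropped, is the one that must be estimated. Your switch to ``edge means of $\partial_{\bn\bn}w$'' silently replaces $w_\pi$ by $w$ and never bounds the difference.

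The repair needs two bounds:
\begin{itemize}
\item $|\jump{D^2w_\pi}|\le C h_e^{s-1}|w|_{2+s,K^+\cup K^-}$. Obtain it by comparing both sides with $D^2w$ on $e$ via a trace inequality ($s>1/2$), or with a single quadratic on $K^+\cup K^-$.
\item $\|\nabla\chi\|_{0,e}\le C h_e^{1/2}|v_h|_{2,\omega_e}$ on a patch $\omega_e$ around $e$. This requires a \emph{local} form of your first bullet, $|\chi|_{1,K}+h_K|\chi|_{2,K}\le Ch_K|v_h|_{2,\omega_K}$, not only the global estimate.
\end{itemize}
Together these give $h^s$ per edge.

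\textbf{Boundary edges.} These are not handled. The paper's convention is $\jump{\phi}=\phi|_e$ on $e\subset\partial\Omega$, so the tangential ``jump'' there is $\partial_{\bt}\chi$ itself. In general $\int_e\partial_{\bt}\chi=\chi(\mathbf b)-\chi(\mathbf a)\neq 0$. The Steklov problem imposes no Dirichlet condition, and $E_hv_h(\vb)$ is an average of $\Pi^0v_h|_{\widehat K}(\vb)$, not $v_h(\vb)$.

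The leftover term is $\sum_{e\subset\partial\Omega}(\bt\cdot D^2w_\pi|_K\bn)\,(\chi(\mathbf b)-\chi(\mathbf a))$. Naively it is bounded only by $\|D^2w\|_0$ over the layer of boundary elements times $|v_h|_{2,h}$, which gives at best $h^{1/2}$, not $h^s$, when $s>1/2$. It becomes $O(h^s)$ once you use $w\in V$. Then $\partial_{\bn\bt}w=0$ on each side of $\partial\Omega$, so $\bt\cdot D^2w_\pi|_K\bn$ may be replaced by $\bt\cdot D^2(w_\pi-w)\bn$, of size $h^{s-1}$. Combined with $|\chi(\mathbf b)-\chi(\mathbf a)|\le Ch_e|v_h|_{2,\omega_e}$, this gives $h^s$. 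The hypothesis $w\in V$ holds in every use of the lemma in the paper ($w=\widetilde u$ or $\phi$), but it is not in the statement. For general $w\in H^{2+s}(\Omega)$ you would need a summation by parts along $\partial\Omega$ plus corner estimates.

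\textbf{Boundary degrees of freedom of $E_h$.} Your claim that $\int_e\partial_{\bn}\chi=0$ on boundary edges requires $\partial_{\bn}E_hv_h=0$. Averaging $\nabla\Pi^0v_h|_{\widehat K}$ at a boundary vertex does not produce a vanishing normal component, and appealing to $\partial_{\bn}v_h(\vb_i)=0$ does not change that. You must modify the boundary gradient degrees of freedom: drop the normal component, and set the full gradient to zero at corners of $\Omega$. Then bound this correction in the first bullet using $\int_e\partial_{\bn}v_h=0$ on boundary edges, with the two independent normals at corners.
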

The following results summarize some fundamental approximation properties
helpful in deriving the a priori error estimates. 
\begin{lemma}
For $\phi \in H^{2+s}(\Omega)$, and $\chi \in H^{2+s}(\Omega)$
with $s \in [0,1]$, it holds
\begin{equation*}
    a(\phi, \chi-\chi_I) \leq C h^{2s}~|\phi|_{2+s,\Omega} |\chi|_{2+s,\Omega},
\end{equation*}
where $\chi_I$ is the interpolant of $\chi$ in the virtual element space $V_h^{nc}$.
\end{lemma}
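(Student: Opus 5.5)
Since $\chi-\chi_I\notin V$ in general, I cannot use any Galerkin orthogonality. Instead I will read $a(\phi,\chi-\chi_I)$ as the broken form $\sum_{\E\in\CTh}\int_\E D^2\phi:D^2(\chi-\chi_I)$ and integrate by parts element by element, moving all derivatives onto $\phi$ except those on boundary terms. The key ingredient is that the interpolant $\chi_I\in V_h^{nc}$ is defined by the degrees of freedom $(\DOFS{F_v})$, $(\DOFS{F_e^1})$, $(\DOFS{F_e^2})$. Consequently, on every edge $e$,
\[
\int_e (\chi-\chi_I)=0,\qquad \int_e \partial_{\bn_e}(\chi-\chi_I)=0 ,
\]
and $\chi-\chi_I$ vanishes at the vertices. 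I will also use the local interpolation estimate
\[
|\chi-\chi_I|_{m,\E}\le C h_\E^{2+s-m}|\chi|_{2+s,\E},\qquad m=0,1,2,
\]
which follows from Assumption~\ref{Mesh:regularity} by a standard Bramble--Hilbert argument for the enhanced space, as in Ref.~\cite{zhao2016nonconforming}.

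The first step is to subtract piecewise constant averages of the Hessian. Let $\Pi^0_{0}$ be the elementwise $L^2$ projection onto constants. Integrating $\int_\E \Pi^0_0 D^2\phi : D^2(\chi-\chi_I)$ by parts gives only edge integrals of $\partial_{\bn}(\chi-\chi_I)$ and $\partial_{\bt}(\chi-\chi_I)$ multiplied by constants. The normal part vanishes by the edge moment condition above. The tangential part integrates to the difference of the vertex values of $\chi-\chi_I$, which is zero. Hence
\[
a(\phi,\chi-\chi_I)=\sum_{\E}\int_\E (D^2\phi-\Pi^0_0 D^2\phi):D^2(\chi-\chi_I).
\]
By Cauchy--Schwarz this is bounded by $\sum_\E Ch_\E^s|\phi|_{2+s,\E}\,h_\E^s|\chi|_{2+s,\E}$. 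Here the factor $h_\E^s$ on $\phi$ comes from a fractional Poincaré (Bramble--Hilbert) estimate for the constant projection of $D^2\phi\in H^s(\E)$, and the factor $h_\E^s$ on $\chi$ comes from the interpolation estimate above. A final discrete Cauchy--Schwarz over the elements gives the claimed bound $Ch^{2s}|\phi|_{2+s,\Omega}|\chi|_{2+s,\Omega}$.

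Step one above already closes the argument. The integration-by-parts route is only a fallback if I want to avoid the constant projection. In that route I would integrate by parts once, pair the edge terms with $\Pi^0_0$ of the edge traces of $D^2\phi$, and bound the remainders with trace inequalities. That requires the fractional trace estimate $\|D^2\phi-\Pi_0 D^2\phi\|_{0,e}\le Ch_e^{s-1/2}|\phi|_{2+s,\E}$, which for $s\le1/2$ needs care. The primary argument avoids it entirely.

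The main obstacle is therefore justifying two things: the two orthogonality identities for $D^2(\chi-\chi_I)$ against constant tensors, and the fractional-order estimates at low regularity. For the orthogonality I will check that the componentwise identity
\[
\int_\E \partial_{ij}w=\int_{\partial\E}\partial_i w\, n_j
\]
can be split into normal and tangential components using $\nabla w=\partial_{\bn}w\,\bn+\partial_{\bt}w\,\bt$, with $\bn$ and $\bt$ constant on each edge. For the fractional estimates, the case $s<1$ requires an interpolation-space argument between the integer cases $s=0$ and $s=1$, which I would first try to settle by invoking known VEM interpolation results.
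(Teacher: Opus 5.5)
Your argument is correct and is essentially the standard one behind the result the paper cites (Lemma~4.11 of \cite{adak2023morley}; the paper itself gives no proof). The vertex values and normal-derivative moments make $D^2(\chi-\chi_I)$ elementwise $L^2$-orthogonal to constant tensors, equivalently $\Pi^{\Delta,nc}_K\chi_I=\Pi^{\Delta,nc}_K\chi$. You then subtract a piecewise constant approximation of $D^2\phi$ and conclude with Cauchy--Schwarz and the two local $O(h^s)$ estimates.

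The only implicit point is that you need $\chi\in V$, which holds in every use of the lemma. Otherwise the degree-of-freedom interpolant cannot in general lie in $V_h^{nc}$, whose normal-derivative moments vanish on boundary edges, and the identity $\int_e\partial_{\bn_e}(\chi-\chi_I)=0$ would fail there.
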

We refer to Lemma 4.11 in~\cite{adak2023morley} for the details of the proof. 
\begin{lemma}
  \label{Int_poly}
  Let Assumption~\ref{Mesh:regularity} hold. Then, 
  \begin{itemize}
  \item for every $\phi\in H^{2+t}(K)$ with $t\in[0,1]$, there exists
    $\phi_{\pi}\in\mathbb{P}_2(K)$ and generic constant $C>0$
    (independent of $h$) such that:
    \begin{equation*}
      \| \phi-\phi_{\pi} \|_{l,K}\leq C h_K^{2+t-l} |\phi|_{2+t,K}, \quad l=0,1,2
    \end{equation*}
    for all $\E\in\CTh$;
  \item for every $\phi\in H^{2+t}(\Omega)$ with $t\in[0,1]$, there exists $\phi_I\in V_h^{\dag}$, and $C>0$ (independent of $h$) such that
    \begin{equation*}
      \| \phi-\phi_{I} \|_{l,K}\leq C h_K^{2+t-l} |\phi|_{2+t,K}, \quad l=0,1,2
    \end{equation*}
    for all $\E\in\CTh$;
  \end{itemize}
\end{lemma}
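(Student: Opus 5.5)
We establish the two items separately, using the standard Bramble--Hilbert/Dupont--Scott machinery for the first and a degrees-of-freedom interpolant plus a local stability argument for the second.

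\textbf{Polynomial approximation.} By (A1), each $K$ is star-shaped with respect to a ball $B_K$ of radius greater than $\rho h_K$. We take $\phi_\pi$ to be the averaged Taylor polynomial of degree $2$ of $\phi$ over $B_K$. The Dupont--Scott theory (Bramble--Hilbert lemma in its fractional-order form) gives
\begin{equation*}
|\phi-\phi_\pi|_{m,K}\le C h_K^{2+t-m}|\phi|_{2+t,K},\qquad m=0,1,2.
\end{equation*}
Here $C$ depends only on the chunkiness parameter, which (A1) bounds in terms of $\rho$. For $t\in(0,1)$ we argue by interpolation between the integer cases $t=0$ and $t=1$; alternatively we invoke the fractional Bramble--Hilbert lemma directly. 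Summing the seminorms over $m\le l$ then yields the stated full-norm bound.

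\textbf{Interpolation into $V_h^\dag$.} We define $\phi_I$ via the degrees of freedom.
\begin{itemize}
\item In the conforming case, $\phi_I$ matches $\phi(\vb_i)$ and $\nabla\phi(\vb_i)$. These are well defined because $H^{2+t}\hookrightarrow C^1$ for $t>0$. For $t=0$, we replace the point values by averages of a quasi-interpolant (Scott--Zhang/Clément type), a standard modification.
\item In the non-conforming case, $\phi_I$ matches the vertex values, $\frac1{h_e}\int_e\phi$ and $\int_e\partial_{\bn_e}\phi$. These are well defined already for $\phi\in H^2$.
\end{itemize}
The boundary condition $\partial_{\bn}\phi=0$ passes to $\phi_I$ in both cases, and single-valuedness of the degrees of freedom gives global membership in $V_h^\dag$.

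The key estimate is local. Since polynomials in $\mathbb{P}_2(K)$ are reproduced by the interpolant, we write
\begin{equation*}
\phi-\phi_I=(\phi-\phi_\pi)-(\phi-\phi_\pi)_I.
\end{equation*}
It then suffices to prove the stability bound
\begin{equation*}
\|v_I\|_{l,K}\le C\sum_{m=0}^{2+t}h_K^{m-l}|v|_{m,K}
\end{equation*}
and to combine it with the first item.

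\textbf{Main obstacle: the stability bound.} The functions in $V_h^\dag(K)$ are virtual, so we cannot simply map to a reference element. We proceed in four steps.
\begin{enumerate}
\item By scaling arguments, together with (A1)--(A2) to control the number of edges and their sizes, we bound the $\dag$-seminorm $|v_I|_{2,K}$ by the scaled degrees of freedom.
\item For this seminorm bound we use the known norm equivalence between $|\cdot|_{2,K}$ on $V_h^\dag(K)\cap\ker\Pi^{\Delta,\dag}_K$ and the scaled DoF vector; this is exactly what underlies the stabilization bound \eqref{stab:cnc}.
\item We then bound the scaled DoFs of $v$ through trace inequalities and the Sobolev embedding on the star-shaped $K$, obtaining $h_K^{-1}\|v\|_{0,K}+|v|_{1,K}+h_K|v|_{2,K}$ plus the $H^{2+t}$ contribution.
\item Finally, we recover the lower-order norms ($l=0,1$) from the $l=2$ bound by Poincaré-type inequalities on $K$. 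These apply because $v-v_I$ has vanishing edge moments or vertex values, respectively.
\end{enumerate}
If a self-contained treatment is too long, we instead cite the known interpolation estimates for these spaces (\cite{MR3002804} for conforming, \cite{zhao2016nonconforming} for non-conforming) and verify that their hypotheses match Assumption~\ref{Mesh:regularity}.
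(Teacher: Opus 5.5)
The paper states this lemma without proof and treats both bullets as standard, so your outline is essentially the argument it leaves implicit. That outline is: Dupont--Scott on the star-shaped element; then the element-local DoF interpolant with $\mathbb{P}_2(K)$-reproduction, a DoF bound for $|v_I|_{2,K}$, scaled embeddings and traces, and Poincar\'e via vanishing vertex values. For $t\in(0,1]$ it is sound, provided you cite the DoF--seminorm equivalence for the dofi-dofi form. In the paper \eqref{stab:cnc} is only a hypothesis on $\CS^K$, not a proved fact.

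\textbf{The gap: the conforming case at $t=0$.} Your Cl\'ement/Scott--Zhang fix does not give the stated bound.
\begin{itemize}
\item Its vertex gradients are averages over neighbouring elements, so you only get $|\phi|_{2,\omega_K}$ on a patch.
\item The splitting $\phi-\phi_I=(\phi-\phi_\pi)-(\phi-\phi_\pi)_I$ is unavailable, because the operator is not element-local while $\phi_\pi$ is only piecewise polynomial.
\end{itemize}
No other choice of $\phi_I$ helps, because the $K$-local bound at $t=0$ is false for $V_h^c$. On a square mesh take $K_1,K_2$ meeting only at an interior vertex $\vb$, and affine $p_1,p_2$ with $p_1(\vb)=p_2(\vb)$ and $\nabla p_1\neq\nabla p_2$. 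Since points have zero $H^1$-capacity in two dimensions, there are $\phi_n\in V$ with $\phi_n=p_1$ on $K_1$ and $\phi_n\to p_2$ in $H^2(K_2)$. To build them, blend $p_1,p_2$ in the angular variable across the intermediate elements, switch to $p_1$ near $\vb$ with a $\log\log$ cutoff, and cut off away from $\partial\Omega$. The bound on $K_1$ forces $\phi_I=p_1$ there, so $\nabla\phi_I(\vb)=\nabla p_1$. The bound on $K_2$, plus finite dimensionality of $V_h^c(K_2)$, forces $\nabla\phi_I(\vb)\to\nabla p_2$, a contradiction. So state the conforming part only for $t\in(0,1]$; the constant of $H^{2+t}\hookrightarrow C^1$ degenerates as $t\to 0$. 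This is all the paper needs, since $s>1/2$. For $V_h^{nc}$ the case $t=0$ is fine, as you say.

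\textbf{The boundary condition.} You use $\partial_{\bn}\phi=0$ on $\partial\Omega$, but the lemma does not assume it, and it cannot be dropped. If $\phi$ is affine with $\partial_{\bn}\phi\neq 0$ on a boundary edge, the right-hand side vanishes on every element, forcing $\phi_I=\phi\notin V_h^\dag$. For $V_h^{nc}$ this is because $\int_e\jump{\partial_{\bn_e}\phi_I}=\int_e\partial_{\bn}\phi\neq0$. Make $\phi\in V$ an explicit hypothesis.

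Relatedly, for $V_h^{nc}$, single-valued DoFs give continuity and the edge conditions in $\HncT$. However, $\HncT$ as written also requires $\partial_{\bn}\phi_I(\vb_i)=0$ at boundary vertices, and no non-conforming DoF controls this. You need to say how that condition is read (for instance, as the edge-moment condition) rather than asserting that membership is automatic.
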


\subsection{A priori error estimates for the source problem in conforming method}
In this section, we carry out the a priori analysis of the conforming
VEM for the source problem.
\begin{theorem}
  \label{Con_Anal_H2}
  Under mesh assumptions $({\bf A_1})-({\bf A_2})$, let $\widetilde{u}$ and $\widetilde{u}_{h}$ be the unique solutions to Problems~\ref{weak:source} and \ref{source:discrete}, respectively.
  Then, there exists a positive constant $C$, independent of $h$, such that

  \begin{equation}
     \|\widetilde{u}-\widetilde{u}_h\|_{0,\Omega}
    + |\widetilde{u}-\widetilde{u}_h|_{1,\Omega}
    + h^s\parallel \widetilde{u}-\widetilde{u}_h\parallel_{2,\Omega}
    \leq C h^{2s} |\widetilde{u}|_{2+s,\Omega}. 
  \end{equation}

\end{theorem}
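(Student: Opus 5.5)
We first prove the energy estimate $\|\widetilde{u}-\widetilde{u}_h\|_{2,\Omega}\le Ch^s|\widetilde{u}|_{2+s,\Omega}$ by a Strang-type argument. We then obtain the $H^1$ and $L^2$ bounds, which gain an extra $h^s$, through an Aubin--Nitsche duality argument based on the adjoint source problem.

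\emph{Energy estimate.} Let $\widetilde{u}_I\in V_h^c$ be the interpolant of Lemma~\ref{Int_poly}, and let $\widetilde{u}_\pi\in\mathbb{P}_2(\CTh)$ be the piecewise polynomial approximation. Set $\delta_h:=\widetilde{u}_h-\widetilde{u}_I\in V_h^c$. By the discrete coercivity of $\widehat a_h$ on $V_h^c$,
\[
C\|\delta_h\|_{2,\Omega}^2\le \widehat a_h(\widetilde{u}_h,\delta_h)-\widehat a_h(\widetilde{u}_I,\delta_h)=b(f,\delta_h)-\widehat a_h(\widetilde{u}_I,\delta_h).
\]
Since $V_h^c\subset V$, we may replace $b(f,\delta_h)$ by $\widehat a(\widetilde{u},\delta_h)$. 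The $b$-parts then reduce to $b(\widetilde{u}-\widetilde{u}_I,\delta_h)$. For the $a$-parts, we insert $\widetilde{u}_\pi$ elementwise and use the polynomial consistency of Lemma~\ref{stability:bilinear}, which gives
\[
\sum_K\Big(a^K(\widetilde{u}-\widetilde{u}_\pi,\delta_h)-a_h^K(\widetilde{u}_I-\widetilde{u}_\pi,\delta_h)\Big).
\]
Both terms are bounded by continuity and stability of $a_h^K$, combined with the approximation bounds of Lemma~\ref{Int_poly} for $t=s$. The boundary term is handled with the trace inequality $\|v\|_{0,\partial\Omega}\le C\|v\|_{1,\Omega}$. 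Together these give $\|\delta_h\|_{2,\Omega}\le Ch^s|\widetilde{u}|_{2+s,\Omega}$, and the triangle inequality concludes the energy estimate.

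\emph{Duality.} Let $\psi\in V$ solve the auxiliary problem $\widehat a(v,\psi)=(\widetilde{u}-\widetilde{u}_h,v)_{1,\Omega}$ for all $v\in V$ (or the $L^2$ version for the $L^2$ bound). We need the regularity $\|\psi\|_{2+s,\Omega}\le C\|\widetilde{u}-\widetilde{u}_h\|_{1,\Omega}$. This is the same corner-singularity regularity assumed for $T$, applied to an interior load of $H^{-1}$ type. We would assert it by analogy, since on a polygon the re-entrant-corner exponent controls both cases; this is the step we consider least secure.

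Taking $v=e:=\widetilde{u}-\widetilde{u}_h$ and subtracting $\widehat a(e,\psi_I)$, we write
\[
\|e\|_{1,\Omega}^2=\widehat a(e,\psi-\psi_I)+\big[\widehat a(\widetilde{u},\psi_I)-\widehat a_h(\widetilde{u}_h,\psi_I)\big]+\big[\widehat a_h(\widetilde{u}_h,\psi_I)-\widehat a(\widetilde{u}_h,\psi_I)\big].
\]
The middle bracket vanishes, since both $\widehat a(\widetilde{u},\psi_I)$ and $\widehat a_h(\widetilde{u}_h,\psi_I)$ equal $b(f,\psi_I)$. The first term is at most $Ch^s|\widetilde{u}|_{2+s}\,h^s\|\psi\|_{2+s}$ by the energy estimate and interpolation. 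For the last term, we insert $\widetilde{u}_\pi,\psi_\pi$, use consistency as in Lemma~\ref{lemma:consistency:error}, and obtain the bound $C\big(|\widetilde{u}_h-\widetilde{u}_\pi|_{2,h}\big)\big(|\psi_I-\psi_\pi|_{2,h}\big)\le Ch^{2s}|\widetilde{u}|_{2+s}\|\psi\|_{2+s}$. Dividing by $\|e\|_{1,\Omega}$ yields the $H^1$ and hence the $L^2$ bound.

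The main obstacle is the regularity of the dual problem. If it fails, we would instead take the dual load as $b(\cdot,\cdot)$-type data in $H^1$, matching the stated regularity of $T$. This suffices for the boundary-trace norm, and we would recover the $L^2$ and $H^1$ bounds via the coercivity structure.
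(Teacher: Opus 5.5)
Your proposal follows the paper's route: a Strang-type energy estimate built on $\widetilde{u}_I$ and $\widetilde{u}_\pi$, then an Aubin--Nitsche duality argument whose $H^{2+s}$ dual regularity the paper also simply asserts as classical. Your single dual problem with the full $H^1$ inner product gives the $L^2$ bound directly, where the paper runs a separate $L^2$ duality.

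The one real difference is in your favour. Starting from the discrete coercivity of $\widehat{a}_h$ legitimately produces the paper's decomposition~\eqref{main:H2:conf:decomp}. The paper instead starts from $\widehat{a}(\eta,\eta)$, and when it replaces $\widetilde{u}_h$ by $\widetilde{u}_I$ in the consistency term it silently drops $a(\eta,\eta)-a_h(\eta,\eta)$, which is of the same order as the left-hand side.

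Your closing fallback (boundary-type dual data plus coercivity) would not recover the $h^{2s}$ rate in $H^1$ or $L^2$. Coercivity only controls these norms through $|e|_{2,h}=O(h^s)$, so that paragraph should be dropped.
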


\begin{proof}~\\[-\baselineskip]

  \noindent
  $\bullet$ \textit{\bf $H^2$ estimate}.
  Adding and subtracting the interpolation $\widetilde{u}_I$, we split
  the approximation error $\widetilde{u}-\widetilde{u}_h$ as follows:
  $\widetilde{u}-\widetilde{u}_h=\widetilde{u}-\widetilde{u}_I+\widetilde{u}_I-\widetilde{u}_h$.
  We estimate the first term by applying Lemma~\ref{Int_poly}.
  We estimate the second term by using the same argument of
  \cite{adak2023conforming}.
  
  By using the coercivity of the bilinear form
  $\widehat{a}(\cdot,\cdot)$, we derive that
  \begin{equation}
  \label{main:H2:conf}
      \begin{split}
          C \parallel \widetilde{u}_h-\widetilde{u}_I
            \parallel_{2,\Omega}^2
          &\leq \widehat{a}(\widetilde{u}_h-\widetilde{u}_I,
                            \widetilde{u}_h-\widetilde{u}_I) \\
          &= \widehat{a}(\widetilde{u}_h,
                         \widetilde{u}_h-\widetilde{u}_I)
            -\widehat{a}(\widetilde{u}_I,
                         \widetilde{u}_h-\widetilde{u}_I).
      \end{split}
  \end{equation}
  We expand $\widehat{a} = a + b$ in both terms on the right-hand side
  of~\eqref{main:H2:conf} and set
  $\eta := \widetilde{u}_h - \widetilde{u}_I$ for brevity.
  For the first term, we write
  $\widehat{a}(\widetilde{u}_h, \eta)
  = a(\widetilde{u}_h, \eta) + b(\widetilde{u}_h, \eta)$.
  Adding and subtracting $a_h(\widetilde{u}_h, \eta)$ and using the
  discrete source
  equation~\eqref{discrete:source:issue},
  $a_h(\widetilde{u}_h, \eta) + b(\widetilde{u}_h, \eta) = b(f, \eta)$,
  we obtain
  \begin{equation*}
    \widehat{a}(\widetilde{u}_h, \eta)
    = b(f, \eta)
      + \big[ a(\widetilde{u}_h, \eta) - a_h(\widetilde{u}_h, \eta) \big].
  \end{equation*}
  Since $\eta \in V_h^c \subset V$, the continuous source equation
  gives $b(f, \eta)
  = \widehat{a}(\widetilde{u}, \eta)
  = a(\widetilde{u}, \eta) + b(\widetilde{u}, \eta)$.
  For the second term in~\eqref{main:H2:conf}, we similarly expand
  $\widehat{a}(\widetilde{u}_I, \eta)
  = a(\widetilde{u}_I, \eta) + b(\widetilde{u}_I, \eta)$.
  Substituting and rearranging, we find
  \begin{equation*}
    \widehat{a}(\widetilde{u}_h, \eta)
    - \widehat{a}(\widetilde{u}_I, \eta)
    = b(\widetilde{u} - \widetilde{u}_I, \eta)
      + a(\widetilde{u} - \widetilde{u}_I, \eta)
      + \big[ a(\widetilde{u}_h, \eta) - a_h(\widetilde{u}_h, \eta) \big].
  \end{equation*}
  We rewrite the sum of the last two terms element by element.
  By the polynomial consistency of $a_h^K$ (Lemma~\ref{stability:bilinear}),
  we have
  $a^K(\widetilde{u}_h, \eta) - a_h^K(\widetilde{u}_h, \eta)
  = -\big[a_h^K(\widetilde{u}_h - \widetilde{u}_{\pi}, \eta)
  - a^K(\widetilde{u}_h - \widetilde{u}_{\pi}, \eta)\big]$.
  Adding and subtracting
  $a^K(\widetilde{u}_I - \widetilde{u}_{\pi}, \eta)$
  inside the sum over elements, and recombining with
  $a(\widetilde{u} - \widetilde{u}_I, \eta)$, we obtain
  \begin{equation}
  \label{main:H2:conf:decomp}
      \begin{split}
          C \parallel \widetilde{u}_h-\widetilde{u}_I
            \parallel_{2,\Omega}^2
          &\leq
            b(\widetilde{u}-\widetilde{u}_I,\,
              \widetilde{u}_h-\widetilde{u}_I)
          \\
          &\quad
          - \sum_{K\in \CTh}
            \Big(
              a_h^K(\widetilde{u}_I-\widetilde{u}_{\pi},\,
                     \widetilde{u}_h-\widetilde{u}_I)
              - a^K(\widetilde{u}_I-\widetilde{u}_{\pi},\,
                     \widetilde{u}_h-\widetilde{u}_I)
            \Big)
          \\
          &\quad
          - \sum_{K\in \CTh}
            a^K(\widetilde{u}_I-\widetilde{u},\,
                \widetilde{u}_h-\widetilde{u}_I).
      \end{split}
  \end{equation}
  By using the trace inequality and Lemma~\ref{Int_poly}, we derive that
  \begin{equation}
  \label{main:trace:conf}
      \begin{split}
          \vert b(\widetilde{u}-\widetilde{u}_I,\,
                   \widetilde{u}_h-\widetilde{u}_I) \vert
          &\leq C \,
            |\widetilde{u}-\widetilde{u}_I|_{1,\Omega}\,
            |\widetilde{u}_h-\widetilde{u}_I|_{1,\Omega}
          \\
          &\leq C \, h^{1+s}\,
            |\widetilde{u}|_{2+s,\Omega}\,
            \|\widetilde{u}_h-\widetilde{u}_I\|_{2,h}.
      \end{split}
  \end{equation}
  For the second term
  in~\eqref{main:H2:conf:decomp}, we apply
  Lemma~\ref{lemma:consistency:error} with
  $w_h = \widetilde{u}_I$,
  $w_{\pi} = \widetilde{u}_{\pi}$,
  $v_h = \widetilde{u}_h - \widetilde{u}_I$,
  and $v_{\pi} = 0$
  (noting that $0 \in \mathbb{P}_2(K)$).
  This yields
  \begin{equation}
  \label{H2:consistency}
  \begin{split}
    \left|
      \sum_{K \in \CTh}
      \Big(
        a_h^K(\widetilde{u}_I-\widetilde{u}_{\pi},\,
               \widetilde{u}_h-\widetilde{u}_I)
        - a^K(\widetilde{u}_I-\widetilde{u}_{\pi},\,
               \widetilde{u}_h-\widetilde{u}_I)
      \Big)
    \right|
    &\leq C \,
      |\widetilde{u}_I-\widetilde{u}_{\pi}|_{2,h}\,
      |\widetilde{u}_h-\widetilde{u}_I|_{2,h}
    \\
    &\leq C \, h^s \,
      |\widetilde{u}|_{2+s,\Omega}\,
      \|\widetilde{u}_h-\widetilde{u}_I\|_{2,h},
  \end{split}
  \end{equation}
  where in the last step we used the triangle inequality
  $|\widetilde{u}_I-\widetilde{u}_{\pi}|_{2,h}
  \leq |\widetilde{u}_I-\widetilde{u}|_{2,h}
  + |\widetilde{u}-\widetilde{u}_{\pi}|_{2,h}$
  together with Lemma~\ref{Int_poly}.
  For the third term in~\eqref{main:H2:conf:decomp},
  the Cauchy--Schwarz inequality and Lemma~\ref{Int_poly} give
  \begin{equation}
  \label{H2:continuous}
      \left|
        \sum_{K \in \CTh}
        a^K(\widetilde{u}_I-\widetilde{u},\,
             \widetilde{u}_h-\widetilde{u}_I)
      \right|
      \leq
        |\widetilde{u}_I-\widetilde{u}|_{2,h}\,
        |\widetilde{u}_h-\widetilde{u}_I|_{2,h}
      \leq C \, h^s \,
        |\widetilde{u}|_{2+s,\Omega}\,
        \|\widetilde{u}_h-\widetilde{u}_I\|_{2,h}.
  \end{equation}
  Inserting~\eqref{main:trace:conf}, \eqref{H2:consistency},
  and~\eqref{H2:continuous}
  into~\eqref{main:H2:conf:decomp}, dividing both sides by
  $\|\widetilde{u}_h-\widetilde{u}_I\|_{2,h}$, and combining
  with the interpolation estimate for
  $\|\widetilde{u}-\widetilde{u}_I\|_{2,h}$ from
  Lemma~\ref{Int_poly}, we derive that
  \begin{equation}
  \label{H2:final:conf}
      \parallel \widetilde{u}-\widetilde{u}_h
        \parallel_{2,\Omega}
      \leq C \, h^s \,
        \vert \widetilde{u} \vert_{2+s,\Omega}.
  \end{equation}

  $\bullet$
  \textit{\bf $H^1$ estimates}.
  Using the $H^2$ estimate, we obtain an upper bound for the approximation error in the $H^1$ norm. In this direction, we first define the auxiliary variational problem: Find $\phi \in V$ such that
  \begin{equation}\label{label:H1}
    \widehat{a}(\phi,\omega)=\int_{\Omega} \nabla (\widetilde{u}-\widetilde{u}_h) \cdot \nabla \omega  \quad \forall \omega \in V.
  \end{equation}
  Since the bilinear form $\widehat{a}(\cdot,\cdot)$ is coercive, problem \eqref{label:H1} is well-posed. Classical regularity result states that
  \begin{equation}
    \|\phi\|_{2+s, \Omega} \leq C \| \nabla (\widetilde{u}-\widetilde{u}_h) \|_{0,\Omega}.
    \label{eq:Th41:regularity}
  \end{equation}
  Since $\widetilde{u}_h \in V_h^c \subset V$, we choose
  $\omega=\widetilde{u}-\widetilde{u}_h$ in \eqref{label:H1}, we add and
  subtract $\phi_I$, and we obtain
  \begin{align}
   |\widetilde{u}-\widetilde{u}_h|_{1,\Omega}^2
      = \widehat{a}(\phi,\widetilde{u}-\widetilde{u}_h)
    = \widehat{a}(\phi-\phi_I,\widetilde{u}-\widetilde{u}_h) + \widehat{a}(\phi_I,\widetilde{u}-\widetilde{u}_h).
    \label{first_H1_conf}
  \end{align}
  We estimate separately both terms of \eqref{first_H1_conf} as follows
  \begin{equation*}
    \widehat{a}(\phi-\phi_I,\widetilde{u}-\widetilde{u}_h)=a(\phi-\phi_I,\widetilde{u}-\widetilde{u}_h)+b(\phi-\phi_I,\widetilde{u}-\widetilde{u}_h).
  \end{equation*}
  By employing the approximation properties of the interpolation
  operator, cf. Lemma~\ref{Int_poly}, and the regularity
  inequality~\eqref{eq:Th41:regularity}, and the estimate $\| \widetilde{u}-\widetilde{u}_h \|_{2,\Omega}$, we obtain
  \begin{equation}
  \label{H1:conf:a}
  \begin{split}
    a(\phi-\phi_I,\widetilde{u}-\widetilde{u}_h) &\leq c |\phi-\phi_I|_{2,\Omega} |\widetilde{u}-\widetilde{u}_h|_{2,\Omega} \\
    & \leq C h^s |\phi|_{2+s,\Omega}~ h^s |\widetilde{u}|_{2+s,\Omega} \\
    & \leq C h^{2s} |\widetilde{u}-\widetilde{u}_h|_{1,\Omega} |\widetilde{u}|_{2+s,\Omega}.
  \end{split}
  \end{equation}
  Further, an application of the trace inequality,
  Lemma~\ref{Int_poly}, and $H^2-$ estimate, we derive that
  \begin{equation}
    \label{H1:conf:b}
    \begin{split}
      b(\phi-\phi_I,\widetilde{u}-\widetilde{u}_h) &\leq C |\phi-\phi_I|_{1,\Omega} |\widetilde{u}-\widetilde{u}_h|_{1,\Omega} \\
      & \leq C h^{1+s} |\phi|_{2+s,\Omega} \|\widetilde{u}-\widetilde{u}_h\|_{2,\Omega}  \\
      & \leq C h^{1+2s} |\widetilde{u}-\widetilde{u}_h|_{1,\Omega} |\widetilde{u}|_{2+s,\Omega}.
    \end{split}
  \end{equation}
  
  To estimate the last term of~\eqref{first_H1_conf}, we use the
  continuous and discrete source equations.
  Since
  $\widehat{a}(\widetilde{u}, \phi_I) = b(f, \phi_I)$
  and
  $\widehat{a}_h(\widetilde{u}_h, \phi_I) = b(f, \phi_I)$,
  we obtain
  \begin{equation}
  \label{H1:conf:consistency}
    \begin{split}
      \widehat{a}(\phi_I,\widetilde{u}-\widetilde{u}_h)
      &= \widehat{a}(\widetilde{u},\phi_I)
         - \widehat{a}(\widetilde{u}_h,\phi_I)
       = \widehat{a}_h(\widetilde{u}_h,\phi_I)
         - \widehat{a}(\widetilde{u}_h,\phi_I)
      \\
      &= a_h(\widetilde{u}_h,\phi_I)
         - a(\widetilde{u}_h,\phi_I)
      \\
      &= \sum_{K \in \CTh}
         \Big(
           a_h^K(\widetilde{u}_h-\widetilde{u}_{\pi},\,
                  \phi_I-\phi_{\pi})
           - a^K(\widetilde{u}_h-\widetilde{u}_{\pi},\,
                  \phi_I-\phi_{\pi})
         \Big),
    \end{split}
  \end{equation}
  where in the last step we used the polynomial consistency of
  $a_h^K(\cdot,\cdot)$ (Lemma~\ref{stability:bilinear}).
  An application of Lemma~\ref{lemma:consistency:error} with
  $w_h = \widetilde{u}_h$,
  $w_{\pi} = \widetilde{u}_{\pi}$,
  $v_h = \phi_I$,
  $v_{\pi} = \phi_{\pi}$
  yields
  \begin{equation*}
    \left|
      \widehat{a}(\phi_I,\widetilde{u}-\widetilde{u}_h)
    \right|
    \leq C \,
      |\widetilde{u}_h-\widetilde{u}_{\pi}|_{2,h}\,
      |\phi_I-\phi_{\pi}|_{2,h}.
  \end{equation*}
  By the triangle inequality and the $H^2$ estimate~\eqref{H2:final:conf},
  \begin{equation*}
    |\widetilde{u}_h-\widetilde{u}_{\pi}|_{2,h}
    \leq |\widetilde{u}_h-\widetilde{u}|_{2,h}
         + |\widetilde{u}-\widetilde{u}_{\pi}|_{2,h}
    \leq C \, h^s \,
      |\widetilde{u}|_{2+s,\Omega},
  \end{equation*}
  and, by Lemma~\ref{Int_poly},
  $|\phi_I-\phi_{\pi}|_{2,h}
  \leq C \, h^s \, |\phi|_{2+s,\Omega}$.
  Combining these bounds with the regularity
  estimate~\eqref{eq:Th41:regularity}, we conclude that
  \begin{equation*}
    \left|
      \widehat{a}(\phi_I,\widetilde{u}-\widetilde{u}_h)
    \right|
    \leq C \, h^{2s} \,
      |\widetilde{u}|_{2+s,\Omega}\,
      |\widetilde{u}-\widetilde{u}_h|_{1,\Omega}.
  \end{equation*}
  
  By inserting \eqref{H1:conf:a}, \eqref{H1:conf:b}, and
  \eqref{H1:conf:consistency} into \eqref{first_H1_conf}, and assuming
  $|\widetilde{u}-\widetilde{u}_h|_{1,\Omega} \neq 0$, we derive the
  estimate.

  
  $\bullet$ \textit{\bf $L^2$ estimates}.
  To prove the error estimate in $L^2$ norm, we first define the auxiliary problem as: find $\phi \in V$ such that
  \begin{equation}
  \label{aux:L2:con}
      \widehat{a}(\phi,\omega)=\int_{\Omega} (\widetilde{u}-\widetilde{u}_h) \omega  \quad \forall \omega \in V.
  \end{equation}

The regularity result for the biharmonic problem states the following inequality
\begin{equation}
\label{Regularity:L2}
    \|\phi\|_{2+s,\Omega} \leq C \|\widetilde{u}-\widetilde{u}_h \|_{0,\Omega}.
\end{equation}
Now, by considering $\omega=\widetilde{u}-\widetilde{u}_h$ in \eqref{aux:L2:con}, and an application of analogous arguments and the regularity mentioned above in \eqref{Regularity:L2}, we derive the estimate in the $L^2$-norm as
\begin{equation*}
    \|\widetilde{u}-\widetilde{u}_h \|_{0,\Omega} \leq C h^{2s} |\widetilde{u}|_{2+s,\Omega}. 
\end{equation*}
\end{proof}
Now, we prove the error estimates for the non-conforming scheme.  
%
%
\subsection{A priori error estimates for the source problem approximated by the non-conforming method}
\label{Apriori_Error}
In this subsection, we present the convergence analysis of the
non-conforming approximation of Problem~\ref{weak:source}.
To this end, we employ the \emph{enriching operator} $E_h$ defined in Section~\ref{enriching:operator}.
Since the discrete space is non-conforming, a variational crime occurs
and we need to estimate a non-conformity error, which is defined as
follows:
\begin{equation}
  \mathcal{N}_h(\widetilde{u},v_h)
  := \sum_{K\in\CTh} \widehat{a}^K(\widetilde{u},v_h)-b(f,v_h)\qquad \qquad \forall v_h \in V_h^{nc}.
  \label{consistency:error}
\end{equation} 
We bound the non-conformity error by employing the enriching operator
as in the lemma below.
\begin{lemma}
  Let $f$ belong to $H^1(\Omega) $
  and
  $\widetilde{u} \in H^{2+s}(\Omega)$ be the solution of
    Problem~\ref{weak:source}.
  Then there exists a constant $C>0$,
  independent of $h$, such that
  \begin{align*}
    \mathcal{N}_h(\widetilde{u},v_h) \leq 
     Ch^{\min(1,s)} \Big( \|\widetilde{u}\|_{2+s,\O} + \|f\|_{1,\O} \Big) |v_h|_{2,h}\quad\forall v_h\in\Vh^{nc},
  \end{align*}
  where $\mathcal{N}_h(\widetilde{u}, \cdot)$ is the consistency error defined in~\eqref{consistency:error}. 
\end{lemma}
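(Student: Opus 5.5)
The argument is the standard enriching-operator one: insert $E_h v_h\in V_h^c\subset V$ and use the continuous equation. Since $E_hv_h\in V$, Problem~\ref{weak:source} gives $\widehat{a}(\widetilde{u},E_hv_h)=b(f,E_hv_h)$. Subtracting this identity from the definition~\eqref{consistency:error} yields
\begin{equation*}
\mathcal{N}_h(\widetilde{u},v_h)=\sum_{K\in\CTh}a^K(\widetilde{u},v_h-E_hv_h)+b(\widetilde{u},v_h-E_hv_h)-b(f,v_h-E_hv_h).
\end{equation*}
The statement then follows by bounding these three terms separately.

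For the first term, the second item of Lemma~\ref{Eh:ADEh:ADI}, applied with $w=\widetilde{u}\in H^{2+s}(\Omega)$, gives $\sum_K a^K(\widetilde{u},v_h-E_hv_h)\le Ch^s\|\widetilde{u}\|_{2+s,\Omega}|v_h|_{2,h}$. This lemma already absorbs the edge-jump terms that arise from piecewise integration by parts, using the weak continuity built into $\HncT$. Since $s\le 1$, we have $h^s\le Ch^{\min(1,s)}$ for $h$ bounded.

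The two boundary terms are $b(\widetilde{u}-f,\,v_h-E_hv_h)$. Both $v_h$ and $E_hv_h$ lie in $H^1(\Omega)$, so the trace on $\partial\Omega$ is meaningful. I would use the multiplicative trace inequality $\|w\|_{0,\partial\Omega}^2\le C\|w\|_{0,\Omega}\|w\|_{1,\Omega}$ on $w=v_h-E_hv_h$, together with the first item of Lemma~\ref{Eh:ADEh:ADI}. That lemma gives $\|w\|_{0,\Omega}\le Ch^2|v_h|_{2,h}$ and $|w|_{1,\Omega}\le Ch|v_h|_{2,h}$, hence $\|w\|_{0,\partial\Omega}\le Ch^{3/2}|v_h|_{2,h}$. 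Combining this with $\|\widetilde{u}\|_{0,\partial\Omega}\le C\|\widetilde{u}\|_{2+s,\Omega}$ and $\|f\|_{0,\partial\Omega}\le C\|f\|_{1,\Omega}$, the boundary contributions are $O(h^{3/2})$, which is at least $O(h^{\min(1,s)})$. A cruder route also suffices: the standard trace bound $\|w\|_{0,\partial\Omega}\le C\|w\|_{1,\Omega}$ gives $O(h)$, which already matches $h^{\min(1,s)}$ and explains where the $\min(1,s)$ in the statement comes from.

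The main obstacle is the first term. Everything delicate is there: the broken integration by parts, the use of $\int_e\jump{\partial_{\bn}v_h}=0$ and vertex continuity, and the limited regularity $s\in(1/2,1]$, which prevents pointwise control of third derivatives. I will rely entirely on Lemma~\ref{Eh:ADEh:ADI}(ii) for it. The one point to check is that the zero normal-derivative boundary condition in $V$ is respected by $E_hv_h$, which holds since $E_h$ maps into $V_h^c\subset V$. Summing the three bounds yields the claimed estimate.
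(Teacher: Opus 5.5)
Your proposal is correct and follows the paper's proof essentially step for step. You insert $E_hv_h\in V_h^c\subset V$, use the continuous equation to rewrite $\mathcal{N}_h(\widetilde{u},v_h)$ as the same three terms, bound the first by the second item of Lemma~\ref{Eh:ADEh:ADI}, and bound the boundary terms by a trace inequality combined with the first item. The paper uses the standard $H^1$ trace bound (your ``cruder route'') and gets $O(h)$ for the boundary terms, which is all that is needed; your multiplicative trace inequality sharpens this to $O(h^{3/2})$.
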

\begin{proof}
  Upon employing the enriching operator $E_h$, we rewrite the term
  $\mathcal{N}_h(\widetilde{u},v_h)$, as follows
  \begin{align*}
    \mathcal{N}_h(\widetilde{u},v_h)
    &= \sum_{K\in\CTh} \widehat{a}^K(\widetilde{u},v_h-E_h v_h)-b(f,v_h-E_h v_h) \\
    &= \sum_{K\in\CTh} a^K(\widetilde{u},v_h-E_h v_h) + b(\widetilde{u},v_h-E_h v_h) - b(f,v_h-E_h v_h) \\
    &\leq C \Big( h^{s}\|\widetilde{u}\|_{2+s,\Omega}\,|v_h|_{2,h} + \|\widetilde{u}\|_{\half,\partial\Omega} \|v_h-E_h v_h\|_{\half,\partial\Omega} + \|f\|_{\half,\partial\Omega} \|v_h-E_h v_h\|_{\half,\partial\Omega} \Big) \\
    &\leq C \Big( h^{s}\|\widetilde{u}\|_{2+s,\Omega}\,|v_h|_{2,h} + \|\widetilde{u}\|_{1,\Omega} \|v_h-E_h v_h\|_{1,\Omega} + \|f\|_{1,\Omega} \|v_h-E_h v_h\|_{1,\Omega} \Big) \\
    &\leq C \Big( h^{s}\|\widetilde{u}\|_{2+s,\Omega}\,|v_h|_{2,h} + h \|\widetilde{u}\|_{1,\Omega} |v_h|_{2,h} + h\|f\|_{1,\Omega} |v_h|_{2,h} \Big) \\
    &  \leq C \Big( h^s \|\widetilde{u}\|_{2+s,\Omega} + h\|\widetilde{u}\|_{1,\Omega}  + h\|f\|_{1,\Omega} \Big) |v_h|_{2,h}.
  \end{align*}
  %
\end{proof}
\begin{remark}
  \label{General_order}
  The $C^0$ non-conforming VEM for the plate bending problem was first
  proposed in Ref.~\cite{zhao2016nonconforming}, where the authors
  carried out the a~prior analysis for any order of accuracy on a
  convex domain.
  In particular, for the lowest-order virtual element space, i.e., for
  $k = 2$, the authors of Ref.~\cite{zhao2016nonconforming} assumed
  that the analytical solution $u$ belongs to $H^3(\Omega)$.
  In our paper, we assume that the computational domain may have a
  nonconvex shape, so that 
  the analytical solution $u$ belongs to
  $H^{2+s}(\Omega)$, with $s \in (1/2,1]$.
  To prove the optimal order of convergence, i.e., $O(h^s)$, we have
  employed different arguments using an enriching operator from the
  $C^0$ non-conforming space to its $H^2$-conforming counterpart.
\end{remark}
In what follows, we will prove some preliminary results to establish
that the operator $T_{h} $ converges to $T $ when $h$ goes to
zero.
First, we have the Strang-type result of Theorem~\ref{Noncon_anal},
where we can identify $\widetilde{u}_I$ with the interpolation of $u$
in the approximation space $\Vh^{nc}$ as defined in
Lemma~\ref{Int_poly}.
\begin{theorem}
  \label{Noncon_anal}
  Under Mesh Assumptions $\mathbf{(A_1)}-\mathbf{(A_2)}$, we let $\widetilde{u}$
  and $\widetilde{u}_{h}$ be the unique solutions to
  Problems~\ref{weak:source} and \ref{source:discrete}, respectively.
  Then, there exists a positive constant $C$, independent of $h$, such
  that
  \begin{equation}
    \label{strang:type}
    \|\widetilde{u}-\widetilde{u}_h\|_{2,h}
    \leq C\Big(
    |\widetilde{u}-\widetilde{u}_I|_{1,\Omega} +
    |\widetilde{u}-\widetilde{u}_I|_{2,h} +
    |\widetilde{u}-\widetilde{u}_{\pi}|_{2,h} +
    \underset{\underset{\phi_h \neq 0}{\phi_h \in V_h^{nc}} }{\sup}\frac{\mathcal{N}(\widetilde{u},\phi_h)}{\|\phi_h\|_{2,h}}
    \Big), 	
  \end{equation}
  for any approximation $\widetilde{u}_I$ in $\Vh^{nc}$ and
  $\widetilde{u}_{\pi}$ in $\mathbb{P}_2(\CTh)$, of $\widetilde{u}$. Further, by employing the approximation property of $u_I$, $u_{\pi}$, we derive that
  \begin{align*}
    \|\widetilde{u}-\widetilde{u}_h\|_{2,h} \leq C h^{s}
    \|\widetilde{u}\|_{2+s,\Omega}.
  \end{align*}
\end{theorem}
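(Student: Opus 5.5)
We follow the standard second-Strang argument, mirroring the $H^2$ part of Theorem~\ref{Con_Anal_H2} but now tracking the variational crime through $\mathcal{N}_h$. Split $\widetilde{u}-\widetilde{u}_h=(\widetilde{u}-\widetilde{u}_I)+(\widetilde{u}_I-\widetilde{u}_h)$ and set $\eta:=\widetilde{u}_h-\widetilde{u}_I\in V_h^{nc}$. The first piece is bounded directly by $|\widetilde{u}-\widetilde{u}_I|_{2,h}$ together with its lower-order parts, which are controlled through Lemma~\ref{Int_poly}. For the discrete piece we use the discrete coercivity of $\widehat{a}_h$ on $V_h^{nc}$ in the norm $|||\cdot|||_{2,h}$. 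Combined with the bound $\|v_h\|_{0,h}+|v_h|_{1,h}\le C\|v_h\|_{2,h}$ quoted from \cite{zhao2018morley}, this gives $C\|\eta\|_{2,h}^2\le \widehat{a}_h(\eta,\eta)$. We then write
\begin{equation*}
\widehat{a}_h(\eta,\eta)=b(f,\eta)-\widehat{a}_h(\widetilde{u}_I,\eta),
\end{equation*}
using the discrete source equation~\eqref{discrete:source:issue}.

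Next we insert the exact solution elementwise. By the definition~\eqref{consistency:error}, $b(f,\eta)=\sum_K\widehat{a}^K(\widetilde{u},\eta)-\mathcal{N}_h(\widetilde{u},\eta)$, so that
\begin{equation*}
\widehat{a}_h(\eta,\eta)=\sum_K\big(a^K(\widetilde{u},\eta)-a_h^K(\widetilde{u}_I,\eta)\big)+b(\widetilde{u}-\widetilde{u}_I,\eta)-\mathcal{N}_h(\widetilde{u},\eta).
\end{equation*}
In the first sum we add and subtract $\widetilde{u}_\pi$ and use the polynomial consistency of Lemma~\ref{stability:bilinear}, $a_h^K(\widetilde{u}_\pi,\eta)=a^K(\widetilde{u}_\pi,\eta)$. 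This leaves $\sum_K a^K(\widetilde{u}-\widetilde{u}_\pi,\eta)-\sum_K a_h^K(\widetilde{u}_I-\widetilde{u}_\pi,\eta)$. Continuity of $a^K$ and the upper stability bound of $a_h^K$ (as in Lemma~\ref{lemma:consistency:error}) bound these terms by $C\big(|\widetilde{u}-\widetilde{u}_\pi|_{2,h}+|\widetilde{u}-\widetilde{u}_I|_{2,h}\big)|\eta|_{2,h}$. The boundary term is handled with the trace inequality on the $H^1$ functions $\widetilde{u}-\widetilde{u}_I$ and $\eta$: $|b(\widetilde{u}-\widetilde{u}_I,\eta)|\le C\|\widetilde{u}-\widetilde{u}_I\|_{1,\Omega}\|\eta\|_{1,\Omega}\le C\|\widetilde{u}-\widetilde{u}_I\|_{1,\Omega}\|\eta\|_{2,h}$. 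The $L^2$ part of $\|\widetilde{u}-\widetilde{u}_I\|_{1,\Omega}$ is of higher order by Lemma~\ref{Int_poly}, which is why only $|\cdot|_{1,\Omega}$ appears in~\eqref{strang:type}. Finally, $|\mathcal{N}_h(\widetilde{u},\eta)|\le \|\eta\|_{2,h}\sup_{\phi_h}\mathcal{N}_h(\widetilde{u},\phi_h)/\|\phi_h\|_{2,h}$. Dividing by $\|\eta\|_{2,h}$ and applying the triangle inequality yields~\eqref{strang:type}.

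For the rate, we apply Lemma~\ref{Int_poly} with $t=s$. This gives $|\widetilde{u}-\widetilde{u}_I|_{1,\Omega}\le Ch^{1+s}$, while $|\widetilde{u}-\widetilde{u}_I|_{2,h}$ and $|\widetilde{u}-\widetilde{u}_\pi|_{2,h}$ are both $\le Ch^{s}|\widetilde{u}|_{2+s,\Omega}$. The preceding non-conformity lemma bounds the supremum term by $Ch^{\min(1,s)}(\|\widetilde{u}\|_{2+s,\Omega}+\|f\|_{1,\Omega})=Ch^s(\cdots)$, since $s\le1$.

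The main obstacle is absorbing $\|f\|_{1,\Omega}$ into $\|\widetilde{u}\|_{2+s,\Omega}$ to reach the stated final form. One option is to estimate the boundary part of $\mathcal{N}_h$ more carefully: $b(\widetilde{u}-f,v_h-E_hv_h)$ is of order $h$ times the traces, and the term with $f$ can be kept as a data term. If the final statement is meant literally, the cleanest route is to note that in the spectral application $f$ is itself an eigenfunction or is measured in the $H^1$ norm, so the regularity estimate $\|Tf\|_{2+s}\le C\|f\|_1$ gives the bound $Ch^s\|f\|_{1,\Omega}$. Either reading is then obtained by collecting the above terms. A second delicate point is ensuring that $\|\cdot\|_{2,h}$ and $|||\cdot|||_{2,h}$ are equivalent on $V_h^{nc}$ uniformly in $h$; this relies on the cited Lemma~5.1 of \cite{zhao2018morley} together with the trace term.
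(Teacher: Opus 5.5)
Your argument is essentially the paper's proof: the same splitting with $\eta=\widetilde{u}_h-\widetilde{u}_I$, discrete coercivity, the discrete source equation together with the definition of $\mathcal{N}_h$ to bring in $\widetilde{u}$, and polynomial consistency with $\widetilde{u}_\pi$, arriving at the same identity and the same term-by-term bounds. Your caution about $\|f\|_{1,\Omega}$ is justified---the paper simply asserts $\mathcal{N}(\widetilde{u},\eta)\le Ch^s\|\widetilde{u}\|_{2+s,\Omega}|\eta|_{2,h}$ and drops the data term of the non-conformity lemma, so both arguments actually give $Ch^s(\|\widetilde{u}\|_{2+s,\Omega}+\|f\|_{1,\Omega})$ (the form later used in Theorem~\ref{gapr}); the literal bound is recovered when $Tf=\mu f$ through $\|f\|_{1,\Omega}=\mu^{-1}\|\widetilde{u}\|_{1,\Omega}\le\mu^{-1}\|\widetilde{u}\|_{2+s,\Omega}$, not through the regularity estimate you invoke, which instead yields $Ch^s\|f\|_{1,\Omega}$.
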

%
\begin{proof}
  We add and subtract the interpolated field $\widetilde{u}_I$ and we
  split the error as follows:
  $\widetilde{u}-\widetilde{u}_h:=\widetilde{u}-\widetilde{u}_I+\widetilde{u}_I-\widetilde{u}_h$.
  We bound the term $\widetilde{u}-\widetilde{u}_I$ by employing the
  approximation properties of $\widetilde{u}_I$, cf.
  Lemma~\ref{Int_poly}.
  Hence, we focus on the term $\eta:=\widetilde{u}_h-\widetilde{u}_I$.
  Using the coercivity of the bilinear form
  $\widehat{a}(\cdot,\cdot)$, we obtain that
%
  \begin{align*}
    \hspace{-5cm}
    &
    C(C_\alpha) \|\widetilde{u}_h-\widetilde{u}_I\|^2_{2,h} \leq \widehat{a}_h(\widetilde{u}_h - \widetilde{u}_I, \eta) = \widehat{a}_h(\widetilde{u}_h, \eta)-\widehat{a}_h(\widetilde{u}_I, \eta)\\
    &\quad = b(f,\eta)-\sum_{K\in \Omega_h} a_h^K(\widetilde{u}_I, \eta)-b(\widetilde{u}_I,\eta) \\
    &\quad = b(f,\eta)-\sum_{K\in \Omega_h} \Big( a_h^K(\widetilde{u}_I-\widetilde{u}_{\pi}, \eta) + a_h^K(\widetilde{u}_{\pi}, \eta) \Big) -b(\widetilde{u}_I,\eta) \\
    &\quad = b(f,\eta)-\sum_{K\in \Omega_h} \Big( a_h^K(\widetilde{u}_I-\widetilde{u}_{\pi}, \eta) + a^K(\widetilde{u}_{\pi}, \eta) \Big) -b(\widetilde{u}_I,\eta) \\
    &\quad = -b(\widetilde{u}_{I},\eta)-\sum_{K\in \Omega_h} \Big( a_h^K(\widetilde{u}_I-\widetilde{u}_{\pi}, \eta) + a^K(\widetilde{u}_{\pi}-\widetilde{u}, \eta) \Big) - \bigg(  \sum_{K\in \Omega_h} a^K(\widetilde{u}, \eta) - b(f,\eta) \bigg) \\
    &\quad = -b(\widetilde{u}_{I},\eta)-\sum_{K\in \Omega_h} \Big( a_h^K(\widetilde{u}_I-\widetilde{u}_{\pi}, \eta) + a^K(\widetilde{u}_{\pi}-\widetilde{u}, \eta) \Big) - \bigg(  \sum_{K\in \Omega_h} \widehat{a}^K(\widetilde{u}, \eta) -b(\widetilde{u},\eta) - b(f,\eta) \bigg) \\
    &\quad =  b(\widetilde{u},\eta)-b(\widetilde{u}_{I},\eta)-\sum_{K\in \Omega_h} \Big( a_h^K(\widetilde{u}_I-\widetilde{u}_{\pi}, \eta) + a^K(\widetilde{u}_{\pi}-\widetilde{u}, \eta) \Big) - \bigg(  \sum_{K\in \Omega_h} \widehat{a}^K(\widetilde{u}, \eta) - b(f,\eta) \bigg) \\
    &\quad = b(\widetilde{u}-\widetilde{u}_I, \eta)-\sum_{K\in \Omega_h} \Big( a_h^K(\widetilde{u}_I-\widetilde{u}_{\pi}, \eta) + a^K(\widetilde{u}_{\pi}-\widetilde{u},\eta) \Big) - \mathcal{N}(\widetilde{u},\eta)\\
    &\quad \leq \Big( |\widetilde{u}-\widetilde{u}_I|_{1,\Omega}+|\widetilde{u}_I-\widetilde{u}_{\pi}|_{2,h}+|\widetilde{u}_{\pi}-\widetilde{u}|_{2,h} \Big)|\eta|_{2,h} - \mathcal{N}(\widetilde{u},\eta)\\
    &\quad \leq \Big(|\widetilde{u}-\widetilde{u}_I|_{1,\Omega} + |\widetilde{u}_I-\widetilde{u}|_{2,h} + 2|\widetilde{u}-\widetilde{u}_{\pi}|_{2,h}  \Big)\,|\eta|_{2,h} - \mathcal{N}(\widetilde{u},\eta).
  \end{align*}
  We note that
  \begin{align*}
    \mathcal{N}(\widetilde{u},\eta)
    \leq C h^s \|\widetilde{u}\|_{2+s,\Omega} |\eta|_{2,\Omega}.
  \end{align*}
  By inserting the approximation properties of the interpolation
  operator $\widetilde{u}_I$ and exploiting the polynomial
  approximation property of $\widetilde{u}_{\pi}$ (Lemma~\ref{Int_poly}), we derive the order
  of convergence in the $\vert \cdot \vert_{2,\Omega}$ norm as follows
  \begin{align}
    |\widetilde{u}-\widetilde{u}_h|_{2,h} \leq C h^{s}
    \|\widetilde{u}\|_{2+s,\Omega}.
  \end{align}
\end{proof}
$\bullet$ \textit{\bf Convergence of the source problem in $H^1$ norm}.
%
Now, we conduct the convergence analysis in the $H^1$
norm. To perform the convergence analysis, we employ the enrichment operator $E_h$. 
\begin{theorem}
\label{H1:Nonconf:Th}
  Let Assumption~\ref{Mesh:regularity} be satisfied. Further, let
  $\widetilde{u}$, and $\widetilde{u}_h$ be the unique solutions to
  Problems~\ref{weak:source}, and \ref{Disc_VEP_BEP}
  respectively. Then,
  there exists a positive constant $C$, independent of $h$ such that
  \begin{equation}
    |\widetilde{u}-\widetilde{u}_h|_{1,\Omega} \leq C h^{2s}|\widetilde{u}|_{2+s,\Omega}.
  \end{equation}
\end{theorem}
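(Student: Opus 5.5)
We argue by duality, following the conforming $H^1$ estimate of Theorem~\ref{Con_Anal_H2}, and use the enriching operator $E_h$ to handle the fact that $\widetilde{u}-\widetilde{u}_h\notin V$. Let $\phi\in V$ solve the auxiliary problem \eqref{label:H1}, with $\widetilde{u}_h$ now the non-conforming solution, so that $\|\phi\|_{2+s,\Omega}\le C|\widetilde{u}-\widetilde{u}_h|_{1,\Omega}$ by \eqref{eq:Th41:regularity}. Since \eqref{label:H1} can only be tested with $\omega\in V$, we split
\begin{equation*}
|\widetilde{u}-\widetilde{u}_h|_{1,\Omega}^2=\int_\Omega\nabla(\widetilde{u}-\widetilde{u}_h)\cdot\nabla(\widetilde{u}-E_h\widetilde{u}_h)+\int_\Omega\nabla(\widetilde{u}-\widetilde{u}_h)\cdot\nabla(E_h\widetilde{u}_h-\widetilde{u}_h).
\end{equation*}
For the second term, Lemma~\ref{Eh:ADEh:ADI} gives $|E_h\widetilde{u}_h-\widetilde{u}_h|_{1,\Omega}\le Ch|\widetilde{u}_h|_{2,h}$. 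This is only $O(h)$, which is insufficient when $2s>1$. We therefore refine it by inserting $\widetilde{u}_I$ (the conforming interpolant, so that $E_h\widetilde{u}_I$ is controlled) and writing $\widetilde{u}_h-E_h\widetilde{u}_h=(I-E_h)(\widetilde{u}_h-\widetilde{u}_I)+(I-E_h)\widetilde{u}_I$. Then Lemma~\ref{Eh:ADEh:ADI}, combined with Theorem~\ref{Noncon_anal} and Lemma~\ref{Int_poly}, gives $|(I-E_h)(\widetilde{u}_h-\widetilde{u}_I)|_{1,\Omega}\le Ch\,h^s\|\widetilde{u}\|_{2+s,\Omega}$. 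The remaining part $|(I-E_h)\widetilde{u}_I|_{1,\Omega}$ is bounded by $Ch|\widetilde{u}_I-\widetilde{u}_\pi|_{2,h}\le Ch^{1+s}$, using that $E_h$ is exact on smooth interpolants up to local polynomial errors. Since $1+s\ge 2s$, this term is $O(h^{2s})|\widetilde{u}-\widetilde{u}_h|_{1,\Omega}$.

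For the first term we use \eqref{label:H1} with $\omega=\widetilde{u}-E_h\widetilde{u}_h\in V$, which gives $\widehat{a}(\phi,\widetilde{u}-E_h\widetilde{u}_h)$. We split it as
\begin{equation*}
\widehat{a}(\phi,\widetilde{u}-E_h\widetilde{u}_h)=\sum_K\widehat{a}^K(\phi,\widetilde{u}-\widetilde{u}_h)+\sum_K\widehat{a}^K(\phi,\widetilde{u}_h-E_h\widetilde{u}_h).
\end{equation*}
The second sum is handled by Lemma~\ref{Eh:ADEh:ADI} (second item), applied to $v_h=\widetilde{u}_h-\widetilde{u}_I$ and its $b$-part through trace and $H^1$ bounds. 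This yields $Ch^s\|\phi\|_{2+s,\Omega}|\widetilde{u}_h-\widetilde{u}_I|_{2,h}$ plus a contribution from $\widetilde{u}_I$ of the same order, hence $O(h^{2s})$. For the first sum we insert $\phi_I\in V_h^{nc}$. The piece $\sum_K\widehat{a}^K(\phi-\phi_I,\widetilde{u}-\widetilde{u}_h)$ is bounded by $Ch^s\|\phi\|_{2+s}\cdot h^s\|\widetilde{u}\|_{2+s}$ using Lemma~\ref{Int_poly} and Theorem~\ref{Noncon_anal}, with the boundary part giving a higher order. For the piece $\sum_K\widehat{a}^K(\phi_I,\widetilde{u}-\widetilde{u}_h)$, we use the discrete equation and the definition of $\mathcal{N}_h$ to write it as $\mathcal{N}_h(\widetilde{u},\phi_I)+[a_h(\widetilde{u}_h,\phi_I)-a(\widetilde{u}_h,\phi_I)]$. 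The bracket is treated exactly as in \eqref{H1:conf:consistency} via Lemma~\ref{lemma:consistency:error}, giving $Ch^{2s}\|\widetilde{u}\|_{2+s}\|\phi\|_{2+s}$.

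The main obstacle is the consistency term $\mathcal{N}_h(\widetilde{u},\phi_I)$. The preceding lemma gives only $h^s|\phi_I|_{2,h}$, which is not enough. We instead rewrite it with the enriching operator as $a(\widetilde{u},\phi_I-E_h\phi_I)+b(\widetilde{u}-f,\phi_I-E_h\phi_I)$. Since $\phi$ is smooth, $\phi_I-E_h\phi_I=(\phi_I-\phi)-(E_h\phi_I-\phi)$ has broken $H^2$-size $O(h^s)\|\phi\|_{2+s}$, so we aim to prove a refined version of the second item of Lemma~\ref{Eh:ADEh:ADI}, namely $a(w,v_h-E_hv_h)\le Ch^s\|w\|_{2+s}\,|v_h-v|_{2,h}$ for smooth $v$. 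We would establish this via the integration-by-parts identity (edge moments of $\partial_{\bn}$ vanish across edges) used in \cite{adak2023morley}. Alternatively, we would use directly the lemma stating $a(\phi,\chi-\chi_I)\le Ch^{2s}|\phi|_{2+s}|\chi|_{2+s}$. The boundary terms cost $h\cdot h^s$ through the $H^1$ bound on $(I-E_h)$. Collecting all contributions and dividing by $|\widetilde{u}-\widetilde{u}_h|_{1,\Omega}$ yields the claim.
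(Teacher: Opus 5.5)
\paragraph{A gap in the treatment of $(I-E_h)\widetilde{u}_I$.}
The gap comes from applying $E_h$ to $\widetilde{u}_h$ instead of to $\delta_h=\widetilde{u}_h-\widetilde{u}_I$. Your splitting leaves the term $(I-E_h)\widetilde{u}_I$ in two places, and both must be $O(h^{2s})$: inside $|\widetilde{u}_h-E_h\widetilde{u}_h|_{1,\Omega}$, and inside $\widehat{a}(\phi,(I-E_h)\widetilde{u}_h)$, including its boundary part. Lemma~\ref{Eh:ADEh:ADI} only bounds them through $|\widetilde{u}_I|_{2,h}=O(1)$, i.e.\ by $Ch$ and $Ch^{s}\|\phi\|_{2+s,\Omega}$. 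Since $s>1/2$, neither is $O(h^{2s})$.

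Your justification does not repair this. $E_h$ is defined on $V_h^{nc}$, and $V_h^{c}\not\subset V_h^{nc}$ (cubic versus quadratic edge traces). So $\widetilde{u}_I$ cannot be ``the conforming interpolant'' on which $E_h$ acts as the identity. Moreover, a bound $|(I-E_h)v_h|_{1,\Omega}\le Ch|v_h-v_\pi|_{2,h}$ with $v_\pi$ a broken quadratic cannot be a general property of $E_h$. It would force $E_hv_h=v_h$ for every continuous piecewise quadratic $v_h\in V_h^{nc}$, and such functions are in general not $C^1$, while $E_hv_h\in H^2(\Omega)$. What is true is a medius-type estimate in terms of the distance of $v_h$ to $H^2$ functions (vertex jumps of $\Pi^0v_h$ and $\nabla\Pi^0v_h$, controlled via $\widetilde{u}\in C^1$). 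You would also need a sharpened second item of Lemma~\ref{Eh:ADEh:ADI} with $|v_h|_{2,h}$ replaced by that distance. Both would have to be proved, and neither is available here. The same difficulty affects your primary treatment of $\mathcal{N}_h(\widetilde{u},\phi_I)$ through $\phi_I-E_h\phi_I$.

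\paragraph{The fix.}
Use the paper's decomposition.
\begin{itemize}
\item Start from $|\widetilde{u}-\widetilde{u}_h|_{1,\Omega}\le|\widetilde{u}-\widetilde{u}_I|_{1,\Omega}+|(I-E_h)\delta_h|_{1,\Omega}+|E_h\delta_h|_{1,\Omega}$. The middle term is $O(h^{1+s})$ by Lemma~\ref{Eh:ADEh:ADI}, because $|\delta_h|_{2,h}=O(h^s)$ by Theorem~\ref{Noncon_anal}.
\item Run the duality argument only for $E_h\delta_h\in V$, with the dual problem driven by $\nabla E_h\delta_h$. Split $\widehat{a}(\phi,E_h\delta_h)=\widehat{a}(\phi,E_h\delta_h-\delta_h)+\widehat{a}(\phi,\delta_h)$.
\item The first piece is again Lemma~\ref{Eh:ADEh:ADI} with $|\delta_h|_{2,h}=O(h^s)$.
\item In the second piece, the interpolant enters only as $\widehat{a}(\phi,\widetilde{u}-\widetilde{u}_I)$. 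This is $O(h^{2s})$ by the lemma $a(\phi,\chi-\chi_I)\le Ch^{2s}|\phi|_{2+s,\Omega}|\chi|_{2+s,\Omega}$, which is a property of the interpolant, not of $E_h$. The remaining terms are the ones you already handle.
\end{itemize}
For $\mathcal{N}_h(\widetilde{u},\phi_I)$, your ``alternative'' is the right one and is what the paper does. Since $\phi\in V$ gives $\widehat{a}(\widetilde{u},\phi)=b(f,\phi)$, one has $\mathcal{N}_h(\widetilde{u},\phi_I)=a(\widetilde{u},\phi_I-\phi)+b(\widetilde{u}-f,\phi_I-\phi)$. The interpolation lemma plus a trace inequality then give $O(h^{2s})$ without any enriching operator.
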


\begin{proof}
By adding and subtracting the interpolated field $\widetilde{u}_{I}$,
we split the error as follows
\begin{equation}
  \widetilde{u}_h-\widetilde{u}=\widetilde{u}_h-\widetilde{u}_{I}+\widetilde{u}_I-\widetilde{u}=\widetilde{u}_I-\widetilde{u}+\delta_h-E_h \delta_h+E_h \delta_h,
\end{equation} 
$\delta_h=\widetilde{u}_h-\widetilde{u}_{I} $.
By employing the triangle inequality and Lemma~\ref{Eh:ADEh:ADI}, we
obtain that
\begin{equation}
  \label{Err:H1}
  \begin{split}
    |\widetilde{u}-\widetilde{u}_h|_{1,\Omega} & \leq |\widetilde{u}_I-\widetilde{u}|_{1,\Omega}+|\delta_h- E_h \delta_h|_{1,\Omega}+|E_h \delta_h|_{1,\Omega} \\
    & \leq C h^{1+s} |\widetilde{u}|_{2+s,\Omega}+ |E_h \delta_h|_{1,\Omega}.
  \end{split}
\end{equation}
To derive an upper bound for the term $|E_h \delta_h|_{1,\Omega}$, we
consider the following auxiliary problems: Find $\phi \in V$ such that
\begin{equation}
  \label{aux:h1}
  \widehat{a}(\phi,\omega)=\int_{\Omega} \nabla E_h \delta_h \cdot \nabla \omega \qquad \forall \omega \in V.
\end{equation}
Since the problem \eqref{aux:h1} is well-posed, by employing the
regularity result, we have
\begin{equation}
\label{reg:non_con:H1}
  \|\phi\|_{2+s,\Omega} \leq C \| \nabla E_h \delta_h \|_{0,\Omega}.
\end{equation}
By considering $\omega=\nabla E_h \delta_h \in V_h^C \subset
H^2(\Omega)$ in \eqref{aux:h1}, we obtain that
\begin{equation}
\label{Bound:EH}
\|\nabla E_h \delta_h \|_{0,\Omega}^2=\widehat{a}(\phi, E_h \delta_h )=\widehat{a}(\phi, E_h \delta_h-\delta_h)+\widehat{a}(\phi, \delta_h):=T_1+T_2.
\end{equation}
By applying Lemma~\ref{Eh:ADEh:ADI}, we bound the term $T_1$ as
\begin{equation}
\label{Bound:T1}
\begin{split}
|T_1|=|\widehat{a}(\phi, E_h \delta_h-\delta_h)|&\leq |a(\phi, E_h \delta_h-\delta_h)|+|b(\phi, E_h \delta_h-\delta_h)| \\
& \leq C h^s |\phi|_{2+s,\Omega} |\delta_h|_{2,h}+C |\phi|_{1,\Omega}|E_h \delta_h-\delta_h|_{1,\Omega} \\
& \leq Ch^{2s} |\phi|_{2+s,\Omega} |\widetilde{u}|_{2+s,\Omega}+C h^{1+s} |\phi|_{1,\Omega} |\widetilde{u}|_{2+s,\Omega} \\
& \leq C h^{2s} |\phi|_{2+s,\Omega} |\widetilde{u}|_{2+s,\Omega} \leq  C h^{2s} \|\nabla E_h \delta_h \|_{0,\Omega} |\widetilde{u}|_{2+s,\Omega}.
\end{split}
\end{equation} 
To derive an upper bound of the term $T_2$, we first note that
\begin{equation}
\label{Bound:T2}
T_2=\widehat{a}(\phi, \delta_h)=\widehat{a}(\widetilde{u}-\widetilde{u}_I, \phi)+\widehat{a}(\widetilde{u}_h-\widetilde{u}, \phi-\phi_I)+\widehat{a}(\widetilde{u}_h-\widetilde{u},\phi_I).
\end{equation}
Then, an application of Lemma~\ref{Eh:ADEh:ADI} yields the following upper bound for the first term:
\begin{equation}
\label{bound:A1}
\begin{split}
\widehat{a}(\widetilde{u}-\widetilde{u}_I, \phi) &=a (\widetilde{u}-\widetilde{u}_I, \phi)+ b(\widetilde{u}-\widetilde{u}_I, \phi) \\
& \leq C h^{2s} |\widetilde{u}|_{2+s,\Omega} \|\phi\|_{2+s,\Omega}+C |\widetilde{u}-\widetilde{u}_I|_{1,\Omega} |\phi|_{1,\Omega} \\
& \leq C h^{2s} |\widetilde{u}|_{2+s,\Omega} \|\phi\|_{2+s,\Omega} \leq C h^{2s}~\|\nabla E_h \delta_h \|_{0,\Omega} |\widetilde{u}|_{2+s,\Omega}.
\end{split}
\end{equation}
Further, by employing Theorem~\ref{Noncon_anal}, and approximation property of interpolant $u_I$, we derive that
\begin{equation}
\label{bound:A2}
\widehat{a}(\widetilde{u}_h-\widetilde{u}, \phi-\phi_I) \leq C h^{2s} (|\widetilde{u}|_{2+s,\Omega} +\|f\|_{1,\Omega})|\phi|_{2+s,\Omega} \leq C h^{2s} (|\widetilde{u}|_{2+s,\Omega} +\|f\|_{1,\Omega}) \|\nabla E_h \delta_h \|_{0,\Omega}.
\end{equation} 
Further, we rewrite the term $\widehat{a}(\widetilde{u}_h-\widetilde{u},\phi_I)$ as follows
\begin{equation*}
\begin{split}
\widehat{a}(\widetilde{u}_h-\widetilde{u},\phi_I)&=\widehat{a}(\widetilde{u}_h,\phi_I)-\widehat{a}(\widetilde{u},\phi_I) \\
& =\widehat{a}(\widetilde{u}_h,\phi_I)-\widehat{a}_h(\widetilde{u}_h,\phi_I)+b(f,\phi_I-\phi)+\widehat{a}(\widetilde{u},\phi-\phi_I).
\end{split}
\end{equation*}
Further, upon using properties of interpolation operator $\phi_I$, trace inequality, and regularity estimate~\eqref{reg:non_con:H1}, we derive that
\begin{equation}
\label{bound:F}
b(f, \phi_I-\phi) \leq C h^{2s} \|f\|_{0, \partial \Omega}~\|\phi\|_{2+s,\Omega} \leq C h^{2s} \|f\|_{1, \Omega} \|\nabla E_h \delta_h \|_{0,\Omega}.
\end{equation}
By using Lemma~\ref{Eh:ADEh:ADI}, we bound the term as follows
\begin{equation}
\label{bound:F2}
\begin{split}
\widehat{a}(\widetilde{u},\phi-\phi_I)&=a(\widetilde{u},\phi-\phi_I)+b(\widetilde{u},\phi-\phi_I) \\
& \leq C h^{2s} \|\widetilde{u}\|_{2,\Omega} \|\phi\|_{2+s,\Omega} \leq C h^{2s} \|\widetilde{u}\|_{2+s,\Omega} \|\nabla E_h \delta_h \|_{0,\Omega}.
\end{split}
\end{equation}
By using the polynomial consistency of $\widehat{a}_h(\cdot,\cdot)$, the polynomial approximation property of $u_{\pi}$, and Theorem~\ref{Noncon_anal}, we rewrite the difference as follows
\begin{equation}
\label{bound:diff}
\begin{split}
\widehat{a}(\widetilde{u}_h,\phi_I)-\widehat{a}_h(\widetilde{u}_h,\phi_I)&=\widehat{a}(\widetilde{u}_h-\widetilde{u}_{\pi},\phi_I-\phi_{\pi})-\widehat{a}_h(\widetilde{u}_h-\widetilde{u}_{\pi},\phi_I-\phi_{\pi}) \\
& \leq C h^{2s} (|\widetilde{u}|_{2+s,\Omega} +\|f\|_{1,\Omega}) \|\phi\|_{2+s,\Omega}  \\
&\leq C h^{2s} (|\widetilde{u}|_{2+s,\Omega} +\|f\|_{1,\Omega}) \|\nabla E_h \delta_h \|_{0,\Omega}.
\end{split}
\end{equation}
By inserting \eqref{Bound:T1}-\eqref{bound:diff} into \eqref{Bound:EH}, we 
estimate
\begin{equation}
\|\nabla E_h \delta_h \|_{0,\Omega} \leq C h^{2s}(\|\widetilde{u}\|_{2+s,\Omega}+\|f\|_{1,\Omega}).
\end{equation}
Upon inserting the estimate of $\|\nabla E_h \delta_h \|_{0,\Omega}$  into \eqref{Err:H1}, we derive the result
\begin{equation}
|\widetilde{u}-\widetilde{u}_h|_{1,\Omega} \leq C h^{2s} (\|\widetilde{u}\|_{2+s,\Omega}+ \|f\|_{1,\Omega}).
\end{equation}

\end{proof}
%
$\bullet$ \textit{\bf Convergence of the source problem in $L^2$ norm}.
In addition to the previous estimate, we would like to derive the error estimate of  Problem~\ref{source:discrete} in the $L^2$-norm. 
Following \cite[Lemma~4.3]{adak2023conforming} and  \cite[Theorem~5.1]{chinosi2016virtual}, we estimate $\|\widetilde{u}-\widetilde{u}_h\|_{0,\Omega}$. Since most of the arguments for proving the  $L^2$ estimate are used in Theorem~\ref{H1:Nonconf:Th}, we briefly state the proof.
\begin{theorem}
Let the mesh regularity stated in Assumption~\ref{Mesh:regularity} be satisfied. Let $\widetilde{u}$ and $\widetilde{u}_h$ be the unique solutions of Problem \ref{pr:eigen1} and \ref{Disc_VEP_BEP}, respectively. Then, there exists a positive constant $C$, independent of $h$ such that
\begin{equation}
\|\widetilde{u}-\widetilde{u}_h\|_{0,\Omega} \leq C h^{2s} (\|\widetilde{u}\|_{2+s,\Omega}+ \|f\|_{1,\Omega}).
\end{equation}
\end{theorem}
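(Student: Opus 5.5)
We follow the same scheme as in the proof of Theorem~\ref{H1:Nonconf:Th}, replacing the $H^1$ duality problem by an $L^2$ one. First we split the error with the interpolant $\widetilde{u}_I\in V_h^{nc}$ and the enriching operator: setting $\delta_h:=\widetilde{u}_h-\widetilde{u}_I$, we write
\begin{equation*}
\widetilde{u}-\widetilde{u}_h=(\widetilde{u}-\widetilde{u}_I)-(\delta_h-E_h\delta_h)-E_h\delta_h .
\end{equation*}
By Lemma~\ref{Int_poly}, $\|\widetilde{u}-\widetilde{u}_I\|_{0,\Omega}\le Ch^{2+s}|\widetilde{u}|_{2+s,\Omega}$. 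By the first item of Lemma~\ref{Eh:ADEh:ADI}, $\|\delta_h-E_h\delta_h\|_{0,\Omega}\le Ch^2|\delta_h|_{2,h}$. Theorem~\ref{Noncon_anal} together with Lemma~\ref{Int_poly} gives $|\delta_h|_{2,h}\le Ch^s(\|\widetilde{u}\|_{2+s,\Omega}+\|f\|_{1,\Omega})$, so this term is $O(h^{2+s})$. It therefore suffices to bound $\|E_h\delta_h\|_{0,\Omega}$ by $Ch^{2s}(\|\widetilde{u}\|_{2+s,\Omega}+\|f\|_{1,\Omega})$.

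For this we introduce the dual problem: find $\phi\in V$ with $\widehat{a}(\phi,\omega)=\int_\Omega E_h\delta_h\,\omega$ for all $\omega\in V$. By the regularity stated for \eqref{Regularity:L2}, $\|\phi\|_{2+s,\Omega}\le C\|E_h\delta_h\|_{0,\Omega}$. Since $E_h\delta_h\in V_h^c\subset V$, we test with $\omega=E_h\delta_h$ and split
\begin{equation*}
\|E_h\delta_h\|_{0,\Omega}^2=\widehat{a}(\phi,E_h\delta_h-\delta_h)+\widehat{a}(\phi,\delta_h)=:T_1+T_2,
\end{equation*}
exactly as in \eqref{Bound:EH}. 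For $T_1$, the $a$-part is handled by the second item of Lemma~\ref{Eh:ADEh:ADI}, which gives $Ch^s\|\phi\|_{2+s,\Omega}|\delta_h|_{2,h}\le Ch^{2s}\|\phi\|_{2+s,\Omega}(\dots)$. The $b$-part is handled by the trace inequality together with $|E_h\delta_h-\delta_h|_{1,\Omega}+\|E_h\delta_h-\delta_h\|_{0,\Omega}\le Ch|\delta_h|_{2,h}$, which gives $O(h^{1+s})$. For $T_2$ we use the decomposition \eqref{Bound:T2} and bound each piece exactly as in \eqref{bound:A1}--\eqref{bound:diff}:
\begin{itemize}
\item the interpolation term uses the lemma $a(\phi,\chi-\chi_I)\le Ch^{2s}|\phi|_{2+s}|\chi|_{2+s}$;
\item the term $\widehat{a}(\widetilde{u}_h-\widetilde{u},\phi-\phi_I)$ uses Theorem~\ref{Noncon_anal};
\item the consistency term $\widehat{a}-\widehat{a}_h$ uses polynomial consistency and Lemma~\ref{lemma:consistency:error};
\item the load mismatch $b(f,\phi_I-\phi)$ uses the trace inequality with $|\phi-\phi_I|_{1,\Omega}\le Ch^{1+s}$.
\end{itemize}
Each piece is bounded by $Ch^{2s}(\|\widetilde{u}\|_{2+s,\Omega}+\|f\|_{1,\Omega})\|\phi\|_{2+s,\Omega}$. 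Dividing by $\|E_h\delta_h\|_{0,\Omega}$ then closes the argument.

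The main obstacle is the same one as in the $H^1$ case. Since $\widetilde{u}_h\notin V$, the identity $\widehat{a}(\widetilde{u}-\widetilde{u}_h,\phi_I)=\widehat{a}_h(\widetilde{u}_h,\phi_I)-\widehat{a}(\widetilde{u}_h,\phi_I)+\dots$ must be written with the broken form $\sum_K\widehat{a}^K$, and the continuous equation is tested with $\phi_I\in V_h^{nc}\not\subset V$. This produces a nonconformity term $\mathcal{N}_h(\widetilde{u},\phi_I)$, which must be shown to be $O(h^{2s})$ rather than $O(h^s)$. To get this we write $\mathcal{N}_h(\widetilde{u},\phi_I)=\mathcal{N}_h(\widetilde{u},\phi_I-E_h\phi_I)$, using that $\mathcal{N}_h$ vanishes on $V$. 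The key point is then that $|\phi_I-E_h\phi_I|_{2,h}$, and not merely $|\phi_I|_{2,h}$, enters the bound. Since $\phi\in V$, we have $|\phi_I-E_h\phi_I|_{2,h}\le C(|\phi-\phi_I|_{2,h}+|\phi-E_h\phi_I|_{2,\Omega})\le Ch^s\|\phi\|_{2+s,\Omega}$, where the bound on $\phi-E_h\phi_I$ is the standard enriching-operator estimate; we assume it, as in \cite{huang2021medius}, since the paper does not state it. Combined with the $h^s$ factor from the second item of Lemma~\ref{Eh:ADEh:ADI}, this yields the required $h^{2s}$ rate.
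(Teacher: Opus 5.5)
\textbf{Overall.} Most of your argument is sound. In one respect it is more careful than the paper's own proof. The paper poses the $L^2$ dual problem with datum $\delta_h$ and then tests it with $\omega=\delta_h$, which does not belong to $V$. You instead test with $E_h\delta_h\in V_h^c\subset V$, as in the paper's $H^1$ proof. This makes $\|E_h\delta_h\|_{0,\Omega}^2=\widehat a(\phi,E_h\delta_h)$ legitimate, and the leftover $\delta_h-E_h\delta_h$ is indeed $O(h^{2+s})$.

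\textbf{The gap.} It is in your last paragraph, at the step you call the key point.

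The second item of Lemma~\ref{Eh:ADEh:ADI} bounds $a(w,v_h-E_hv_h)$ by $Ch^s\|w\|_{2+s,\Omega}|v_h|_{2,h}$ for $v_h\in V_h^{nc}$. With $v_h=\phi_I$ this gives only $Ch^s\|\widetilde u\|_{2+s,\Omega}\|\phi\|_{2+s,\Omega}$, because $|\phi_I|_{2,h}=O(1)$.

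Nothing in that lemma lets $|\phi_I-E_h\phi_I|_{2,h}$ replace $|\phi_I|_{2,h}$. You also cannot apply it with $v_h:=\phi_I-E_h\phi_I$: that function is not in $V_h^{nc}$ (its edge traces are cubic), and $E_h$ is not defined on it. What you actually need is a stronger, medius-type estimate such as
\[
\sum_K a^K(w,v_h-E_hv_h)\le Ch^s\|w\|_{2+s,\Omega}\inf_{\psi\in V}|v_h-\psi|_{2,h}.
\]
Proving this would itself require enriching bounds the paper does not provide, for instance $|\phi_I-E_h\phi_I|_{1,h}\le Ch^{1+s}\|\phi\|_{2+s,\Omega}$.

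The boundary part $b(\widetilde u-f,\phi_I-E_h\phi_I)$ of $\mathcal N_h$ has the same defect. The first item of Lemma~\ref{Eh:ADEh:ADI} only gives $|\phi_I-E_h\phi_I|_{1,\Omega}\le Ch|\phi_I|_{2,h}$, i.e.\ $O(h)$, and this is weaker than $h^{2s}$ when $s>1/2$. So, as written, the $h^{2s}$ rate for the nonconformity term is not established.

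\textbf{The fix.} It is simpler than your route, and it is exactly what the paper does in \eqref{bound:F}--\eqref{bound:F2}. Since $\mathcal N_h(\widetilde u,\cdot)$ vanishes on $V$ and $\phi$ itself lies in $V$, subtract $\phi$ rather than $E_h\phi_I$:
\[
\mathcal N_h(\widetilde u,\phi_I)=\sum_{K\in\CTh}\widehat a^K(\widetilde u,\phi_I-\phi)-b(f,\phi_I-\phi).
\]
The $a$-part of the sum is bounded by the interpolation lemma $a(\widetilde u,\phi-\phi_I)\le Ch^{2s}|\widetilde u|_{2+s,\Omega}|\phi|_{2+s,\Omega}$. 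This is the same lemma you already use for $\widehat a(\widetilde u-\widetilde u_I,\phi)$, with the two functions' roles swapped. The $b$-parts are $O(h^{1+s})$ by the trace inequality and $\|\phi-\phi_I\|_{1,\Omega}\le Ch^{1+s}\|\phi\|_{2+s,\Omega}$.

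Note also that keeping both your bullet $b(f,\phi_I-\phi)$ and a separate $\mathcal N_h(\widetilde u,\phi_I)$ counts the load mismatch twice. The consistent splitting is
\[
\widehat a(\widetilde u_h-\widetilde u,\phi_I)=\big[\widehat a-\widehat a_h\big](\widetilde u_h,\phi_I)+b(f,\phi_I-\phi)+\widehat a(\widetilde u,\phi-\phi_I).
\]
Here the continuous equation is tested only with $\phi\in V$, never with $\phi_I$, so no additional enriching-operator estimate is needed.
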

\begin{proof}
    By adding and subtracting the interpolated field $\widetilde{u}_{I}$,
we split the error as follows
\begin{equation}
  \widetilde{u}_h-\widetilde{u}=\widetilde{u}_h-\widetilde{u}_{I}+\widetilde{u}_I-\widetilde{u}=\widetilde{u}_I-\widetilde{u}+\delta_h,
\end{equation} 
$\delta_h=\widetilde{u}_h-\widetilde{u}_{I} $.
By employing the triangle inequality and Lemma~\ref{Int_poly}, we
obtain that
\begin{equation}
  \label{Err:L2}
  \begin{split}
    \|\widetilde{u}-\widetilde{u}_h\|_{0,\Omega} & \leq \|\widetilde{u}_I-\widetilde{u}\|_{0,\Omega}+\|\delta_h\|_{0,\Omega} \\
    & \leq C h^{1+s} \|\widetilde{u}\|_{2+s,\Omega}+ \|\delta_h\|_{0,\Omega}.
  \end{split}
\end{equation}
To derive the estimate for $\|\delta_h\|_{0,\Omega}$, we define the following auxiliary problems. Find $\phi \in V$ such that
\begin{equation}
  \label{aux:L2}
  \widehat{a}(\phi,\omega)=\int_{\Omega} \delta_h  \omega \qquad \forall \omega \in V.
\end{equation}
Since the problem \eqref{aux:L2} is well-posed, by employing the
regularity result, we have
\begin{equation}
\label{reg:non_con:L1}
  \|\phi\|_{2+s,\Omega} \leq C \| \delta_h \|_{0,\Omega}.
\end{equation}
By considering $\omega=\delta_h$ in \eqref{aux:L2}, we obtain that
\begin{equation}
\label{Bound:EH:L2}
\| \delta_h \|_{0,\Omega}^2=\widehat{a}(\phi, \delta_h ).
\end{equation}
Following analogous arguments as in Theorem~\ref{H1:Nonconf:Th}, we derive the estimate
\begin{equation}
\label{estimate:del_h}
    \| \delta_h \|_{0,\Omega}\leq C h^{2s}~(\|\widetilde{u}\|_{2+s,\Omega}+ \|f\|_{1,\Omega}). 
\end{equation}
Inserting \eqref{estimate:del_h} into \eqref{Err:L2}, we derive the bound of $\widetilde{u}-\widetilde{u}_h$ in $L^2$-norm.
\end{proof}
%
\section{Spectral Convergence}
\label{SPC:CONVERGENCE}
This section will prove the convergence and establish error estimates
for the proposed VEM discretizations of the Steklov eigenvalue
problem.
To this end, we will prove that $T_h$ provides a correct spectral
approximation of $T$ using the classical theory for compact operators,
see, e.g.,~\cite{BO}.
Next, an immediate consequence of Theorem~\ref{Con_Anal_H2}, and
Theorem~\ref{H1:Nonconf:Th} is that both the conforming and
non-conforming methods approximate isolated parts of $sp(T)$ by
isolated parts of $sp(T_h)$.
Consequently, if $\mu$ is a nonzero eigenvalue of $T$ with algebraic
multiplicity $m$, there exist $m$ discrete eigenvalues
$\mu_h^{(1)},\ldots,\mu_h^{(m)}$ of $T_h$ (repeated according to their
respective multiplicities) that will converge to $\mu$ as $h$ goes to
zero.
Let $\mathcal{E}$ and $\mathcal{E}_h$ denote the eigenspaces
associated with the eigenvalue $\mu$ and the span of the eigenspaces
associated with $\mu_h^{(1)},\ldots,\mu_h^{(m)}$, respectively.
We recall that the \textit{gap} $\hat{\delta}$ between two closed
subspaces $\mathcal{X}$ and $\mathcal{Y}$ of $H^1(\Omega)$ is
\begin{align*}
  \hat{\delta}(\mathcal{X},\mathcal{Y})
  :=\max\left\{\delta(\mathcal{X},\mathcal{Y}),\delta(\mathcal{Y},\mathcal{X})\right\},
\end{align*}
where
\begin{align*}
  \delta(\mathcal{X},\mathcal{Y})
  :=\sup_{\mathbf{x}\in\mathcal{X}:\ 
\left\|x\right\|_{H^1( \Omega)}=1}\delta(x,\mathcal{Y}),
  \quad\text{with }\delta(x,\mathcal{Y}):=
  \inf_{y\in\mathcal{Y}}\|x-y\|_{H^1(\Omega)}.
\end{align*}
We also define
\begin{align*}
  \gamma_h:=\sup\limits_{v\in \mathcal{E}:\Vert
    v\Vert_{H^1( \Omega)}=1} \Vert(T-T_h)v\Vert_{H^1(
    \Omega)}
\end{align*}
and 
\begin{align*}
  \beta_h:=\sup\limits_{v\in \mathcal{E}:\Vert
    v\Vert_{H^2( \Omega)}=1} \Vert(T-T_h)v\Vert_{H^2(
    \Omega)}.
\end{align*}
The following error estimates for the approximation of eigenvalues and
eigenfunctions hold true.
The result can be obtained from the application of \cite[Theorems~7.1
  and~7.3]{BO}.

\begin{theorem}
  \label{gap}
  There exists a strictly positive constant $C$ such that
  \begin{align}
    \hat{\delta}(\mathcal{E},\mathcal{E}_h) 
    & \leq C \gamma_h,\nonumber\\
    \left|\mu-\mu_h^{(j)}\right|
    & \le C \beta_h^2 \qquad \forall j=1,\ldots,m.\nonumber
  \end{align}
\end{theorem}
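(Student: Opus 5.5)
The plan is to apply the Babu\v{s}ka--Osborn theory \cite[Theorems~7.1 and~7.3]{BO} to the family $\{T_h\}$ acting on $H^1(\Omega)$. First I will check its hypotheses. The operator $T:H^1(\Omega)\to H^1(\Omega)$ is compact and self-adjoint with respect to $\widehat{a}(\cdot,\cdot)$, as noted after Problem~\ref{weak:source}. The operator $T_h$ is self-adjoint with respect to $\widehat{a}_h(\cdot,\cdot)$ and has finite rank. The key requirement is norm convergence $\|T-T_h\|_{\mathcal{L}(H^1(\Omega))}\to 0$. I will obtain it from Theorem~\ref{Con_Anal_H2} (conforming case) and Theorem~\ref{H1:Nonconf:Th} (non-conforming case), combined with the regularity bound $\|Tf\|_{2+s,\Omega}\le C\|f\|_{1,\Omega}$. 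Together these give
\[
\|(T-T_h)f\|_{1,\Omega}\le C h^{2s}\|f\|_{1,\Omega}\quad\forall f\in H^1(\Omega),
\]
where the $L^2$ part of the $H^1$ norm is controlled by the $L^2$ estimates proved above. Norm convergence yields the absence of spurious modes and the convergence of the $m$ discrete eigenvalues $\mu_h^{(j)}$ to $\mu$.

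\emph{Gap estimate.} By \cite[Theorem~7.1]{BO}, $\hat\delta(\mathcal{E},\mathcal{E}_h)\le C\|(T-T_h)|_{\mathcal{E}}\|_{\mathcal{L}(H^1(\Omega))}$. Because $\mathcal{E}$ is finite dimensional, the restricted operator norm is exactly $\gamma_h$ as defined above, which gives the first inequality.

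\emph{Eigenvalue estimate.} \cite[Theorem~7.3]{BO} gives
\[
|\mu-\mu_h^{(j)}|\le C\Big(\sum_{i,k=1}^m|\langle (T-T_h)\phi_i,\phi_k\rangle| + \|(T-T_h)|_{\mathcal{E}}\|^2\Big),
\]
where $\{\phi_i\}$ is a basis of $\mathcal{E}$ and $\langle\cdot,\cdot\rangle$ is the inner product with respect to which $T$ is self-adjoint. I will take this to be $\widehat{a}(\cdot,\cdot)$, which is equivalent to the $H^2$ norm on $V$ by \eqref{eq:coercivity}; this is why $\beta_h$ is measured in $H^2$. The squared term is then bounded by $\beta_h^2$ directly. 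For the cross term I will use the symmetry of $\widehat{a}$ and of $\widehat{a}_h$ together with the source equations, aiming for the identity
\[
\widehat{a}((T-T_h)\phi_i,\phi_k)=\widehat{a}((T-T_h)\phi_i,(T-T_h)\phi_k)/\mu + \text{consistency term}.
\]
The consistency term has the form $a_h(T_h\phi_i,T_h\phi_k)-a(T_h\phi_i,T_h\phi_k)$. I will split it by inserting piecewise $\mathbb{P}_2$ approximations $\pi$ and applying Lemma~\ref{lemma:consistency:error}. This bounds it by $|T_h\phi_i-\pi|_{2,h}\,|T_h\phi_k-\pi|_{2,h}\lesssim(\beta_h+h^s)^2$.

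\emph{Main obstacle.} The hardest step is showing that this consistency term, and in the non-conforming case the extra term $\mathcal{N}_h$, are genuinely $O(\beta_h^2)$ rather than merely $O(\beta_h)$. For the non-conforming method $T_h\phi_i\notin V$, so $\widehat{a}$ must be replaced by its broken version. I would then use the enriching operator $E_h$ and Lemma~\ref{Eh:ADEh:ADI} to write $\mathcal{N}_h(T\phi_i,T_h\phi_k)=\mathcal{N}_h(T\phi_i,T_h\phi_k-T\phi_k)$, which holds because $\mathcal{N}_h$ vanishes on conforming arguments. The eigenfunction $T\phi_k$ is smooth, so this is a product of two small factors and gives the double order. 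If this product structure fails, I would fall back on the explicit rate $|\mu-\mu_h^{(j)}|\le Ch^{2s}$, which uses $\beta_h\le Ch^s$ from the $H^2$ estimates.
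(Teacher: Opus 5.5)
\paragraph{Overall.} Your route is the paper's route. Its proof consists only of invoking \cite[Theorems~7.1 and~7.3]{BO}. Your gap estimate is fine: you get norm convergence in $\mathcal{L}(H^1(\Omega))$ from the $H^1$ and $L^2$ source estimates, then apply Theorem~7.1 with $\|(T-T_h)|_{\mathcal{E}}\|=\gamma_h$.

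\paragraph{The gap.} It lies in the double-order argument for the eigenvalues, which the paper leaves implicit and which you try to supply. In the non-conforming case, your step ``$\mathcal{N}_h(T\phi_i,T_h\phi_k)=\mathcal{N}_h(T\phi_i,T_h\phi_k-T\phi_k)$ is a product of two small factors'' does not follow from the tools you name. The only available bound on $\mathcal{N}_h$ (through $E_h$ and Lemma~\ref{Eh:ADEh:ADI}) holds for discrete arguments $v_h\in V_h^{nc}$ and is proportional to $|v_h|_{2,h}$. However, $E_h$ is not defined on $T_h\phi_k-T\phi_k$. Applied to $v_h=T_h\phi_k$, the bound is only $Ch^s|T_h\phi_k|_{2,h}=O(h^s)$, i.e.\ single order.

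Your fallback is circular. ``$Ch^{2s}$ via $\beta_h\le Ch^s$'' presupposes exactly the $O(\beta_h^2)$ cross-term bound that is at stake; without it you only get $O(h^s)$.

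\paragraph{The missing step.} Pass through the interpolant $(T\phi_k)_I\in V_h^{nc}$. Set $\mathcal{N}_h^{(i)}(v):=\sum_{K}\widehat a^K(T\phi_i,v)-b(\phi_i,v)$, which vanishes on $V$. Then
\[
\mathcal{N}_h^{(i)}(T_h\phi_k)=\mathcal{N}_h^{(i)}\big(T_h\phi_k-(T\phi_k)_I\big)+\sum_{K\in\CTh}a^K\big(T\phi_i,(T\phi_k)_I-T\phi_k\big)+b\big(T\phi_i-\phi_i,(T\phi_k)_I-T\phi_k\big).
\]
The three terms are bounded as follows:
\begin{itemize}
\item the first by $Ch^s|T_h\phi_k-(T\phi_k)_I|_{2,h}\le Ch^s(\beta_h+h^s)$, using the lemma bounding $\mathcal{N}_h$;
\item the second by $Ch^{2s}$, using the lemma $a(\phi,\chi-\chi_I)\le Ch^{2s}|\phi|_{2+s,\Omega}|\chi|_{2+s,\Omega}$ stated after Lemma~\ref{Eh:ADEh:ADI};
\item the third by $Ch^{1+s}$, using the trace inequality and Lemma~\ref{Int_poly}.
\end{itemize}

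\paragraph{Choice of pairing.} BO's cross term uses the duality pairing of the space in which you proved norm convergence, namely $H^1(\Omega)$. There $\widehat a$ is not defined, and in the non-conforming case no fixed space with inner product $\widehat a$ contains the ranges of $T_h$.

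Use instead the eigenfunctionals $b(\cdot,\phi_k)$ of $T^*$, with $\{\phi_k\}$ $b$-orthonormal in $\mathcal{E}$ (possible since $b$ is positive definite on $\mathcal{E}$). Because $b(f,Tg)=\widehat a(Tf,Tg)$ and $b(f,T_hg)=\widehat a_h(T_hf,T_hg)$, both $T$ and $T_h$ are $b$-symmetric. This has two consequences.

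First, it bounds the factor $\|(T^*-T_h^*)|_{R(E^*)}\|$ in BO's product term by $C\gamma_h$. That term is not simply $\|(T-T_h)|_{\mathcal{E}}\|^2$, because $T_h$ is not $\widehat a$-self-adjoint.

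Second, it gives the exact identity
\begin{align*}
b\big((T-T_h)\phi_i,\phi_k\big)
&=\sum_{K\in\CTh}\widehat a^K\big((T-T_h)\phi_i,(T-T_h)\phi_k\big)
+\mathcal{N}_h^{(k)}(T_h\phi_i)+\mathcal{N}_h^{(i)}(T_h\phi_k)\\
&\quad+\sum_{K\in\CTh}\big(a_h^K-a^K\big)(T_h\phi_i,T_h\phi_k).
\end{align*}
Its $\mathcal{N}_h$ terms vanish in the conforming case.

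\paragraph{What is actually proved.} The argument yields $|\mu-\mu_h^{(j)}|\le C(\beta_h+h^s)^2$. By Theorem~\ref{gapr} this is $O(h^{2s})$, but it is not literally $C\beta_h^2$. The consistency and non-conformity terms carry an $h^s$ from approximating $T\phi_i$ by polynomials or interpolants, and $\beta_h$ does not control it. Your own bound $(\beta_h+h^s)^2$ on the consistency term already shows this, and the paper's bare citation of BO does not remove it.
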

Moreover, employing the additional regularity of the eigenfunctions,
we immediately obtain the following bound. 
\begin{theorem}\label{gapr}
  There exist $s> 1/2$ and $C>0$ independent of $h$ such that
  \begin{align}
    &\Vert(T-T_h)f\Vert_{H^1( \Omega)} =\Vert \widetilde{u}-\widetilde{u}_h \Vert_{H^1( \Omega)} 
    \le C h^{2s} \Big (|\widetilde{u}|_{2+s,\Omega}+||f||_{1,\Omega} \Big),\label{bou_gamma_h_1} \\
      &\Vert(T-T_h)f\Vert_{H^2( \Omega)} =\Vert \widetilde{u}-\widetilde{u}_h \Vert_{H^2( \Omega)} 
    \le C h^{s} \Big (|\widetilde{u}|_{2+s,\Omega}+||f||_{1,\Omega} \Big),\label{bou_gamma_h_2}
  \end{align}
  and as a consequence,
  \begin{align}
    &\gamma_h \leq C h^{2s} \text{ and } \beta_h \leq C h^{s} . \label{bound1r}
  \end{align}
\end{theorem}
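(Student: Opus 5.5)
The statement follows by combining the source-problem estimates of Section~\ref{Analysis:Source} with the regularity of $T$, and then restricting to the eigenspace $\mathcal{E}$. Fix $f\in H^1(\Omega)$ and set $\widetilde{u}=Tf$, $\widetilde{u}_h=T_hf$.

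First, for \eqref{bou_gamma_h_1} I would write $\Vert \widetilde{u}-\widetilde{u}_h\Vert_{H^1(\Omega)}\le \Vert \widetilde{u}-\widetilde{u}_h\Vert_{0,\Omega}+|\widetilde{u}-\widetilde{u}_h|_{1,\Omega}$. In the conforming case both terms are bounded by Theorem~\ref{Con_Anal_H2}. In the non-conforming case they are bounded by the $L^2$ theorem and Theorem~\ref{H1:Nonconf:Th}. Each contribution is at most $Ch^{2s}(|\widetilde{u}|_{2+s,\Omega}+\|f\|_{1,\Omega})$.

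Second, for \eqref{bou_gamma_h_2} I would read $\Vert\cdot\Vert_{H^2(\Omega)}$ as the broken norm $\Vert\cdot\Vert_{2,h}$ in the non-conforming case, since $\widetilde{u}_h\notin H^2(\Omega)$ there. The conforming bound is the $H^2$ part of Theorem~\ref{Con_Anal_H2}, i.e.\ \eqref{H2:final:conf}. The non-conforming bound is Theorem~\ref{Noncon_anal}, in which the nonconformity term was controlled by the consistency lemma with the factor $h^{\min(1,s)}=h^s$ and the term $\|f\|_{1,\Omega}$ appears explicitly.

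Third, to obtain \eqref{bound1r}, I take $v\in\mathcal{E}$ with $\Vert v\Vert_{H^1(\Omega)}=1$. Then $Tv=\mu v$, and the stability estimate $\|Tv\|_{2+s,\Omega}\le C\|v\|_{1,\Omega}$ gives $|Tv|_{2+s,\Omega}\le C$. Applying \eqref{bou_gamma_h_1} with $f=v$ and taking the supremum yields $\gamma_h\le Ch^{2s}$. For $\beta_h$ the normalization is $\Vert v\Vert_{H^2(\Omega)}=1$, which implies $\|v\|_{1,\Omega}\le 1$, so the same regularity bound applies and \eqref{bou_gamma_h_2} gives $\beta_h\le Ch^s$. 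The generic exponent $s>1/2$ is the one from the regularity statement of Section~\ref{SEC:STAT}. Using the higher regularity of the eigenfunctions themselves could improve $s$ on $\mathcal{E}$, but this is not needed.

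The main obstacle I anticipate is bookkeeping rather than new analysis. The error bounds must be expressed uniformly in terms of $|\widetilde{u}|_{2+s,\Omega}+\|f\|_{1,\Omega}$ across both methods; the conforming estimates are stated only with $|\widetilde{u}|_{2+s,\Omega}$, so they are trivially dominated by that sum. The $H^2$ norm must be interpreted correctly, as the broken norm, in the non-conforming setting. I would also check that the $L^2$ and $H^1$ non-conforming proofs, which absorb the lower-order terms $h^{1+s}\le h^{2s}$, are valid for $s\le1$ and $h\le 1$, since that absorption is what gives the uniform order $h^{2s}$.
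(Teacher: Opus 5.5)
Your proposal is correct and follows the same route as the paper. The paper also obtains \eqref{bou_gamma_h_1}--\eqref{bou_gamma_h_2} from the source-problem estimates (Theorem~\ref{Con_Anal_H2} in the conforming case; Theorems~\ref{Noncon_anal}, \ref{H1:Nonconf:Th} and the $L^2$ estimate in the non-conforming case, with the $H^2$ norm read as the broken norm) and then reads off \eqref{bound1r} from the definitions of $\gamma_h$ and $\beta_h$; your explicit use of the stability bound $\|Tv\|_{2+s,\Omega}\le C\|v\|_{1,\Omega}$ on $\mathcal{E}$ just spells out what the paper leaves implicit in that last step.
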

\begin{proof}
  The inequalities~\eqref{bou_gamma_h_1} and~\eqref{bou_gamma_h_2}  are obtained repeating the proof
  of Theorem~\ref{Con_Anal_H2}, and Theorem~\ref{H1:Nonconf:Th}.
  Estimates \eqref{bound1r} follow from the definition of $\gamma_h$ and $\beta_h$
  and~\eqref{bou_gamma_h_1} and~~\eqref{bou_gamma_h_2}.
\end{proof}
%

\section{Numerical Results}
\label{Numer:Exp}
This section reports some numerical experiments that support the
theoretical estimates.
We show the results for the lowest-order $H^2$-conforming and
$C^0$ non-conforming VEM space, i.e., for $k=2$.
To assess these methods' behavior, we consider four different mesh
families: triangle, square, non-convex, and Voronoi meshes.
The model problem \eqref{eq:model1:A}-\eqref{eq:model1:C} deals with
two distinct types of boundary conditions: homogeneous Dirichlet
and Neumann boundary conditions.
As usual, we impose strongly the homogeneous Dirichlet boundary condition on the discrete space in which the numerical approximation is searched.
Generally, by employing the standard basis of the virtual element
space, Problem~\ref{Disc_VEP_BEP} leads to the generalized matrix
eigenvalue problem:
\begin{equation}
  \mathbf{A} \mathbf{U}=(\lambda_h +1)\mathbf{B} \mathbf{U},
\end{equation}
where $\mathbf{U}:\big[\text{dof}_j(u_h)\big]_{j=1}^{N^{\text{dof}}}$
is the column vector collecting the degrees of freedom of the virtual
element eigenvector.
Matrix $\mathbf{A}$ is symmetric and positive definite, whereas
$\mathbf{B}$ is symmetric and semi-positive definite.
We use the MATLAB command \texttt{eigs} to solve the
equivalent problem
\begin{equation}
  \mathbf{B}\mathbf{U}=\frac{1}{\lambda_h+1} \mathbf{A} \mathbf{U}.
\end{equation}
Let $N$ denote the number of elements in the mesh. To analyze the convergence rate, we use the relation $h \approx N^{-1/2}$. Some meshes were generated with Gmsh mesh generator \cite{geuzaine2009gmsh}, and Voronoi meshes were created using Polymesher \cite{polymeshertalischi2012}.

To the best of our knowledge, the proposed model problem associated
with these boundary conditions is new in the literature on the Steklov
eigenvalue problems.
Since the analytical eigenvalues are not known, we compare the
solutions computed with the $H^2$-conforming and the
$C^0$ non-conforming VEM.
In both cases, we obtain the same numerical approximations, which we
display in
Tables~\ref{table:square2D-VEM-Th0}-\ref{table:square2D-VEM-Th3} 
of Test~\ref{subsec:square2D}.
For the conforming method, we have come across the same behavior of
the spectrum, and to avoid repetition, we do not show the
corresponding results for the eigenfunctions and eigenvalues in
Test~\ref{subsec:lshape2D} - Test~\ref{subsec:circle-holes}. For completeness, we plot the gradients of the eigenfunctions, which are reconstructed via polynomial projections from the virtual element degrees of freedom to assess the enforcement of the zero normal derivative condition.

\subsection{Square domain with homogeneous boundary condition}\label{subsec:square2D}
This test assesses the accuracy of the proposed conforming and nonconforming schemes on a convex polygonal domain. We consider $\Omega := (0,1)^2$ and compute the first ten positive eigenvalues for several mesh refinement levels on each mesh family $\mathcal{T}_h^i$, $i=0,1,2,3$, shown in Figure~\ref{fig:mesh_PolyGon}.
\begin{figure}[t!]\centering
  \begin{minipage}{0.23\linewidth}\centering
    \includegraphics[scale=0.09, trim=17cm 2cm 17cm 2cm,clip]{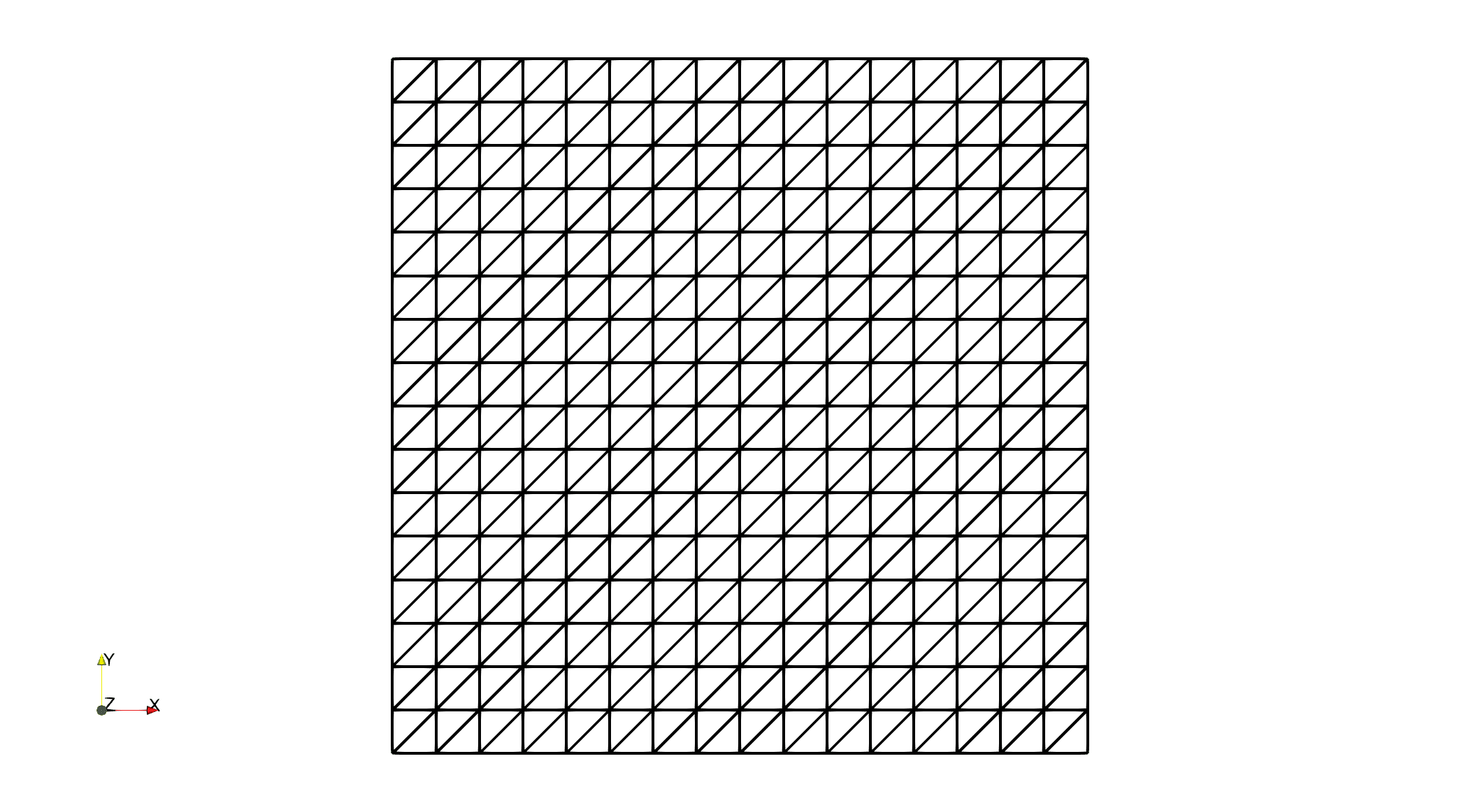}\\
	            {$\CT^0_h$}
  \end{minipage}
  \begin{minipage}{0.23\linewidth}\centering
    \includegraphics[scale=0.09, trim=17cm 2cm 17cm 2cm,clip]{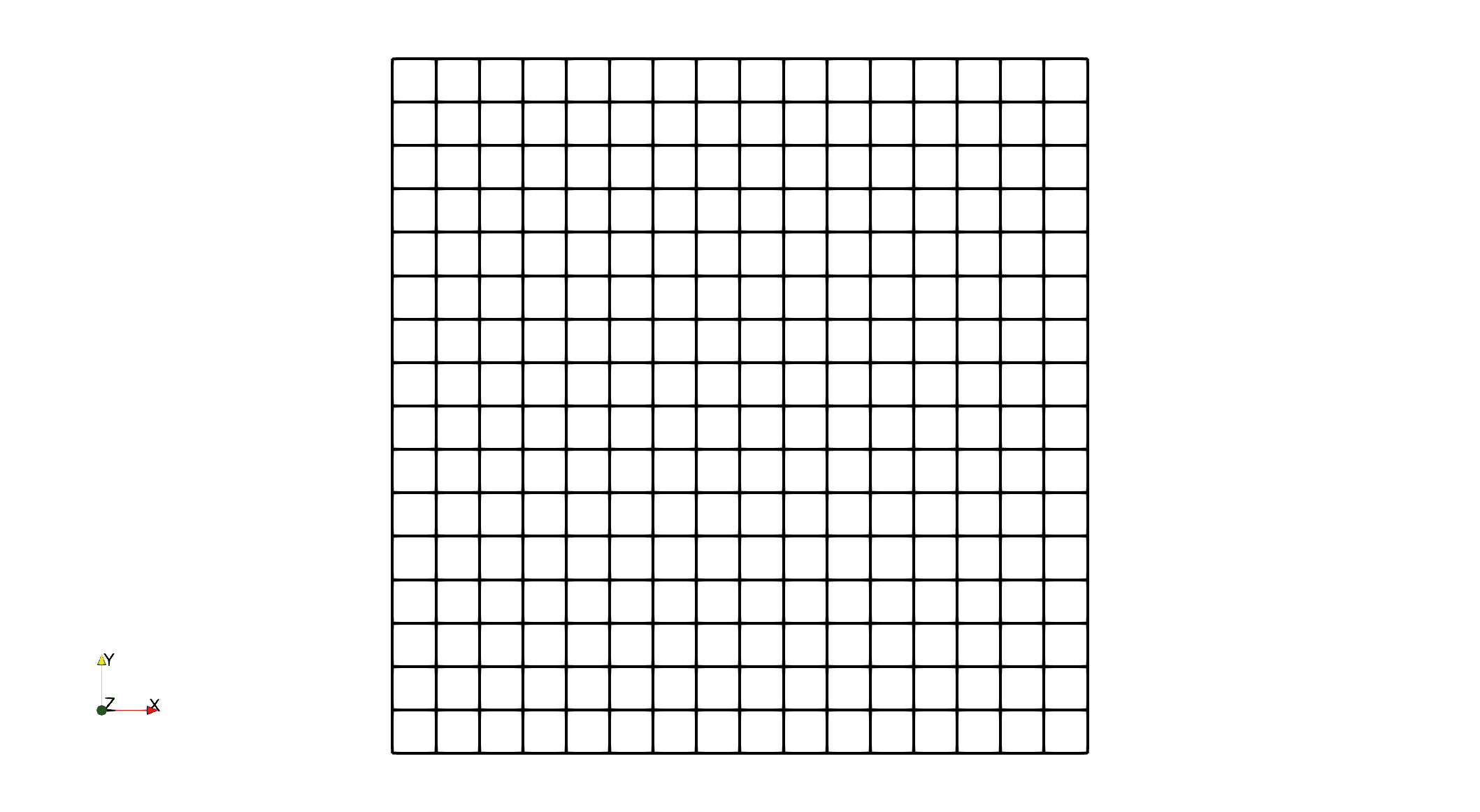}\\
                    {$\CT^1_h$}
  \end{minipage}
  \begin{minipage}{0.23\linewidth}\centering
    \includegraphics[scale=0.09, trim=17cm 2cm 17cm 2cm,clip]{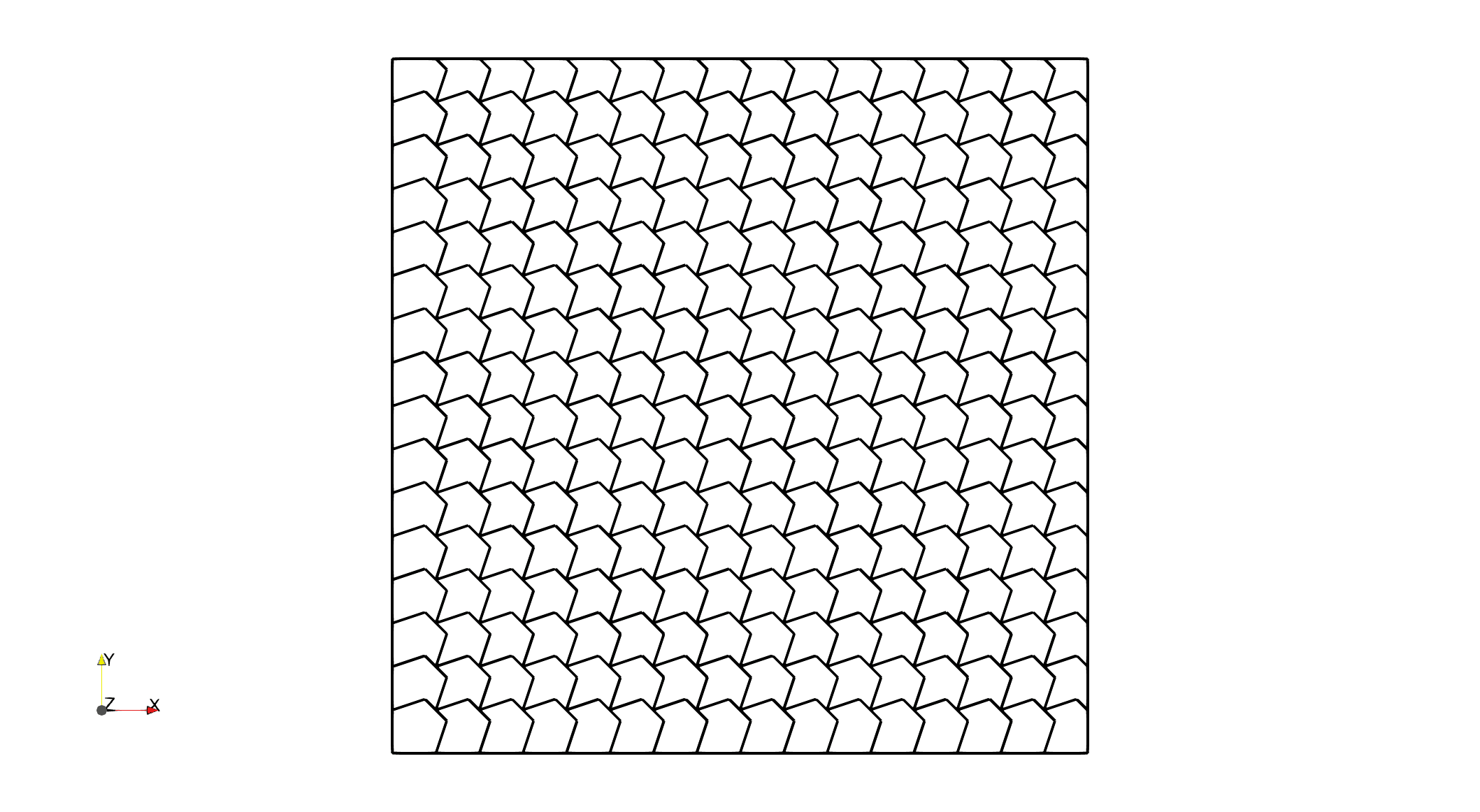}\\
	            {$\CT^2_h$}
  \end{minipage}
  \begin{minipage}{0.23\linewidth}\centering
    \includegraphics[scale=0.09, trim=17cm 2cm 17cm 2cm,clip]{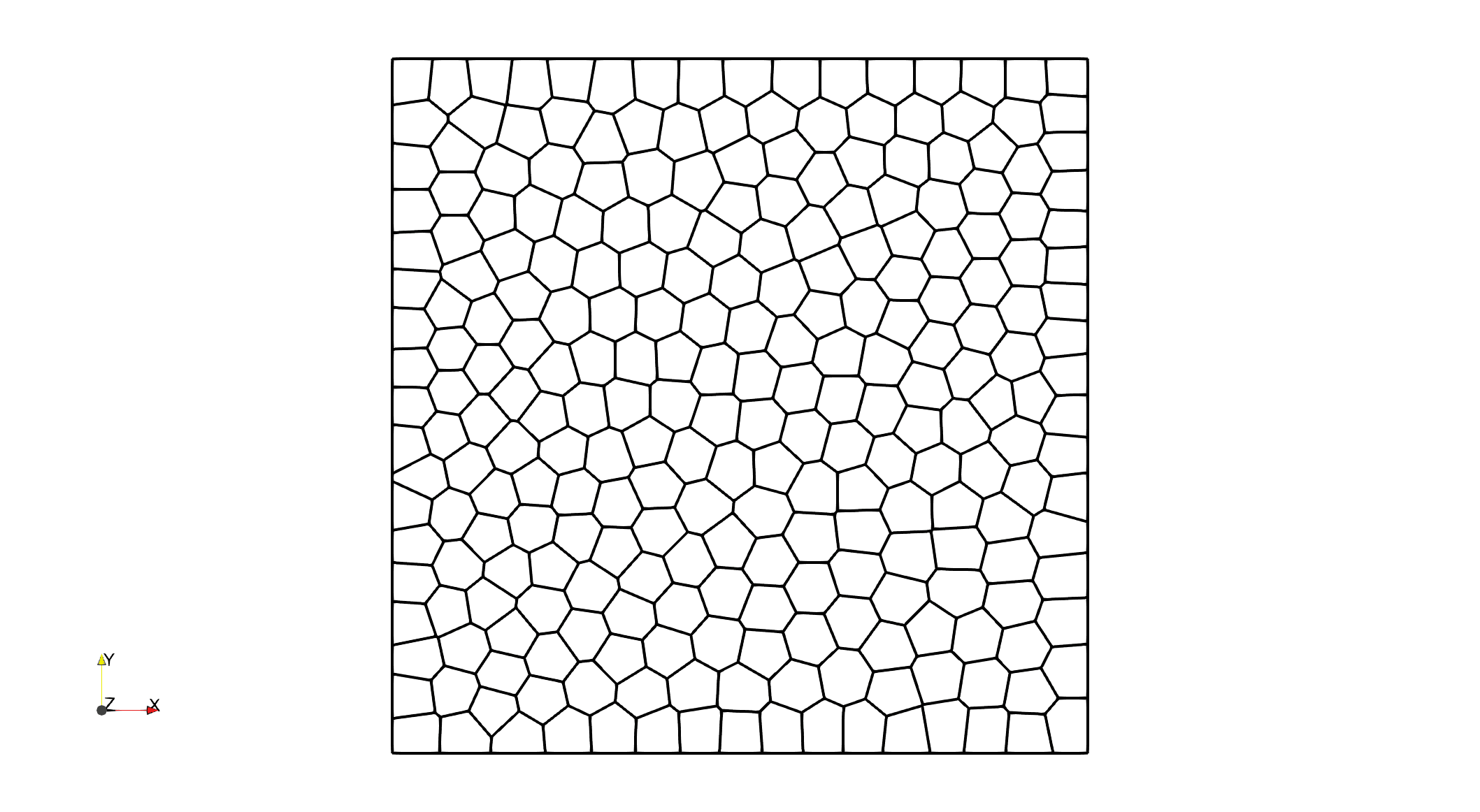}\\
	            {$\CT^3_h$}
  \end{minipage}
  \caption{Test~\ref{subsec:square2D}. Examples of the mesh families used in the	eigenvalue calculations.}
  \label{fig:mesh_PolyGon} 
\end{figure}
Tables~\ref{table:square2D-VEM-Th0}--\ref{table:square2D-VEM-Th3} report the convergence histories obtained with both conforming and nonconforming schemes (denoted by $C^1$ cVEM and $C^0$ ncVEM, respectively) on the four mesh families. Since the square is a convex polygonal domain, the numerical results exhibit the expected second-order convergence behavior, namely $\mathcal{O}(h^2)$, or equivalently $\mathcal{O}(N^{-1})$ in two dimensions, in agreement with Theorem~\ref{gapr}. The extrapolated eigenvalues obtained from the different mesh families are very close, indicating that the observed convergence behavior is essentially independent of the particular polygonal mesh sequence. We also observe the presence of repeated, or nearly repeated, eigenvalues. This is consistent with the symmetry of the square domain and should be interpreted in terms of convergence of eigenspaces rather than solely pointwise convergence of individual eigenvalue branches. For illustration, Figure~\ref{fig:square2D-uh-graduh} shows selected computed eigenfunctions and the corresponding gradient fields obtained with the nonconforming scheme on the Voronoi mesh. The conforming results are omitted, as they exhibit qualitatively similar behavior. The plots are consistent with the homogeneous boundary condition $\partial_{\mathbf n}u=0$; in particular, the gradient field is tangential to the boundary, except at corner points where the normal and tangential directions are not uniquely defined.%
\begin{table}[t!]
  \centering 
  \footnotesize
  \begin{center}
    \caption{Example \ref{subsec:square2D}. Convergence history of the ten lowest computed eigenvalues on the square domain and $\CT_h^0$. }
    \label{table:square2D-VEM-Th0}
    \begin{tabular}{|c| c c c c |c| c|}
      \hline
      \hline
      &$N=512$             &  $N=2048$         &   $N=8192$         & $N=32768$ & Order & $\lambda_{\text{extr}}$ \\ 
      \hline 
      \hline
      \multirow{9}{0.11\linewidth}{$C^0$ ncVEM}
      &  15.8025  &    15.9265  &    15.9568  &    15.9640  & 2.04 &    15.9664  \\
      &  15.8733  &    15.9433  &    15.9608  &    15.9651  & 2.01 &    15.9665  \\
      & 46.8996   &    47.3745  &    47.4893  &    47.5168  & 2.05 &    47.5257  \\
      & 238.0188  &   245.2412  &   246.9778  &   247.3980  & 2.05 &   247.5341  \\
      & 312.3578  &   324.7498  &   327.7016  &   328.3955  & 2.07 &   328.6204  \\
      & 315.9303  &   325.5833  &   327.8902  &   328.4491  & 2.06 &   328.6236  \\
      & 350.0269  &   363.7253  &   367.0108  &   367.7827  & 2.06 &   368.0405  \\
      &1007.5503  &  1089.4047  &  1109.2059  &  1113.9331  & 2.05 &  1115.4730  \\
      &1105.4769  &  1211.6456  &  1237.5230  &  1243.5748  & 2.05 &  1245.5808  \\
      &1125.5417  &  1216.7030  &  1238.6360  &  1243.9016  & 2.06 &  1245.5283  \\
      \hline
      \multirow{9}{0.11\linewidth}{$C^1$ cVEM}
      &  15.9584  &    15.9645  &    15.9660  &    15.9664  & 2.00 &    15.9665  \\
      &  15.9971  &    15.9741  &    15.9684  &    15.9670  & 2.00 &    15.9665  \\
      &  47.5525  &    47.5327  &    47.5278  &    47.5266  & 2.03 &    47.5262  \\
      & 248.7586  &   247.8404  &   247.6134  &   247.5569  & 2.01 &   247.5380  \\
      & 328.8417  &   328.6830  &   328.6469  &   328.6383  & 2.12 &   328.6358  \\
      & 331.0476  &   329.2350  &   328.7850  &   328.6728  & 2.01 &   328.6362  \\
      & 369.6616  &   368.4465  &   368.1490  &   368.0753  & 2.03 &   368.0522  \\
      &1127.9642  &  1118.5973  &  1116.2848  &  1115.7102  & 2.02 &  1115.5275  \\
      &1253.4774  &  1247.5562  &  1246.1325  &  1245.7838  & 2.05 &  1245.6747  \\
      &1265.5007  &  1250.5615  &  1246.8835  &  1245.9715  & 2.02 &  1245.6755  \\
      \hline
      \hline             
    \end{tabular}
  \end{center}
\end{table}
\begin{table}[t!]
  \centering 
  \footnotesize
  \begin{center}
    \caption{Example \ref{subsec:square2D}. Convergence history of the
      ten lowest computed eigenvalues on the square domain and
      $\CT_h^1$. }
    \label{table:square2D-VEM-Th1}
    \begin{tabular}{|c| c c c c |c| c|}
      \hline
      \hline
      &$N=256$             &  $N=1024$         &   $N=4096$         & $N=16384$ & Order & $\lambda_{\text{extr}}$ \\ 
      \hline
      \hline
      \multirow{10}{0.11\linewidth}{$C^0$ ncVEM}
      &  15.7390  &    15.9108  &    15.9528  &    15.9631  & 2.03 &    15.9664  \\
      &  15.7390  &    15.9108  &    15.9528  &    15.9631  & 2.03 &    15.9664  \\
      &  46.5976  &    47.3081  &    47.4736  &    47.5133  & 2.10 &    47.5245  \\
      & 229.8467  &   243.3007  &   246.5188  &   247.2891  & 2.06 &   247.5369  \\
      & 303.5487  &   322.8091  &   327.2580  &   328.3020  & 2.11 &   328.6100  \\
      & 303.5487  &   322.8091  &   327.2580  &   328.3020  & 2.11 &   328.6100  \\
      & 337.0410  &   360.8788  &   366.3599  &   367.6420  & 2.12 &   368.0073  \\
      & 918.4974  &  1068.0040  &  1104.3836  &  1112.8524  & 2.05 &  1115.7204  \\
      &1010.7716  &  1189.5470  &  1232.6177  &  1242.5558  & 2.06 &  1245.9622  \\
      &1010.7716  &  1189.5470  &  1232.6177  &  1242.5558  & 2.06 &  1245.9622  \\
      \hline
      \multirow{10}{0.11\linewidth}{$C^1$ cVEM} &15.9605  &    15.9650  &    15.9661  &    15.9664  & 2.00 &    15.9665  \\
      &  15.9605  &    15.9650  &    15.9661  &    15.9664  & 2.00 &    15.9665  \\
      &  47.5209  &    47.5249  &    47.5259  &    47.5261  & 1.98 &    47.5262  \\
      & 247.4412  &   247.5137  &   247.5320  &   247.5365  & 1.99 &   247.5381  \\
      & 329.7480  &   328.9155  &   328.7056  &   328.6530  & 1.99 &   328.6352  \\
      & 329.7480  &   328.9155  &   328.7056  &   328.6530  & 1.99 &   328.6352  \\
      & 368.3098  &   368.1143  &   368.0666  &   368.0548  & 2.03 &   368.0510  \\
      &1123.6236  &  1117.6004  &  1116.0428  &  1115.6502  & 1.96 &  1115.5112  \\
      &1250.6352  &  1246.9187  &  1245.9813  &  1245.7467  & 1.99 &  1245.6673  \\
      &1250.6352  &  1246.9187  &  1245.9813  &  1245.7467  & 1.99 &  1245.6673  \\
      \hline
      \hline             
    \end{tabular}
  \end{center}
\end{table}

\begin{table}[t!]
  \centering 
  \footnotesize
  \begin{center}
    \caption{Example \ref{subsec:square2D}. Convergence history of the
      ten lowest computed eigenvalues on the square domain and
      $\CT_h^2$. }
    \label{table:square2D-VEM-Th2}
    \begin{tabular}{|c|c c c c |c| c|}
      \hline
      \hline
      &$N=256$             &  $N=1024$         &   $N=4096$         & $N=16384$ & Order & $\lambda_{\text{extr}}$ \\ 
      \hline
      \hline
      \multirow{10}{0.11\linewidth}{$C^0$ ncVEM}
      &  15.7626  &    15.9153  &    15.9537  &    15.9633  & 1.96 &    15.9667  \\
      &  15.8270  &    15.9327  &    15.9582  &    15.9644  & 2.02 &    15.9664  \\
      &  46.6744  &    47.3151  &    47.4737  &    47.5131  & 1.98 &    47.5266  \\
      & 234.5368  &   244.3737  &   246.7649  &   247.3474  & 2.01 &   247.5379  \\
      & 307.3526  &   323.4127  &   327.3474  &   328.3160  & 1.99 &   328.6522  \\
      & 310.0740  &   324.1780  &   327.5542  &   328.3697  & 2.03 &   328.6293  \\
      & 343.6970  &   362.0684  &   366.5804  &   367.6868  & 1.99 &   368.0702  \\
      & 963.7848  &  1078.7960  &  1106.6477  &  1113.3487  & 2.01 &  1115.6610  \\
      &1055.2214  &  1199.1905  &  1234.3319  &  1242.8766  & 2.00 &  1245.8098  \\
      &1076.3021  &  1203.6800  &  1235.5504  &  1243.1994  & 1.97 &  1246.0956  \\
      \hline
      \multirow{10}{0.11\linewidth}{$C^1$ cVEM} & 
      15.2877  &    15.8238  &    15.9374  &    15.9606  & 2.15 &    15.9681  \\
      &  15.2882  &    15.8239  &    15.9374  &    15.9606  & 2.15 &    15.9681  \\
      &  46.1238  &    47.2275  &    47.4649  &    47.5137  & 2.13 &    47.5301  \\
      & 232.5623  &   244.2909  &   246.8725  &   247.4034  & 2.10 &   247.5954  \\
      & 313.6371  &   325.2247  &   327.9286  &   328.4928  & 2.03 &   328.7196  \\
      & 314.0362  &   325.3969  &   327.9478  &   328.4940  & 2.07 &   328.6882  \\
      & 354.6668  &   364.9039  &   367.3815  &   367.9141  & 1.98 &   368.1399  \\
      &1049.1072  &  1100.4036  &  1112.3625  &  1114.8565  & 2.03 &  1115.8649  \\
      &1187.1871  &  1231.5915  &  1242.6972  &  1245.0393  & 1.95 &  1246.1043  \\
      &1189.5286  &  1232.7252  &  1242.7613  &  1245.0511  & 2.02 &  1245.8473  \\
      \hline
      \hline             
    \end{tabular}
  \end{center}
\end{table}

\begin{table}[t!]
  \centering 
  \footnotesize
  \begin{center}
    \caption{Example \ref{subsec:square2D}. Convergence history of the
      ten lowest computed eigenvalues on the square domain and
      $\CT_h^3$. }
    \label{table:square2D-VEM-Th3}
    \begin{tabular}{|c|c c c c |c| c|}
      \hline
      \hline
      &$N=256$             &  $N=1024$         &   $N=4096$         & $N=16384$ & Order & $\lambda_{\text{extr}}$ \\ 
      \hline
      \hline
      \multirow{10}{0.11\linewidth}{$C^0$ ncVEM}
      &15.8173  &    15.9299  &    15.9576  &    15.9643  & 2.03 &    15.9665  \\
   &15.8203  &    15.9311  &    15.9579  &    15.9643  & 2.05 &    15.9664  \\
   &46.6316  &    47.3105  &    47.4741  &    47.5131  & 2.05 &    47.5261  \\
  &234.8461  &   244.6719  &   246.8824  &   247.3790  & 2.15 &   247.5261  \\
  &306.8576  &   323.8409  &   327.5166  &   328.3608  & 2.20 &   328.5652  \\
  &308.2492  &   323.8819  &   327.5294  &   328.3641  & 2.10 &   328.6320  \\
  &341.7519  &   362.0988  &   366.6280  &   367.7045  & 2.15 &   367.9905  \\
  &954.2283  &  1078.5636  &  1107.3549  &  1113.5837  & 2.12 &  1115.7190  \\
 &1043.5687  &  1199.9067  &  1235.3608  &  1243.2162  & 2.15 &  1245.5253  \\
 &1049.2813  &  1200.4177  &  1235.5341  &  1243.2530  & 2.12 &  1245.7372  \\
      \hline
      \multirow{10}{0.11\linewidth}{$C^1$ cVEM}
      &  15.9208  &    15.9552  &    15.9637  &    15.9658  & 1.99 &    15.9665  \\
      &  15.8741  &    15.9443  &    15.9611  &    15.9652  & 2.03 &    15.9664  \\
      &  46.8996  &    47.3734  &    47.4885  &    47.5169  & 2.00 &    47.5265  \\
      & 238.4931  &   245.3784  &   247.0140  &   247.4092  & 2.04 &   247.5316  \\
      & 313.1013  &   324.9101  &   327.7261  &   328.4109  & 2.03 &   328.6292  \\
      & 315.2244  &   325.4938  &   327.8821  &   328.4514  & 2.06 &   328.6314  \\
      & 350.1731  &   363.7780  &   367.0135  &   367.7956  & 2.03 &   368.0529  \\
      &1004.6596  &  1089.6798  &  1109.3682  &  1114.0237  & 2.07 &  1115.4647  \\
      &1103.9321  &  1212.3897  &  1237.6819  &  1243.7159  & 2.06 &  1245.5905  \\
      &1119.5983  &  1215.8226  &  1238.6044  &  1243.9587  & 2.05 &  1245.6679  \\
      \hline
      \hline             
    \end{tabular}
  \end{center}
\end{table}

\begin{figure}[t!]
  \centering
  \begin{minipage}{0.32\linewidth}\centering
    \includegraphics[scale=0.062, trim=37cm 5cm 37cm 5cm,clip]{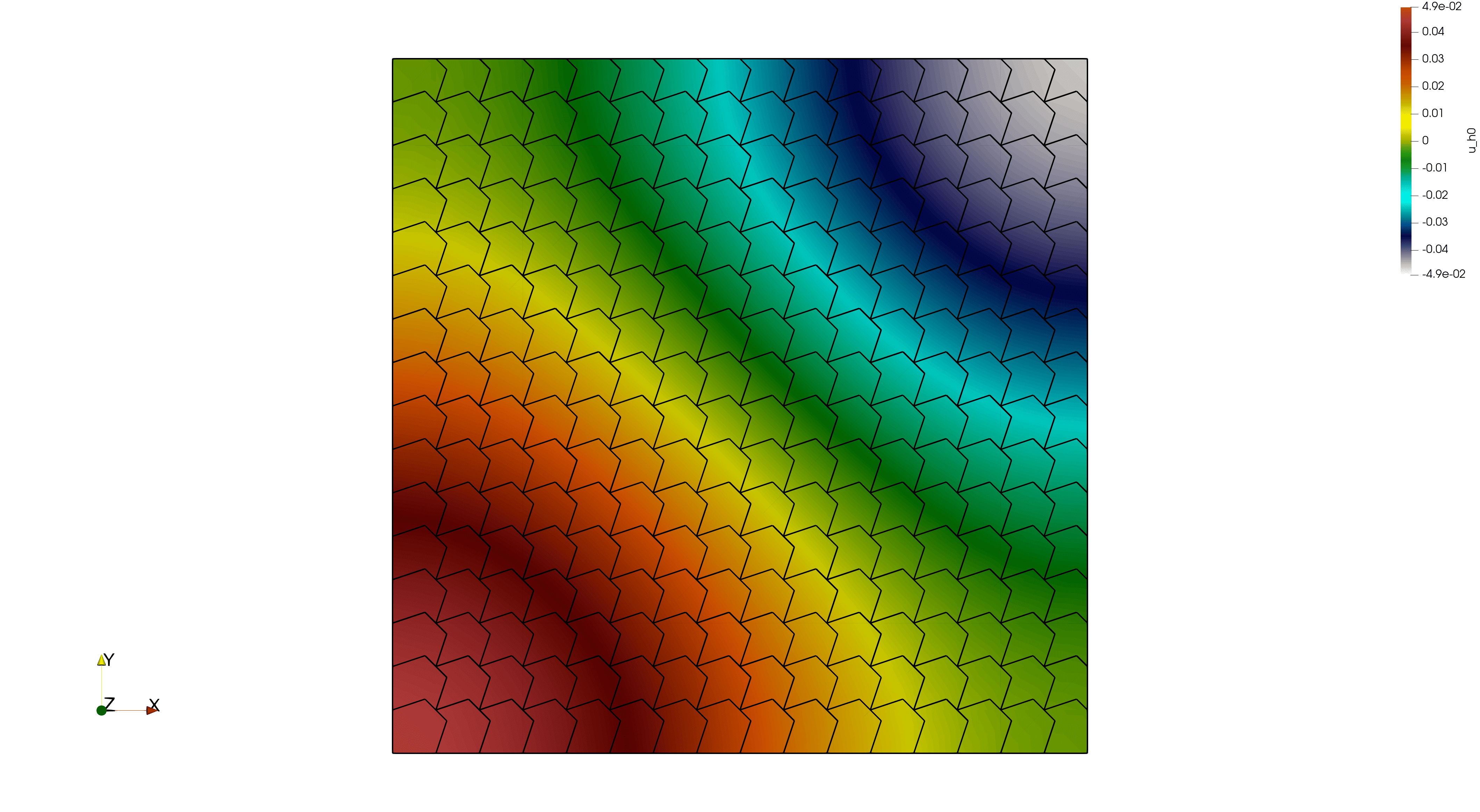}\\
		    {\footnotesize $u_{1,h}$}
  \end{minipage}
  \begin{minipage}{0.32\linewidth}\centering
    \includegraphics[scale=0.062, trim=37cm 5cm 37cm 5cm,clip]{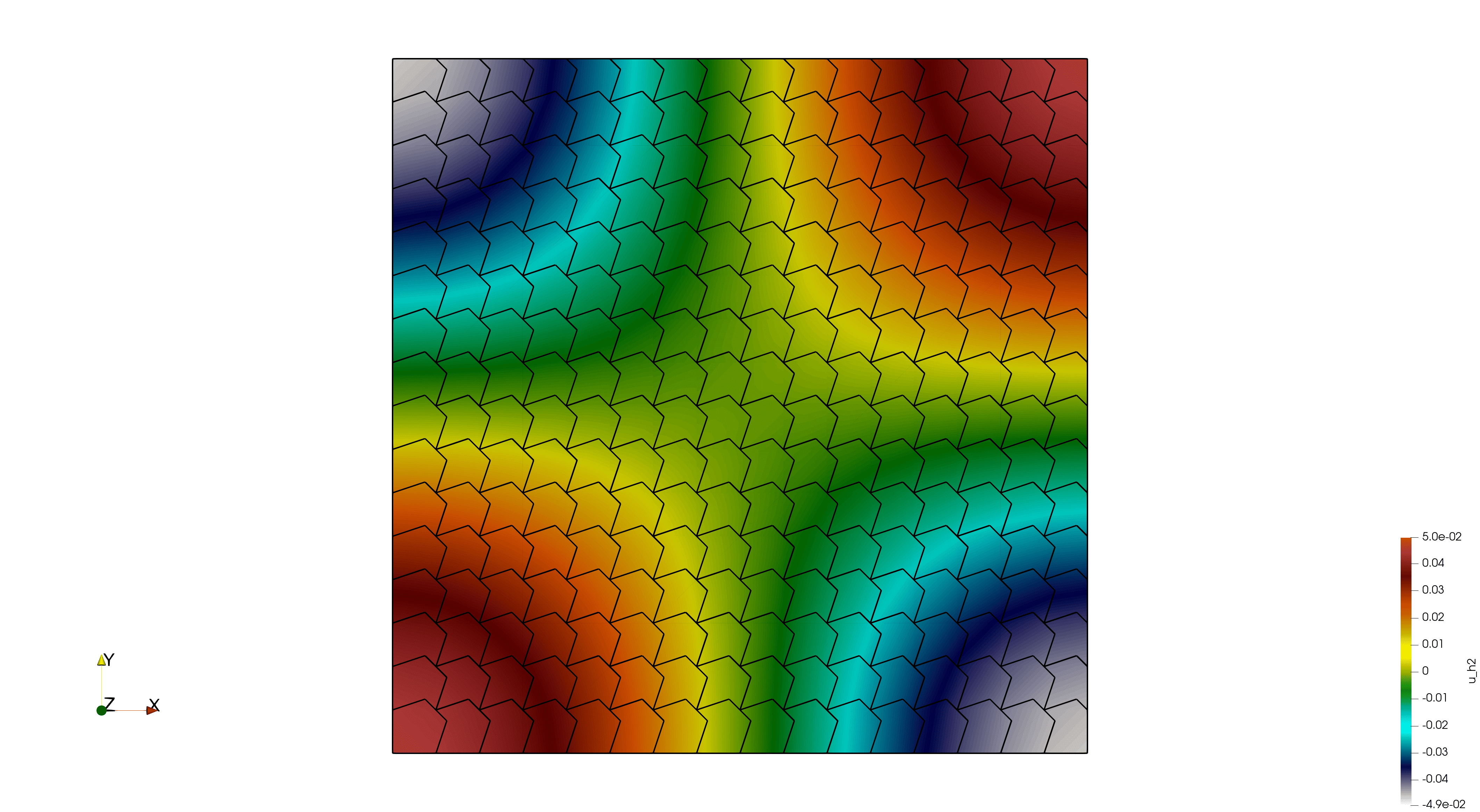}\\
		    {\footnotesize $u_{3,h}$}
  \end{minipage}
  \begin{minipage}{0.32\linewidth}\centering
    \includegraphics[scale=0.062, trim=37cm 5cm 37cm 5cm,clip]{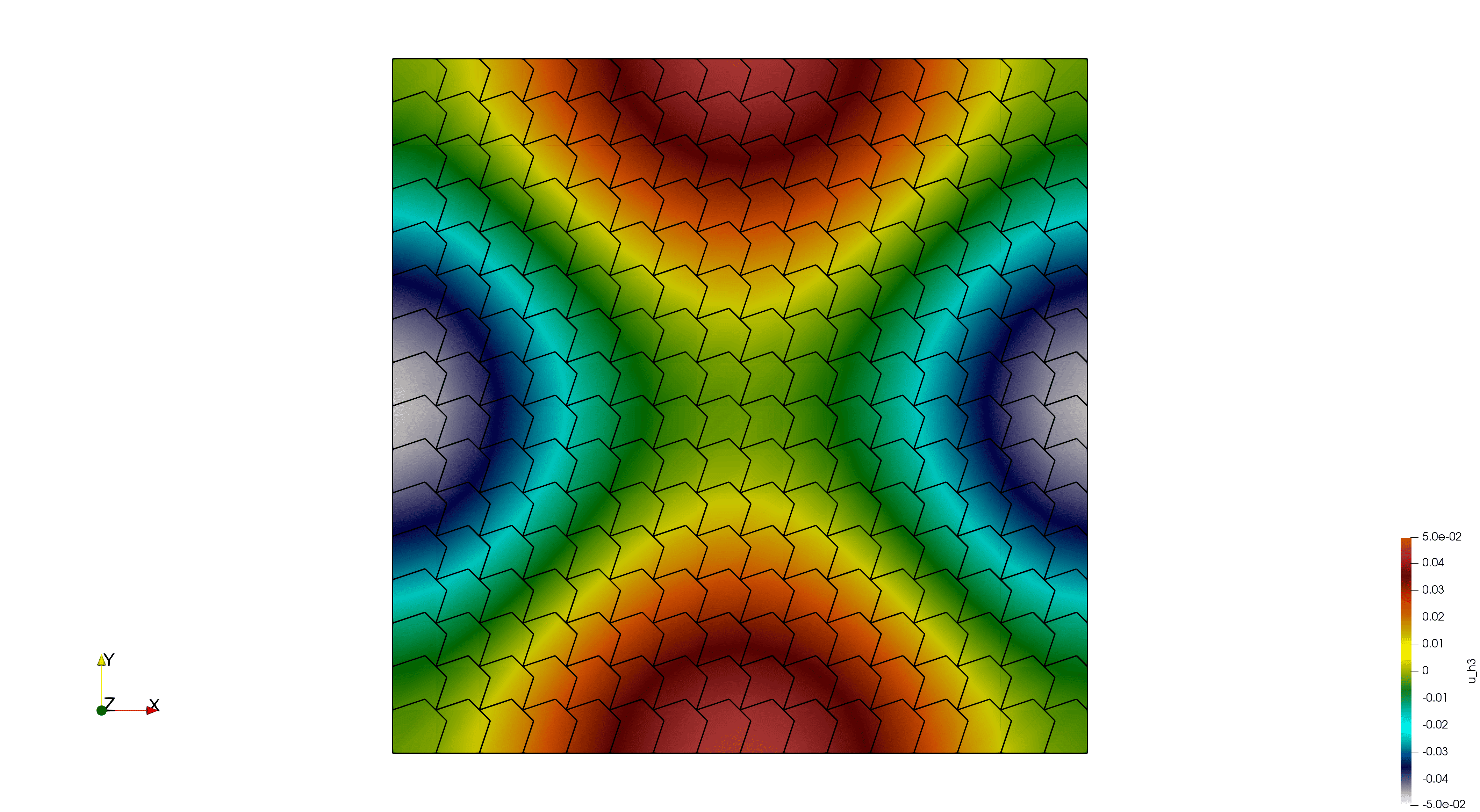}\\
	            {\footnotesize $u_{4,h}$}
  \end{minipage}
  \hspace*{0.1cm}\begin{minipage}{0.32\linewidth}\centering
    \includegraphics[scale=0.125, trim=53cm 2cm 53cm 2cm,clip]{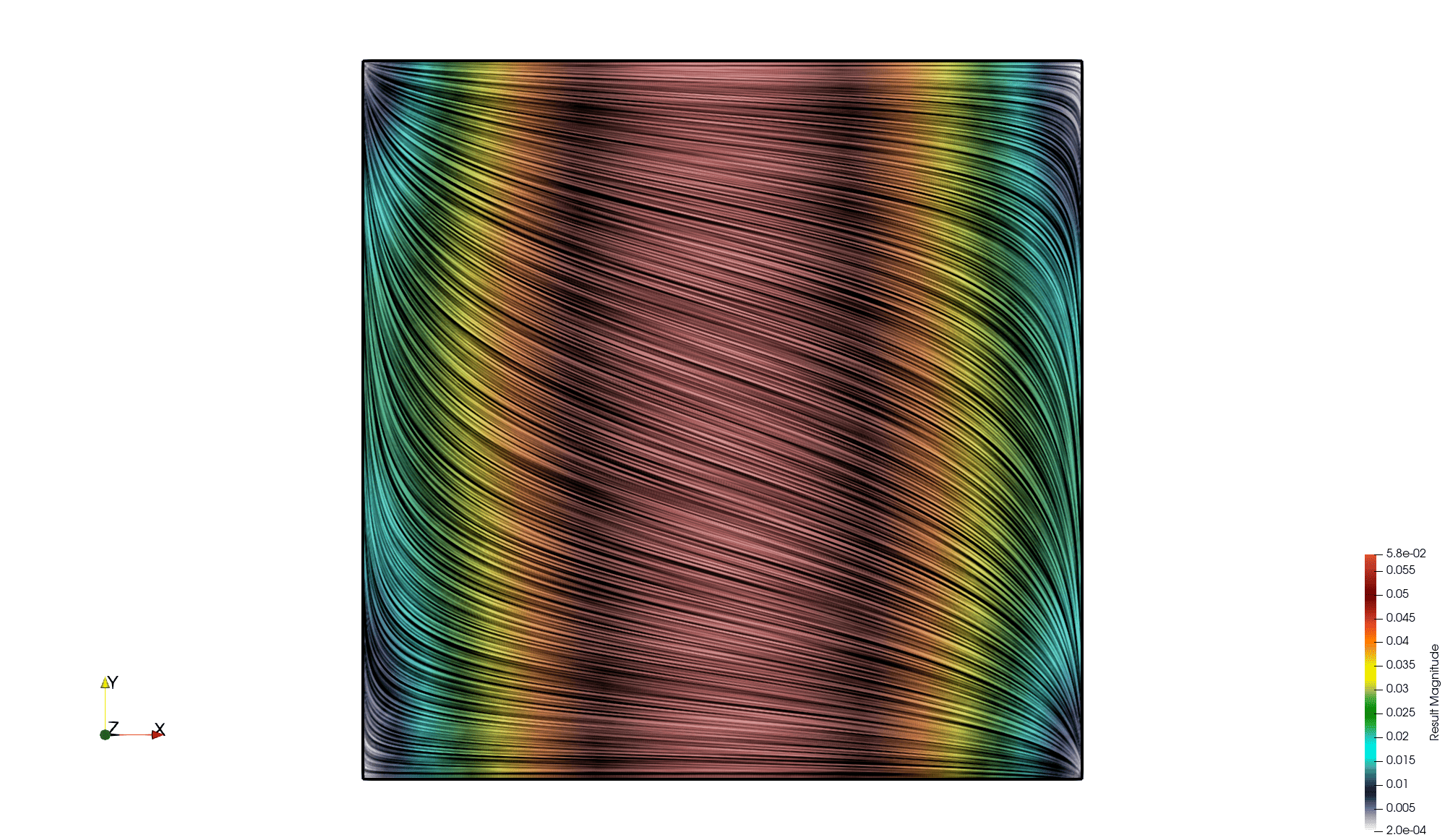}\\
		    {\footnotesize $\nabla u_{1,h}$}
  \end{minipage}
  \begin{minipage}{0.32\linewidth}\centering
    \includegraphics[scale=0.125, trim=53cm 2cm 53cm 2cm,clip]{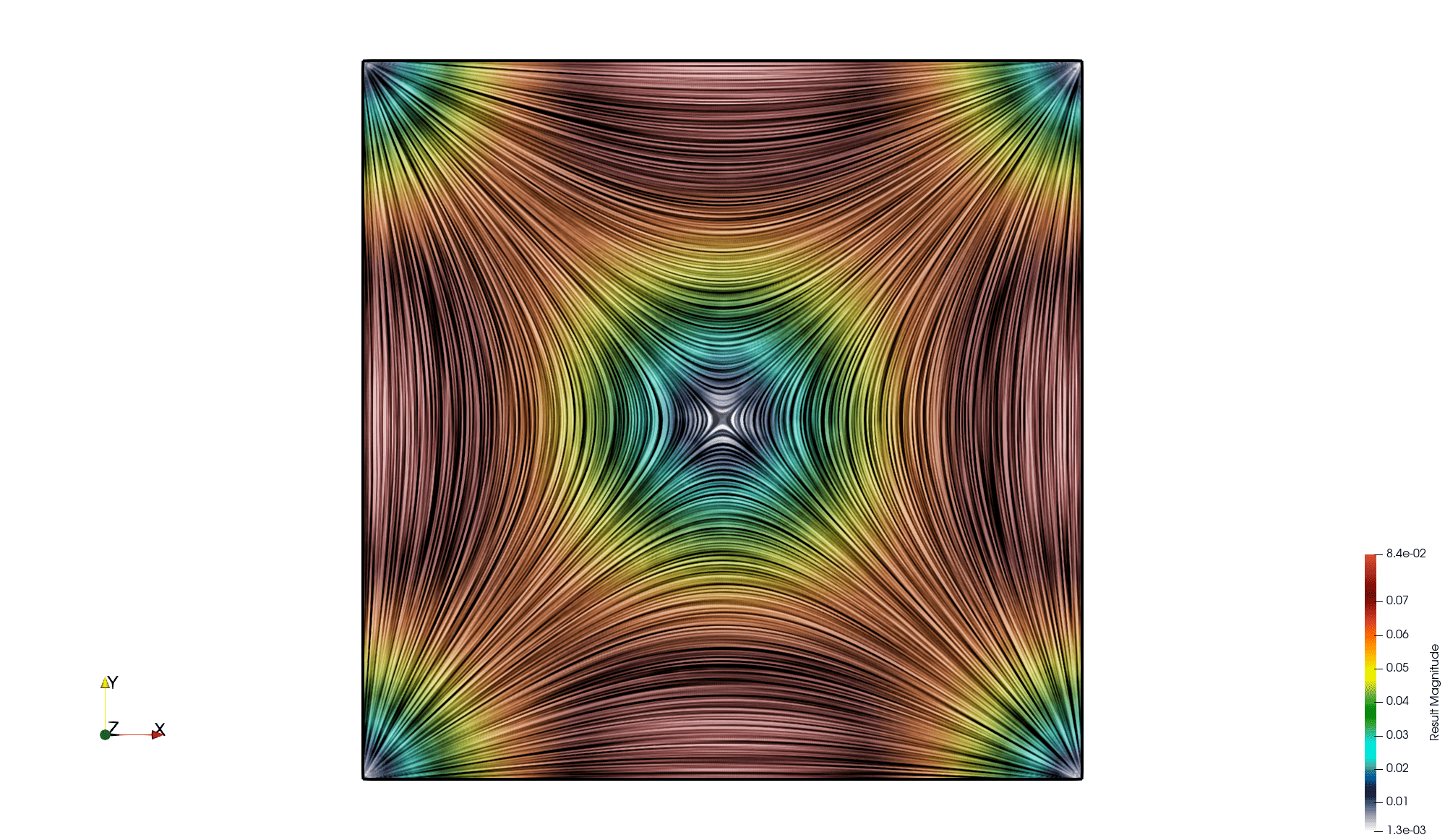}\\
		    {\footnotesize $\nabla u_{3,h}$}
	\end{minipage}
  \begin{minipage}{0.32\linewidth}\centering
    \includegraphics[scale=0.125, trim=53cm 2cm 53cm 2cm,clip]{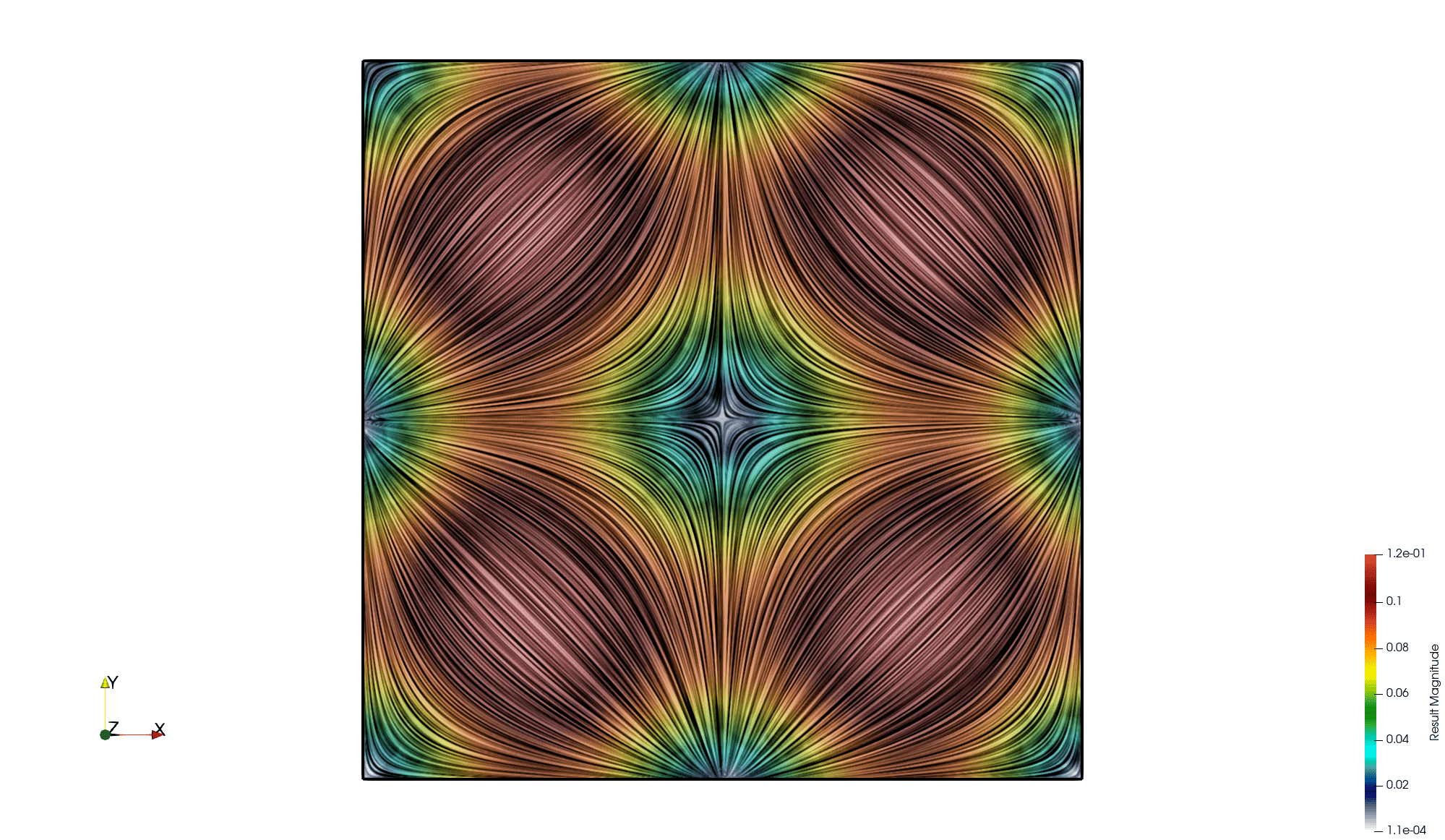}\\
		    {\footnotesize $\nabla u_{4,h}$}
  \end{minipage}
  \caption{Example \ref{subsec:square2D}. Surface plot and gradients
    surface line integral convolution of the first, third, and fourth
    lowest computed eigenvalues using non-conforming scheme on the mesh
    $\CT^3_h$ for $N=32$.}
  \label{fig:square2D-uh-graduh}
\end{figure}

\subsection{L-Shaped Domain}\label{subsec:lshape2D}
The purpose of this experiment is to assess the performance of the method on a domain exhibiting reduced regularity. We consider the L-shaped domain
\begin{align*}
  \Omega:= (-1,1)^2\backslash\left(-1,0\right)^2,
\end{align*}
which features a reentrant corner at $(0,0)$ and therefore induces singular behavior in the eigenfunctions.
We consider meshes composed of triangles and Voronoi polygons, as illustrated in Figure~\ref{fig:mesh_PolyGon-lshape}.

\begin{figure}[t!]\centering
  \begin{minipage}{0.23\linewidth}\centering
    \includegraphics[scale=0.09, trim=17cm 2cm 17cm 2cm,clip]{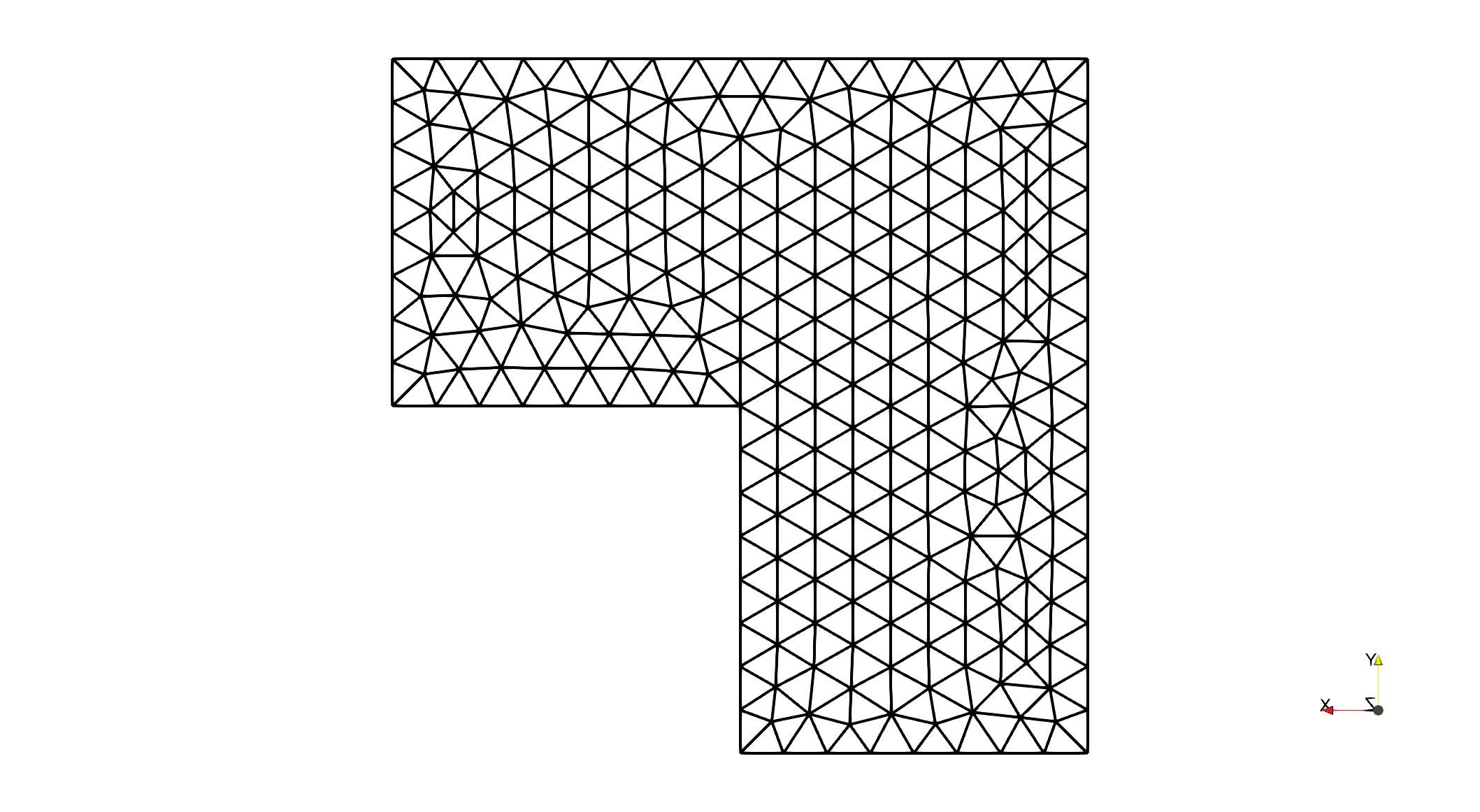}\\
		    {$\CT^4_h$}
  \end{minipage}
  \begin{minipage}{0.23\linewidth}\centering
    \includegraphics[scale=0.09, trim=17cm 2cm 17cm 2cm,clip]{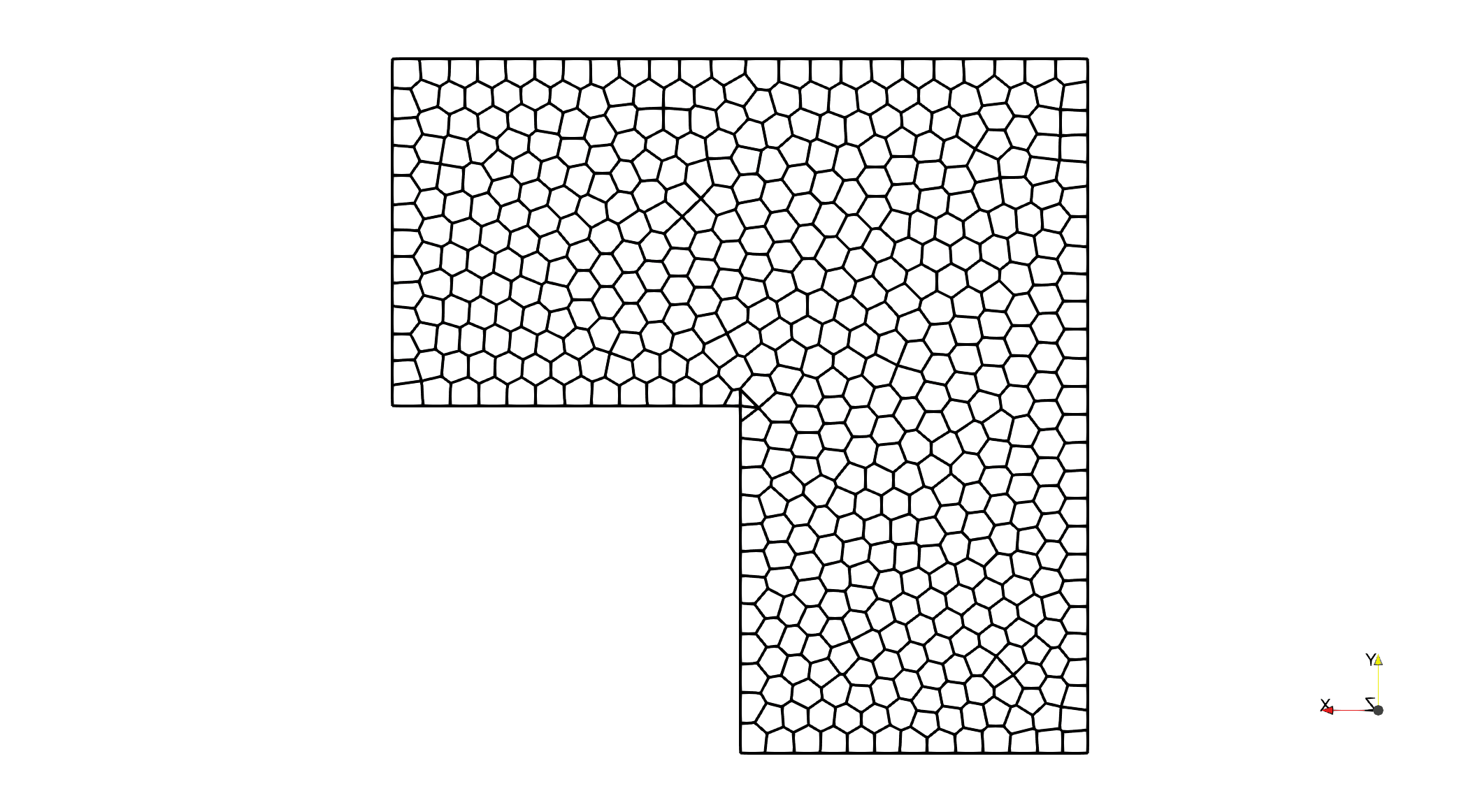}\\
		    {$\CT^5_h$}
  \end{minipage}
  \caption{Test \eqref{subsec:lshape2D}. Representative meshes used in  the eigenvalue computations.}
  \label{fig:mesh_PolyGon-lshape} 
\end{figure}
We report in Table~\ref{table:lshape2D-FEM} the convergence history of the ten lowest eigenvalues computed with the non-conforming method. It can be observed that, as expected, the rate of convergence is suboptimal for eigenvalues with associated singular eigenfunctions.  
For instance, the
first two eigenvalues are affected by the singularity, with an error behaving like $\mathcal{O}(h^{r})$, $1.2\leq r\leq 1.43$. In contrast, higher eigenvalues display convergence rates close to the optimal $\mathcal{O}(h^2)$ behavior, indicating that their associated eigenfunctions are less affected by the singularity.

Figure~\ref{fig:lshape2D-uh-graduh} shows selected eigenfunctions and their corresponding gradient fields computed with the nonconforming scheme. The first two eigenfunctions exhibit strong gradients in the vicinity of the reentrant corner, which is consistent with the expected singular behavior. The reconstructed gradient fields also confirm that the normal derivative vanishes along the boundary, as prescribed by the model.
\begin{table}[t!]
  \centering 
  \footnotesize
  \begin{center}
    \caption{Example \ref{subsec:lshape2D}. Convergence history of the
      ten lowest computed eigenvalues on the L-shaped domain in
      different meshes. }
    \label{table:lshape2D-FEM}
    \begin{tabular}{|c c c c |c| c|}
      \hline
      \multicolumn{6}{|c|}{$\CT^4_h$}\\
      \hline
      \hline
      $N=126$             &  $N=480$         &   $N=1818$         & $N=7178$ & Order & $\lambda_{\text{extr}}$ \\ 
      \hline
        1.8454  &     2.2456  &     2.4245  &     2.5422  & 1.42 &     2.5868  \\
        3.1214  &     3.2005  &     3.2338  &     3.2528  & 1.23 &     3.2637  \\
       20.6871  &    21.1653  &    21.2955  &    21.3341  & 1.98 &    21.3445  \\
       20.7923  &    21.5447  &    21.7254  &    21.7818  & 2.15 &    21.7897  \\
       32.5976  &    33.3889  &    33.5864  &    33.6350  & 2.14 &    33.6478  \\
       35.2018  &    38.5684  &    39.3554  &    39.6577  & 2.21 &    39.6706  \\
       73.9685  &    77.4825  &    78.3661  &    78.6065  & 2.14 &    78.6590  \\
      127.0634  &   138.2728  &   141.3847  &   142.5152  & 2.00 &   142.7097  \\
      146.2965  &   159.1158  &   162.1393  &   162.8566  & 2.32 &   162.9865  \\
      143.9508  &   177.2920  &   187.7400  &   192.9284  & 1.96 &   193.4054  \\
      \hline
      \hline
      \multicolumn{6}{|c|}{$\CT^5_h$}\\
      \hline
      \hline
      $N=259$             &  $N=515$         &   $N=1027$         & $N=2051$ & Order & $\lambda_{\text{extr}}$ \\ 
      \hline
        2.1642  &     2.2964  &     2.3342  &     2.4218  & 1.30 &     2.5099  \\
        3.0236  &     3.1050  &     3.1478  &     3.1850  & 1.43 &     3.2332  \\
       19.1866  &    20.2113  &    20.7780  &    21.0344  & 2.09 &    21.3098  \\
       19.5528  &    20.5675  &    21.1742  &    21.4604  & 1.91 &    21.8137  \\
       21.4139  &    26.6081  &    29.8566  &    31.6832  & 2.30 &    33.2864  \\
       24.4630  &    30.0922  &    33.9049  &    36.3514  & 2.01 &    39.0219  \\
       70.3257  &    74.5462  &    76.5876  &    77.6256  & 2.29 &    78.4458  \\
      129.3497  &   136.1305  &   139.1755  &   140.9823  & 2.28 &   142.2166  \\
      132.9624  &   147.6219  &   155.9331  &   159.6513  & 2.24 &   163.3600  \\
      165.2225  &   178.2810  &   183.4868  &   188.0302  & 2.21 &   190.4697  \\
      \hline
      \hline             
    \end{tabular}
  \end{center}
\end{table}

\begin{figure}[t!]
  \centering
  \begin{minipage}{0.32\linewidth}\centering
    \includegraphics[scale=0.064, trim=34cm 4cm 34cm 4cm,clip]{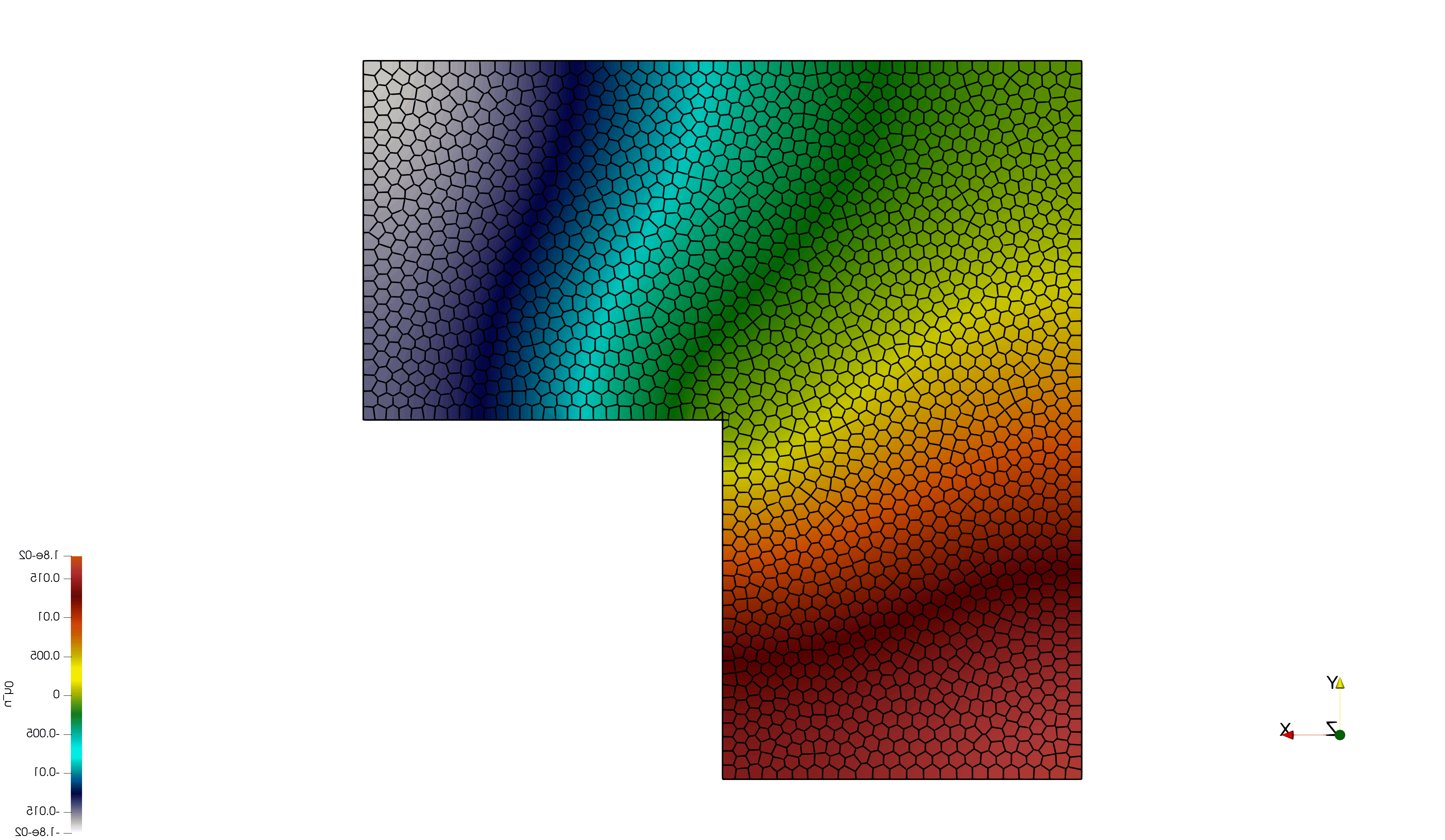}\\
		    {\footnotesize $u_{1,h}$}
  \end{minipage}
  \begin{minipage}{0.32\linewidth}\centering
    \includegraphics[scale=0.064, trim=34cm 4cm 34cm 4cm,clip]{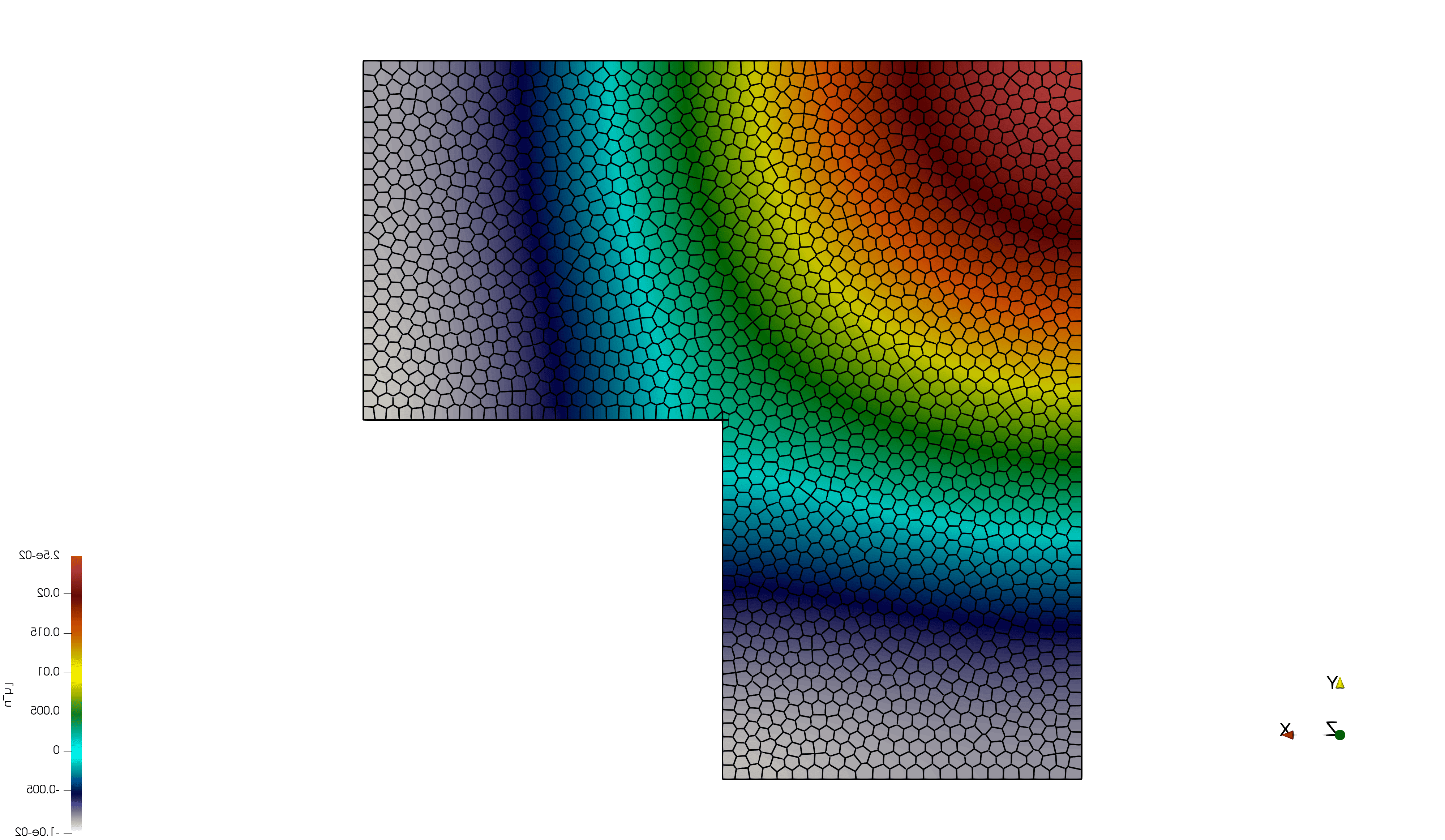}\\
		    {\footnotesize $u_{2,h}$}
  \end{minipage}
  \begin{minipage}{0.32\linewidth}\centering
    \includegraphics[scale=0.064, trim=34cm 4cm 34cm 4cm,clip]{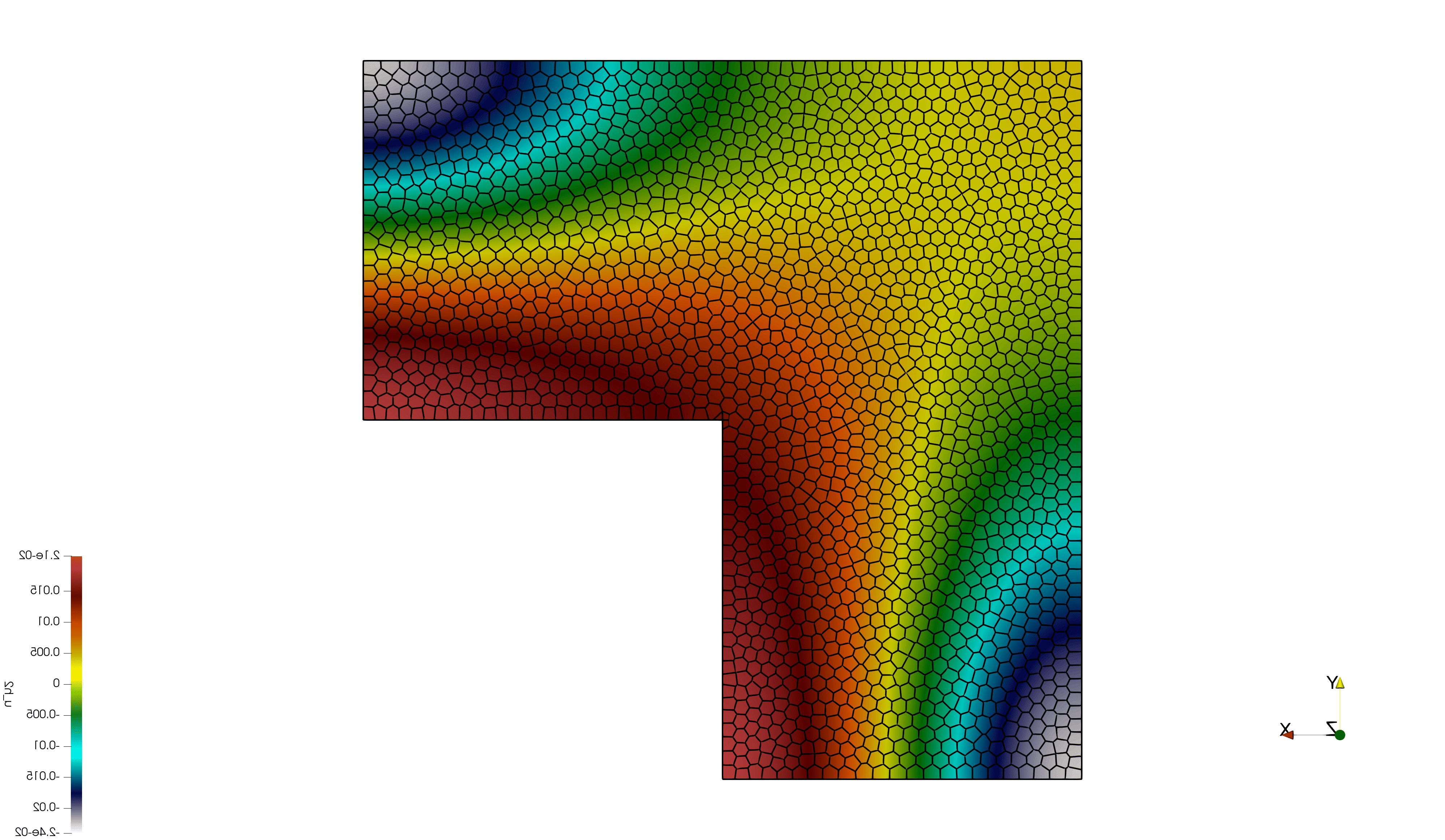}\\
		    {\footnotesize $u_{3,h}$}
  \end{minipage}
  \hspace*{0.1cm}\begin{minipage}{0.32\linewidth}\centering
    \includegraphics[scale=0.125, trim=53cm 2cm 53cm 2cm,clip]{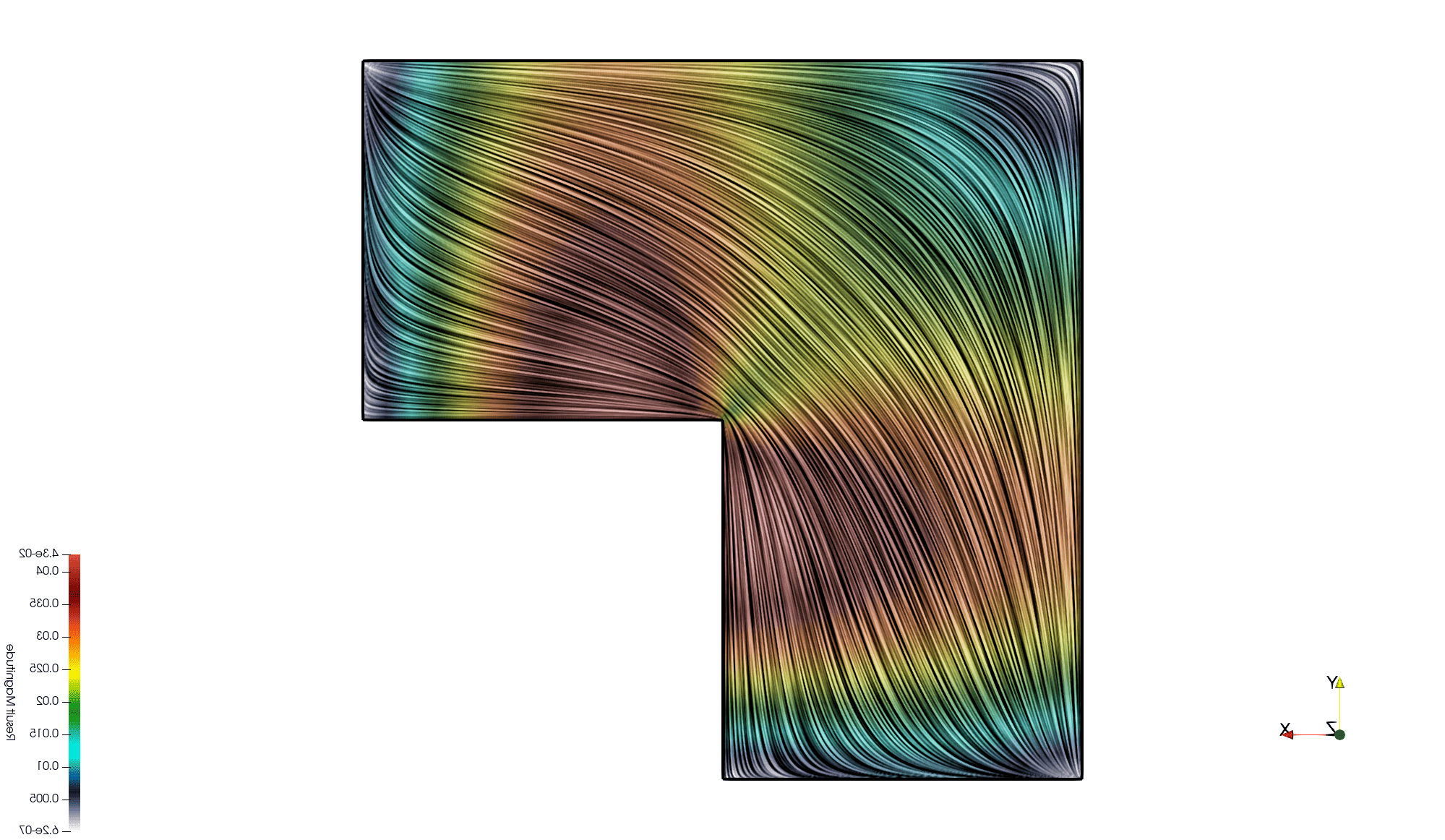}\\
		    {\footnotesize $\nabla u_{1,h}$}
  \end{minipage}
  \begin{minipage}{0.32\linewidth}\centering
    \includegraphics[scale=0.125, trim=53cm 2cm 53cm 2cm,clip]{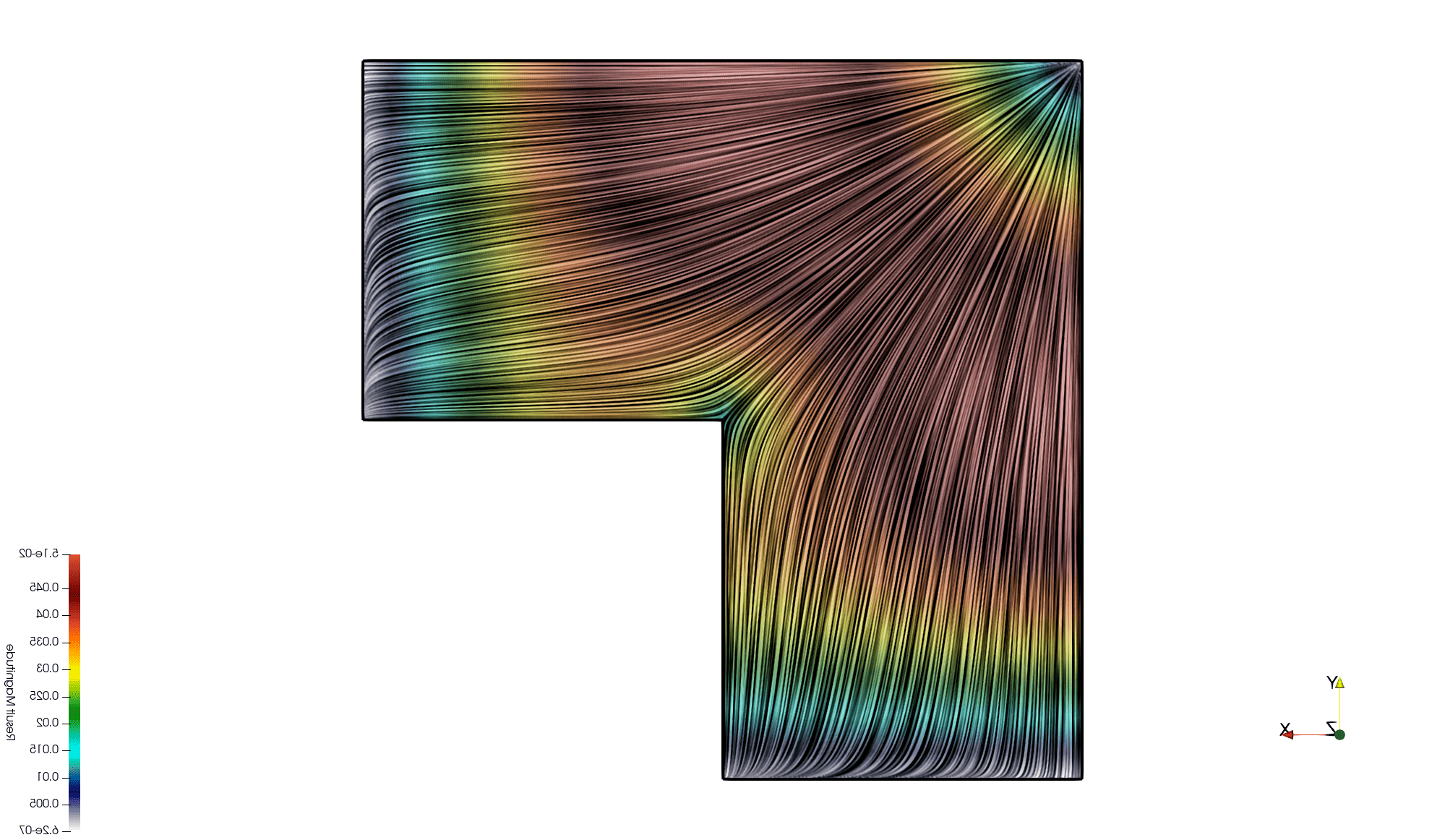}\\
		    {\footnotesize $\nabla u_{2,h}$}
  \end{minipage}
  \begin{minipage}{0.32\linewidth}\centering
    \includegraphics[scale=0.125, trim=53cm 2cm 53cm 2cm,clip]{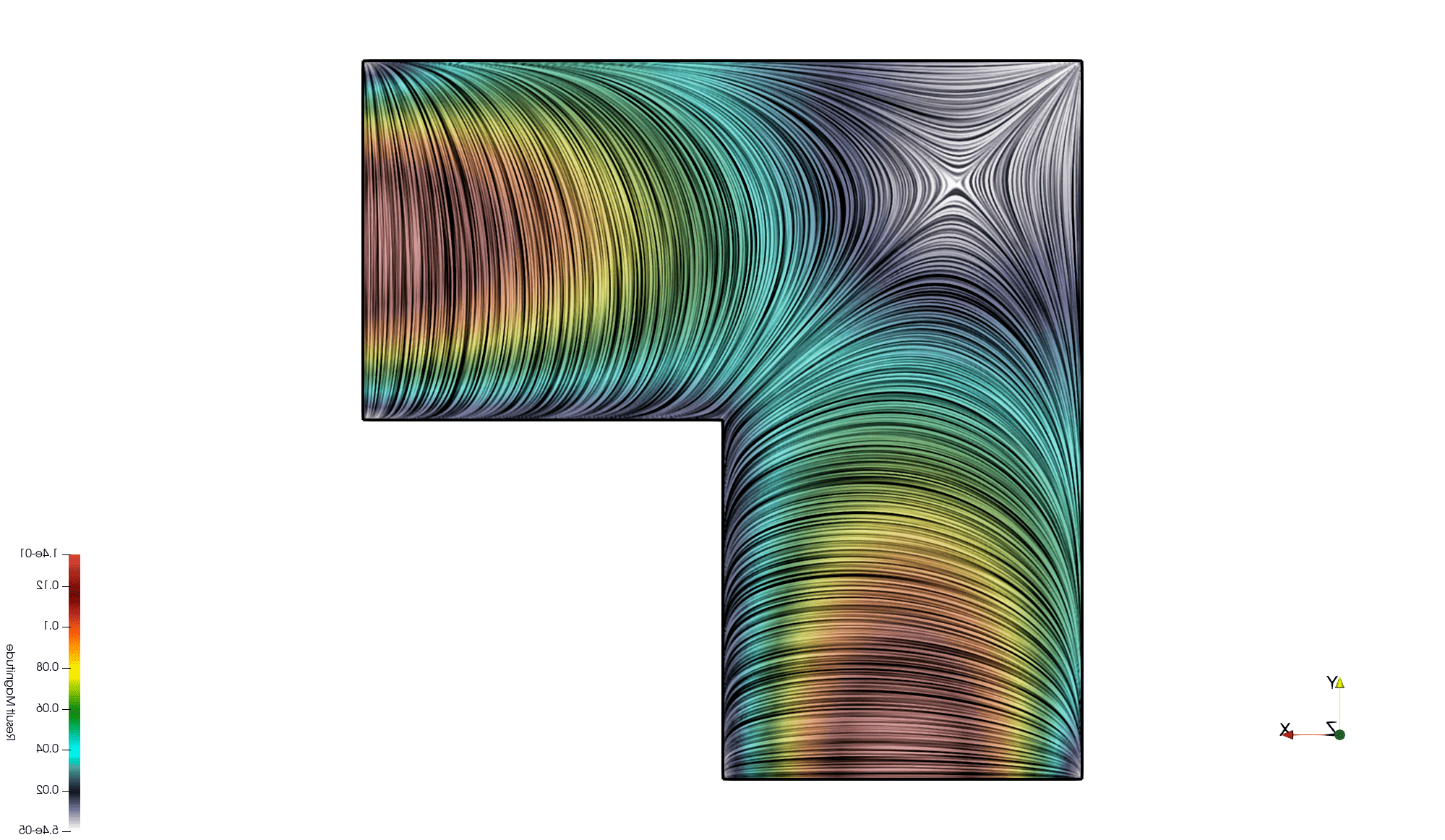}\\
		    {\footnotesize $\nabla u_{3,h}$}
  \end{minipage}
  \caption{Test~\ref{subsec:lshape2D}. Surface plot and line integral convolution plots of the gradient fields of the first, second, and third
    lowest computed eigenvalues on the mesh $\CT^5_h$ for $N=22$ while
    a non-conforming approach is chosen.}
  \label{fig:lshape2D-uh-graduh}
\end{figure}
\subsection{Circular domain}\label{subsec:circle2D}
We now consider a smooth, non-polygonal domain. Specifically, we take
$$
\Omega:=\left\{(x,y)\in\mathbb{R}^2\,:\, x^2 + y^2 \leq 1/4\right\}.
$$ Since the domain is approximated by polygonal meshes, the method incurs a geometric variational crime. Consequently, in this setting, the error in the approximation of the eigenvalues is 
$\mathcal{O}(h^2)$,  or equivalentely $\mathcal{O}(N^{-1})$. 
Figure~\ref{fig:mesh_PolyGon-circle} shows representative meshes used in the computations.
\begin{figure}[t!]\centering
  \begin{minipage}{0.23\linewidth}\centering
    \includegraphics[scale=0.09, trim=17cm 2cm 17cm 2cm,clip]{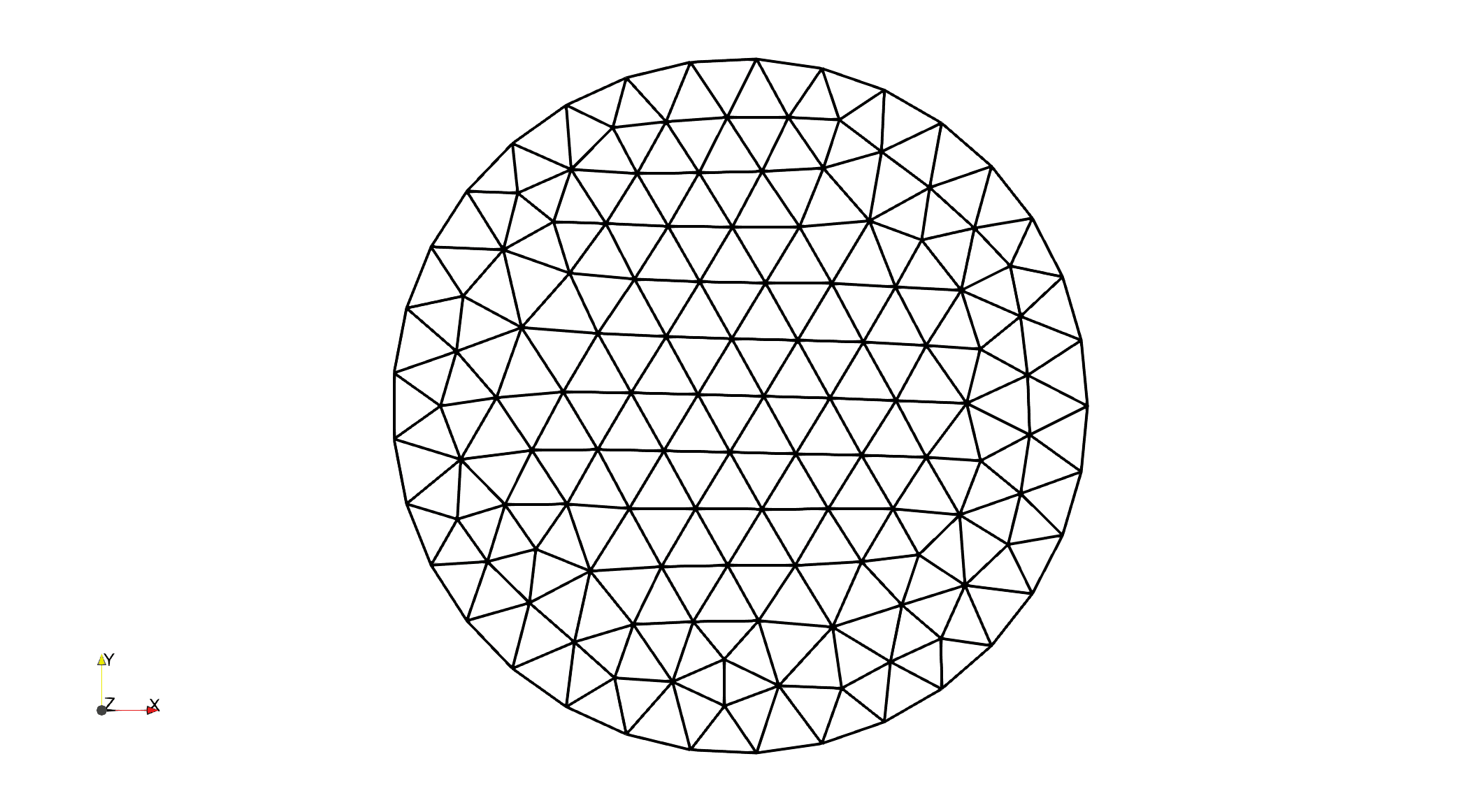}\\
		    {$\CT^6_h$}
  \end{minipage}
  \begin{minipage}{0.23\linewidth}\centering
    \includegraphics[scale=0.09, trim=17cm 2cm 17cm 2cm,clip]{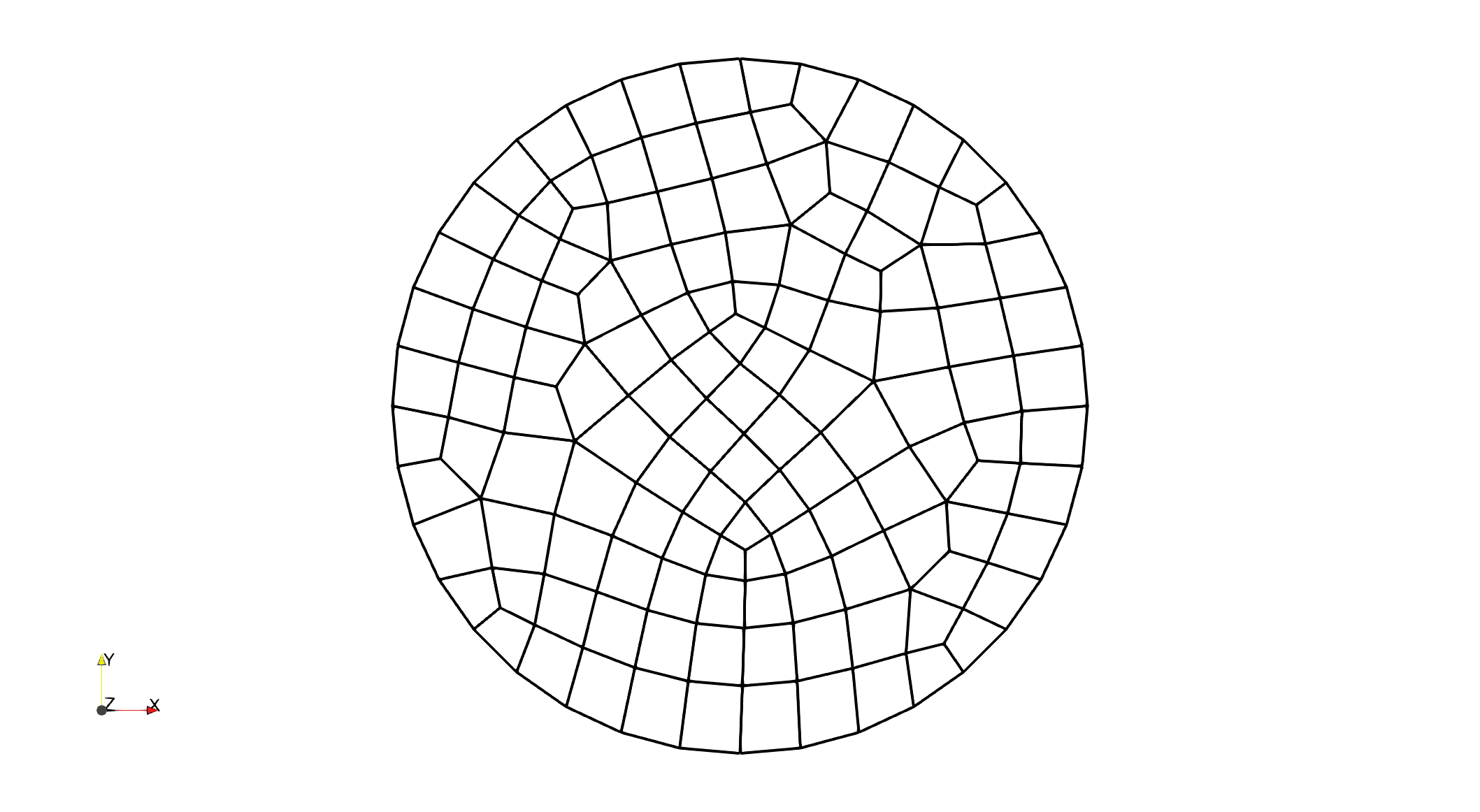}\\
		    {$\CT^7_h$}
  \end{minipage}
  \begin{minipage}{0.23\linewidth}\centering
    \includegraphics[scale=0.09, trim=17cm 2cm 17cm 2cm,clip]{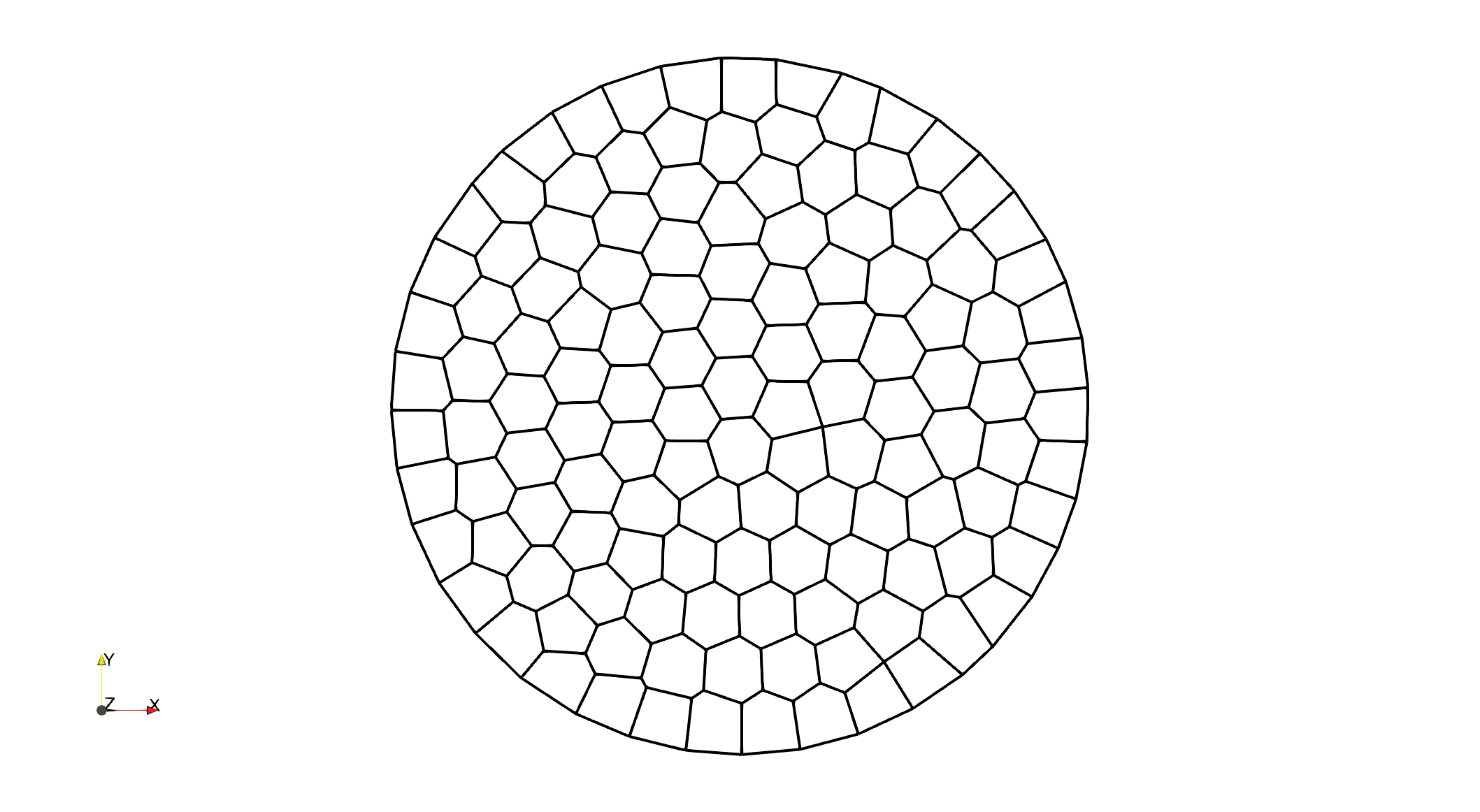}\\
		    {$\CT^8_h$}
  \end{minipage}
  \caption{Test~\ref{subsec:circle2D}. Representative mesh families used in the eigenvalue computations.}
  \label{fig:mesh_PolyGon-circle} 
\end{figure}
The convergence history of the first ten positive eigenvalues computed with the nonconforming scheme is reported in Table~\ref{table:circle2D-FEM}. A quadratic convergence rate is observed across all mesh families, which is consistent with the expected behavior under geometric approximation of the domain. The extrapolated eigenvalues were obtained using highly refined meshes combined with a least-squares fit. 

Figure~\ref{fig:circle2D-uh-graduh} displays selected eigenfunctions and the corresponding reconstructed gradient fields computed on Voronoi meshes. In contrast to the polygonal cases, no singular behavior is present, and the eigenfunctions exhibit smooth distributions over the domain. The gradient fields are tangential along the boundary, confirming the enforcement of the homogeneous Neumann condition $\partial_{\mathbf n}u=0$.
\begin{table}[t!]
  \centering 
  \footnotesize
  \begin{center}
    \caption{Test~\ref{subsec:circle2D}. Convergence history of the
      ten lowest eigenvalues computed with the non-conforming approach
      on the circle domain in different meshes. }
    \label{table:circle2D-FEM}
    \begin{tabular}{|c c c c |c| c|}
      \hline
      \hline
      \multicolumn{6}{|c|}{$\CT^6_h$}\\
      \hline
      \hline
      $N=225$             &  $N=761$         &   $N=2962$         & $N=11778$ & Order & $\lambda_{\text{extr}}$ \\ 
      \hline
      23.7479  &    23.9326  &    23.9843  &    23.9843  & 2.23 &    23.9992  \\
        23.7502  &    23.9334  &    23.9843  &    23.9843  & 2.04 &    24.0006  \\
       149.6535  &   157.2891  &   159.3755  &   159.3755  & 2.31 &   159.9572  \\
       149.6909  &   157.3259  &   159.3809  &   159.3809  & 2.07 &   160.0227  \\
       427.9333  &   483.8660  &   499.4299  &   499.4299  & 2.35 &   503.6499  \\
       428.9644  &   484.5253  &   499.6030  &   499.6030  & 1.99 &   504.4781  \\
       849.1243  &  1069.3200  &  1133.5190  &  1133.5190  & 2.15 &  1152.1133  \\
       857.4570  &  1070.6139  &  1133.6290  &  1133.6290  & 1.99 &  1153.6119  \\
      1357.7182  &  1950.6772  &  2143.9016  &  2143.9016  & 1.89 &  2208.0568  \\
      1376.8635  &  1954.3054  &  2144.3432  &  2144.3432  & 1.99 &  2203.9002  \\
      \hline\hline
      \multicolumn{6}{|c|}{$\CT^7_h$}\\
      \hline\hline
      $N=129$             &  $N=414$         &   $N=1476$         & $N=5847$ & Order & $\lambda_{\text{extr}}$ \\ 
      \hline
        23.6785  &    23.8935  &    23.9704  &    23.9704  & 2.11 &    23.9992  \\
        23.6915  &    23.8975  &    23.9715  &    23.9715  & 2.01 &    24.0006  \\
       149.6325  &   156.7202  &   159.0968  &   159.0968  & 2.19 &   159.9572  \\
       150.0780  &   156.7259  &   159.1471  &   159.1471  & 2.01 &   160.0227  \\
       433.0186  &   481.3161  &   497.9471  &   497.9471  & 2.20 &   503.6499  \\
       433.8642  &   482.0060  &   498.2037  &   498.2037  & 1.94 &   504.4781  \\
       867.3765  &  1061.5059  &  1128.3734  &  1128.3734  & 2.08 &  1152.1133  \\
       882.4124  &  1063.1044  &  1128.7506  &  1128.7506  & 1.94 &  1153.6119  \\
      1412.3223  &  1928.0070  &  2130.4845  &  2130.4845  & 1.86 &  2208.0568  \\
      1424.8487  &  1940.3125  &  2130.9424  &  2130.9424  & 1.95 &  2203.9002  \\
      \hline\hline
      \multicolumn{6}{|c|}{$\CT^8_h$}\\
      \hline\hline
      $N=128$             &  $N=384$         &   $N=768$         & $N=1536$ & Order & $\lambda_{\text{extr}}$ \\ 
      \hline
        23.7266  &    23.9182  &    23.9619  &    23.9808  & 2.13 &    23.9992  \\
        23.7777  &    23.9294  &    23.9657  &    23.9832  & 2.00 &    24.0006  \\
       153.6024  &   158.1142  &   159.0829  &   159.5611  & 2.16 &   159.9572  \\
       154.1278  &   158.1753  &   159.1487  &   159.5762  & 2.04 &   160.0227  \\
       460.3176  &   491.3718  &   497.7227  &   501.1154  & 2.19 &   503.6499  \\
       464.9252  &   491.6933  &   498.5945  &   501.2138  & 2.00 &   504.4781  \\
       983.5687  &  1100.8885  &  1128.0101  &  1140.2852  & 2.09 &  1152.1133  \\
       989.9151  &  1102.3202  &  1129.4662  &  1141.1000  & 2.04 &  1153.6119  \\
      1708.8633  &  2043.8460  &  2129.1779  &  2165.7591  & 1.96 &  2208.0568  \\
      1723.7599  &  2051.8494  &  2131.5298  &  2166.3778  & 2.02 &  2203.9002  \\
      \hline
      \hline             
    \end{tabular}
  \end{center}
\end{table}

\begin{figure}[!h]
  \centering
  \begin{minipage}{0.32\linewidth}\centering
    \includegraphics[scale=0.064, trim=34cm 4cm 34cm 4cm,clip]{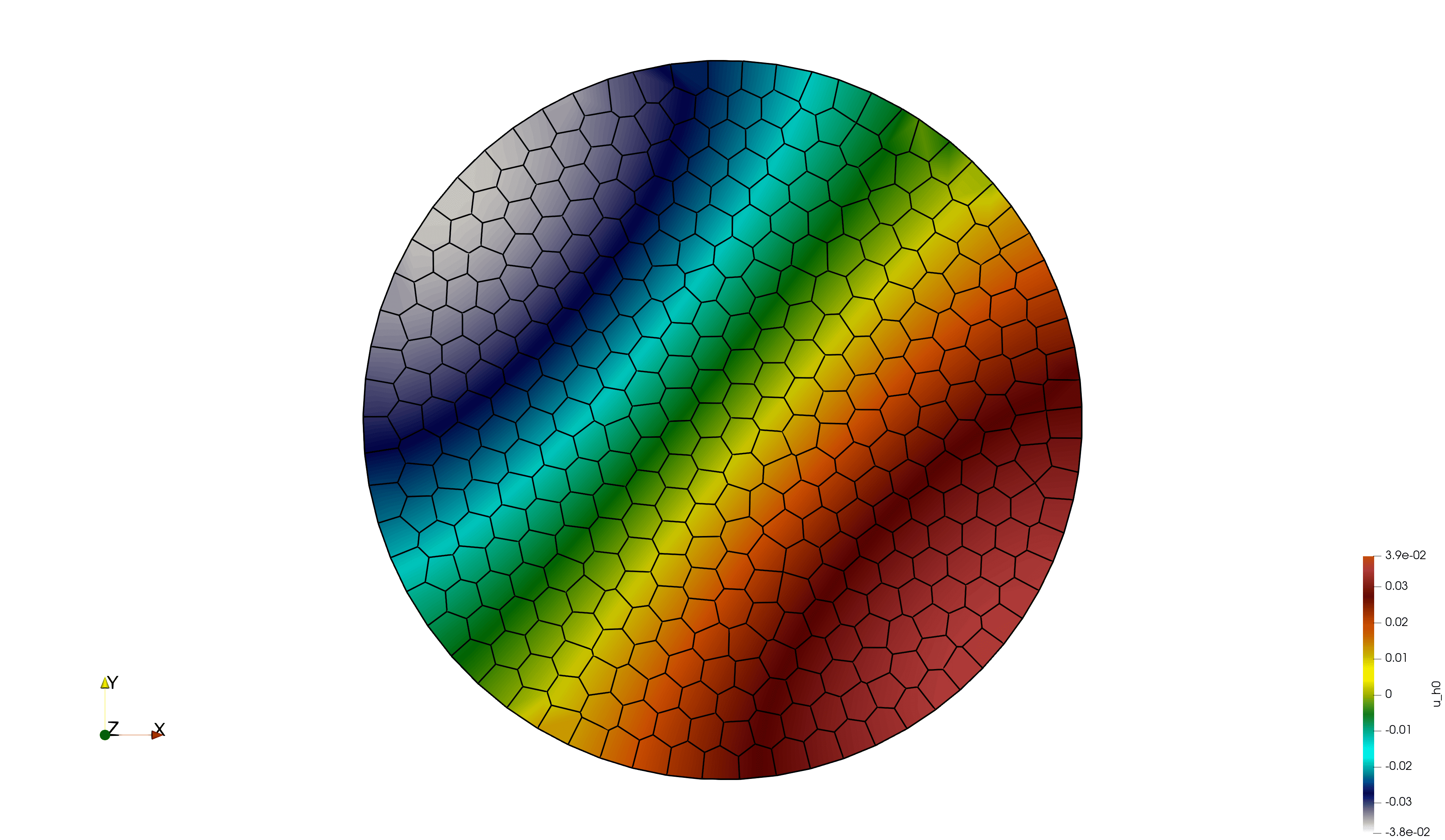}\\
		    {\footnotesize $ u_{1,h}$}
  \end{minipage}
  \begin{minipage}{0.32\linewidth}\centering
    \includegraphics[scale=0.064, trim=34cm 4cm 34cm 4cm,clip]{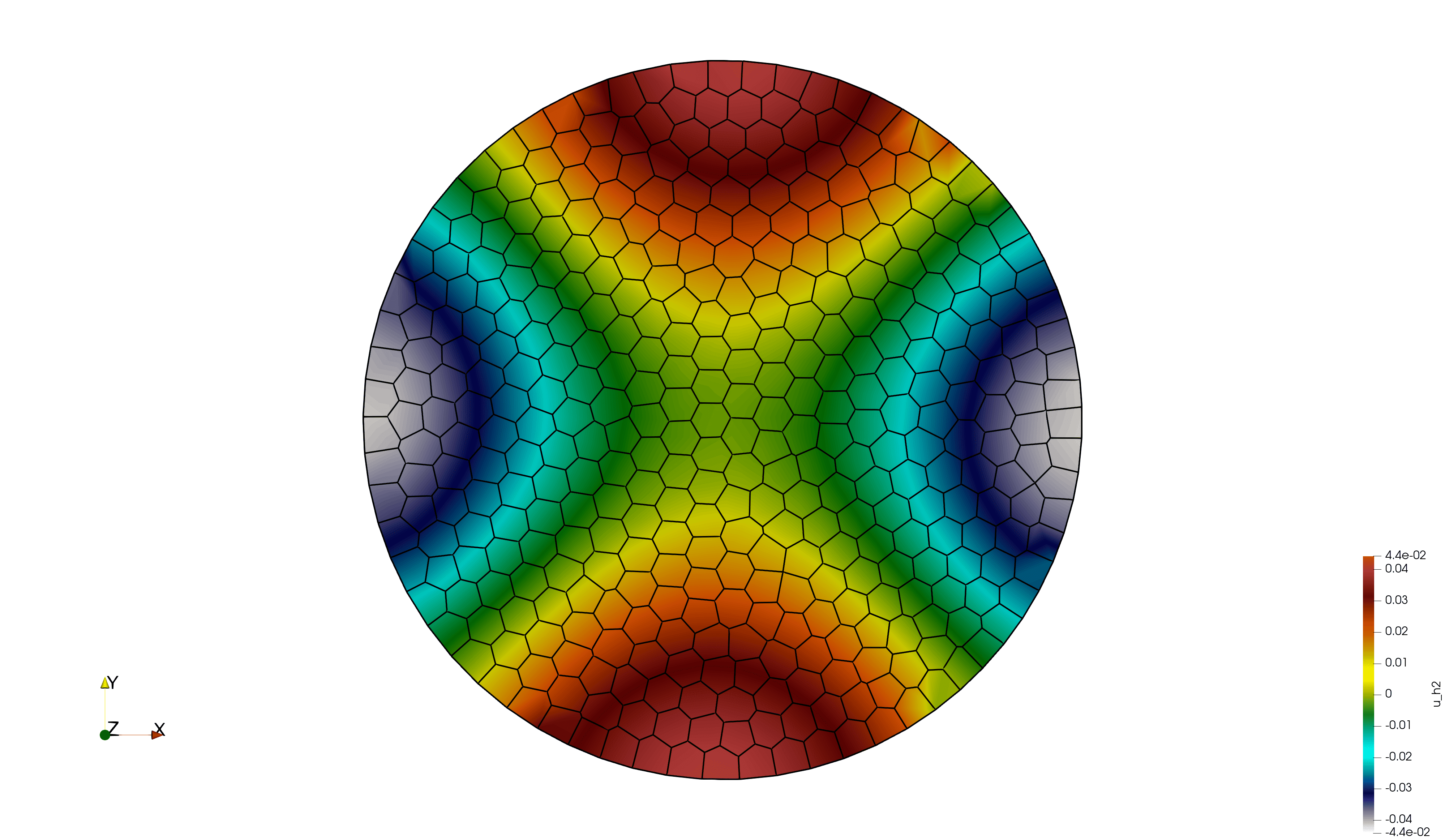}\\
		    {\footnotesize $ u_{3,h}$}
  \end{minipage}
  \begin{minipage}{0.32\linewidth}\centering
    \includegraphics[scale=0.064, trim=34cm 4cm 34cm 4cm,clip]{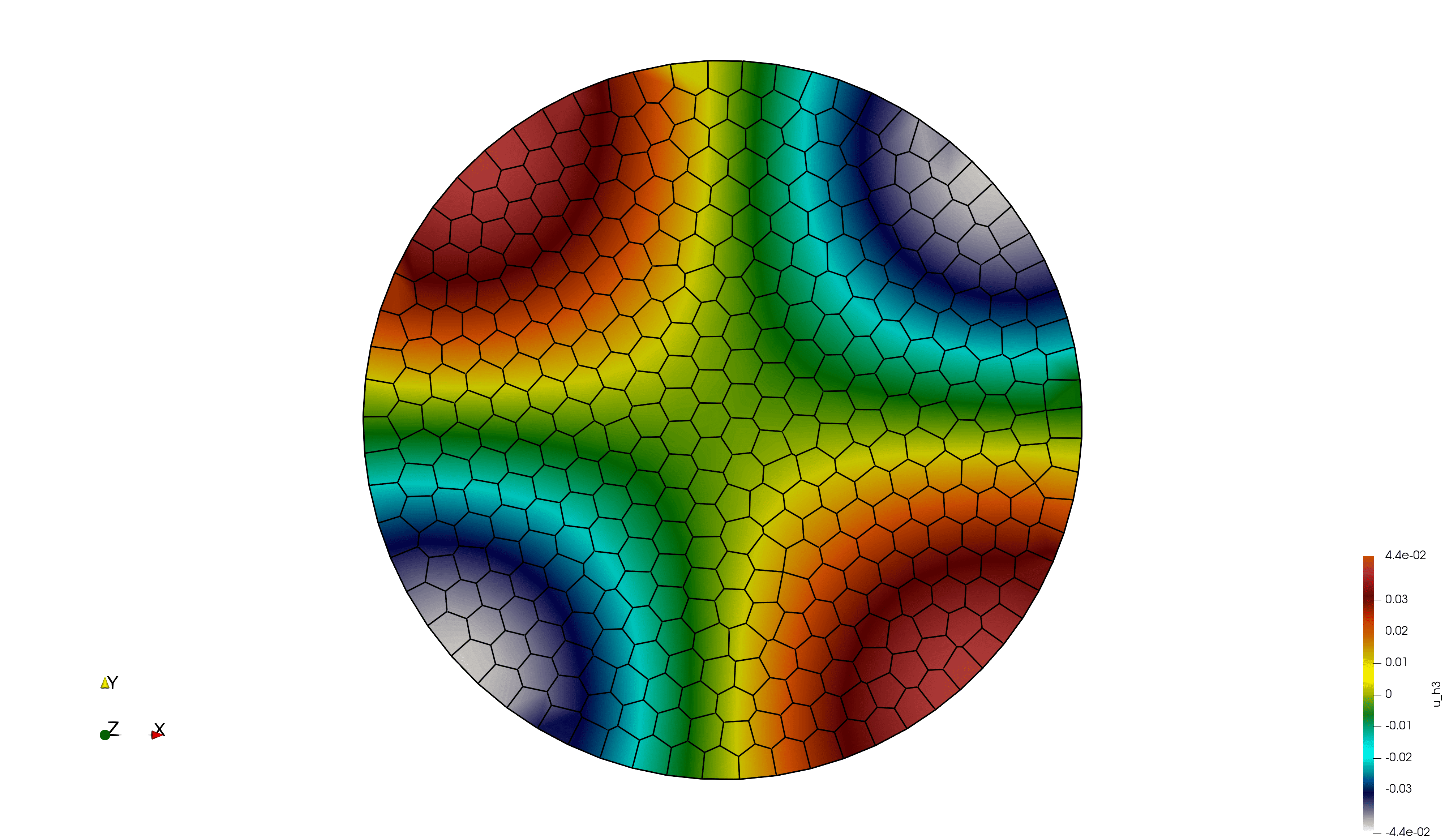}\\
		    {\footnotesize $ u_{4,h}$}
  \end{minipage}
  \hspace*{0.1cm}\begin{minipage}{0.32\linewidth}\centering
    \includegraphics[scale=0.125, trim=53cm 2cm 53cm 2cm,clip]{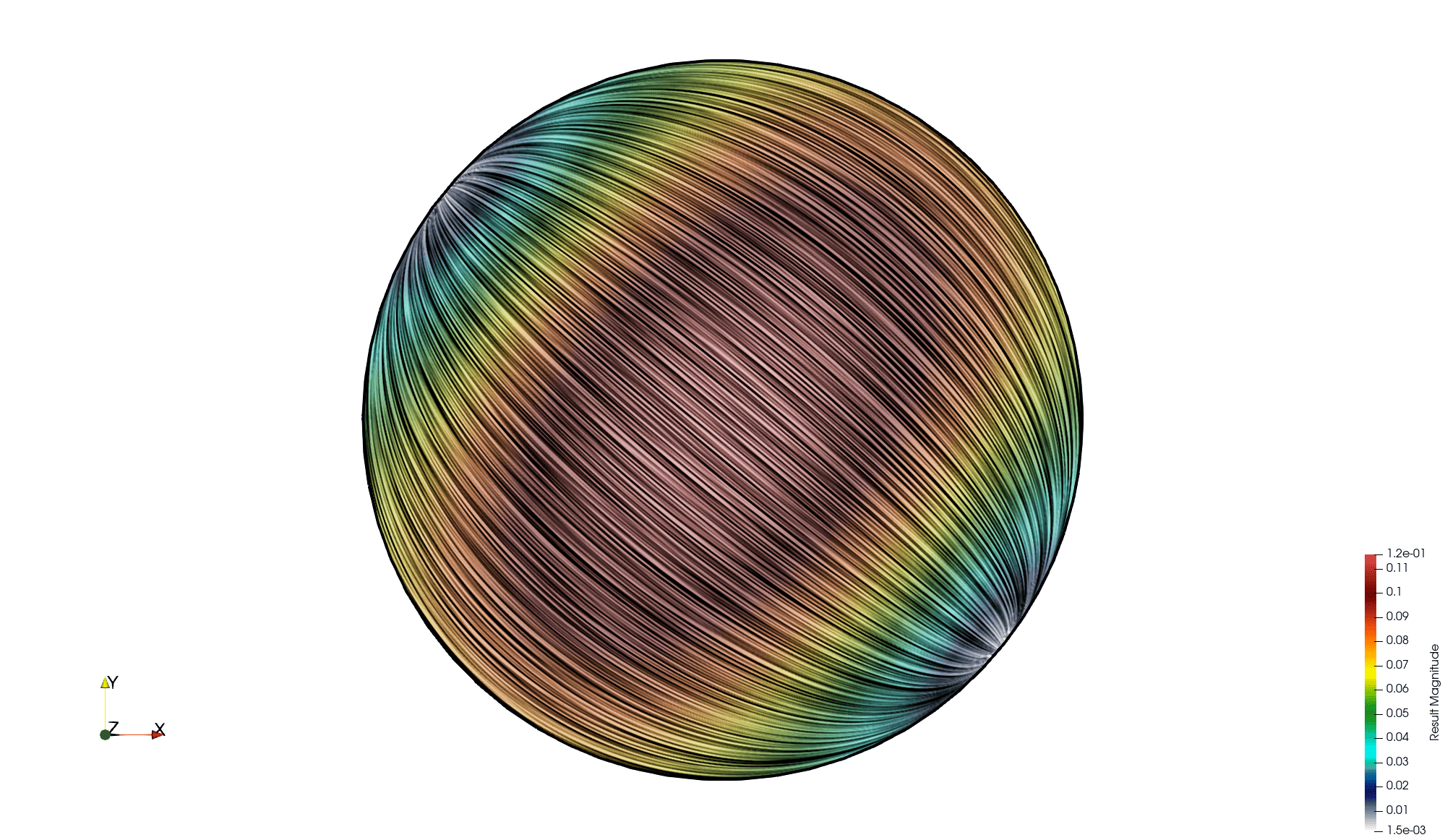}\\
		    {\footnotesize $\nabla u_{1,h}$}
  \end{minipage}
  \begin{minipage}{0.32\linewidth}\centering
    \includegraphics[scale=0.125, trim=53cm 2cm 53cm 2cm,clip]{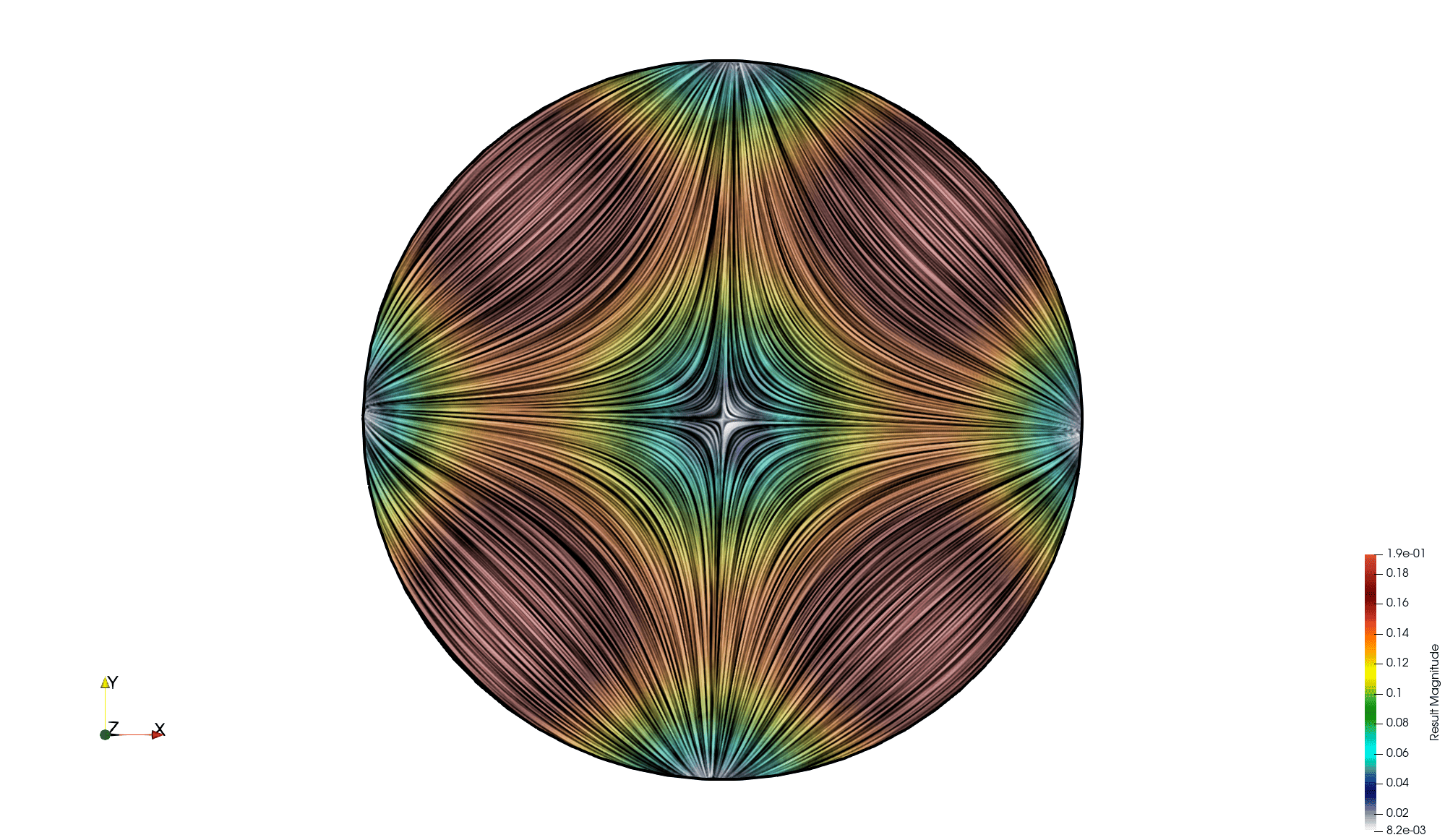}\\
		    {\footnotesize $\nabla u_{3,h}$}
  \end{minipage}
  \begin{minipage}{0.32\linewidth}\centering
    \includegraphics[scale=0.125, trim=53cm 2cm 53cm 2cm,clip]{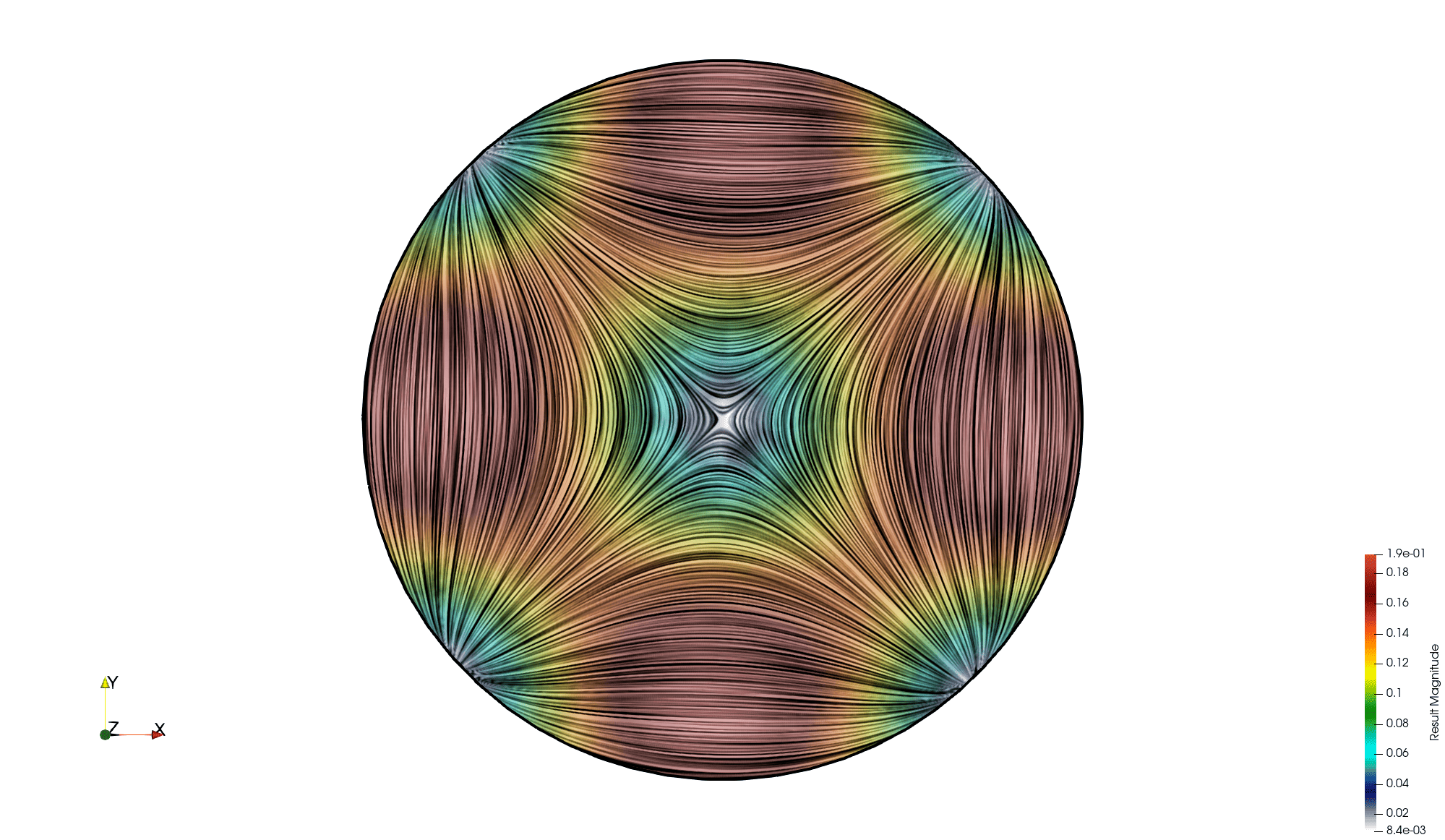}\\
		    {\footnotesize $\nabla u_{4,h}$}
  \end{minipage}
  \caption{Test~\ref{subsec:circle2D}. Surface plot and line integral convolution plots of the gradient fields of the first, third, and fourth lowest eigenvalues computed with non-conforming scheme on the mesh $\CT^4_h$.}
  \label{fig:circle2D-uh-graduh}
\end{figure}

\subsection{Circular Plate with Holes}\label{subsec:circle-holes}
We conclude the numerical section with a test that goes beyond the scope of the proposed theory. We consider a non-polygonal multiply connected domain defined by

\[
  \Omega := \Omega_m \setminus \bigcup_{i=1}^4 \Omega_i,
\]
where

$$
\begin{aligned}
  \Omega_m &:= \left\{(x,y)\in\mathbb{R}^2 : x^2+y^2 \leq 1 \right\},\\
  \Omega_1 &:= \left\{(x,y)\in\mathbb{R}^2 : (x-0.4)^2+(y-0.4)^2 \leq 0.04 \right\},\\
  \Omega_2 &:= \left\{(x,y)\in\mathbb{R}^2 : (x+0.4)^2+(y-0.4)^2 \leq 0.04 \right\},\\
  \Omega_3 &:= \left\{(x,y)\in\mathbb{R}^2 : (x+0.4)^2+(y+0.4)^2 \leq 0.04 \right\},\\
  \Omega_4 &:= \left\{(x,y)\in\mathbb{R}^2 : (x-0.4)^2+(y+0.4)^2 \leq 0.04 \right\}.
\end{aligned}
$$

Thus, $\Omega$ is the unit disk centered at the origin with four circular holes of radius $0.2$. Representative meshes are shown in Figure~\ref{fig:mesh_PolyGon-circle-hole}. We compare two boundary-condition settings. In the first one, the zero normal derivative condition is imposed on the whole boundary, $\partial\Omega_m \cup \bigcup_{i=1}^4 \partial\Omega_i$. In the second one, it is imposed only on the exterior boundary $\partial\Omega_m$. The comparison is performed using the nonconforming method.
\begin{figure}[t!]\centering
  \begin{minipage}{0.23\linewidth}\centering
    \includegraphics[scale=0.09, trim=17cm 2cm 17cm 2cm,clip]{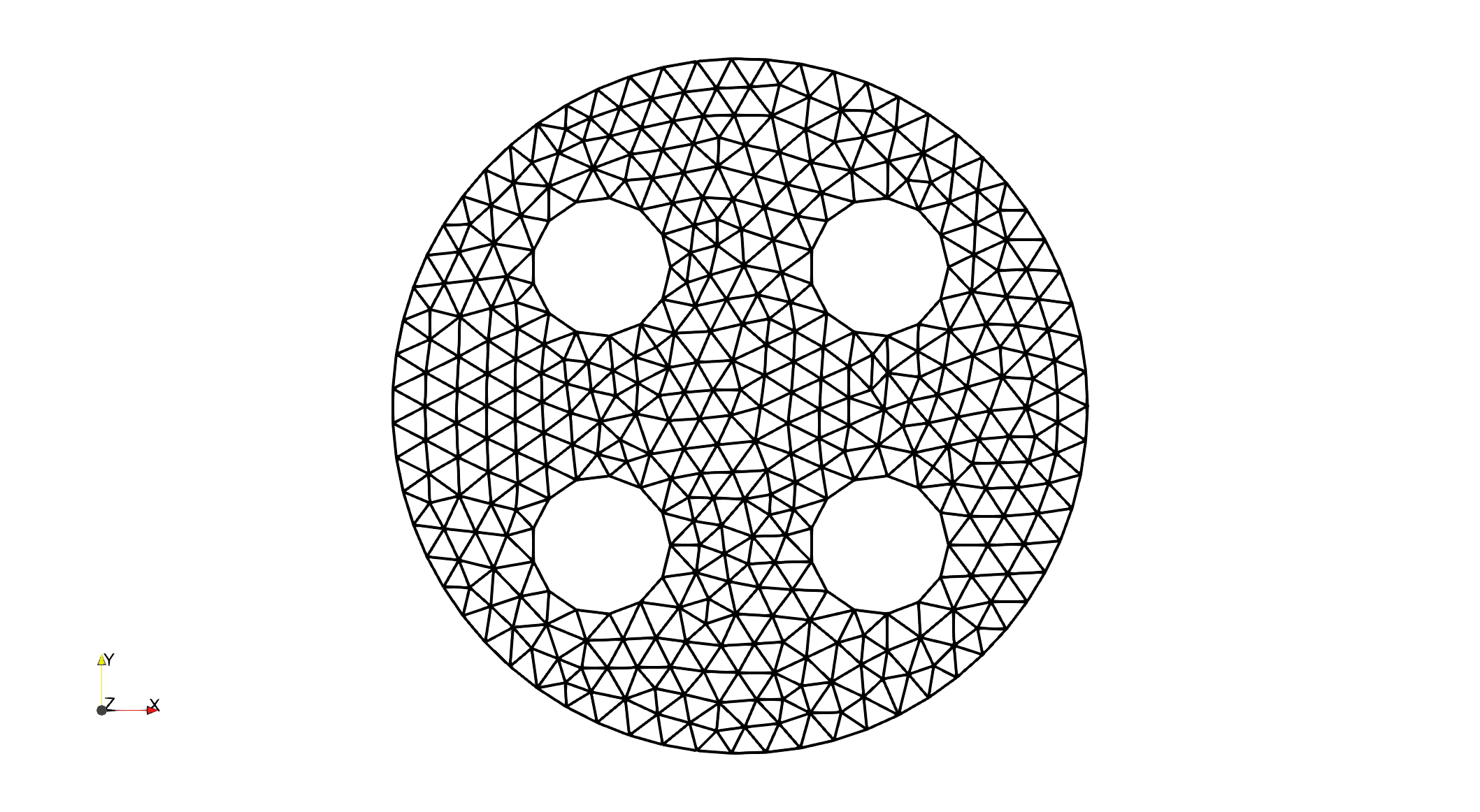}\\
		    {$\CT^9_h$}
  \end{minipage}
  \begin{minipage}{0.23\linewidth}\centering
    \includegraphics[scale=0.09, trim=17cm 2cm 17cm 2cm,clip]{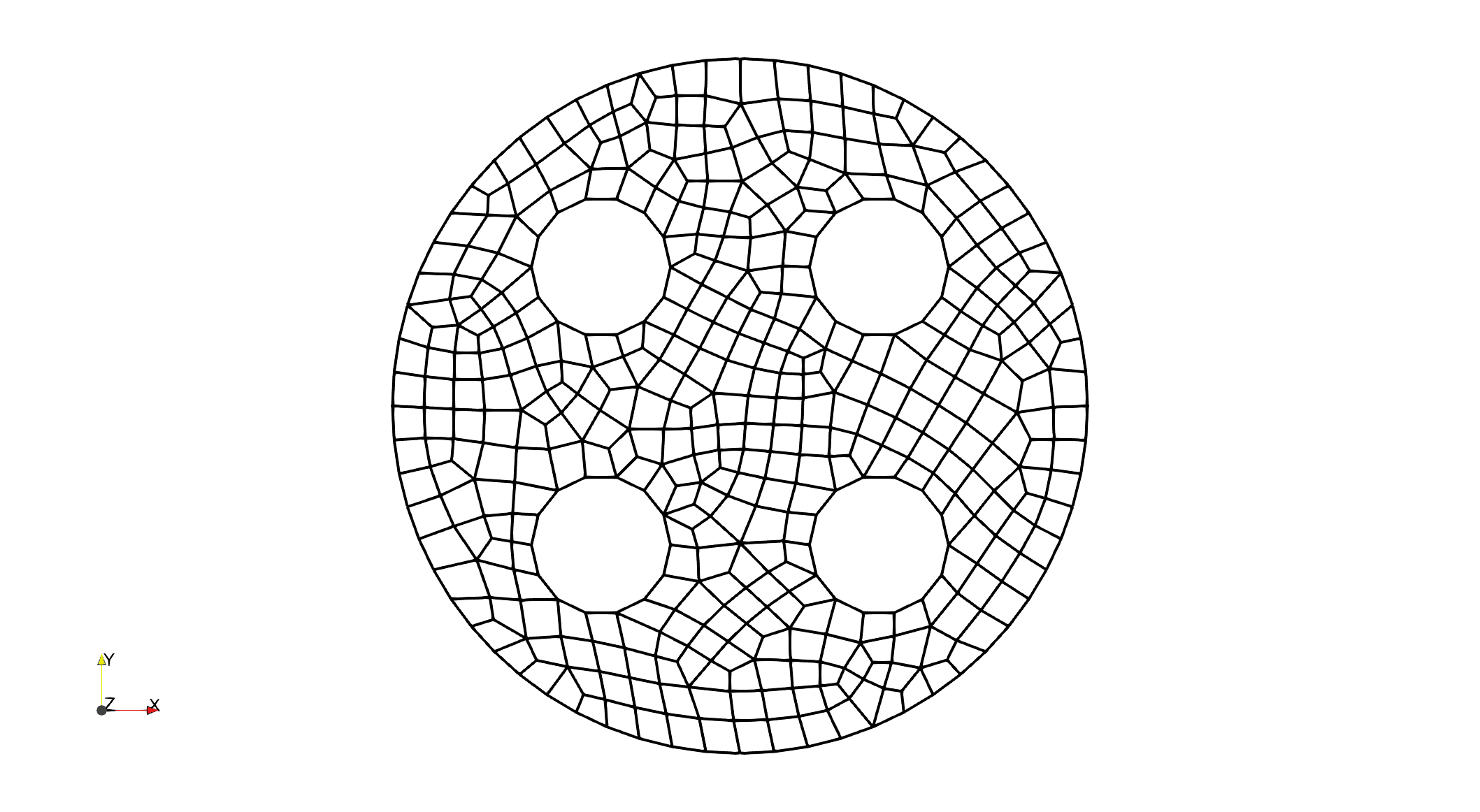}\\
		    {$\CT^{10}_h$}
  \end{minipage}
  \begin{minipage}{0.23\linewidth}\centering
    \includegraphics[scale=0.09, trim=17cm 2cm 17cm 2cm,clip]{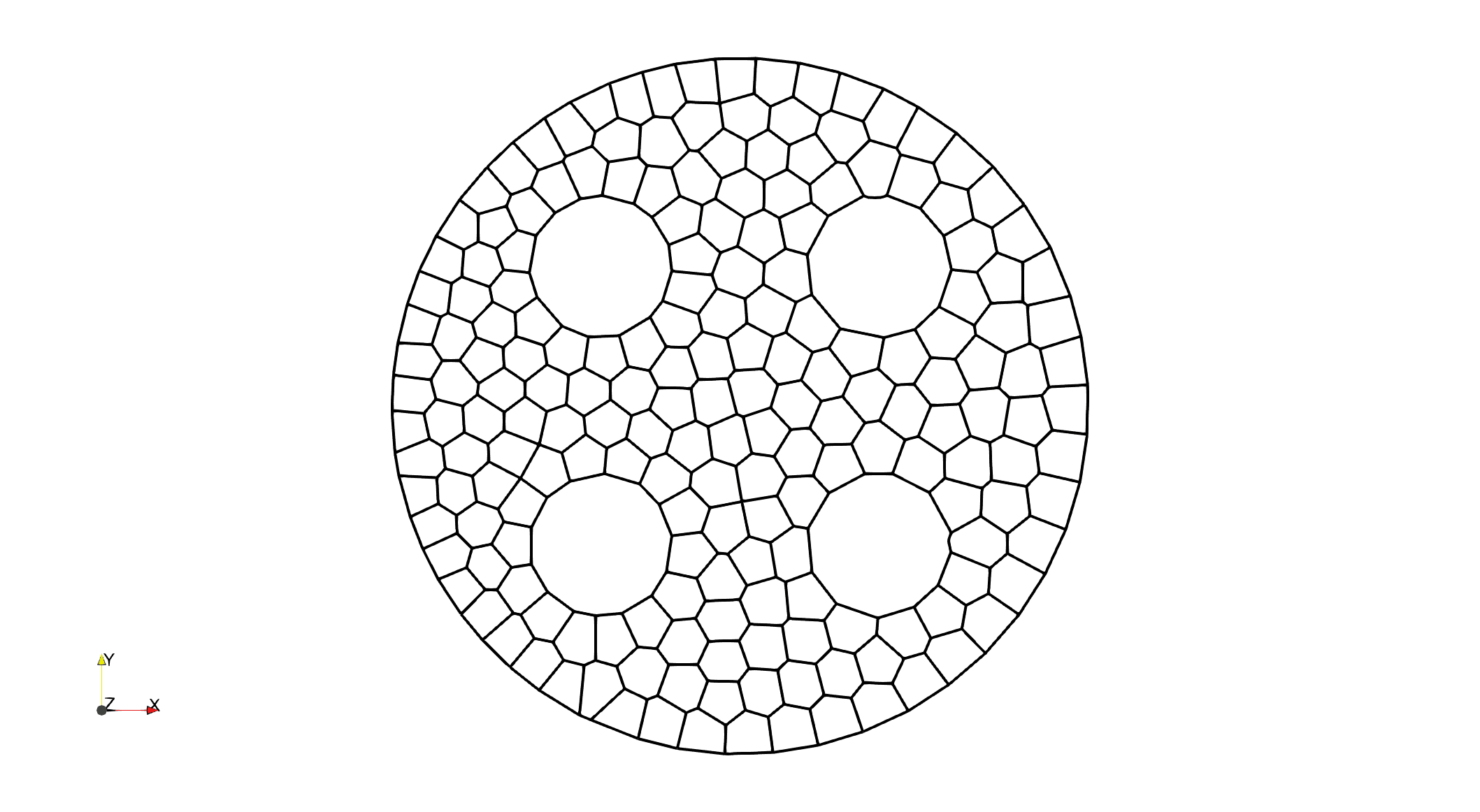}\\
		    {$\CT^{11}_h$}
  \end{minipage}
  \caption{Test \ref{subsec:circle-holes}. Examples of the mesh families used in the	eigenvalue calculations.}
  \label{fig:mesh_PolyGon-circle-hole} 
\end{figure}
Tables~\ref{tabla:circle-holes-zero-all} and \ref{tabla:circle-holes-zero-external} report the lowest positive computed eigenvalues for the two boundary-condition settings. In Table~\ref{tabla:circle-holes-zero-all}, the zero normal derivative condition is imposed on all boundary components, whereas in Table~\ref{tabla:circle-holes-zero-external}, it is imposed only on the exterior boundary. The results show that imposing the condition on all boundary components leads to larger eigenvalues. This is consistent with the additional constraint imposed on the admissible discrete space.

Figures~\ref{fig:circle_hole2DNfree-uh-graduh} and \ref{fig:circle_hole2Dfree-uh-graduh} compare selected eigenfunctions and their reconstructed gradient fields for both settings. Although the surface plots of the eigenfunctions are qualitatively similar, the gradient fields exhibit clear differences near the internal boundaries. 

When the zero normal derivative condition is imposed on all boundary components, the reconstructed gradients are tangential not only to the outer boundary but also to the boundaries of the holes. In contrast, when the condition is imposed only on $\partial\Omega_m$, the gradient fields are not constrained in the same way along the internal boundaries. Similar qualitative behavior is observed when the conforming method was used.
\begin{table}[t!]
  \footnotesize
  \caption{Test \ref{subsec:circle-holes}. Comparison between the
    lowest computed eigenvalue for different meshes with zero normal
    derivatives on $\partial\Omega_m\cup\partial\Omega_j,\,
    j=1,2,3,4$. The mesh level for each $\CT^i_h$, $i=9,10,11$ is
    $N=256$, and non-conforming method is chosen.}
  \label{tabla:circle-holes-zero-all}
  \begin{center}
    \begin{tabular}{|l | c  | c |  c| }
      \hline
      \hline
      &$\CT^9_h\, (N=10174)$ & $\CT^{10}_h, \, (N=5130)$ & $\CT^{11}_h\, (N=3072)$\\
      \hline
      \hline
      \multirow{5}{0.01\linewidth}{$\lambda_h$}	
      &	9.174694 &      9.363489 &      9.361099 \\
      & 9.176687 &      9.363798 &      9.361239 \\
      &22.861174 &     23.223021 &     23.220259 \\
      &33.325811 &     33.974838 &     33.968979 \\
      &63.381595 &     64.392810 &     64.387450 \\
      &63.420043 &     64.393832 &     64.390200 \\
      &97.137117 &     98.837564 &     98.819654 \\
      \hline  
      \hline
    \end{tabular}
  \end{center}
\end{table}
 
\begin{table}[h!]
  \footnotesize
  \caption{Test~\ref{subsec:circle-holes}. Comparison between the
    lowest computed eigenvalue for different meshes with zero normal
    derivatives on $\partial\Omega_m$. The mesh level for each
    $\CT^i_h,\, i=9,10,11,$ is $N=256$, and non-conforming method is
    chosen.}
  \label{tabla:circle-holes-zero-external}
  \begin{center}
    \begin{tabular}{|l | c  | c |  c| }
      \hline\hline
      &$\CT^9_h\, (N=10174)$ & $\CT^{10}_h, \, (N=5130)$ & $\CT^{11}_h\, (N=3072)$\\
      \hline
      \hline
      \multirow{5}{0.01\linewidth}{$\lambda_h$}	
      &      1.583082 &      1.583001 &      1.576237 \\
      &      1.583083 &      1.583007 &      1.576419 \\
      &     10.008667 &     10.007804 &      9.953473 \\
      &     15.501407 &     15.500153 &     15.433397 \\
      &     41.121759 &     41.117202 &     40.854942 \\
      &     41.122026 &     41.118265 &     40.868167 \\
      &     68.048024 &     68.036866 &     67.460732 \\
      \hline\hline       
    \end{tabular}
  \end{center}
\end{table}

\begin{figure}[!h]
  \centering
  \begin{minipage}{0.32\linewidth}\centering
    \includegraphics[scale=0.064, trim=37cm 4cm 37cm 4cm,clip]{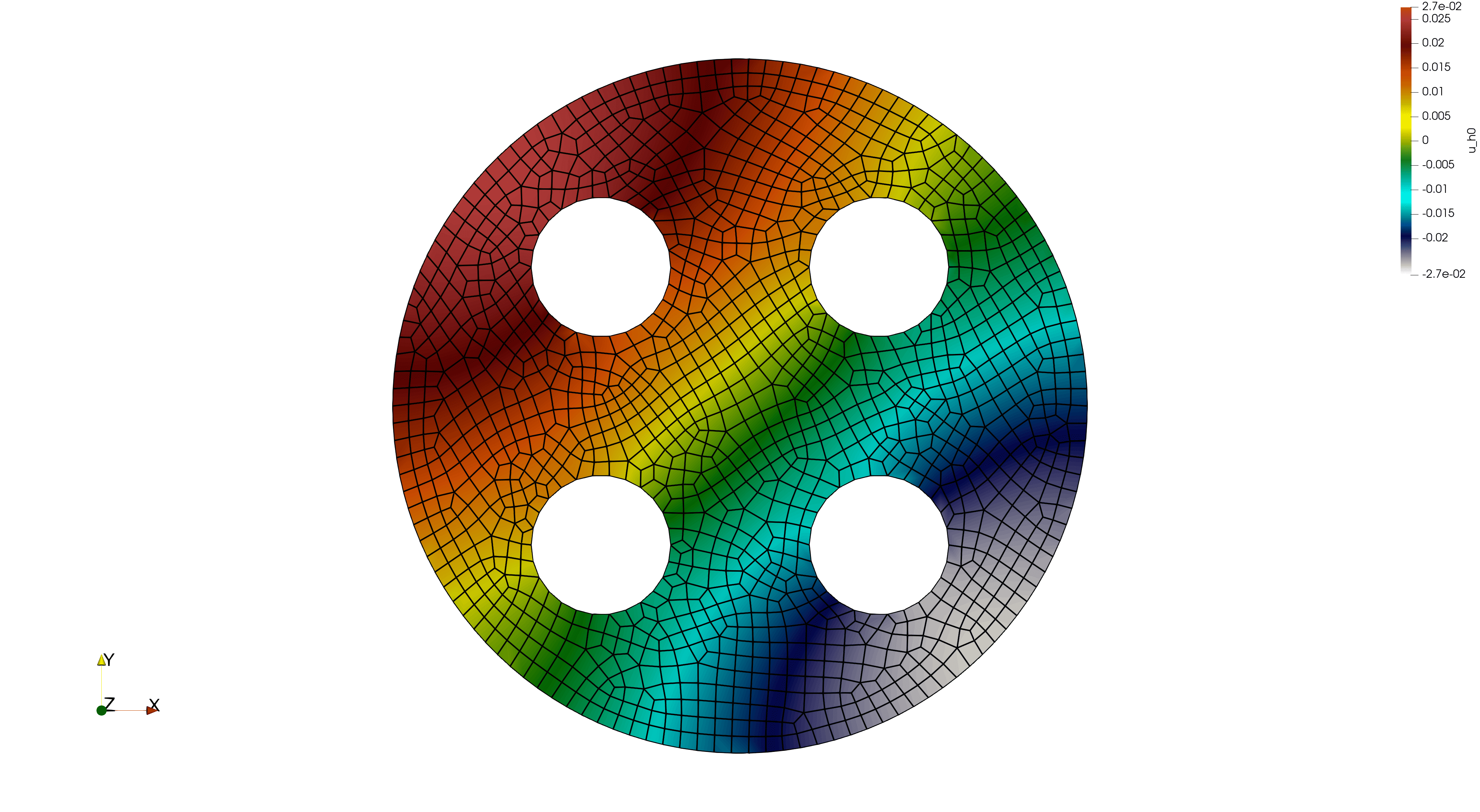}\\
 		    {\footnotesize $\bu_{1,h}$}
  \end{minipage}
  \begin{minipage}{0.32\linewidth}\centering
    \includegraphics[scale=0.064, trim=37cm 4cm 37cm 4cm,clip]{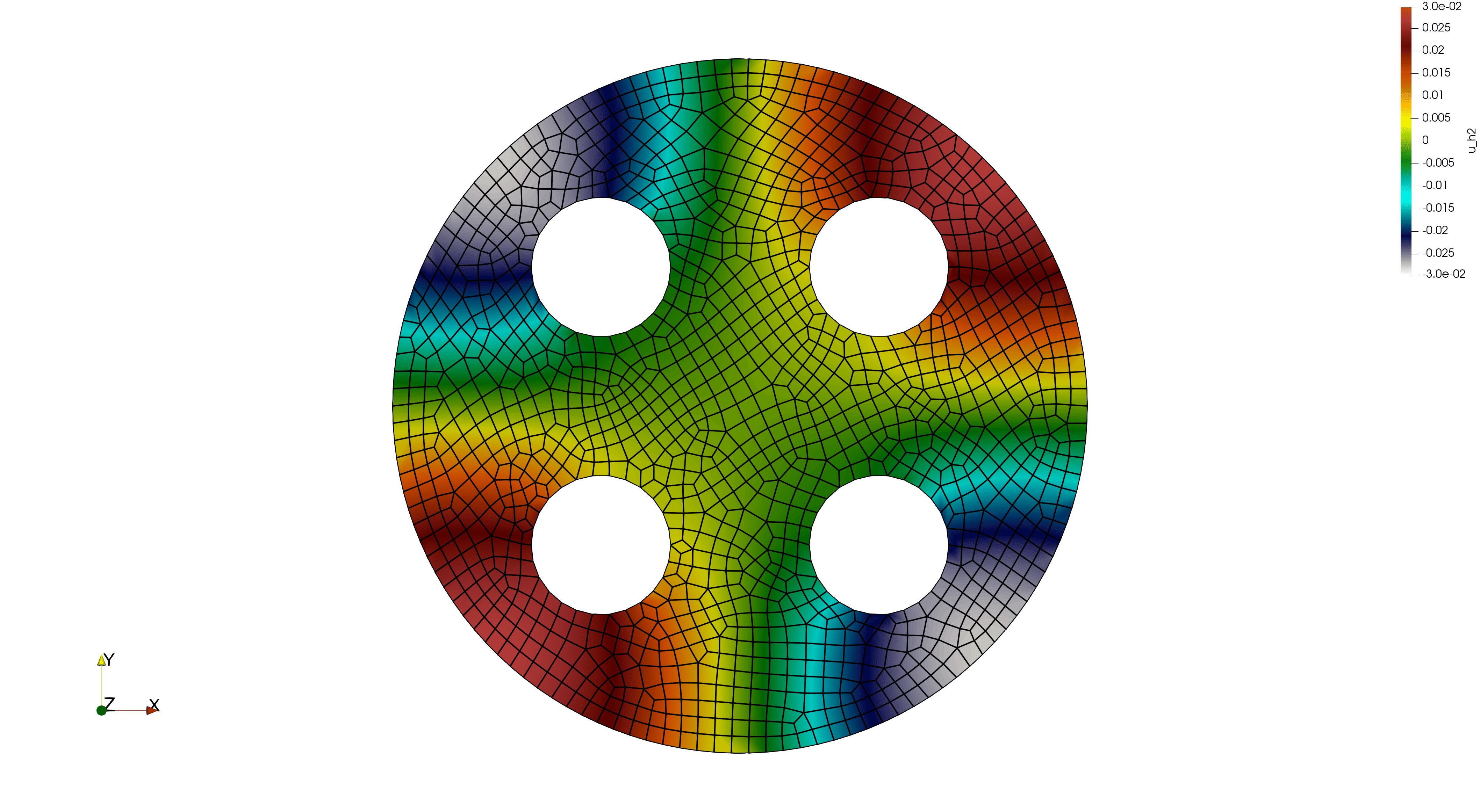}\\
 		    {\footnotesize $\bu_{3,h}$}
  \end{minipage}
  \begin{minipage}{0.32\linewidth}\centering
    \includegraphics[scale=0.064, trim=37cm 4cm 37cm 4cm,clip]{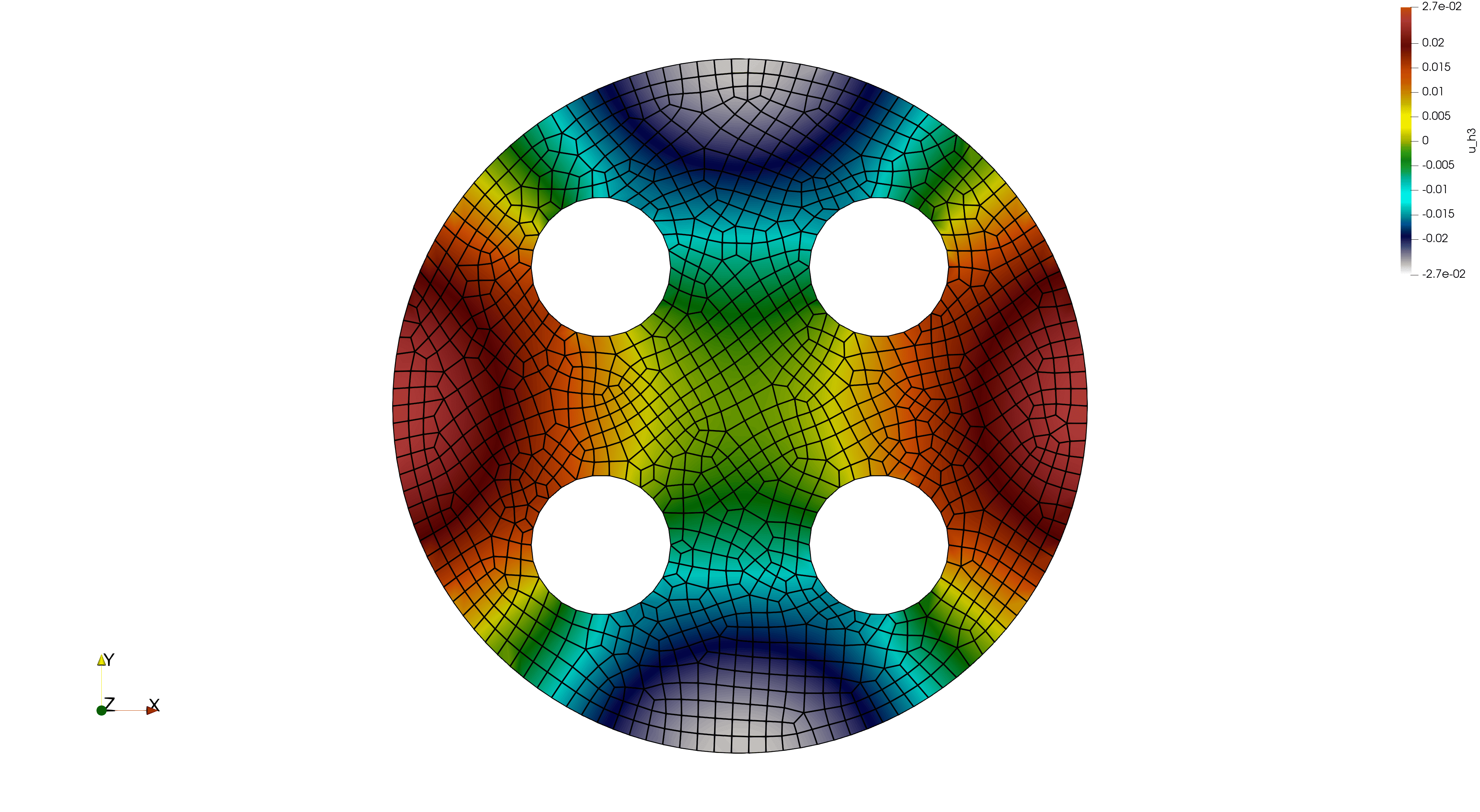}\\
 		    {\footnotesize $\bu_{4,h}$}
  \end{minipage}
  \hspace*{0.1cm}\begin{minipage}{0.32\linewidth}\centering
    \includegraphics[scale=0.13, trim=56cm 2cm 56cm 2cm,clip]{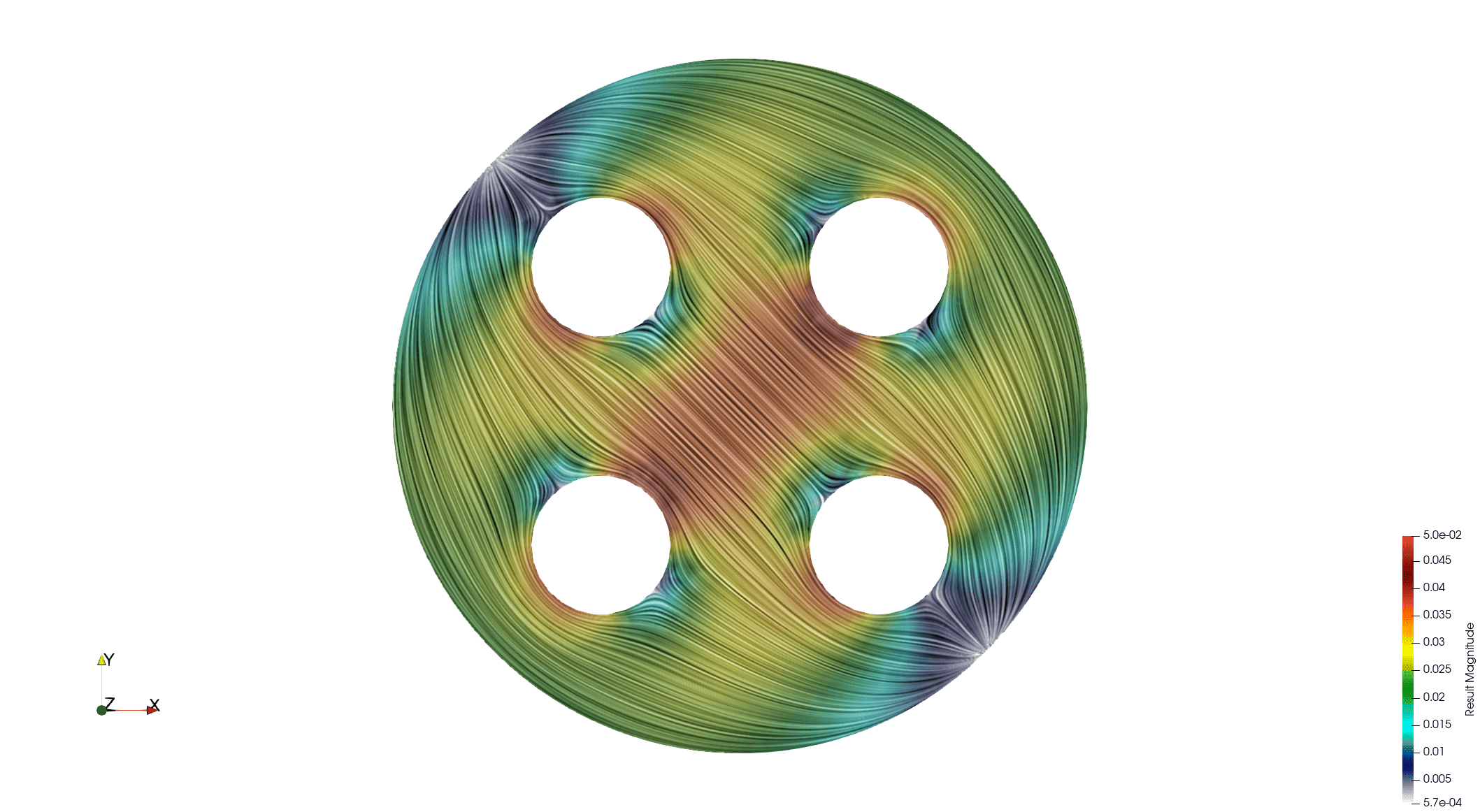}\\
 		    {\footnotesize $\nabla\bu_{1,h}$}
  \end{minipage}
  \begin{minipage}{0.32\linewidth}\centering
    \includegraphics[scale=0.13, trim=56cm 2cm 56cm 2cm,clip]{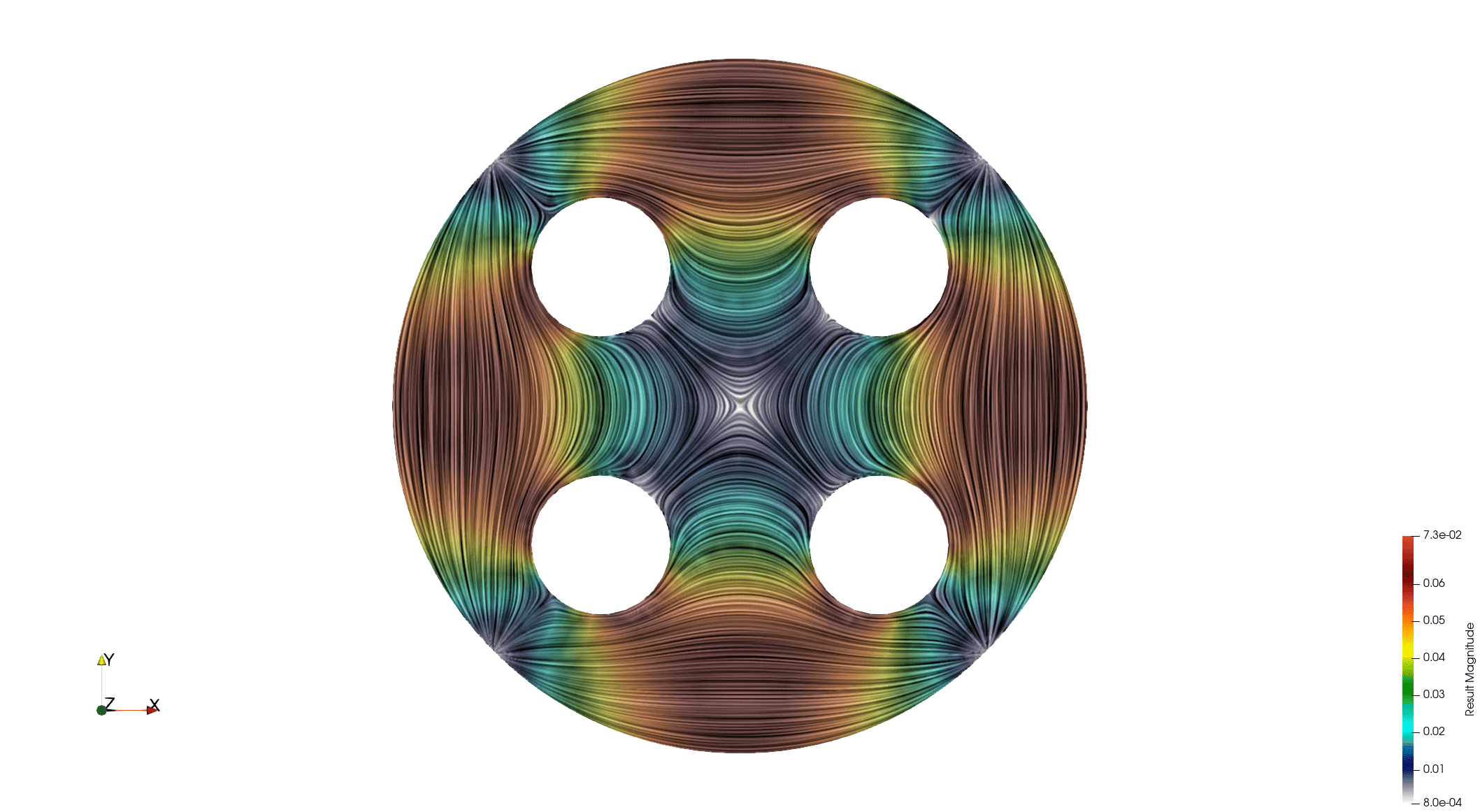}\\
 		    {\footnotesize $\nabla\bu_{3,h}$}
  \end{minipage}
  \begin{minipage}{0.32\linewidth}\centering
    \includegraphics[scale=0.13, trim=56cm 2cm 56cm 2cm,clip]{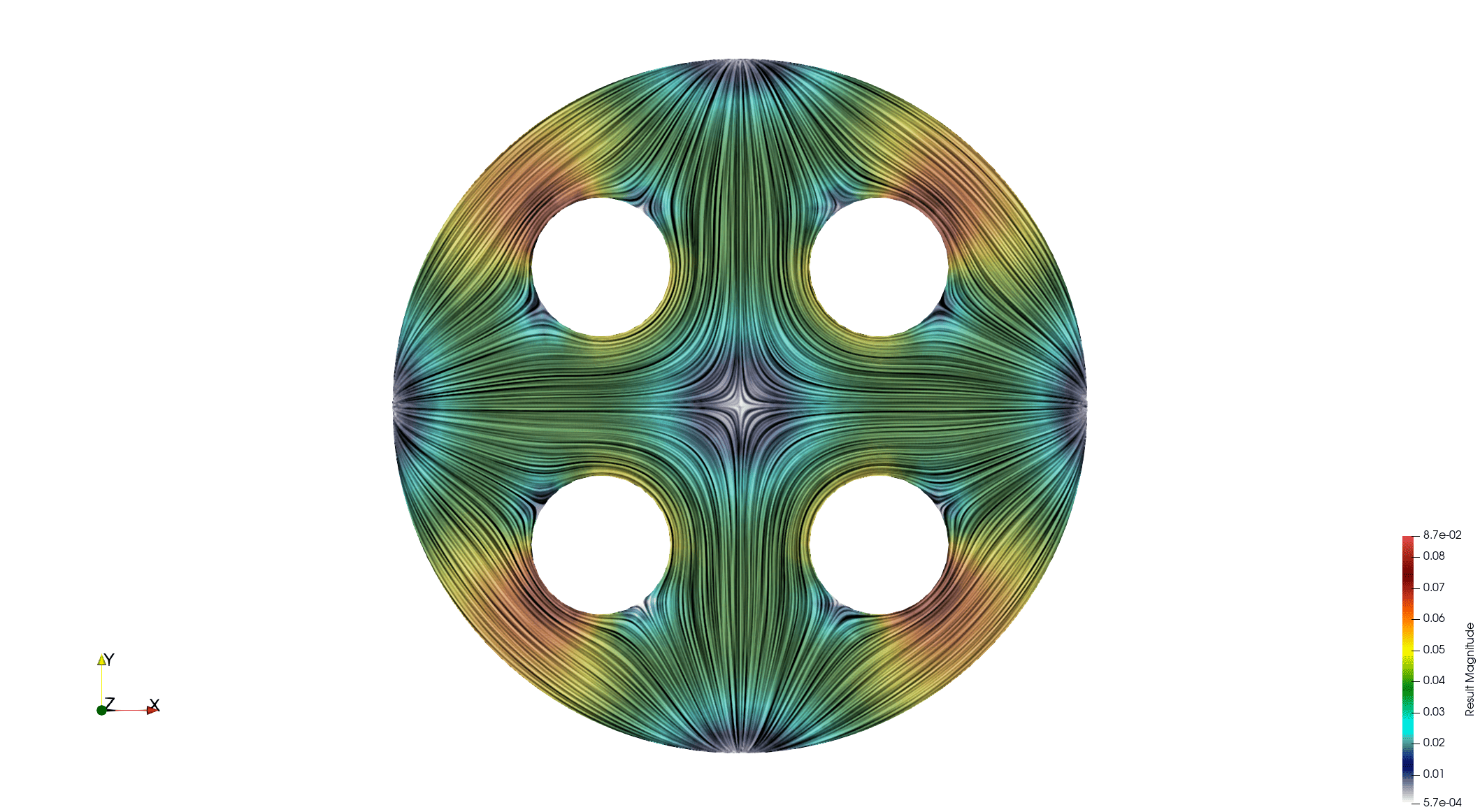}\\
 		    {\footnotesize $\nabla\bu_{4,h}$}
  \end{minipage}
  \caption{Example \ref{subsec:circle-holes}. Surface plots of selected computed eigenfunctions and line integral convolution plots of their reconstructed gradient fields. The results were obtained with the nonconforming method on the mesh $\mathcal{T}^{10}_h$ for $N=64$, with zero normal derivative imposed on $\partial\Omega_m \cup \bigcup_{i=1}^4 \partial\Omega_i$, $i=1,2,3,4$.}
  \label{fig:circle_hole2DNfree-uh-graduh}
 \end{figure}

\begin{figure}[!h]
  \centering
  \begin{minipage}{0.32\linewidth}\centering
    \includegraphics[scale=0.064, trim=37cm 4cm 37cm 4cm,clip]{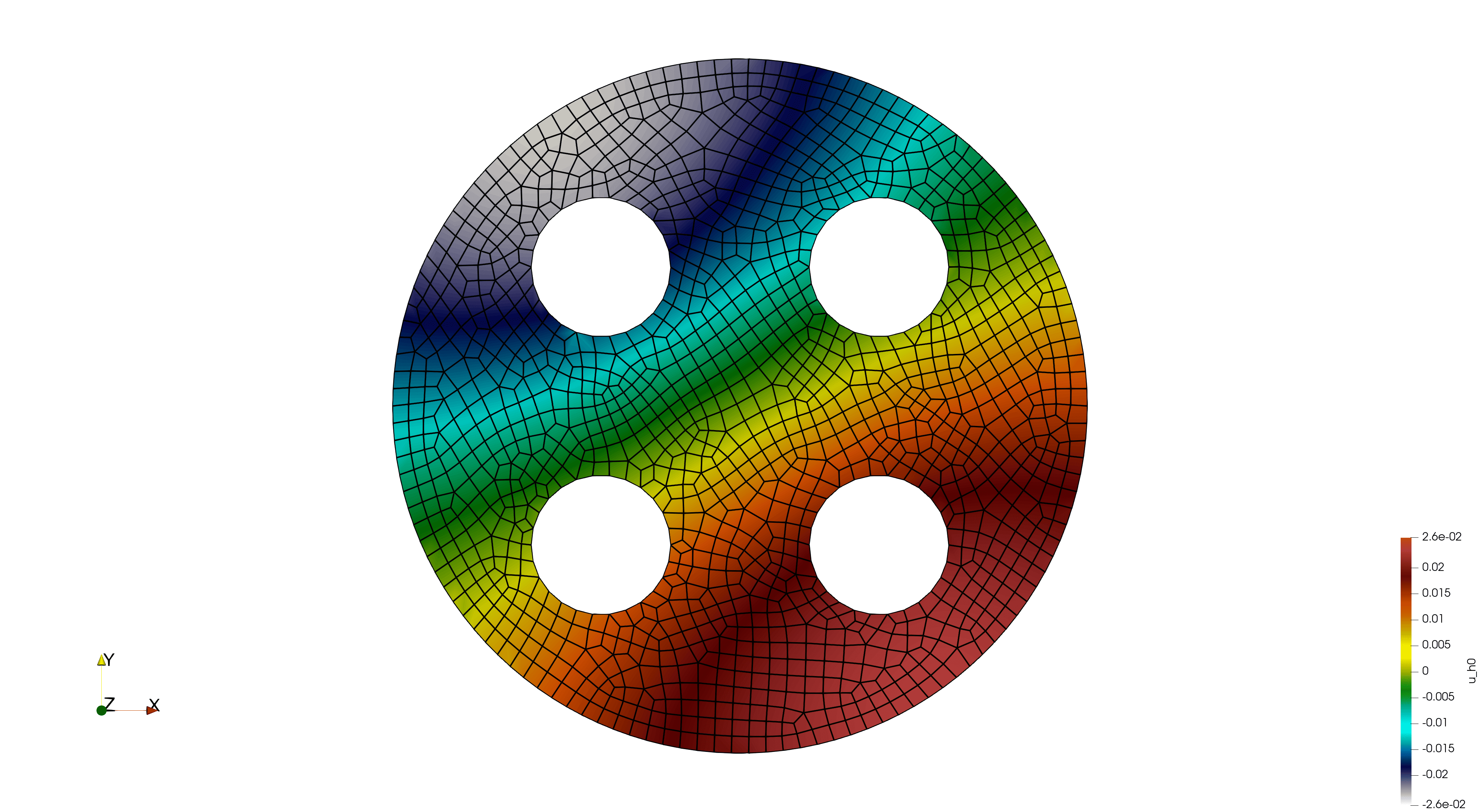}\\
		    {\footnotesize $\bu_{1,h}$}
  \end{minipage}
  \begin{minipage}{0.32\linewidth}\centering
    \includegraphics[scale=0.064, trim=37cm 4cm 37cm 4cm,clip]{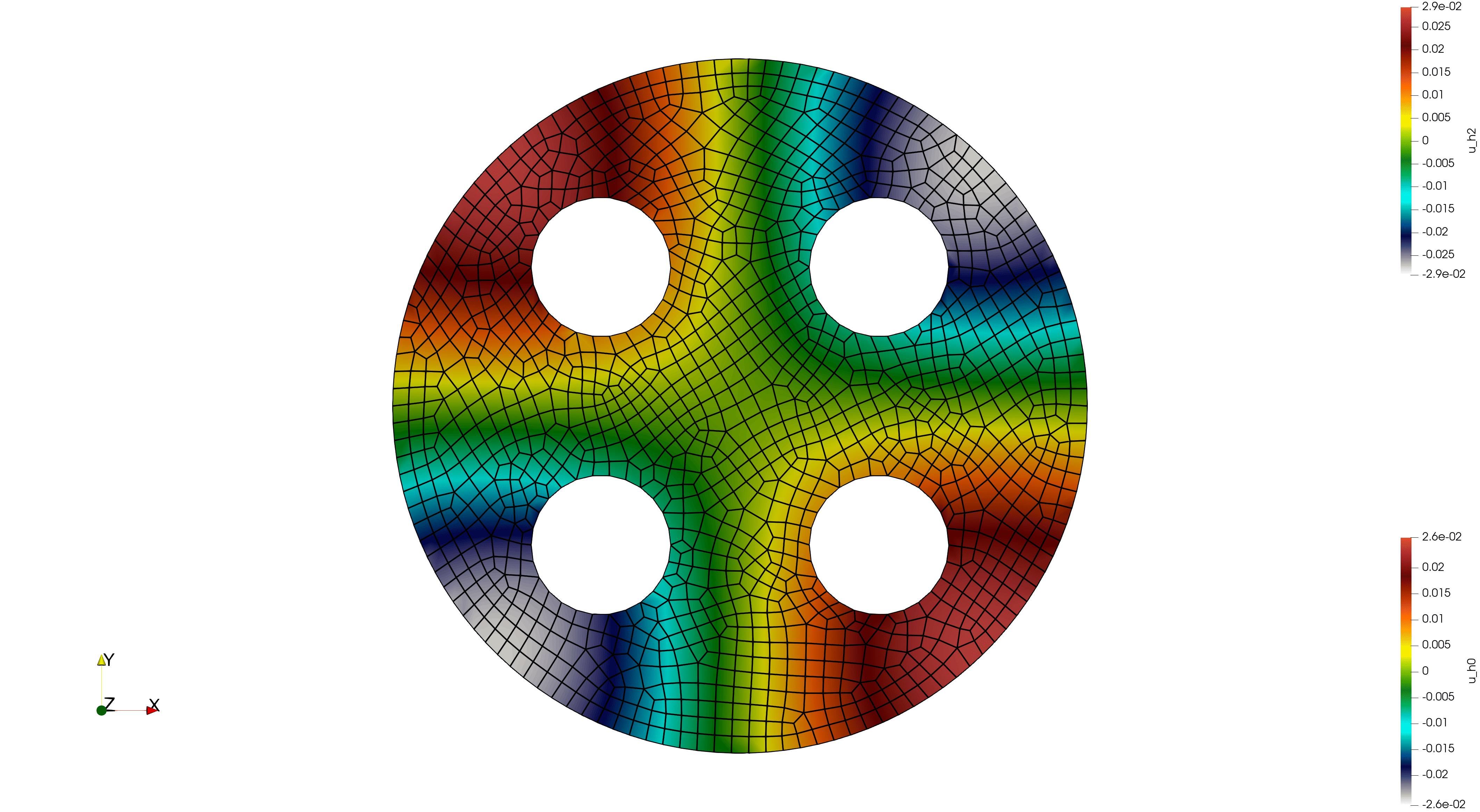}\\
		    {\footnotesize $\bu_{3,h}$}
  \end{minipage}
  \begin{minipage}{0.32\linewidth}\centering
    \includegraphics[scale=0.064, trim=37cm 4cm 37cm 4cm,clip]{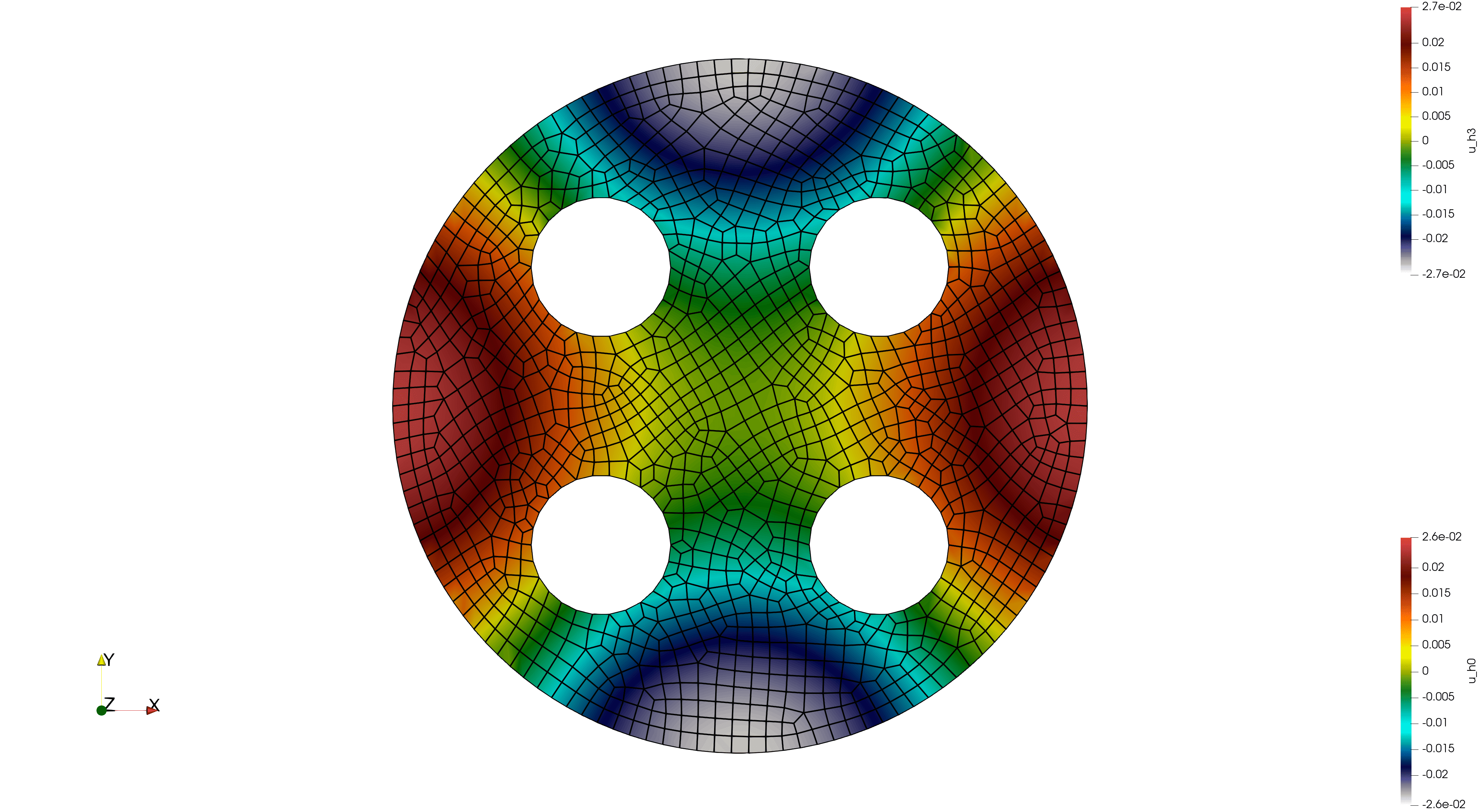}\\
		    {\footnotesize $\bu_{4,h}$}
  \end{minipage}
  \hspace*{0.1cm}\begin{minipage}{0.32\linewidth}\centering
    \includegraphics[scale=0.13, trim=56cm 2cm 56cm 2cm,clip]{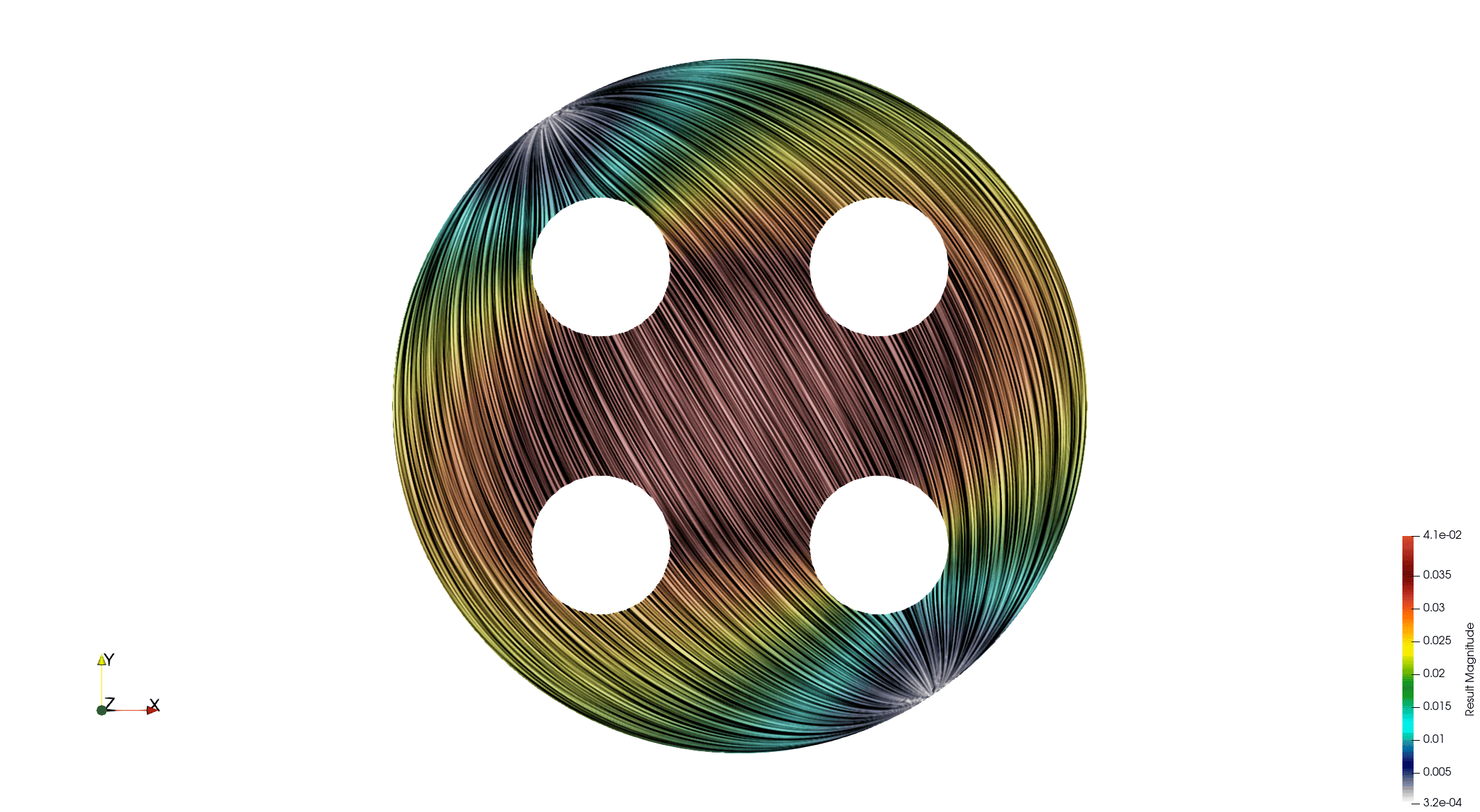}\\
		    {\footnotesize $\nabla\bu_{1,h}$}
  \end{minipage}
  \begin{minipage}{0.32\linewidth}\centering
    \includegraphics[scale=0.13, trim=56cm 2cm 56cm 2cm,clip]{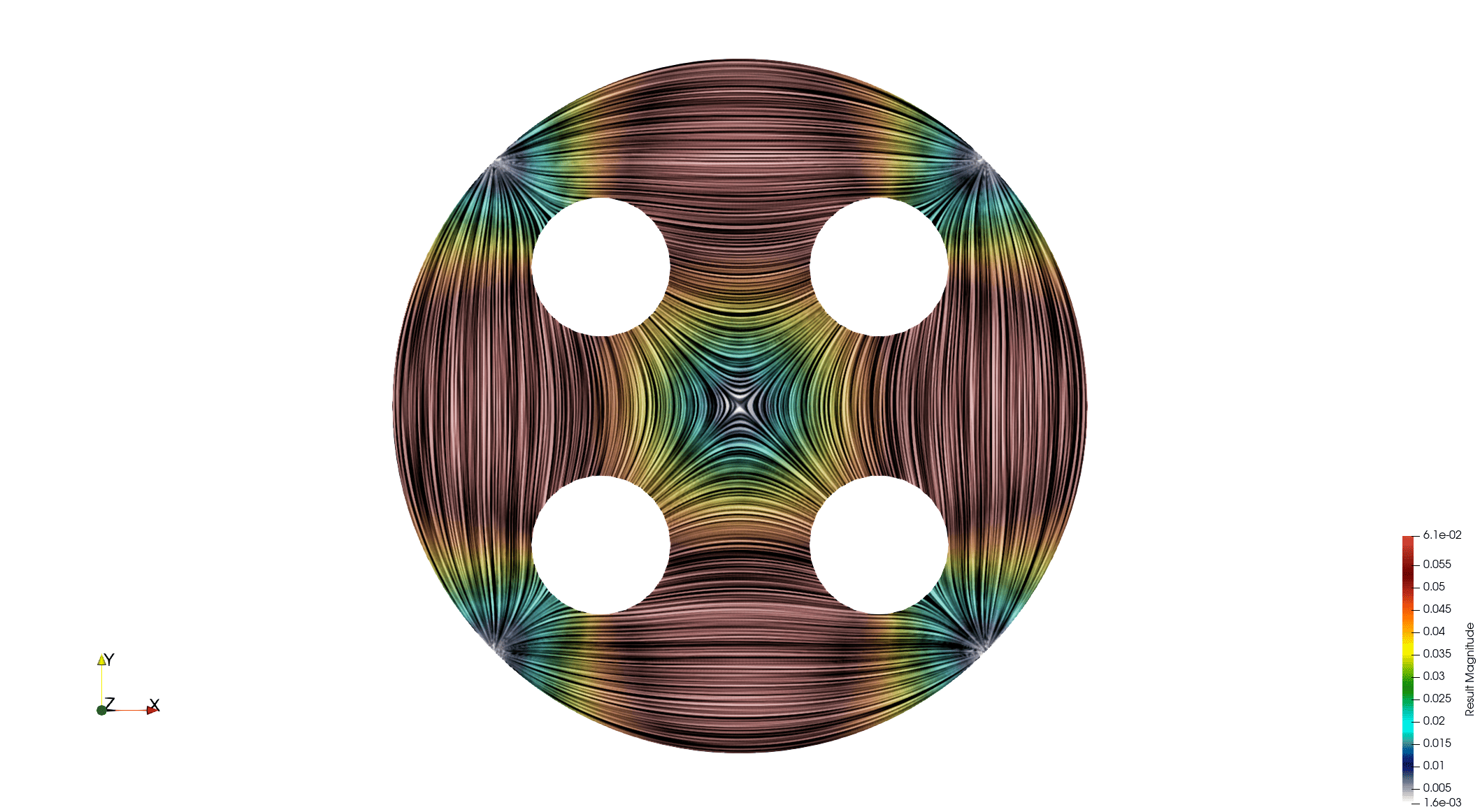}\\
		    {\footnotesize $\nabla\bu_{3,h}$}
  \end{minipage}
  \begin{minipage}{0.32\linewidth}\centering
    \includegraphics[scale=0.13, trim=56cm 2cm 56cm 2cm,clip]{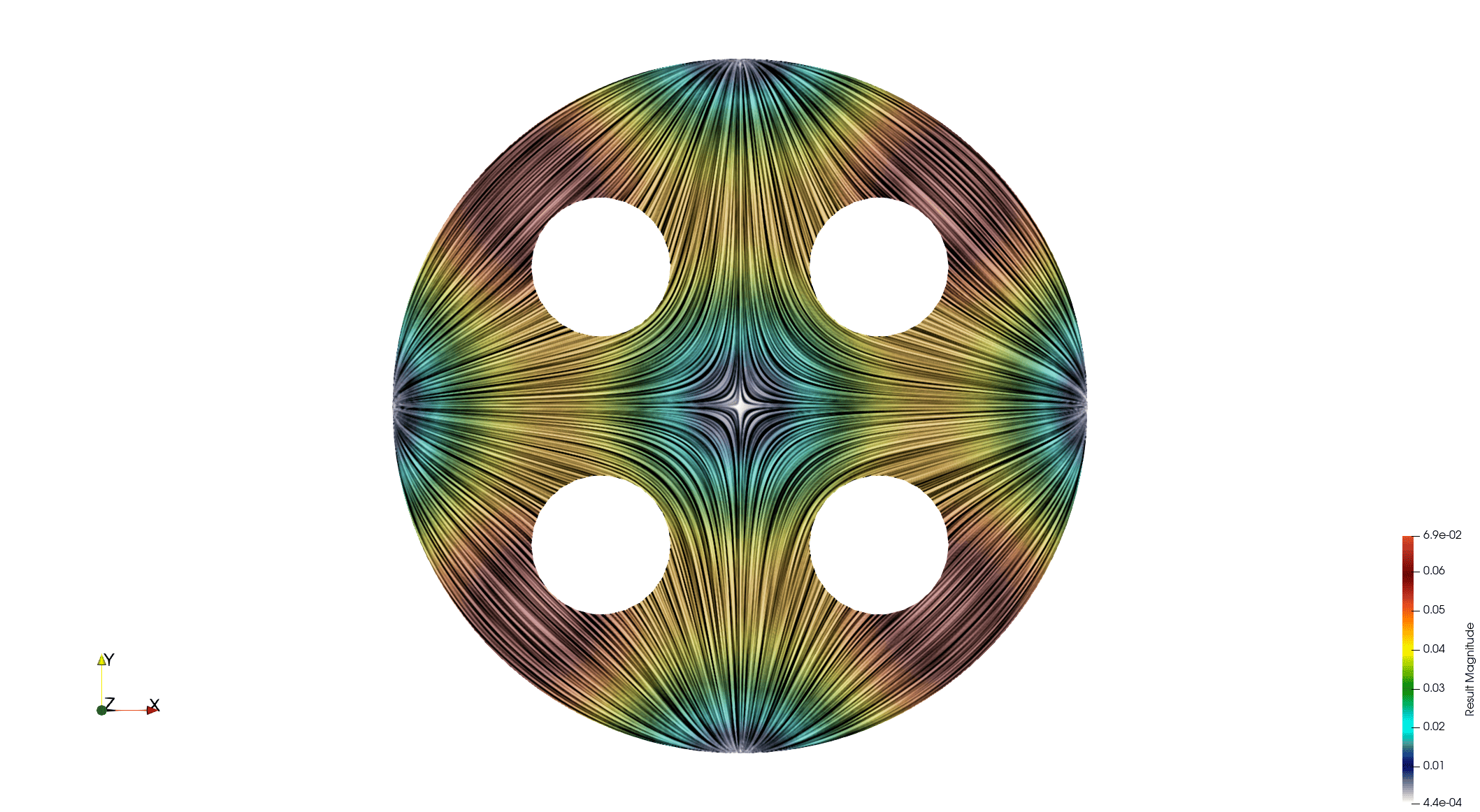}\\
		    {\footnotesize $\nabla\bu_{4,h}$}
  \end{minipage}
  \caption{Test~\ref{subsec:circle-holes}. Surface plot and gradients
    surface line integral convolution of the first, third, and fourth
    lowest eigenfunctions computed with non-conforming method on the
    mesh $\CT^{10}_h$ for $N=64$ and zero normal derivative on
    $\partial\Omega_m$.}
  \label{fig:circle_hole2Dfree-uh-graduh}
\end{figure}

\section{Conclusion} \label{conclusion}
We comprehensively analyzed the $H^2$-conforming and
$C^0$ non-conforming virtual element methods for the biharmonic Steklov
eigenvalue problem.

We highlight that the 
boundary integration in the variational formulation is well-defined also for functions in the $C^0$-non-conforming space. 
Since both the discrete conforming/non-conforming spaces $\CV_h^{\dag}
\subset H^1(\Omega)$, for $\dag \in \{ c,nc\}$, the discrete solution
operator $T_h$ is compact and well-defined.
By employing Babu\v{s}ka-Osborn spectral theory \cite{BO}, we proved
that discrete eigenvalues approximate analytical eigenvalues optimally
(double order of convergence).
However, we stress that the fully non-conforming Morley VEM might not
be a proper choice for the approximation since Morley VEM has global
$L^2(\Omega)$ regularity instead of $H^1(\Omega)$.

The theoretical analysis of the discrete problem
covers both conforming and non-conforming cases.
The numerical results confirm the theoretical expectations.
A potential area for future research could be the \emph{a posteriori}
analysis to this model problem.

\section*{Acknowledgments}
\noindent D. Adak has been partially supported by ANRF, New Delhi, India, through Project~RJF/2025/000114. \\
D. Boffi is a member of the Gruppo Nazionale Calcolo
Scientifico-Istituto Nazionale di Alta Mate\-ma\-tica (GNCS-INdAM) and his work was partially supported by KAUST - CRG13-2025 grant (6911).\\
F. Gardini is a member of the Gruppo Nazionale Calcolo
Scientifico-Istituto Nazionale di Alta Matematica (GNCS-INdAM).   \\
G. Manzini is a member of the Gruppo Nazionale Calcolo
Scientifico-Istituto Nazionale di Alta Matematica (GNCS-INdAM) and this 
work has been partially supported by 
INdAM - GNCS Project CUP\_E53C23001670001.

\bibliographystyle{siam}
\bibliography{bib_LR2}

@article{geuzaine2009gmsh,
author = {Geuzaine, C. and Remacle, J.},
title = {{Gmsh}: A {3-D} finite element mesh generator with built-in pre- and post-processing facilities},
journal = {International Journal for Numerical Methods in Engineering},
volume = {79},
number = {11},
pages = {1309-1331},
doi = {https://doi.org/10.1002/nme.2579},
url = {https://onlinelibrary.wiley.com/doi/abs/10.1002/nme.2579},
eprint = {https://onlinelibrary.wiley.com/doi/pdf/10.1002/nme.2579},
year = {2009}
}

@article{polymeshertalischi2012,
	title={PolyMesher: a general-purpose mesh generator for polygonal elements written in {M}atlab},
	author={Talischi, C. and Paulino, G. H. and Pereira, A. and Menezes, I.F.M.},
	journal={Structural and Multidisciplinary Optimization},
	volume={45},
	pages={309--328},
	year={2012},
	publisher={Springer}
}

@article{chinosi2016virtual,
  title={Virtual element method for fourth order problems: ${L}^2$-estimates},
  author={Chinosi, Claudia and Marini, L Donatella},
  journal={Computers \& Mathematics with Applications},
  volume={72},
  number={8},
  pages={1959--1967},
  year={2016},
  publisher={Elsevier}
}

@incollection {BO,
    AUTHOR = {Babu\v{s}ka, I. and Osborn, J.},
     TITLE = {Eigenvalue problems},
 BOOKTITLE = {Handbook of numerical analysis, {V}ol. {II}},
    SERIES = {Handb. Numer. Anal., II},
     PAGES = {641--787},
 PUBLISHER = {North-Holland, Amsterdam},
      YEAR = {1991},
   MRCLASS = {65-02 (65N25 65N30)},
  MRNUMBER = {1115240},
}

@article{monzon2019virtual,
  AUTHOR={Monz{\'o}n, G.},
  TITLE={A virtual element method for a biharmonic {S}teklov eigenvalue problem},
  JOURNAL={Adv. Pure Appl. Math. },
  FJOURNAL={Advances in Pure and Applied Mathematics},
  VOLUME={10},
  YEAR={2019},
  NUMBER={4},
  PAGES={325--337},
  publisher={De Gruyter}
}

@article{chinosi2016,
  AUTHOR={Chinosi, C. and Marini, L. D.},
  TITLE={Virtual element method for fourth order problems: ${L}^2$-estimates},
  JOURNAL={Comput. Math. with Appl. },
  FJOURNAL={Computers \& Mathematics with Applications},
  VOLUME={72},
  YEAR={2016},
  NUMBER={8},
  PAGES={1959--1967},
  publisher={Elsevier}
}

@article {MR3002804,
	AUTHOR = {Brezzi, Franco and Marini, L. Donatella},
	TITLE = {Virtual element methods for plate bending problems},
	JOURNAL = {Comput. Methods Appl. Mech. Engrg.},
	FJOURNAL = {Computer Methods in Applied Mechanics and Engineering},
	VOLUME = {253},
	YEAR = {2013},
	PAGES = {455--462},
	ISSN = {0045-7825},
	MRCLASS = {74G60 (65N30 74K20 74S05)},
	MRNUMBER = {3002804},
	DOI = {10.1016/j.cma.2012.09.012},
	URL = {https://ezproxyucor.unicordoba.edu.co:2129/10.1016/j.cma.2012.09.012},
}

@article {BasicVEM2013,
	AUTHOR = {Beir\~{a}o da Veiga, L. and Brezzi, F. and Cangiani, A. and
	Manzini, G. and Marini, L. D. and Russo, A.},
	TITLE = {Basic principles of virtual element methods},
	JOURNAL = {Math. Models Methods Appl. Sci.},
	FJOURNAL = {Mathematical Models and Methods in Applied Sciences},
	VOLUME = {23},
	YEAR = {2013},
	NUMBER = {1},
	PAGES = {199--214},
	ISSN = {0218-2025},
	MRCLASS = {65N06},
	MRNUMBER = {2997471},
	MRREVIEWER = {Bo\v{s}ko S. Jovanovi\'{c}},
	DOI = {10.1142/S0218202512500492},
	URL = {https://ezproxyucor.unicordoba.edu.co:2129/10.1142/S0218202512500492},
}

@article {MR3529253,
	AUTHOR = {Zhao, Jikun and Chen, Shaochun and Zhang, Bei},
	TITLE = {The nonconforming virtual element method for plate bending
	problems},
	JOURNAL = {Math. Models Methods Appl. Sci.},
	FJOURNAL = {Mathematical Models and Methods in Applied Sciences},
	VOLUME = {26},
	YEAR = {2016},
	NUMBER = {9},
	PAGES = {1671--1687},
	ISSN = {0218-2025},
	MRCLASS = {65N30 (65N12 74K20)},
	MRNUMBER = {3529253},
	DOI = {10.1142/S021820251650041X},
	URL = {https://ezproxyucor.unicordoba.edu.co:2129/10.1142/S021820251650041X},
}

@article {MR3460621,
	AUTHOR = {Beir\~{a}o da Veiga, L. and Brezzi, F. and Marini, L. D. and
	Russo, A.},
	TITLE = {Virtual element method for general second-order elliptic
	problems on polygonal meshes},
	JOURNAL = {Math. Models Methods Appl. Sci.},
	FJOURNAL = {Mathematical Models and Methods in Applied Sciences},
	VOLUME = {26},
	YEAR = {2016},
	NUMBER = {4},
	PAGES = {729--750},
	ISSN = {0218-2025},
	MRCLASS = {65N30 (35J25)},
	MRNUMBER = {3460621},
	DOI = {10.1142/S0218202516500160},
	URL = {https://ezproxyucor.unicordoba.edu.co:2129/10.1142/S0218202516500160},
}

@article {MR3875292,
	AUTHOR = {Mora, David and Rivera, Gonzalo and Vel\'{a}squez, Iv\'{a}n},
	TITLE = {A virtual element method for the vibration problem of
	{K}irchhoff plates},
	JOURNAL = {ESAIM Math. Model. Numer. Anal.},
	FJOURNAL = {ESAIM. Mathematical Modelling and Numerical Analysis},
	VOLUME = {52},
	YEAR = {2018},
	NUMBER = {4},
	PAGES = {1437--1456},
	ISSN = {0764-583X},
	MRCLASS = {65N25 (65N30 74H45 74K20)},
	MRNUMBER = {3875292},
	MRREVIEWER = {Jiguang Sun},
	DOI = {10.1051/m2an/2017041},
	URL = {https://ezproxyucor.unicordoba.edu.co:2129/10.1051/m2an/2017041},
}

@article {MR4019985,
	AUTHOR = {Beir\~{a}o da Veiga, L. and Mora, D. and Vacca, G.},
	TITLE = {The {S}tokes complex for virtual elements with application to
	{N}avier--{S}tokes flows},
	JOURNAL = {J. Sci. Comput.},
	FJOURNAL = {Journal of Scientific Computing},
	VOLUME = {81},
	YEAR = {2019},
	NUMBER = {2},
	PAGES = {990--1018},
	ISSN = {0885-7474},
	MRCLASS = {65N35 (35Q30)},
	MRNUMBER = {4019985},
	DOI = {10.1007/s10915-019-01049-3},
	URL = {https://ezproxyucor.unicordoba.edu.co:2129/10.1007/s10915-019-01049-3},
}

@article{mora2020virtual,
AUTHOR={Mora, David and Vel{\'a}squez, Iv{\'a}n},
TITLE={Virtual element for the buckling problem of {K}irchhoff--{L}ove plates},
JOURNAL={Comput. Methods Appl. Mech. Engrg.},
FJOURNAL={Computer Methods in Applied Mechanics and Engineering},
VOLUME={360},
YEAR={2020},
MRNUMBER={112687},
}

@article{adak2022convergence,
AUTHOR={Adak, D and Mora, D and Natarajan, Sundararajan},
TITLE={Convergence Analysis of Virtual Element Method for Nonlinear Nonlocal Dynamic Plate Equation},
JOURNAL={J. Sci. Comput.},
FJOURNAL = {Journal of Scientific Computing},
VOLUME={91},
NUMBER={1},
YEAR={2022},
PAGES={1--37},
}

@article{cangiani2016nonconforming,
AUTHOR={Cangiani, Andrea and Gyrya, Vitaliy and Manzini, Gianmarco},
TITLE={The nonconforming virtual element method for the {S}tokes equations},
JOURNAL={SIAM J. Numer. Anal.},
FJOURNAL={SIAM Journal on Numerical Analysis},
VOLUME={54},
NUMBER={6},
YEAR={2016},
PAGES={3411--3435},
}

@article{zhao2018morley,
AUTHOR={Zhao, Jikun and Zhang, Bei and Chen, Shaochun and Mao, Shipeng},
TITLE={The {M}orley-type virtual element for plate bending problems},
JOURNAL={J. Sci. Comput.},
FJOURNAL={Journal of Scientific Computing},
VOLUME={76},
NUMBER={1},
YEAR={2018},
PAGES={610--629},
}

@article{zhao2016nonconforming,
AUTHOR={Zhao, Jikun and Chen, Shaochun and Zhang, Bei},
TITLE={The nonconforming virtual element method for plate bending problems},
JOURNAL={Math. Models Methods Appl. Sci.},
FJOURNAL={Mathematical Models and Methods in Applied Sciences},
VOLUME={26},
NUMBER={09},
YEAR={2016},
PAGES={1671--1687},
}

@article{antonietti2018fully,
AUTHOR={Antonietti, Paola F and Manzini, Gianmarco and Verani, Marco},
TITLE={The fully nonconforming virtual element method for biharmonic problems},
JOURNAL={Math. Models Methods Appl. Sci.},
FJOURNAL={Mathematical Models and Methods in Applied Sciences},
VOLUME={28},
NUMBER={02},
YEAR={2018},
PAGES={387--407},
}

@article{huang2021medius,
AUTHOR={Huang, Jianguo and Yu, Yue},
TITLE={A medius error analysis for nonconforming virtual element methods for {P}oisson and biharmonic equations},
JOURNAL={J. Comput. Appl. Math.},
FJOURNAL={Journal of Computational and Applied Mathematics},
VOLUME={386},
MRNUMBER={113229},
YEAR={2021},
}

@article{bermudez2000finite,
  AUTHOR={Berm{\'u}dez, Alfredo and Rodr{\'\i}guez, Rodolfo and Santamarina, Duarte},
  TITLE={A finite element solution of an added mass formulation for coupled fluid-solid vibrations},
  JOURNAL={Numer. Math.},
  VOLUME={87},
  PAGES={201--227},
  YEAR={2000}
}

@article{bermudez2003finite,
AUTHOR={Berm{\'u}dez, Alfredo and Rodr{\'\i}guez, Rodolfo and Santamarina, Duarte},
  TITLE={Finite element computation of sloshing modes in containers with elastic baffle plates},
  JOURNAL={Int. J.  Numer. Methods Eng.},
  VOLUME={56},
  NUMBER={3},
  PAGES={447--467},
  YEAR={2003}
  }

@article{kuttler1972remarks,
  AUTHOR={Kuttler, James R},
  TITLE={Remarks on a {S}tekloff eigenvalue problem},
  JOURNAL={SIAM J. NumeR. Anal.},
  VOLUME={9},
  NUMBER={1},
  PAGES={1--5},
  YEAR={1972},
}

@article{bucur2009first,
AUTHOR={Bucur, Dorin and Ferrero, Alberto and Gazzola, Filippo},
  TITLE={On the first eigenvalue of a fourth order {S}teklov problem},
  JOURNAL={Calculus of Variations and Partial Differential Equations},
  VOLUME={35},
  PAGES={103--131},
  YEAR={2009}
}

@article{ferrero2005fourth,
AUTHOR={Ferrero, Alberto and Gazzola, Filippo and Weth, Tobias},
  TITLE={On a fourth order {S}teklov eigenvalue problem},
  PAGES={315-332},
  YEAR={2005},
}

@article{wang2009sharp,
AUTHOR={Wang, Qiaoling and Xia, Changyu},
  TITLE={Sharp bounds for the first non-zero Stekloff eigenvalues},
  JOURNAL={Journal of Functional Analysis},
  VOLUME={257},
  NUMBER={8},
  PAGES={2635--2644},
  YEAR={2009}
}

@article{bi2011conforming,
AUTHOR={Bi, Hai and Ren, Shixian and Yang, Yidu},
  TITLE={Conforming finite element approximations for a fourth-order {S}teklov eigenvalue problem},
  JOURNAL={Mathematical Problems in Engineering},
 VOLUME={2011},
  YEAR={2011},
}

@article{wang2022priori,
AUTHOR={Wang, Gang and Meng, Jian and Wang, Ying and Mei, Liquan},
  TITLE={A priori and a posteriori error estimates for a virtual element method for the non-self-adjoint Steklov eigenvalue problem},
  JOURNAL={IMA Journal of Numerical Analysis},
  VOLUME={42},
  NUMBER={4},
  PAGES={3675--3710},
  YEAR={2022}
}

@article{yang2020non,
AUTHOR={Yang, Yidu and Zhang, Yu and Bi, Hai},
  TITLE={Non-conforming Crouzeix-Raviart element approximation for Stekloff eigenvalues in inverse scattering},
  JOURNAL={Advances in Computational Mathematics},
  VOLUME={46},
  PAGES={1--25},
  YEAR={2020},
}

@article{zhang2020multigrid,
 AUTHOR={Zhang, Yu and Bi, Hai and Yang, Yidu},
  TITLE={A multigrid correction scheme for a new Steklov eigenvalue problem in inverse scattering},
  JOURNAL={International Journal of Computer Mathematics},
  VOLUME={97},
  NUMBER={7},
  PAGES={1412--1430},
  YEAR={2020}
}

@article{dedner2021higher,
AUTHOR={Dedner, Andreas and Hodson, Alice},
  TITLE={A higher order nonconforming virtual element method for the {C}ahn-{H}illiard equation},
  JOURNAL={arXiv preprint arXiv:2111.11408},
  YEAR={2021}
}

@article{antonietti2016c,
AUTHOR={Antonietti, Paola F and Da Veiga, L Beirao and Scacchi, Simone and Verani, Marco},
  TITLE={A ${C}^1$ virtual element method for the {C}ahn--{H}illiard equation with polygonal meshes},
  JOURNAL={SIAM Journal on Numerical Analysis},
  VOLUME={54},
  NUMBER={1},
  PAGES={34--56},
  YEAR={2016},
}

@article{adak2023c0,
AUTHOR={Adak, Dibyendu and Mora, David and Vel{\'a}squez, Iv{\'a}n},
  TITLE={A ${C}^0$-nonconforming virtual element methods for the vibration and buckling problems of thin plates},
  JOURNAL={Computer Methods in Applied Mechanics and Engineering},
  VOLUME={403},
 PAGES={115763},
  YEAR={2023},
}

@article{adak2023morley,
AUTHOR={Adak, D and Mora, D and Silgado, A},
  TITLE={A {M}orley-type virtual element approximation for a wind-driven ocean circulation model on polygonal meshes},
  JOURNAL={Journal of Computational and Applied Mathematics},
  VOLUME={425},
  PAGES={115026},
  YEAR={2023},
}

@article{adak2021virtual,
AUTHOR={Adak, Dibyendu and Mora, David and Natarajan, Sundararajan and Silgado, Alberth},
  TITLE={A virtual element discretization for the time dependent {N}avier--{S}tokes equations in stream-function formulation},
  JOURNAL={ESAIM: Mathematical Modelling and Numerical Analysis},
  VOLUME={55},
  NUMBER={5},
  PAGES={2535--2566},
  YEAR={2021},
}

@article{zhang2020nonconforming,
AUTHOR={Zhang, Bei and Zhao, Jikun and Chen, Shaochun},
  TITLE={The nonconforming virtual element method for fourth-order singular perturbation problem},
  JOURNAL={Advances in Computational Mathematics},
  VOLUME={46},
  PAGES={1--23},
  YEAR={2020},
}

@incollection{mora2022virtual,
AUTHOR={Mora, David and Silgado, Alberth},
  TITLE={Virtual Element Methods for a Stream-Function Formulation of the {O}seen Equations},
  BOOKTITLE={The Virtual Element Method and its Applications},
  PAGES={321--361},
  YEAR={2022},
}

@article{da2023fully,
 AUTHOR={da Veiga, L Beir{\~a}o and Mora, D and Silgado, A},
  TITLE={A fully-discrete virtual element method for the nonstationary {B}oussinesq equations in stream-function form},
  JOURNAL={Computer Methods in Applied Mechanics and Engineering},
  VOLUME={408},
  PAGES={115947},
  YEAR={2023},
}

@article{de2016nonconforming,
AUTHOR={de Dios, Blanca Ayuso and Lipnikov, Konstantin and Manzini, Gianmarco},
  TITLE={The nonconforming virtual element method},
  JOURNAL={ESAIM: Mathematical Modelling and Numerical Analysis},
  VOLUME={50},
  NUMBER={3},
  PAGES={879--904},
  YEAR={2016},
}

@article{liu2019nonconforming,
AUTHOR={Liu, Xin and Chen, Zhangxin},
  TITLE={The nonconforming virtual element method for the Navier-Stokes equations},
  JOURNAL={Advances in Computational Mathematics},
  VOLUME={45},
  PAGES={51--74},
  YEAR={2019}
}

@article{zhang2021divergence,
AUTHOR={Zhang, Bei and Zhao, Jikun and Li, Meng},
  TITLE={The divergence-free nonconforming virtual element method for the Navier--Stokes problem},
  JOURNAL={Numerical Methods for Partial Differential Equations},
  YEAR={2021}
}

@article{zhao2019divergence,
AUTHOR={Zhao, Jikun and Zhang, Bei and Mao, Shipeng and Chen, Shaochun},
  TITLE={The divergence-free nonconforming virtual element for the Stokes problem},
  JOURNAL={SIAM Journal on Numerical Analysis},
  VOLUME={57},
  NUMBER={6},
  PAGES={2730--2759},
  YEAR={2019},
}

@book{G,
	author    = {Grisvard, Pierre},
	title     = {Elliptic Problems in Non-Smooth Domains},
	publisher = {Pitman, Boston},
	year      = {1985},
}

@book{adams2003sobolev,
	title={Sobolev Spaces},
	author={Adams, Robert A and Fournier, John JF},
	year={2003},
	publisher={Elsevier}
}

@article{adak2023conforming,
  title={Conforming and nonconforming virtual element methods for fourth order nonlocal reaction diffusion equation},
  author={Adak, Dibyendu and Anaya, Veronica and Bendahmane, Mostafa and Mora, David},
  journal={Mathematical Models and Methods in Applied Sciences},
  year={2023},
  publisher={World Scientific}
}

@article{brenner2013morley,
  title={A {M}orley finite element method for the displacement obstacle problem of clamped {K}irchoff plates},
  author={Brenner, Susanne C and Sung, Li-yeng and Zhang, Hongchao and Zhang, Yi},
  journal={Journal of Computational and Applied Mathematics},
  volume={254},
  pages={31--42},
  year={2013},
  publisher={Elsevier}
}

@book{chen2014bridge,
   author    = {Chen, W.-F. and Duan, L.},
   title     = {Bridge Engineering Handbook},
   edition   = {2nd},
   publisher = {CRC Press},
   year      = {2014}
 }

@book{naeim1999design,
   author    = {Naeim, F. and Kelly, J. M.},
   title     = {Design of Seismic Isolated Structures: 
                From Theory to Practice},
   publisher = {John Wiley \& Sons},
   year      = {1999}
 }

@book{senturia2001microsystem,
   author    = {Senturia, S. D.},
   title     = {Microsystem Design},
   publisher = {Kluwer Academic Publishers},
   year      = {2001}
 }

@misc{Gilardi-dispensa,
author = {G. Gilardi},
title = {Alcuni risultati sugli spazi di {H}ilbert},
howpublished = {\url{https://mate.unipv.it/gilardi/WEBGG/PSPDF/hilbert.pdf}},
note = {Accessed: March 25, 2026},
}

\appendix
\renewcommand{\thesection}{\Alph{section}}
\section{Coercivity of the bilinear form $\widehat{a}(\cdot,\cdot)$}
\label{sec:norms}

In this appendix, we prove the coercivity of the bilinear form $\widehat{a}(\cdot,\cdot)$~\eqref{eq:coercivity}.
This property follows from the next theorem,
which is, in fact, an
abstract version of the Friedrichs--Poincar\'{e} inequality on Sobolev
spaces.
A detailed exposition can be found in~\cite[Theorem 3.1]{Gilardi-dispensa}
\begin{theorem}
\label{thm:Friedrichs-Poincare}
Let $X$, $H$, $G$, and~$W$ be four Hilbert spaces, and let
$A \in \mathcal{L}(X,G)$ and $B \in \mathcal{L}(X,W)$ be two linear
and continuous operators.  Suppose that the following condition
\begin{align}
\label{eq:3.1}
& X \subseteq H \quad \text{with compact inclusion;} \\[4pt]
\label{eq:3.2}
& \|\cdot\|_{H} + \|A(\cdot)\|_{G}
  \text{ is a norm on } X \text{ equivalent to } \|\cdot\|_{X}; \\[4pt]
\label{eq:3.3}
& \text{if } \; v \in X, \quad Av = 0, \quad Bv = 0,
  \quad \text{then } \; v = 0.
\end{align}
Then there exists a constant~$C$ such that
\begin{equation}
\label{eq:3.4}
\|v\|_{H} \leq C\bigl(\|Av\|_{G} + \|Bv\|_{W}\bigr)
\quad \forall\, v \in X.
\end{equation}
\end{theorem}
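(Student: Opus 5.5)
We argue by contradiction, using the compactness in \eqref{eq:3.1}. Suppose \eqref{eq:3.4} fails for every constant $C$. Then there is a sequence $\{v_n\}\subset X$ with
\begin{equation*}
\|v_n\|_{H}=1,\qquad \|Av_n\|_{G}+\|Bv_n\|_{W}<\tfrac1n .
\end{equation*}
The normalization $\|v_n\|_H=1$ is allowed because both sides of \eqref{eq:3.4} are positively homogeneous. Also, any $v$ with $\|v\|_H\neq 0$ can be rescaled, while $v$ with $\|v\|_H=0$ satisfies \eqref{eq:3.4} trivially.

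First, by the norm equivalence \eqref{eq:3.2}, $\|v_n\|_X\le C(\|v_n\|_H+\|Av_n\|_G)\le C(1+1)$, so $\{v_n\}$ is bounded in $X$. Since $X$ is a Hilbert space, we can extract a subsequence (not relabeled) converging weakly in $X$ to some $v\in X$. The embedding $X\hookrightarrow H$ is compact, so a compact operator maps weakly convergent sequences to strongly convergent ones, and $v_n\to v$ strongly in $H$. In particular $\|v\|_H=1$.

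Second, we identify the limit. The operators $A$ and $B$ are linear and continuous, hence weakly continuous. Therefore $Av_n\rightharpoonup Av$ in $G$ and $Bv_n\rightharpoonup Bv$ in $W$. Since $\|Av_n\|_G\to0$ and $\|Bv_n\|_W\to0$, weak lower semicontinuity of the norm gives $Av=0$ and $Bv=0$. Hypothesis \eqref{eq:3.3} then forces $v=0$, which contradicts $\|v\|_H=1$.

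The main point needing care is the passage from weak convergence in $X$ to strong convergence in $H$. This step supplies the nonzero limit, and it is the only place where compactness is used. A variant avoids weak limits. Using \eqref{eq:3.2} again, $\|v_n-v_m\|_X\le C(\|v_n-v_m\|_H+\|Av_n\|_G+\|Av_m\|_G)$, and the right-hand side tends to zero. So the subsequence is Cauchy in $X$ and converges strongly in $X$, which gives $Av=0$ and $Bv=0$ directly by continuity.

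For the application to \eqref{eq:coercivity}, take $X=V$ and $H=H^1(\Omega)$, which is compact by Rellich. Take $A=D^2$ with $G=[L^2(\Omega)]^{2\times2}$, so that $\|\cdot\|_{1}+|\cdot|_2$ is equivalent to $\|\cdot\|_2$. Take $B$ to be the trace into $W=L^2(\partial\Omega)$. If $D^2v=0$, then $v$ is affine. The condition $\partial_{\bn}v=0$ on a polygon with at least two non-parallel edges forces $\nabla v=0$, and the vanishing trace then gives $v=0$. Hence \eqref{eq:3.4} plus the norm equivalence yield $\widehat a(v,v)\ge C\|v\|_{2,\Omega}^2$.
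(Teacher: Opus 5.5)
Your proof is correct: normalizing $\|v_n\|_H=1$, using \eqref{eq:3.2} for boundedness in $X$, using the compact inclusion to get strong convergence in $H$, and invoking \eqref{eq:3.3} to reach a contradiction is the standard argument for this abstract Friedrichs--Poincar\'e (Peetre--Tartar type) inequality. The paper gives no proof of its own and only refers to Gilardi's notes, which handle the result by this compactness-by-contradiction argument, so nothing needs adding; your verification of \eqref{eq:3.3} via two non-parallel boundary edges is, if anything, more precise than the paper's one-line check.
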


\begin{proposition}
  \label{prop:coercivity-H1}
  Assume that $\Omega$ is a bounded Lipschitz domain. 
  Then, the
  coercivity inequality~\eqref{eq:coercivity} holds with the estimate:
  \begin{equation}
    \label{eq:FP-H1}
    \|v\|_{H^2(\Omega)}
    \leq C \bigl( \|D^2 v\|_{L^2(\Omega)} + \|v\|_{L^2(\partial\Omega)} \bigr)
    \qquad \forall\, v \in V.
  \end{equation}
\end{proposition}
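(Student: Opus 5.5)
We apply Theorem~\ref{thm:Friedrichs-Poincare} with $X = V$ (endowed with the $H^2(\Omega)$ norm, a closed subspace of $H^2(\Omega)$ since the normal trace is continuous), $H = H^1(\Omega)$, $G = [L^2(\Omega)]^{2\times 2}$ with $A v = D^2 v$, and $W = L^2(\partial\Omega)$ with $B v = v|_{\partial\Omega}$. Once the three hypotheses are checked, the abstract theorem gives $\|v\|_{1,\Omega} \le C(\|D^2 v\|_{0,\Omega} + \|v\|_{0,\partial\Omega})$. Adding $\|D^2 v\|_{0,\Omega}$ to both sides then yields \eqref{eq:FP-H1}. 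Finally, $\widehat{a}(v,v) = \|D^2 v\|_{0,\Omega}^2 + \|v\|_{0,\partial\Omega}^2 \ge \tfrac12(\|D^2v\|_{0,\Omega}+\|v\|_{0,\partial\Omega})^2$ gives \eqref{eq:coercivity}.

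The hypotheses are checked as follows. $A$ is trivially continuous from $X$ to $G$, and $B$ is continuous by the trace theorem $H^1(\Omega)\to L^2(\partial\Omega)$. Condition \eqref{eq:3.1} is Rellich--Kondrachov: on a bounded Lipschitz domain, $H^2(\Omega)\hookrightarrow H^1(\Omega)$ compactly, and restriction to the closed subspace $V$ preserves compactness. For \eqref{eq:3.2}, one direction, $\|v\|_{1,\Omega}+\|D^2v\|_{0,\Omega}\le C\|v\|_{2,\Omega}$, is immediate. The other holds with equality up to constants, since $\|v\|_{2,\Omega}^2 = \|v\|_{1,\Omega}^2 + |v|_{2,\Omega}^2$ and $|v|_{2,\Omega} = \|D^2 v\|_{0,\Omega}$ by definition of the seminorm.

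The only substantive step is \eqref{eq:3.3}. If $v\in V$ has $D^2 v = 0$ in $\Omega$, then, $\Omega$ being connected, $v$ is an affine function $v(x) = c_0 + \mathbf{c}\cdot x$. The constraint $\partial_{\bn} v = \mathbf{c}\cdot\bn = 0$ on $\partial\Omega$ forces $\mathbf{c}=0$, since a polygonal boundary has at least two edges with linearly independent normals. Then $v = c_0$, and $Bv=0$ gives $c_0 = 0$. In fact even without the Neumann constraint, an affine function vanishing on $\partial\Omega$ (which contains three non-collinear points) is zero, so this step is robust. I expect the main care point is only the connectedness of $\Omega$, which is implicit in the word ``domain''; otherwise the argument is routine.
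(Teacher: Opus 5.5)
Your proposal is correct and follows the paper's proof essentially verbatim. You use the same instantiation of the abstract Friedrichs--Poincar\'e theorem ($X=V$, $H=H^1(\Omega)$, $Av=D^2v$, $Bv=v|_{\partial\Omega}$) and verify the three hypotheses the same way: Rellich--Kondrachov, the identity $\|v\|_{2,\Omega}^2=\|v\|_{1,\Omega}^2+\|D^2v\|_{0,\Omega}^2$, and the affine-function argument. You also spell out details the paper leaves implicit: adding $\|D^2v\|_{0,\Omega}$ to both sides to pass from the $H^1$ bound to the $H^2$ bound, the continuity of $B$ via the trace theorem, and the connectedness of $\Omega$.
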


\begin{proof}
  We apply Theorem~\ref{thm:Friedrichs-Poincare} with the
  identifications:
  \begin{alignat*}{2}
    X &= V=\Big\{
    v\in H^2(\Omega)~\textrm{such~that}~
    \partial_{\mathbf{n}}v=0~\text{on}~\partial\Omega
    \Big\}, &\qquad
    H &= H^1(\Omega), \\[0.3em]
    G &= L^2(\Omega;\mathbb{R}^{d \times d}_{\mathrm{sym}}), &\qquad
    W &= L^2(\partial\Omega),
  \end{alignat*}
  and the operators $A$ and $B$ defined as follows: $Av = D^2 v$ and $Bv = v|_{\partial\Omega}$.

  \medskip
  \noindent
  \textit{Verification of~\eqref{eq:3.1}.}
  The embedding $H^2(\Omega) \hookrightarrow H^1(\Omega)$ is compact
  by the Rellich--Kondrachov theorem.  Since $V$ is a closed subspace
  of~$H^2(\Omega)$, the inclusion $V \subset H^1(\Omega)$ is also
  compact.

  \medskip
  \noindent
  \textit{Verification of~\eqref{eq:3.2}.}
  We must show that $\|v\|_{H^1(\Omega)} + \|D^2 v\|_{L^2(\Omega)}$
  defines a norm on~$V$ equivalent to~$\|v\|_{H^2(\Omega)}$.  This is
  immediate from the definition of the $H^2$~norm:
  \[
  \|v\|_{H^2(\Omega)}^2
  = \|v\|_{L^2(\Omega)}^2 + \|\nabla v\|_{L^2(\Omega)}^2
  + \|D^2 v\|_{L^2(\Omega)}^2
  = \|v\|_{H^1(\Omega)}^2 + \|D^2 v\|_{L^2(\Omega)}^2.
  \]
  Hence
  \[
  \frac{1}{\sqrt{2}} \|v\|_{H^2(\Omega)}
  \leq \|v\|_{H^1(\Omega)} + \|D^2 v\|_{L^2(\Omega)}
  \leq \sqrt{2} \, \|v\|_{H^2(\Omega)},
  \]
  which establishes~\eqref{eq:3.2}.

  \medskip
  \noindent
  \textit{Verification of~\eqref{eq:3.3}.}
  If $v \in V$ satisfies
  $D^2 v = 0$ and $v|_{\partial\Omega} = 0$, then $v$ is affine, the
  Neumann condition forces $v$ to be constant, and the Dirichlet
  condition yields $v \equiv 0$.

  \medskip
  Theorem~\ref{thm:Friedrichs-Poincare} now
  yields~\eqref{eq:FP-H1}.  Coercivity follows immediately:
  \[
  \|v\|_{H^2(\Omega)}^2
  = \|v\|_{H^1(\Omega)}^2 + \|D^2 v\|_{L^2(\Omega)}^2
  \leq C \bigl( \|D^2 v\|_{L^2(\Omega)}^2
  + \|v\|_{L^2(\partial\Omega)}^2 \bigr)
  = C \, \widehat{a}(v,v). \qedhere
  \]
\end{proof}





\end{document}